\newtheorem{theorem}{Theorem}
\newtheorem{corollary}[theorem]{Corollary}
\newtheorem{definition}{Definition}
\newtheorem{example}{Example}
\newtheorem{lemma}[theorem]{Lemma}
\newtheorem{proposition}[theorem]{Proposition}
\numberwithin{equation}{section}
\numberwithin{theorem}{section}
\let\oldsqrt\sqrt
\def\sqrt{\mathpalette\DHLhksqrt}
\def\DHLhksqrt#1#2{%
\setbox0=\hbox{$#1\oldsqrt{#2\,}$}\dimen0=\ht0
\advance\dimen0-0.2\ht0
\setbox2=\hbox{\vrule height\ht0 depth -\dimen0}%
{\box0\lower0.4pt\box2}}
\newcommand{\df}[1]{\mathfrak{#1}}
\renewcommand{\bar}{\overline}
\renewcommand{\Re}{\operatorname{Re}}
\renewcommand{\hat}[1]{\widehat{#1}}
\newcommand{\wt}[1]{\widetilde{#1}}
\newcommand{\rest}[1]{\big\rvert_{#1}} 
\newcommand\lra{\longrightarrow}
\newcommand\xlra[1]{\xrightarrow{\phantom{x} #1 \phantom{x}}}
\newcommand\pa{\partial}
\newcommand\eps\varepsilon
\renewcommand\epsilon\varepsilon
\newcommand{\IC}[1]{\operatorname{\mathbf{IC}}_{\overline{#1}}^{\bullet}}
\renewcommand{\sc}[1]{\mathbf{ #1 }^{\bullet}} 
\newcommand\CI{{\mathcal{C}}^{\infty}}
\newcommand\CIc{{\mathcal{C}}^{\infty}_c}
\newcommand\CmI{{\mathcal{C}}^{-\infty}}
\newcommand{\lrpar}[1]{\left( #1 \right)}
\newcommand\ang[1]{\langle #1 \rangle}
\newcommand{\norm}[1]{\lVert #1 \rVert}
\newcommand\fib{\operatorname{---}} 
\newcommand\Diff{\operatorname{Diff}}
\newcommand\depth{\operatorname{depth}}
\newcommand\dR{\operatorname{dR}}
\newcommand\dvol{\operatorname{dvol}}
\newcommand\End{\operatorname{End}}
\newcommand\ev{\operatorname{even}}
\newcommand\id{\operatorname{id}}
\newcommand\Id{\operatorname{Id}}
\newcommand\odd{\operatorname{odd}}
\newcommand\pt{\operatorname{pt}}
\renewcommand\Re{\operatorname{Re}}
\newcommand{\reg}{ \mathrm{reg} }
\newcommand{\Sh}{\operatorname{Sh}}
\newcommand\sign{\operatorname{sign}}
\newcommand\spec{\operatorname{spec}}
\newcommand\supp{\operatorname{supp}}
\newcommand\Mand{\text{ and }}
\newcommand\Mas{\text{ as }}
\newcommand\Mforall{\text{ for all }}
\newcommand\Mforevery{\text{ for every }}
\newcommand\Mif{\text{ if }}
\newcommand\Min{\text{ in }}
\newcommand\Mon{\text{ on }}
\newcommand\Mor{\text{ or }}
\newcommand\Mst{\text{ s.t. }}
\newcommand\Mwhere{\text{ where }}
\newcommand\Mwith{\text{ with }}
\newcommand\paperintro%
\newcommand\paperbody%
\newcommand\bB{\mathbf{B}}
\newcommand\bG{\mathbf{G}}
\newcommand\bH{\mathbf{H}}
\newcommand\bL{\mathbf{L}}
\newcommand\bM{\mathbf{M}}
\newcommand\bN{\mathbf{N}}
\newcommand\bP{\mathbf{P}}
\newcommand\bQ{\mathbf{Q}}
\newcommand\bR{\mathbf{R}}
\newcommand\bS{\mathbf{S}}
\newcommand\bT{\mathbf{T}}
\newcommand\bU{\mathbf{U}}
\newcommand\bW{\mathbf{W}}
\newcommand\bOm{\mathbf{\Omega}}
\newcommand\bbB{\mathbb{B}}
\newcommand\bbC{\mathbb{C}}
\newcommand\bbD{\mathbb{D}}
\newcommand\bbF{\mathbb{F}}
\newcommand\bbH{\mathbb{H}}
\newcommand\bbN{\mathbb{N}}
\newcommand\bbQ{\mathbb{Q}}
\newcommand\bbR{\mathbb{R}}
\newcommand\bbS{\mathbb{S}}
\newcommand\bbZ{\mathbb{Z}}
\newcommand\cC{\mathcal{C}}
\newcommand\cD{\mathcal{D}}
\newcommand\cE{\mathcal{E}}
\newcommand\cH{\mathcal{H}}
\newcommand\cJ{\mathcal{J}}
\newcommand\cL{\mathcal{L}}
\newcommand\cM{\mathcal{M}}
\newcommand\cO{\mathcal{O}}
\newcommand\cP{\mathcal{P}}
\newcommand\cS{\mathcal{S}}
\newcommand\cT{\mathcal{T}}
\newcommand\cU{\mathcal{U}}
\newcommand\cV{\mathcal{V}}
\newcommand\cW{\mathcal{W}}
\newcommand\cY{\mathcal{Y}}
\newcommand\sC{\mathscr{C}}
\newcommand\sF{\mathscr{F}}
\newcommand\sM{\mathscr{M}}
\newcommand\sS{\mathscr{S}}
\newcommand\sT{\mathscr{T}}
\newcommand\sW{\mathscr{W}}
\newcommand\tH{\operatorname{H}}
\DeclareMathAlphabet{\mathpzc}{OT1}{pzc}{m}{it}
\begin{document}

\title{On the Hodge theory of stratified spaces}

\author{Pierre Albin}
\address{University of Illinois at Urbana-Champaign}
\email{palbin@illinois.edu}

\begin{abstract}
This article is a survey of recent work of the author, together with Markus Banagl, Eric Leichtnam, Rafe Mazzeo, and Paolo Piazza, on the Hodge theory of stratified spaces. We discuss how to resolve a Thom-Mather stratified space to a manifold with corners with an iterated fibration structure and the generalization of a perversity in the sense of Goresky-MacPherson to a mezzoperversity. We define Cheeger spaces and their signatures and describe how to carry out the analytic proof of the Novikov conjecture on these spaces. Finally we review the reductive Borel-Serre compactification of a locally symmetric space to a stratified space and describe its resolution to a manifold with corners.
\end{abstract}

\dedicatory{Dedicated to Steven Zucker on the occasion of his 65$^{\text{th}}$ birthday}

\maketitle

\section{Introduction}

The purpose of this paper is to describe some recent advances in the Hodge theory of pseudomanifolds.\\

Algebraic varieties and $L^2$-metrics have been integral aspects of Hodge theory from the very beginning. 
Hodge appealed to the Dirichlet principle \cite{Hodge:Dirichlet}  and the Levi parametrix method \cite{Hodge:Existence,Hodge:Harmonic} to find harmonic forms in each de Rham cohomology class and used these forms to  study the cohomology of complex projective algebraic varieties. Weyl \cite{Weyl:Address} referred to \cite{Hodge:Harmonic} as ``one of the great landmarks in the history of our science in the present century" (and corrected Hodge's existence proof in \cite{Weyl:Hodge}). An existence proof using purely Hilbert space methods was carried out by Gaffney \cite{Gaffney:Harmonic, Gaffney:Hodge} who, for instance, considered the exterior derivative $d$ on a smooth manifold $X$ as an unbounded operator on square integrable differential forms with domain
\begin{equation*}
	\cD_{\max}(d) = \{ \omega \in L^2(X;\Lambda^\bullet T^*X): d\omega \in L^2(X;\Lambda^\bullet T^*X) \},
\end{equation*}
where $d\omega$ is computed distributionally. This domain forms a complex whose cohomology, known as the $L^2$-cohomology of $X,$ was introduced independently in 
\cite{Zucker:Theorie} on a complete manifold and in \cite{Cheeger:Conic} on an incomplete manifold.

Algebraic varieties are often singular but Whitney \cite{Whitney, Kaloshin} showed that they are stratified spaces. This means that they can be decomposed into smooth manifold pieces of various dimensions, called {\em strata}, in such a way that different points on the same stratum have similar neighborhoods in the variety. In particular one of these pieces is an open dense smooth manifold. 
Endowing this `regular part' with a conic-type Riemannian metric, Cheeger studied the de Rham complex of differential forms in $L^2.$
He singled out a topological condition that guaranteed that the exterior derivative had an unambiguous definition as an unbounded operator,
every cohomology class had a harmonic representative, and the resulting cohomology groups satisfy Poincar\'e duality.
In the case of conic singularities he explained how, if this topological condition failed, one can choose ideal boundary conditions and still establish these results.
In this note I will review recent work \cite{ALMP11, ABLMP, ALMP13, ALMP13.2} extending Cheeger's results to general stratified {\em `pseudomanifolds'}.

Specifically, let $\hat X$ denote a pseudomanifold (a stratified space `without boundary') and endow its regular part $\hat X^{\reg}$ with a `wedge metric' $g.$
The exterior derivative $d$ and its formal adjoint $\delta$ make up the de Rham operator
\begin{equation*}
	\eth_{\dR} = d+ \delta: \CIc(\hat X^{\reg}; \Lambda^\bullet T^*\hat X^{\reg}) \lra  \CIc(\hat X^{\reg}; \Lambda^\bullet T^*\hat X^{\reg}).
\end{equation*}
There are two canonical extensions of $\eth_{\dR}$ to a closed operator on $L^2(\hat X^{\reg}; \Lambda^\bullet T^*\hat X^{\reg}).$ The first has domain
\begin{equation*}
	\cD_{\max}(\eth_{\dR}) = \{ \omega \in L^2(\hat X^{\reg}; \Lambda^\bullet T^*\hat X^{\reg}) : 
	\eth_{\dR}\omega \in L^2(\hat X^{\reg}; \Lambda^\bullet T^*\hat X^{\reg}) \}
\end{equation*}
where $\eth_{\dR}\omega$ is computed distributionally, and the second has domain
\begin{multline*}
	\cD_{\min}(\eth_{\dR}) = \{ \omega \in L^2(\hat X^{\reg}; \Lambda^\bullet T^*\hat X^{\reg}) : 
	\exists (\omega_n) \subseteq \CIc(\hat X^{\reg}; \Lambda^\bullet T^*\hat X^{\reg}) \Mst
	\omega_n \to \omega \Min L^2 \\
	\Mand (\eth_{\dR}\omega_n) \text{ is $L^2$-Cauchy} \}
\end{multline*}
and $\eth_{\dR}\omega$ can be computed both distributionally and as $\lim \eth_{\dR}\omega_n.$

Cheeger \cite{Cheeger:Conic} singled out a class of spaces, now known as {\em Witt spaces} \cite{Siegel}, where for a suitably chosen metric we have
\begin{equation*}
	\cD_{\min}(\eth_{\dR}) = \cD_{\max}(\eth_{\dR}).
\end{equation*}
This implies that many of the classical Hodge theory results will hold on $\hat X.$ For example, the $L^2$-cohomology is finite dimensional, represented by harmonic forms, and satisfies Poincar\'e duality. Moreover, Cheeger showed that this cohomology coincides with the (middle perversity) intersection homology of Goresky-MacPherson and so in particular one can study the signature of intersection homology using analytic tools.

The signature is an important invariant of smooth manifolds, e.g., in \cite{Gromov:Macroscopic}, Gromov says that it is ``{\em the }invariant, which can be matched in the beauty and power only by the Euler characteristic". The bordism invariance of the signature implies that it is a polynomial in the rational Pontryagin classes, and Hirzebruch's signature theorem identified the polynomial with the `$L$-class'. The signature is also invariant under (oriented) homotopies and Novikov proved that, for a simply connected oriented manifold, the only rational Pontryagin characteristic class that is (oriented) homotopy invariant is the `top order part' of the $L$-class. He defined `higher signatures' on non-simply connected manifolds and the celebrated, still open, Novikov conjecture claims that these are homotopy invariant and they are the only ones.

Due to the importance of the conjecture and the ubiquity of singular spaces, it is natural to study the Novikov conjecture on pseudomanifolds. A fruitful approach due to Lusztig \cite{Lusztig}, Mishchenko \cite{Mishchenko}, and Kasparov \cite{Kasparov:Novikov} involves the Atiyah-Singer index theorem for families of operators but with $C^*$-algebra bundles (see \cite{Rosenberg:Survey} for a recent survey of the Novikov conjecture). In \cite{ALMP11} we carried out this approach to the Novikov conjecture on Witt spaces. In the process we established Cheeger's results on Witt spaces by an approach that lent itself to allowing $C^*$-algebra bundles.

In \cite{ALMP13, ALMP13.2} we carried out the analytic approach to the Novikov conjecture on a larger class of stratified spaces. In \cite{Cheeger:Conic}, Cheeger considered non-Witt spaces with isolated conic singularities and explained that one could still define an $L^2$ cohomology theory satisfying Poincar\'e duality by choosing `ideal boundary conditions' at the cone points. In \cite{ALMP13} we explain how to use ideal boundary conditions on a general pseudomanifold to define a self-adjoint Fredholm domain for the de Rham operator, and an associated $L^2$-de Rham cohomology complex. We call the spaces that admit ideal boundary conditions for the signature operator {\em Cheeger spaces} in recognition of Cheeger's contributions. In \cite{ALMP13.2} we carry out the analytic approach to the Novikov conjecture on Cheeger spaces.

Our first step, described in \S\ref{sec:StratSpaces}, is to resolve the geometry in the sense of Melrose (e.g., \cite{Melrose:Fibs}) by replacing a stratified space $\hat X$ with a smooth manifold with corners $\wt X,$ so that the regular part of $\hat X$ becomes the interior of $\wt X.$ The idea of resolving a stratified space goes back at least as far as Thom \cite{Thom:Ensembles} and continues to be an object of study \cite{Ayala-Francis-Tanaka:Local}. In our formulation the stratification is replaced by a {\em boundary fibration structure}: each boundary hypersurface of $\wt X$ is the total space of a fiber bundle and these fibrations are compatible over corners. Collapsing the fibers of these fibrations in the appropriate order recovers the singular space from the smooth space, while `blowing-up' the singular strata constructs the smooth space from the singular one.  Having replaced one compactification $\hat X$ of the regular part by another one $\wt X,$ we next replace the cotangent bundle $T^*\wt X$ by a `rescaled bundle', the wedge cotangent bundle ${}^wT^*\wt X,$ that reflects the original degeneration of $\hat X.$ This replacement brings out the iterative structure of the de Rham operator as we describe fully in \S\ref{sec:ResolGeo}. A key fact is that these replacements do not change the regular part of the stratified space nor the cotangent bundle over the regular part; since analysis on a stratified space means analysis on the regular part, these convenient replacements have not changed the problem we wish to solve.

Once these changes are made, $\eth_{\dR}$ is amenable to study using the microanalytic tools developed in, e.g., \cite{Melrose-Mendoza, Melrose:APS, Mazzeo:Edge, Mazzeo-Vertman, Krainer-Mendoza:1stSurvey,Schulze:ItStrSing}. 
We explain how to use these tools first in the case of a single singular stratum in \S\ref{sec:DepthOne} and then for an arbitrary pseudomanifold in \S\ref{sec:DepthEll}. One of the features of our approach is to take advantage of the iterative structure of the space and the operator which means that most of the analytic difficulties are already present for a single singular stratum. 

If $\hat X$ has a single singular stratum $Y,$ then $\wt X$ is a smooth manifold with boundary and its boundary participates in a fiber bundle of smooth closed manifolds
\begin{equation*}
	Z - \pa \wt X \xlra{\phi} Y
\end{equation*}
with base $Y$ and typical fiber $Z.$
Using the microlocal tools mentioned above, we see in \S\ref{sec:MaxMinDom} that elements in $\cD_{\max}(\eth_{\dR})$ have (distributional) asymptotic expansions as they approach the boundary of $\wt X,$
\begin{equation*}
	\omega \in \cD_{\max}(\eth_{\dR}) \implies
	\omega \sim r^{-v/2}(\alpha(\omega) + dr \wedge \beta(\omega)) + \wt \omega \Mas r\to0
\end{equation*}
where $\wt \omega$ is a `remainder' and $\alpha(\omega),$ $\beta(\omega)$ are distributional forms on $Y,$
\begin{equation*}
	\alpha(\omega), \beta(\omega) \in \CmI(Y; \Lambda^\bullet T^*Y \otimes \cH^{\dim Z/2}(\pa \wt X/Y)),
\end{equation*}
with coefficients in $\cH^{\dim Z/2}(\pa\wt X/Y)$ the vertical Hodge cohomology of the boundary fibration.
These forms serve as `Cauchy data' for $\omega.$ For example we show that the minimal domain is determined by the vanishing of the Cauchy data.
Thus if $v$ is odd or $\tH^{\dim Z/2}_{\dR}(Z)=0$ (which is the `Witt condition' in this context) then the maximal domain is the same as the minimal domain.
Otherwise $\cH^{\dim Z/2}(\pa\wt X/Y)$ is a flat bundle and for every flat subbundle $W,$
\begin{equation*}
	\xymatrix{
	W \ar[rr]\ar[rd] & & \cH^{\dim Z/2}(\pa\wt X/Y) \ar[ld] \\
	& Y& },
\end{equation*}
we define a domain for $\eth_{\dR}$ by
\begin{multline*}
	\cD_{W}(\eth_{\dR}) = \{ \omega \in \cD_{\max}(\eth_{\dR}) : \\
	\alpha(\omega) \in \CmI(Y; \Lambda^\bullet T^*Y \otimes W), \quad
	\beta(\omega) \in \CmI(Y; \Lambda^\bullet T^*Y \otimes W^{\perp}) \}.
\end{multline*}
We refer to $W$ as a {\em mezzoperversity} and to $\cD_{W}(\eth_{\dR})$ as imposing {\em Cheeger ideal boundary conditions}.
With this domain, we see in \S\ref{sec:CIBC} that $\eth_{\dR}$ is a self-adjoint Fredholm operator with compact resolvent and hence the Hodge cohomology groups $\cH_{W}^*(\hat X)$ are finite dimensional.
In section \ref{sec:DepthOneDR} we explain how $W$ determines a domain for the exterior derivative, $\cD_W(d),$ and a Fredholm Hilbert complex in the sense of \cite{Bruning-Lesch:Hilbert}. The corresponding de Rham cohomology is denoted $\tH_{W,\dR}^*(\hat X)$ and it satisfies the Hodge theorem
\begin{equation*}
	\tH_{W, \dR}^*(\hat X) \cong \cH_W^*(\hat X).
\end{equation*}

In \S\ref{sec:DepthEll} we explain how these constructions can be carried out on a general pseudomanifold. This involves an inductive procedure: assuming we have carried out this analysis of Hodge and de Rham cohomology on pseudomanifolds of depth less than $k,$ we can carry out it out on spaces of depth $k.$ A mezzoperversity in this context consists of system of flat bundles, $\cW=(W_1, \ldots, W_{\ell}),$ one over each non-Witt stratum, satisfying a compatibility condition at the corners of $\wt X.$ To each mezzoperversity we assign finite dimensional de Rham and Hodge cohomology groups and we prove their equality,
\begin{equation*}
	\tH_{\cW,\dR}^*(\hat X) \cong \cH_{\cW}^*(\hat X).
\end{equation*}
If $\wt X$ is oriented we associate to each mezzoperversity $\cW$ a dual mezzoperversity $\bbD\cW$
and we show that the intersection pairing on differential forms descends to a non-degenerate pairing
\begin{equation}\label{eq:PDPairing}
	\tH_{\cW, \dR}^*(\hat X) \times \tH_{\bbD \cW, \dR}^*(\hat X) \lra \bbR
\end{equation}
yielding a generalized Poincar\'e duality in the sense of \cite{Goresky-MacPherson:IH}. 
This will be a `true' Poincar\'e duality if $\cW = \bbD \cW$ but there are topological obstructions to the existence of such self-dual mezzoperversities, notably the signatures of the fibers of the boundary fibrations. We refer to a pseudomanifold with a self-dual mezzoperversity as a {\em Cheeger space.}

Section \ref{sec:PropsCoho} is devoted to explaining that these cohomology theories share many interesting properties with the usual cohomology theories of smooth manifolds. For example we show that the de Rham cohomology is independent of the metric used to define it. The key fact is that the boundary conditions can be defined using  de Rham cohomology instead of Hodge cohomology, allowing us (inductively) to define the boundary conditions without reference to a metric. 

We also show that the cohomology is invariant under stratified homotopy equivalences. This is a bit tricky since pull-back by a homotopy equivalence need not define a bounded operator on $L^2.$ We make use of a clever construction of Hilsum and Skandalis \cite{Hilsum-Skandalis} to define a bounded pull-back map, and then explain how it acts on mezzoperversities and on cohomology.

On Cheeger spaces self-dual mezzoperversities define, through the quadratic form \eqref{eq:PDPairing}, a signature. In \S\ref{sec:PropSign} we explain how a self-dual mezzoperversity determines a domain for the signature operator on which it is Fredholm with index the signature of \eqref{eq:PDPairing}. If two Cheeger spaces, endowed with self-dual mezzoperversities, are bordant (with a Cheeger space bordism and an extension of the self-dual mezzoperversities) then the signature does not change. Together with a theorem of Banagl \cite{Banagl:LClass}, this implies that the signature of \eqref{eq:PDPairing} depends only on $\hat X$ and not on the self-dual mezzoperversity! Thus every Cheeger space has a signature that is invariant under Cheeger space bordism and stratified homotopy equivalences.

A beautiful approach to characteristic classes of Thom \cite{Thom:Classes} allows us to use this signature to define an $L$-class in homology. (This same approach was used by Goresky-MacPherson \cite{Goresky-MacPherson:IH} and Banagl \cite{Banagl:LClass}; Cheeger \cite{Cheeger:Spec} used $\eta$-invariants of the links to give a combinatorial formula for the $L$-class of a Witt space.) In \S\ref{sec:LClass} we define the $L$-class of a Cheeger space and use it to define the higher signatures.
In \cite{ALMP13.2} we carry out the analytic approach to the Novikov conjecture and prove that these higher signatures are stratified homotopy invariant for a large class of Cheeger spaces.

Cheeger was able to relate $L^2$-cohomology to the intersection cohomology of Goresky-MacPherson \cite{Goresky-MacPherson:IH, GM2}. Specifically, the complex $\cD_{\min}(d)$ has cohomology isomorphic to $\mathrm{IH}_{\bar m}^*(\hat X),$ the `lower middle perversity' intersection cohomology of $\hat X,$ and the complex $\cD_{\max}(d)$ has cohomology isomorphic to $\mathrm{IH}_{\bar n}^*(\hat X),$ the `upper middle perversity' intersection cohomology of $\hat X.$ These can be described as the hypercohomology groups of two sheaf complexes
\begin{equation*}
	\IC m, \quad \IC n
\end{equation*}
respectively.
Banagl \cite{Banagl:ExtendingIH} took the sheaf-theoretic description of intersection cohomology from \cite{GM2} and studied the category of all self-dual sheaf complexes consistent with $\IC m$ and $\IC n.$ In \cite{ABLMP} we considered the category of sheaf complexes consistent with $\IC m$ and $\IC n$ but not necessarily self-dual. As explained in \S\ref{sec:RefIH}, we showed that the cohomology groups $\tH^\bullet _{\cW, \dR}(\hat X)$ corresponding to a mezzoperversity are the hypercohomology groups of one of these sheaf complexes. Also self-dual mezzoperversities precisely correspond to the self-dual sheaves studied by Banagl. This yields the `de Rham theorem' to complement our `Hodge theorem' and completes the analogy between our new cohomology theories and the classical cohomology of closed manifolds.

The final two sections discuss the geometry of stratified spaces. Most of \S\ref{sec:StratSpaces} is devoted to showing the equivalence of Thom-Mather stratified spaces and manifolds with corners and boundary fibration structures. To exemplify the usefulness of this equivalence we include a description of a smooth de Rham complex that computes the intersection cohomology of any (Goresky-MacPherson) perversity function. One of the most interesting sources of stratified spaces is the reductive Borel-Serre compactification of a locally symmetric space, introduced by Zucker in \cite{Zucker:ReductiveLp}, which we discuss in the final section, \S\ref{sec:BorelSerre}. By resolving this stratified space we obtain a new compactification, the resolved Borel-Serre compactification, of a locally symmetric space to a manifold with corners and a boundary fibration structure.

$ $\\
{\bf Acknowledgements.} I thank Lizhen Ji for the invitation to write this article. I am grateful to my collaborators on the work being reported on, Markus Banagl, Eric Leichtnam, Rafe Mazzeo, Richard Melrose, and Paolo Piazza. I am also grateful for conversations on these topics with Francesco Bei and Jeff Cheeger. Melinda Lanius made suggestions that improved the exposition, and Steven Zucker was kind enough to read the manuscript closely and suggest many improvements. Part of this project was completed while the author was in residence at the Mathematical Sciences Research Institute in Berkeley, CA, made possible by the N.S.F. grant 0932078000. This work was partially supported by Simons Foundation grant \#317883.

\tableofcontents

\section{Background and context}\label{sec:Background}

As described in \cite[\S1.10]{Cheeger-Goresky-MacPherson}, in the mid 70's three independent sources gave rise to a new (co)homology theory for singular spaces:
\begin{itemize}
\item Goresky and MacPherson while working to develop a theory of characteristic classes of algebraic varieties developed intersection homology.
\item Cheeger's proof of the Ray-Singer conjecture on the equality of analytic torsion and Reidemeister torsion inspired his study of the $L^2$-cohomology of differential forms on the regular part of a variety.
\item A generalization by Deligne of work of Zucker on extending a variation of Hodge structure from the complement of a divisor to the entirety of a nonsingular complex algebraic variety.
\end{itemize}
The very fruitful history of this development is discussed in detail through 1989 in \cite{Kleiman:Development} and has continued apace. Our focus in this expository paper is on the $L^2$ cohomology of singular spaces, continuing the work of Cheeger \cite{Cheeger:Conic, Cheeger:Hodge, Cheeger-Goresky-MacPherson, Cheeger:Spec}.

\subsection{Stratified spaces}

Let us start by considering a particular example of the general situation we will be studying.
{\em Whitney's umbrella} is given by
\begin{equation*}
	\sW = \{ (x,y,z) \in \bbR^3 : x^2 -zy^2 = 0 \}. 
\end{equation*}
The gradient of the defining polynomial is $(2x, -2yz, -y^2),$ so the critical points make up the $\{ z\text{-axis} \}.$
At every point $p \in \sW\setminus \{ z\text{-axis} \},$ the inverse function theorem guarantees that there is a neighborhood of $p$ in $\sW$ that is diffeomorphic to an open ball in $\bbR^2.$ Thus, this `{\em regular part}' of $\sW,$ $\sW^{\reg},$ forms a two dimensional smooth manifold and we have a decomposition of $\sW$ into two smooth manifolds
\begin{equation*}
	\sW = \{ z\text{-axis} \} \cup \sW^{\reg}.
\end{equation*}
This is the first feature we will ask of a stratified space: it is a union of smooth manifolds, often of different dimensions.
Each of these manifolds is called a {\em stratum} and we talk about the collection of {\em strata}. 

Manifolds have a local homogeneity: every point has a neighborhood diffeomorphic to the same Euclidean ball.
Local homogeneity is a second feature we will require of each stratum of a stratified space.
Of course every point on the $z$-axis has a neighborhood on the $z$-axis diffeomorphic to an interval, but what we want is to look at neighborhoods of points on the $z$-axis as subsets of $\sW.$
The intersections of $\sW$ with the planes $z = c$ consist of
\begin{equation*}
	\begin{cases}
	\text{ two lines: } x= cy, x=-cy & \Mif c>0 \\
	\text{ one line: } x = 0 & \Mif c=0 \\
	\text{ one point: } x=y=0 & \Mif c<0
	\end{cases}
\end{equation*}
so we see that a point on the negative $z$-axis has a neighborhood in $\sW$ homeomorphic to an interval, while a point on the
positive $z$-axis has a neighborhood in $\sW$ homeomorphic to an interval (on the $z$-axis) times a small cross, which we think of as a truncated cone on four points, 
$(0,1) \times C(\{\text{ four points } \}).$
Thus to have local homogeneity we have to refine the decomposition of $\sW$ to 
\begin{equation*}
	\sW = \{ \text{ origin } \} \cup \{ \text{ negative $z$-axis } \} 
	\cup \{ \text{ positive $z$-axis } \} \cup (\sW \setminus  \{ z\text{-axis} \} ).
\end{equation*}
This exhibits $\sW$ as a disjoint union of four strata: one $0$-dimensional, two $1$-dimensional, and one $2$-dimensional, and each stratum is locally homogeneous.\\

We defer the formal definition of a stratified space to \S\ref{sec:StratSpaces} and meanwhile content ourselves with an informal description and some examples.
For us a stratified space will always mean a ``Thom-Mather stratification"; for other notions of stratification we refer the interested reader to the excellent surveys \cite{Hughes-Weinberger, Kloeckner}.
Roughly speaking this means that we have a topological space $\hat X$ with a decomposition into smooth manifold pieces called strata.
The stratum of largest dimension, $\hat X^{\reg},$ is called the regular part and is assumed to be dense; the other strata are called singular.
Each stratum $Y$ is `locally homogeneous' in that 
each point has a neighborhood in $\hat X$ homeomorphic to $\bbB^h \times C(Z),$ for some stratified space $Z$ called the {\em link} of $Y$ in $\hat X.$
Moreover each stratum $Y$ has a `tubular neighborhood' $\cT$ in $\hat X$ that is the total space of a fiber bundle
\begin{equation*}
	C(Z) \fib \cT \lra Y.
\end{equation*}
A stratified space is called a {\em pseudomanifold} if all of the singular strata have codimension at least two.
(The Whitney umbrella is a stratified space but not a pseudomanifold.)

A key feature of stratified spaces is that they show up even when our main object of interest is smooth.

\begin{example}
[Algebraic varieties]
Suppose $\hat X$ is the zero set of a finite set of polynomials $\{ f_1, \ldots, f_k \}.$
The Jacobian matrix of $F = (f_1, \ldots, f_k)$ has rank in $\{0,\ldots, k\}$ at each point in $\hat X.$ 
The sets 
\begin{equation*}
	Y_k = \{ \zeta \in \hat X: \mathrm{rank}(DF)(\zeta) = k \}
\end{equation*}
partition $\hat X$ into smooth manifolds, but do not always form a stratification.
Nevertheless, Whitney \cite{Whitney} showed that this partition can always be refined to yield a stratification (see \cite{Kaloshin}).
\end{example}

\begin{example} [Group Actions]
Another common example of a stratified set is the orbit space of a smooth manifold under the action of a compact Lie group.
Suppose that $M$ is a smooth manifold with an action of the compact Lie group $G,$ meaning a homomorphism $G \lra \Diff(M).$
Recall that to each point $\zeta \in M$ we associate an orbit $G\cdot\zeta$ and a stabilizer $G_{\zeta},$
\begin{equation*}
	G\cdot \zeta = \{ g(\zeta) \in M: g \in G \} \subseteq M, \quad
	G_{\zeta} = \{ g \in G : g(\zeta) = \zeta \} \subseteq G.
\end{equation*}
Note that the stabilizer group changes along an orbit since
\begin{equation*}
	G_{g(\zeta)} = g G_{\zeta} g^{-1},
\end{equation*}
suggesting that a better invariant is the conjugacy class of the stabilizer.
This is borne out by a classical result of Borel: 
the orbit space $M/G = \{ G \cdot \zeta : \zeta \in M\}$ inherits a smooth manifold structure along the projection $M \lra M/G$ if all of the stabilizer groups $G_{\zeta}$ are conjugate to each other in $G.$

In general the space $M/G$ is a stratified space. Given a subgroup $H$ of $G,$ let $[H]$ denote its conjugacy class in $G,$ and let
\begin{equation*}
	M_{[H]} = \{ \zeta \in M : G_{\zeta} \in [H]\}.
\end{equation*}
It's clear from the above that $M_{[H]}$ is closed under the action of $G$ and Borel's theorem implies that $M_{[H]}/G$ is a smooth manifold. 
This exhibits $M/G$ as a union of smooth manifolds. The fact that this decomposition is a stratification follows from the fact that the $M_{[H]}$ have equivariant tubular neighborhoods in $M.$

(In \cite{Albin-Melrose:Resolution} we discuss the resolution of a group action. The minimal isotropy type $X$ can be identified with the interior of a smooth manifold with corners $Y(M)$ endowed with a $G$-action and an equivariant map
\begin{equation*}
	\beta: Y(M) \lra M.
\end{equation*}
The manifold $Y(M)$ has an iterated fibration structure in the sense of \S\ref{sec:TotalRes} where all spaces have $G$-actions and all maps are equivariant. Moreover $Y(M)$ and the bases of its boundary fibrations have unique isotropy types and hence manifold orbit spaces. The orbit space of $Y(M)$ thus inherits an iterated fibration structure and can be identified with the resolution of the stratified space $M/G.$)

As explained in \cite{Hughes-Weinberger}, 
If $M$ and $G$ are not smooth, or the action of $G$ on $M$ is not smooth, then the orbit space $M/G$ need not admit a Thom-Mather stratification. This is the case even for a finite group and was one of the main motivations for introducing more general notions of stratification.
\end{example}

\begin{example}
[Moduli spaces]
A moduli space is a space, usually an open manifold, whose points represent (isomorphism classes of) geometric objects of some fixed kind.
Moduli spaces often involve singular spaces in two ways. On the one hand natural compactifications of moduli spaces are often singular. On the other hand, points in the boundary of the compactification can often be interpreted as moduli of singular versions of the original objects.

For example, let $\Sigma_g$ be a smooth oriented surface of genus $g,$ say $g>1,$ and let $C_g$ denote the space of complex structures on $\Sigma_g.$
The orientation preserving diffeomorphisms of $\Sigma_g,$ $\Diff^+(\Sigma_g),$ act naturally on $C_g$ and Teichm\"uller space is the quotient by the subgroup $\Diff^+_0(\Sigma_g)$ of diffeomorphisms isotopic to the identity,
\begin{equation*}
	\cT_g = C_g / \Diff^+_0(\Sigma_g).
\end{equation*}
This has a natural manifold structure, for which $\cT_g \cong \bbR^{6g-6}$ as real manifolds.
The {\em augmented Teichm\"uller space, $\hat\cT_g,$} is a {\em partial} compactification of $\cT_g$ obtained by attaching a stratified boundary to $\cT_g,$ with boundary hypersurfaces corresponding to allowing geodesic lengths to converge to zero. Thus $\hat \cT_g$ is singular and points on the singular strata correspond to Riemann surfaces with geodesics pinched to points.
The action of the `mapping class group', $\Diff^+(\Sigma_G)/\Diff^+_0(\Sigma_g),$ extends from $\cT_g$ to $\hat \cT_g$ and the quotient yields the Deligne-Mumford (stratified) compactification of $\cM_g,$ see e.g., \cite{Hubbard-Koch,Vakil:Notices}.
\end{example}

\begin{example}
[Unstable manifolds]
Following \cite{Nicolaescu:Morse},
suppose $M$ is a smooth manifold of dimension $m,$ $f:M\lra \bbR$ a Morse function, and $p\in M$ is a critical point of $f.$ Let $V$ be a gradient-like function on $M$ (e.g., the gradient of $f$ with respect to a Riemannian metric) and let $\Phi_t$ denote the flow generated by $-V.$
The stable/unstable manifolds associated to this data are respectively
\begin{equation*}
	W^{\pm}_p = \{ \zeta \in M : \lim_{t \to \pm\infty}\Phi_t(\zeta) = p \}.
\end{equation*}
These are both smooth manifolds and, up to homeomorphism,
\begin{equation*}
	W_p^- \cong \bbR^{\lambda}, \quad W_p^+ \cong \bbR^{m-\lambda},
\end{equation*}
where $\lambda = \lambda(f, p)$ is the Morse index of $f$ at $p.$
The flow satisfies the Morse-Smale transversality condition if the stable and unstable manifolds associated to all of the critical points of $f$ intersect each other transversely.

Let 
\begin{equation*}
	Y_{k} = \bigcup \{ W_p^- : \lambda(f,p) = k \}.
\end{equation*}
Nicolaescu shows in \cite[Chapter 4]{Nicolaescu:Morse} that the decomposition of $M$ into $\{ Y_k \}$ is a stratification if and only if the flow satisfies the Morse-Smale transversality condition.
\end{example}

\begin{example}
[Homology classes]
If $X$ is a topological space, we say that a homology class $h\in H_*(X)$ is realizable by a smooth manifold if there is a closed orientable manifold $M$ and a continuous map $f:M \lra X$ such that
\begin{equation*}
	h= f_*[M].
\end{equation*}
Steenrod's problem consists of describing the realizable homology classes when $X$ is a smooth manifold.
Thom \cite{Thom:Quelques} proved that all classes are realizable with $\bbZ_2$ or $\bbQ$ coefficients.
He showed that all elements of $H_j(X;\bbZ)$ are realizable for $j \leq 6$ and for $\dim X-2 \leq j < \dim X$ and provided an explicit example of a non-realizable class.
Goresky showed that you can represent every homology class of a smooth manifold by a stratified subset \cite{Goresky:Whitney, Murolo:Whitney}.

(Thom's results do not hold if we impose conditions on $f$ beyond being continuous, e.g., \cite{Grant-Szucs} shows that there are homology classes with $\bbZ_2$ coefficients that can not be realized by immersed submanifolds. See also \cite{Rudyak, Brumfiel, Rees-Thomas, Bohr-Hanke-Kotschick}.)
\end{example}

\begin{example}[Locally symmetric spaces]
Let $G$ be a connected semisimple algebraic group defined over $\bbQ,$ $G(\bbR)$ its real points, regarded as a Lie group, $K$ a maximal compact subgroup of $G,$ $\Gamma$ a torsion-free arithmetic subgroup, and
\begin{equation*}
	X = \Gamma \diagdown G(\bbR) \diagup K
\end{equation*}
the corresponding locally symmetric space. This is generally non-compact but there are many natural compactifications of $X,$ see \cite{Borel-Ji:Book}.
The reductive Borel-Serre compactification (see \cite{Zucker:ReductiveLp}) is a stratified space obtained by adding one stratum per $\Gamma$-conjugacy class of rational parabolic subgroups $P$ of $G.$
We will review this and related compactifications in \S\ref{sec:BorelSerre}.
\end{example}

\subsection{Intersection homology and Witt spaces}

One of the primary motivations for \cite{Goresky-MacPherson:IH} was a question of Sullivan: what is the largest class of spaces with singularities to which the signature of manifolds extends as a cobordism invariant?

Recall that on a smooth oriented manifold Poincar\'e duality defines a non-degenerate pairing on (co)homology whose signature is the signature of the manifold.
The singular homology of a stratified space does not satisfy Poincar\'e duality.

\begin{example}
If $M$ is a smooth manifold, its (unreduced) suspension is the quotient space
\begin{equation*}
	SM = (M \times [0,1])/ \{ (m_1, 0) \sim (m_2, 0) \Mand (m_1, 1) \sim (m_2, 1) \Mforall m_i \in M\}
\end{equation*}
which is naturally a pseudomanifold with singular stratum the two cone points.
A Mayer-Vietoris argument shows that 
\begin{equation*}
	\wt H_{k}(SM) \cong \wt H_{k-1}(M), k\geq 0.
\end{equation*}
In particular if $M$ is a two dimensional torus,
\begin{equation*}
	H_0(SM) =\bbZ, \quad H_1(SM) = \bbZ, \quad H_2(SM) = \bbZ \oplus \bbZ, \quad H_3(SM) = \bbZ
\end{equation*}
and we see that Poincar\'e duality can not hold.
\end{example}

Goresky-MacPherson came up with the brave idea of redefining homology to recover Poincar\'e duality. The key is to keep track of the failure of transversality of the chains with the singular set. We keep track of this with a perversity function $\bar p,$ a function from $\{ 2, 3, \ldots\}$ to $\bbN_0$ such that $p(c)$ and $q(c) = 2-c-p(c)$ are both non-negative and non-decreasing. The perversity $\bar q$ is known as the dual perversity to $\bar p.$ (For more general perversity functions, see \cite{Friedman:Intro}.)

Intersection homology can be defined analogously to singular homology but maintaining control of the dimensions of the intersections of chains and singularities \cite{King}.
Let $\hat X$ be a stratified space of dimension $n$ and let $Y_{k}$ be the union of the strata of dimension $k.$
Given a perversity function $\bar p$ 
let the admissible chains, $AC_i^{\bar p}(\hat X),$ be the free group with generators the singular $i$-simplices
\begin{equation*}
	f:\Delta^i \lra \hat X \Mst f^{-1}(Y_{n-j}) \subseteq \text{ $i-j+p(j)$-skeleton of }\Delta^i \Mforall j,
\end{equation*}
and let
\begin{equation*}
	SC_i^{\bar p}(\hat X) = \{ f \in AC_i^{\bar p}(\hat X): \pa f \in AC_{i-1}^{\bar p}(\hat X) \}.
\end{equation*}
The intersection homology groups of perversity $\bar p$  are the homology groups of the complex $SC_*^{\bar p}(\hat X),$
\begin{equation*}
	\mathrm{IH}_i^{\bar p}(\hat X) = H_i(SC_*^{\bar p}(\hat X)).
\end{equation*}
(For the definition of intersection homology through complexes of sheaves see our \S\ref{sec:RefIH} below.)

\begin{example}
Consider again the suspension of a smooth $m$-dimensional manifold $M,$ $SM.$ Here $Y_0$ consists of two points and the regular part is 
\begin{equation*}
	X = SM\setminus Y_0 = M \times (0,1).
\end{equation*}
A perversity consists of a single integer $p(m+1)\in [0,m-1],$ and the intersection homology is given by
\begin{equation*}
	\mathrm{IH}^{\bar p}_k(SM) = 
	\begin{cases}
	H_{k}(M) & k<m-p(m+1) \\
	0 & k=m-p(m+1) \\
	H_{k-1}(M) & k>m-p(m+1)
	\end{cases}
\end{equation*}
The intuition is that allowable chains avoid the suspension points in low dimensions, and so really map into $X,$ but not in high dimensions (cf. \cite[proof of Proposition 4.7.2]{Kirwan-Woolf}).
\end{example}

Goresky-MacPherson proved that the intersection pairing induces a non-degenerate pairing
\begin{equation*}
	\mathrm{IH}_i^{\bar p}(\hat X) \times 
	\mathrm{IH}_i^{\bar q}(\hat X) \lra \bbZ.
\end{equation*}
In general different perversities give rise to different groups. 
The closest dual perversities,
\begin{equation*}
	\bar m = (0,0,1,1,2,2,\ldots), \quad
	\bar n = (0,1, 1, 2, 2, \ldots),
\end{equation*}
are known respectively as the upper and lower middle perversities and differ only with respect to strata of odd codimension.
Thus if $\hat X$ has only even codimensional strata (e.g., a complex variety) then $\mathrm{IH}^{\bar m}_{*}(\hat X)$ is dually paired with itself.

More generally, Siegel \cite[Theorem I.3.4]{Siegel} showed that if every even dimensional link $Z$ satisfies $\mathrm{IH}^{\dim Z/2}_{\bar m}(Z)=0,$ then 
$\mathrm{IH}^{\bar m}_{*}(\hat X)$ is dually paired with itself. He called these spaces {\em Witt spaces} and proved that the signature is a Witt-bordism invariant, i.e., if two Witt spaces are cobordant via a Witt space (with boundary) then their signatures coincide.
Banagl \cite{Banagl:ExtendingIH} managed to extend Poincar\'e duality to stratified spaces that are not necessarily Witt by studying the category of sheaves that are self-dual and compatible with the sheaf-theoretic approach to intersection homology (see \S\ref{sec:RefIH}). Spaces that carry a self-dual sheaf of this type (not every space does) are known as {\em L-spaces}. Banagl mentions in \cite{Banagl:ExtendingIH} that the `idea of employing Lagrangian subspaces in order to obtain self-duality' is invoked in unpublished work of Morgan and is also present in Cheeger's ideal boundary conditions. Indeed in \cite{ABLMP} we show that a Thom-Mather stratified space is an L-space if and only if it is a Cheeger space.

\subsection{$L^2$ cohomology}

If $(M,g)$ is a Riemannian manifold, the metric defines a volume form and a bundle metric on the exterior powers of the cotangent bundle, $\Lambda^\bullet T^*M.$
The space of $L^2$ differential forms can be defined to be the closure of $\CIc(M; \Lambda^\bullet T^*M)$ with respect to the norm
\begin{equation*}
	\norm{\omega}_{L^2}^2 = \int_M |\omega(\zeta)|_g^2 \; \dvol(\zeta)
\end{equation*}
and is then a Hilbert space. The exterior derivative is an unbounded operator 
\begin{equation*}
	d: \CIc(M;\Lambda^\bullet T^*M) \lra  \CIc(M;\Lambda^\bullet T^*M)
\end{equation*}
and its formal adjoint is the  operator
\begin{equation*}
	\delta: \CIc(M;\Lambda^\bullet T^*M) \lra  \CIc(M;\Lambda^\bullet T^*M)
\end{equation*}
characterized by
\begin{equation*}
	\ang{d\omega, \eta}_{L^2} = \ang{\omega, \delta \eta}_{L^2}.
\end{equation*}

If $\omega$ and $\eta$ are $L^2$ differential forms we say that the $d\omega =\eta$ holds weakly (or distributionally) if
\begin{equation*}
	\ang{\omega, \delta \alpha}_{L^2} = \ang{\eta, \alpha}_{L^2} \Mforall \alpha \in \CIc(M; \Lambda^\bullet T^*M).
\end{equation*}
The maximal domain of $d$ is 
\begin{equation*}
	\cD_{\max}(d) = \{ \omega \in L^2(M;\Lambda^\bullet T^*M) : d\omega \in L^2(M;\Lambda^\bullet T^*M)
\end{equation*}
where $d\omega$ is computed distributionally. 
If we denote the $k$-forms in the maximal domain of $d$ by
\begin{equation*}
	\cD_{\max}(d)_{k} = \cD_{\max}(d) \cap L^2(M;\Lambda^kT^*M)
\end{equation*}
then we obtain a complex
\begin{equation*}
	0 \lra \cD_{\max}(d)_0 \xlra{d}
	\cD_{\max}(d)_1 \xlra{d} \ldots \xlra{d}
	\cD_{\max}(d)_{\dim M} \lra 0.
\end{equation*}
The cohomology of this complex is the $L^2$-cohomology of $M$ (see, e.g., \cite{Carron:L2HarmForms} for an introduction to $L^2$-cohomology).

If $(M,g)$ is not complete then typically there are many closed domains for $d$ other than $\cD_{\max}(d).$ A {\em Hilbert complex for $d$} consists of a choice of domains $\cD$ such that $(d, \cD)$ is a closed operator and 
\begin{equation*}
	0 \lra \cD(d)_0 \xlra{d}
	\cD(d)_1 \xlra{d} \ldots \xlra{d}
	\cD(d)_{\dim M} \lra 0
\end{equation*}
forms a complex. An excellent study of Hilbert complexes is carried out in \cite{Bruning-Lesch:Hilbert}.\\

One of the motivations behind \cite{ALMP13, ALMP13.2, ABLMP} was to find an analytic version of Banagl's cohomology groups by defining appropriate Hilbert complexes.
As mentioned above, Cheeger showed in \cite[\S 6]{Cheeger:Hodge} that the $L^2$-cohomology groups coincide with intersection cohomology with perversity $\bar m.$ 
He also came across the Witt condition independently in \cite{Cheeger:Conic} and described ideal boundary conditions for isolated conic singularities as a way of recovering Poincar\'e duality. We explain below (\S\ref{sec:CIBC}) how to use the analytic properties of the de Rham operator $\eth_{\dR}$ to impose ideal boundary conditions, and then how to use the Kodaira decomposition and these ideal boundary conditions to define Hilbert complexes (\S\ref{sec:DepthOneDR}).

Let us mention some related but different approaches. Signatures for general pseudomanifolds are defined in \cite{Hunsicker:Hodge, Friedman-Hunsicker, Bei:PD}. These can not be bordism invariant (since the cone over a smooth manifold would make the manifold bordant to a point)
and it would be interesting to understand their relation with Cheeger spaces. 
There are also analytic studies of other perversity functions in, e.g., \cite{Hunsicker:Hodge, Bei:GralPerv}.\\

We mention a few instances of the vast literature on the $L^2$ cohomology of complete metrics.
Already in \cite{APS0}, Atiyah, Patodi, and Singer explain that the `correct analytical context' for their signature theorem is the signature of the $L^2$ harmonic forms on a manifold with a cylindrical end.
Zucker \cite{Zucker:DegCoeff}, in order to analyze the variation of Hodge structure on a degenerating family of curves, worked out the $L^2$ cohomology on a manifold with ends modeled on the Poincar\'e metric of a punctured disk; this {\em is} the intersection cohomology with coefficients.
Still in the setting of complete metrics, Zucker \cite{Zucker:L2Coho} conjectured that the $L^2$ cohomology of a Hermitian locally symmetric space arising from the real points of a semisimple connected algebraic group defined over $\bbQ$ would equal the intersection homology of its  Baily-Borel-Satake compactification (see \cite{Borel-Ji:Book}), which is a singular space. This celebrated conjecture was verified independently by Looijenga and Saper-Stern \cite{Looijenga:Zucker, Saper-Stern:Zucker}.
Among many others, let us list some of the results which are closest in spirit to the subject and methods here
\begin{itemize}
\item the $L^2$ cohomology of spaces with asymptotically cylindrical ends was determined by Melrose \cite{Melrose:APS}, 
\item cusp ends by M\"uller \cite{Muller:SpecThy}, 
\item asymptotically geometrically finite hyperbolic quotients in \cite{Mazzeo:Hodge, Mazzeo-Phillips} by Mazzeo and Phillips, 
\item flat ends by Carron \cite{Carron:FlatEnds}, 
\item asymptotically fibered cusp ends and asymptotically fibered boundary ends by Hausel, Hunsicker, and Mazzeo \cite{Hausel-Hunsicker-Mazzeo}, 
\item foliated analogues of the latter by Gell-Redman and Rochon \cite{GellRedman-Rochon}, and 
\item iterated fibered cusp metrics on stratified spaces by Hunsicker and Rochon \cite{Hunsicker-Rochon}.
\end{itemize}

Finally we point out that closely related to the $L^2$-cohomology is the study of index theorems. Cheeger's study of $L^2$-cohomology was part of his wider study of the spectral geometry of singular spaces in \cite{Cheeger:Spec} and found application to index theory problems even on smooth spaces cf. \cite{Cheeger:APSW, Bismut-Cheeger:Ad, Bismut-Cheeger:FamI, Bismut-Cheeger:FamII, Bismut-Cheeger:FamIII}. There has been a lot of work on index theory on spaces with isolated conic singularities e.g., \cite{Chou:Dirac, Chou:Criteria, Bruning-Seeley:IndThm, Lesch:Fuchs} and recently also on index theory on spaces with a single singular stratum \cite{Atiyah-LeBrun, Bruning:SignOp, Cheeger-Dai, Albin-GellRedman}.

\section{Depth one stratifications}\label{sec:DepthOne}

A pseudomanifold $\hat X$ is said to have {\em depth one} if there is a single singular stratum. Thus there is one regular stratum, $\hat X^{\reg},$ and one singular stratum, $Y.$
We denote the link of $\hat X$ at a point in $Y$ by $Z,$ and we will denote dimensions by
\begin{equation*}
	n = \dim \hat X^{\reg}, \quad h = \dim Y, \quad v = \dim Z
\end{equation*}
related by $n = h+v+1.$
We assume that $\hat X$ is compact and, for notational convenience, also that $\hat X^{\reg},$  $Y,$ and $Z$ are connected. 
This situation presents most of the analytic complications of the general case but has the advantage that the geometry is much simpler than the general case. 
There are many treatments of analysis in this setting, for example \cite{Mazzeo-Vertman, Gil-Krainer-Mendoza:Wedge, Krainer-Mendoza:Friedrichs, 
Krainer-Mendoza:BVP1st, Krainer-Mendoza:EllSys,Krainer-Mendoza:Kernel,Schulze:PsiDO}.

Our main object of study is the operator $d+\delta$ on differential forms on $\hat X^{\reg}.$ We will see that the analysis of this operator is facilitated by first resolving the geometry.

\subsection{Resolving the geometry} \label{sec:ResolGeo}

Let us describe our geometric setup in detail. We have a smooth open manifold $\hat X^{\reg}$ and a compactification of it to a singular space 
\begin{equation*}
	\hat X = \hat X^{\reg} \cup Y.
\end{equation*}
The smooth manifold $Y$ has a tubular neighborhood $\cT\subseteq \hat X$ that fibers over $Y$ with fiber the cone over a smooth manifold $Z,$
\begin{equation*}
	C(Z) - \cT \lra Y.
\end{equation*}
Let $r$ be a radial variable for the cone and let 
\begin{equation*}
	\wt X = \hat X \setminus \{ r \leq \eps \}.
\end{equation*}
For sufficiently small $\eps>0$ this is a smooth manifold with boundary $\pa \wt X = \{ r=\eps\}$ and, up to diffeomorphism, is independent of the choice of $\eps.$
The boundary of $\wt X$ inherits a smooth fiber bundle structure
\begin{equation*}
	Z \fib \pa\wt X \xlra{\phi} Y.
\end{equation*}
The interior of $\wt X$ is naturally identified with $\hat X^{\reg}$ and so $\wt X$ is a compactification of $\hat X^{\reg}$ to a smooth manifold with boundary.
A key fact for us is that analysis on $\hat X$ refers to analysis on $\hat X^{\reg}$ and hence can equally well be carried out on $\wt X,$ with the advantage that this is a smooth manifold.

A {\em rigid wedge metric} is a Riemannian metric on $\hat X^{\reg} = \wt X^{\circ}$ that, in a collar neighborhood of $\partial\wt X,$ takes the form
\begin{equation*}
	g = dr^2 + r^2g_Z + \phi^*g_Y
\end{equation*}
with $g_Z$ and $g_Y$ independent of $r.$
(Precisely, it has this form after we choose a diffeomorphism of a collar neighborhood with $[0,\eps)_r \times \pa\wt X$ and then we extend $\phi$ to this product in the obvious way.)
This metric is ill-behaved as we approach the boundary of $\wt X,$ but there is a natural way to resolve this.
Let us denote the induced bundle metric on the cotangent bundle still by $g,$ and define
\begin{equation*}
	\cE = \{ \omega \in \CI(\wt X; T^*\wt X) : |\omega|_g \text{ is bounded } \}.
\end{equation*}
It turns out that the space $\cE$ is the space of sections of a new bundle and $g$ defines a non-degenerate bundle metric on this bundle.

Precisely, there is a bundle which we call the {\em wedge cotangent bundle} and denote ${}^w T^*\wt X,$ together with a map
\begin{equation*}
	j: {}^w T^*\wt X \lra T^*\wt X,
\end{equation*}
such that
\begin{equation*}
	j\circ \CI(\wt X; {}^w T^*\wt X) = \cE \subseteq \CI(\wt X; T^*\wt X).
\end{equation*}
We can show this either abstractly, through the Serre-Swan theorem, or concretely as in \cite[Proposition 8.1]{Melrose:APS} by showing that the vector spaces
\begin{equation*}
	{}^w T^*_\zeta \wt X = \cE/ (\cJ_\zeta \cdot \cE),
\end{equation*}
where $\cJ_{\zeta} = \{ f \in \CI(\wt X): f(\zeta) =0 \},$ inherit a vector bundle structure from $T^*\wt X$ and a map $j$ from the inclusion of $\cE$ into $\CI(\wt X; T^*\wt X).$
The map $j$ is an isomorphism over $\wt X^{\circ},$ but over $\pa \wt X$ fits into a short exact sequence
\begin{equation*}
	0 \lra T^*Z \lra {}^w T^*\wt X\rest{\pa \wt X} \xlra{j}  {}^w N^*\pa \wt X \cong N^*\pa \wt X \oplus T^*Y \lra 0
\end{equation*}
of bundles over $\pa \wt X.$ 

In local coordinates trivializing the fiber bundle $\phi$ in a neighborhood of a point $q \in \partial \wt X,$
\begin{equation}\label{eq:FibCoords}
	r, y_1, \ldots, y_h, z_1, \ldots, z_v,
\end{equation}
a section of $T^*\wt X$ is locally spanned by $dr,$ $dy_i,$ $dz_j,$ while a section of ${}^w T^*\wt X$ is locally spanned by
\begin{equation*}
	dr, dy_1, \ldots, dy_h, r\; dz_1, \ldots, r\; dz_v.
\end{equation*}
Crucially, $r\; dz_j$ does not vanish at $\pa\wt X$ as a section of ${}^w T^*\wt X$ since it is not equal to $r$ times a section of ${}^w T^*\wt X.$

The replacement of $T^*\wt X$ by ${}^w T^*\wt X$ is very convenient. On the one hand, as anticipated above, the definition of $\cE$ implies that the metric $g$ extends from $\wt X^{\circ}$ to a non-degenerate metric over all of $\wt X.$ On the other hand, let us consider the effect of this change on the behavior of $d+\delta$ near $\wt X.$
For simplicity of comparing $d+\delta$ in these two settings, let us briefly assume that $Y$ is a single point, i.e., that $\hat X$ has an isolated conic singularity.
After a choice of collar neighborhood of the boundary $c(\pa \wt X),$ we have
\begin{equation*}
\begin{gathered}
	\CI(c(\pa \wt X); \Lambda^i (T^*\wt X)) = \CI(\pa \wt X; \Lambda^i T^*Z) \oplus dr \wedge \CI(\pa \wt X; \Lambda^{i-1} T^*Z) \\
	\CI(c(\pa \wt X); \Lambda^i ({}^wT^*\wt X)) = \CI(\pa \wt X; r^i \Lambda^i T^*Z) \oplus dr \wedge \CI( \pa \wt X; r^{i-1} \Lambda^{i-1} T^*Z)
\end{gathered}
\end{equation*}
and correspondingly
\begin{equation*}
\begin{gathered}
	\Mon \CI(c(\pa \wt X); \Lambda^i (T^*\wt X)), \quad
	d+ \delta = 
	\begin{pmatrix}
	d^Z + \tfrac1{r^2}\delta^Z & -\pa_r +\tfrac{2i - v-1}r \\
	\pa_r & -d^Z - \tfrac1{r^2}\delta^Z
	\end{pmatrix},
	\\
	\Mon 
	\CI(c(\pa \wt X); \Lambda^i ({}^wT^*\wt X)), \quad
	d+ \delta = 
	\begin{pmatrix}
	\tfrac1r(d^Z + \delta^Z) & -\pa_r +\tfrac{i - v}r \\
	\pa_r + \tfrac ir & -\tfrac1r(d^Z +\delta^Z)
	\end{pmatrix}.
\end{gathered}
\end{equation*}
There are two things to note about these expressions. First the form of the metric is going to lead to operators with coefficients that blow-up at the boundary no matter how we look at it. Secondly, when we work with ${}^wT^*\wt X,$ the link $Z$ enters the description of the de Rham operator $d+\delta$ only through its de Rham operator $d^Z + \delta^Z.$ This is very convenient for inductive arguments!

(A related approach, espoused in, e.g., \cite{Bruning-Seeley:IndThm, Bruning:SignOp} is to conjugate the de Rham operator on exterior powers of $T^*\wt X$ with a suitable scaling operator; we realize this rescaling at the level of bundles.)

\subsection{Model operators of the de Rham operator}

Having resolved the geometry by replacing the singular space $\hat X$ with the smooth manifold with boundary $\wt X,$ and resolved the degeneracy of the metric 
by replacing the cotangent bundle $T^*\wt X$ with the wedge cotangent bundle ${}^w T^*\wt X,$ we now proceed to study the de Rham operator $d+ \delta.$

\subsubsection{Cohomology of fiber bundles}\label{sec:CohoFiberBdles}
Since the boundary of $\wt X$ is the total space of a fiber bundle, let us briefly review the behavior of $d$ and $\delta$ on fiber bundles (cf. \cite{Bismut-Lott}, \cite[\S10.1]{Berline-Getzler-Vergne} \cite[\S5.3]{Hausel-Hunsicker-Mazzeo}). 
Let
\begin{equation*}
	F \fib H \xlra{\varphi} B
\end{equation*}
be a fiber bundle of smooth closed manifolds. 
The functions $\varphi^*\CI(B) \subseteq \CI(H)$ are called horizontal, and a vector field on $H$ is called vertical if it annihilates horizontal functions. 
Vertical vector fields are the sections of a vector bundle $TH/B \lra H,$ which participates in a short exact sequence of bundles
\begin{equation*}
	0 \lra 
	TH/B \lra TH \lra \varphi^*TB \lra 0.
\end{equation*}
The Lie bracket of two vertical vector fields is again a vertical vector field and hence the Cartan formula defines a vertical exterior derivative on the exterior powers of the bundle $T^*H/B$ dual to $TH/B,$
\begin{equation*}
	d^Z: \CI(H;\Lambda^k T^*H/B) \lra \CI(H;\Lambda^{k+1}T^*H/B).
\end{equation*}
The cohomology of the resulting complex, $\tH_{\dR}^*(H/B),$ is called the vertical de Rham cohomology and forms a vector bundle over $B$ with typical fiber $\tH_{\dR}^*(F).$
Indeed, given a transition function $G: \cU \lra \Diff(F)$ for $\varphi,$ defined on a neighborhood $\cU \subseteq B,$ there is an induced map on cohomology 
\begin{equation*}
	G^*: \cU \lra \End(\tH_{\dR}^*(F))
\end{equation*}
giving a transition function for $\tH_{\dR}^*(H/B).$ Moreover if $\cU$ is connected then homotopy invariance of de Rham cohomology implies that this transition function is constant, which exhibits $\tH_{\dR}^*(H/B)$ as a flat vector bundle.

Now, let $g_H = g_{H/B} + \varphi^*g_B$ be a Riemannian submersion metric on $H$ with corresponding splittings
\begin{equation*}
\begin{gathered}
	TH = TH/B \oplus \varphi^*TB, \quad T^*H = T^*H/B \oplus \varphi^*T^*B, \\
	\Lambda^k T^*H = \bigoplus_{i+j=k} \Lambda^{i,j}T^*H, \Mwhere
	\Lambda^{i,j} T^*H = \Lambda^i T^*H/B \otimes \Lambda^j \varphi^*T^*B.
\end{gathered}
\end{equation*}
We refer to $i$ in $\Lambda^{i,j} T^*H$ as the {\em vertical form degree}.
The exterior derivative interacts with this splitting via
\begin{equation*}
\begin{gathered}
	d:\CI(H;\Lambda^{i,j}T^*H) \lra 
	\CI(H;\Lambda^{i+1,j}T^*H) \oplus \CI(H;\Lambda^{i, j+1}T^*H) \oplus \CI(H;\Lambda^{i-1,j+2}T^*H) \\
	d = d^Z + \wt d^B + R
\end{gathered}
\end{equation*}
where the first summand is the vertical exterior derivative, the second summand extends the exterior derivative of $B$ from sections of $\Lambda^{0,*}T^*H,$ and the third is a tensorial map built up from the curvature of $\varphi.$ 
In the special case where $H = F \times B$ and $\varphi$ is the projection onto $B,$ this decomposition is just $d = d^Z + d^B.$
There is a corresponding splitting of the adjoint of $d,$
\begin{equation*}
\begin{gathered}
	\delta:\CI(H;\Lambda^{i,j}T^*H) \lra 
	\CI(H;\Lambda^{i-1,j}T^*H) \oplus \CI(H;\Lambda^{i, j-1}T^*H) \oplus \CI(H;\Lambda^{i+1,j-2}T^*H) \\
	\delta = \delta^Z + \wt \delta^B + R^*
\end{gathered}
\end{equation*}
which reduces to $\delta^Z + \delta^B$ when $H = F \times B.$

Next let us point out that $\wt d^B$ induces a flat connection, $\nabla^{\tH},$ on $\tH^\bullet _{\dR}(H/B)\lra B$ compatible with the flat structure described above.
Indeed, we have various relationships among $d^Z,$ $\wt d^B,$ and $R$ stemming from the fact that their sum squares to zero, including
\begin{equation*}
	d^Z \wt d^B + \wt d^B d^Z = 0, \quad
	(\wt d^B)^2 + d^Z R + Rd^Z =0.
\end{equation*}
If $\omega \in \CI(H;\Lambda^k T^*H/Z)$ then
\begin{equation*}
	d^Z \wt d^B \omega = - \wt d^B d^Z\omega
\end{equation*}
shows that $\wt d^B$ preserves both the image and the null space of $d^Z,$ and hence induces a map on $\tH^\bullet _{\dR}(H/B).$
The Leibnitz rule of $d$ implies that
\begin{equation*}
	\wt d^B( (\varphi^*f) \omega) = (\varphi^* df) \wedge \omega + \varphi^*f  \wt d^B \omega
\end{equation*}
which shows that the induced map on $\tH^\bullet _{\dR}(H/B)$ is a connection.
Finally, if $d^Z\omega=0,$ then 
\begin{equation*}
	(\wt d^B)^2\omega = d^Z (-R\omega)
\end{equation*}
which shows that the induced connection on $\tH^\bullet _{\dR}(H/B)$ is flat.
Since this connection coincides with $d^B$ over trivializations of the fiber bundle $\varphi,$ it follows that this flat structure is the same as the one above.

Finally note that the Hodge isomorphism on the fibers of $\varphi,$ $\tH^\bullet _{\dR}(F) \cong \cH^\bullet (F),$ can be used to import the flat bundle structure to the vertical Hodge cohomology
\begin{equation*}
	\cH^\bullet (H/B) \lra B.
\end{equation*}
We denote the flat connection on this bundle by $\nabla^{\cH}.$

\subsubsection{Model operators} \label{sec:ModelOps}

Now let us return to $\wt X,$ with boundary fibration
\begin{equation*}
	Z \fib \pa\wt X \xlra{\phi} Y.
\end{equation*}
Let
\begin{equation*}
	c: [0,\eps)_r \times \pa\wt X \lra \wt X
\end{equation*}
be a collar neighborhood of $\pa \wt X,$ extend $\phi$ trivially to $[0,\eps)_r \times \pa\wt X,$ and assume that $g$ is a wedge metric that is rigid on $c(\wt \pa X)$ so that
$c^*g = dr^2 + r^2g_Z + \phi^*g_Y.$
We abbreviate $c([0,\eps)_r \times \pa \wt X)$ by $c(\pa\wt X).$

Near $\pa \wt X$ we have a splitting
\begin{equation*}
	{}^w T^*\wt X\rest{c(\pa\wt X)} = N^*\pa\wt X \oplus r T^*\pa\wt X/Y \oplus T^*Y
\end{equation*}
and we will make frequent use of the decomposition
\begin{multline*}
	\CI(c(\pa\wt X); \Lambda^\bullet  ({}^w T^*\wt X)) \\ = 
	\CI(c(\pa\wt X); \Lambda^\bullet (r T^*\pa\wt X/Y \oplus T^*Y) )
	\oplus dr \wedge
	\CI(c(\pa\wt X); \Lambda^\bullet (r T^*\pa\wt X/Y \oplus T^*Y) ).
\end{multline*}
With respect to this decomposition, the de Rham operator $\eth_{\dR} = d+\delta$ of $g$ takes the form
\begin{equation}\label{eq:DNearBdy}
	\eth_{\dR} = 
	\begin{pmatrix}
	\tfrac1r\eth_{\dR}^Z + \wt \eth_{\dR}^Y  + r \bR& -\pa_r +\tfrac1r(\bN -v) \\
	\pa_r + \tfrac1r \bN & - \tfrac1r\eth_{\dR}^Z - \wt \eth_{\dR}^Y - r\bR
	\end{pmatrix}
\end{equation}
where, in the notation of \S\ref{sec:CohoFiberBdles}, $\wt \eth_{\dR}^Y = \wt d^Y + \wt \delta^Y,$ $\bR = R + R^*,$ and $\bN$ is the `vertical number operator', i.e., the integer valued map with the property that
\begin{equation*}
	\bN\rest{\Lambda^i N^*\pa \wt X \wedge \Lambda^j (r T^*\pa\wt X/Y) \wedge \Lambda^k(T^*Y) } = j.
\end{equation*}
$ $

The local coordinate expression exhibits $\eth_{\dR}$ as an example of a {\em wedge differential operator} (see, e.g., \cite{ALMP11, Krainer-Mendoza:1stSurvey}).
A vector field on $\wt X$ is said to be an {\em edge vector field} if its restriction to $\pa\wt X$ is tangent to the fibers of $\phi.$
The space of edge vector fields is denoted $\cV_e$ and its enveloping algebra is the space of {\em edge differential operators} \cite{Mazzeo:Edge}.
Put another way, a differential operator of order $m$ is an edge differential operator if in every choice of local coordinates like \eqref{eq:FibCoords} it has the form
\begin{equation*}
	\sum_{j+|\alpha|+|\beta|\leq m} a_{j,\alpha,\beta}(r, y, z) (r\pa_r)^j(r\pa_y)^{\alpha}\pa_z^{\beta}
\end{equation*}
where $\alpha$ and $\beta$ are multi-indices and the coefficients $a_{j,\alpha,\beta}$ are smooth on all of $\wt X.$
A differential operator $L$ of order $m$ is a {\em wedge differential operator}  if $r^m L$ is an edge differential operator.
Thus, since the above expression shows that $r\eth_{\dR}$ can be written in terms of the vector fields $r\pa_r,$ $r\pa_y,$ $\pa_z,$ we can conclude that $\eth_{\dR}$ is first order wedge differential operator.
As such, its analysis proceeds by understanding three model operators known as the (wedge) principal symbol, the normal operator, and the indicial operator.

Just as the principal symbol of a  differential operator on a closed manifold is a section defined on the cotangent bundle, the principal symbol of $\eth_{\dR}$ as a wedge differential operator will be a section of $\End(\Lambda^\bullet  ({}^w T^*\wt X))$ over ${}^w T^*\wt X.$
We can find this symbol in one of the standard ways, given $\xi \in {}^w T^*_p \wt X$ choose a function $f \in \CI(\wt X)$ such that $df(p)=\xi$ and define
\begin{equation*}
	\sigma(\eth_{\dR})(\xi) = [\eth_{\dR}, f](p) \in \End(\Lambda^\bullet  ({}^w T^*\wt X)).
\end{equation*}
It is easy to see that
\begin{equation*}
	(\sigma(\eth_{\dR})(\xi))^2 = |\xi|^2_g \Id \in \End(\Lambda^\bullet  ({}^w T^*\wt X))
\end{equation*}
so we see that $\eth_{\dR}$ is {\em uniformly elliptic} on $\wt X$ as a wedge differential operator.

The normal operator of $\eth_{\dR}$ is a family of operators parametrized by $q \in Y.$
At each point $q\in Y$ we look at then model wedge $\bbR^+_r \times Z_q \times T_qY$ with the metric $dr^2 + r^2 g_{Z_q} + g_{T_qY}$ obtained from $g$ by freezing coefficients at $q.$ The de Rham operator of this metric is given by
\begin{equation}\label{eq:NormalOp}
	N_q(\eth_{\dR}) = 
	\begin{pmatrix}
	\tfrac1r\eth_{\dR}^{Z_q} + \eth_{\dR}^{\bbR^h} & -\pa_r +\tfrac1r(\bN -v) \\
	\pa_r + \tfrac1r \bN & - \tfrac1r\eth_{\dR}^{Z_q} -  \eth_{\dR}^{\bbR^h}
	\end{pmatrix},
\end{equation}
known as the {\em normal operator of $\eth_{\dR}$ at $q\in Y$},
and it is these operators that model the behavior of $\eth_{\dR}$ near $\pa \wt X.$
A key fact, already mentioned above in the case of conic singularities, is that {\em the link $Z_q$ only enters through its de Rham operator, $\eth_{\dR}^{Z_q}.$}

The final model operator, the indicial family, determines the leading order behavior of elements in the null space of $\eth_{\dR}.$
Assume that we have an element of the null space that in local coordinates has the form $r^{\zeta}a(y,z) + \cO(r^{\zeta+1})$ for some $\zeta \in \bbC.$
Applying $\eth_{\dR}$ yields
\begin{equation*}
	\eth_{\dR}(r^{\zeta}a(y,z) + \cO(r^{\zeta+1}))
	= r^{\zeta-1}
	\begin{pmatrix}
	\eth_{\dR}^Z & -\zeta + \bN -v \\
	\zeta + \bN & - \eth_{\dR}^Z 
	\end{pmatrix}
	a(y,z) + \cO(r^{\zeta})
\end{equation*}
and in order for this to vanish we need to have $a(q, \cdot)$ in the null space of the {\em indicial family of $\eth_{\dR}$ at $q \in Y,$ $\zeta \in \bbC$},
\begin{equation}\label{eq:IndicialOp}
	I_q(\eth_{\dR};\zeta) = 
	\begin{pmatrix}
	\eth_{\dR}^Z & -\zeta + \bN -v \\
	\zeta + \bN & - \eth_{\dR}^Z 
	\end{pmatrix}.
\end{equation}
For each $q$ the set of $\zeta$ for which $I_q(\eth_{\dR};\zeta)$ has a non-trivial null space are called the {\em indicial roots} or {\em boundary spectrum} of $\eth_{\dR}$ and denoted
\begin{equation*}
	\spec_b(\eth_{\dR};q).
\end{equation*}
Ellipticity implies that, for a fixed $q,$ this is a discrete subset of $\bbC.$

\subsection{The maximal and minimal domain} \label{sec:MaxMinDom}

The model operators described above can be used to understand the behavior of $\eth_{\dR}$ on various different function spaces. For our purposes the most important will be (a) the $L^2$ space of differential forms determined by the Riemannian metric and (b) the edge Sobolev spaces. The latter are defined by
\begin{multline*}
	H_e^m(\wt X; \Lambda^\bullet  ({}^w T^*\wt X)) = \{ \omega \in L^2(\wt X; \Lambda^\bullet  ({}^w T^*\wt X)): \\
	P\omega \in L^2(\wt X; \Lambda^\bullet  ({}^w T^*\wt X)) 
	\text{ for every edge differential operator $P$ of order } m \}.
\end{multline*}
In fact an elliptic edge differential operator has a unique (closed) domain as an operator on $L^2,$ namely the appropriate edge Sobolev space.
As we will now explore, this is usually not true for wedge differential operators such as $\eth_{\dR}.$

The de Rham operator unambiguously defines an operator
\begin{equation*}
	\eth_{\dR}: \CIc(\wt X; \Lambda^\bullet  ({}^w T^*\wt X)) \lra \CIc(\wt X; \Lambda^\bullet  ({}^w T^*\wt X)),
\end{equation*}
but if we wish to study $\eth_{\dR}$ as an operator on $L^2$-differential forms we need to be careful about its domain.
Recall that an unbounded operator is said to be {\em closed} if its graph is a closed set, and that a well-behaved operator (e.g., Fredholm or even non-empty resolvent set) is necessarily closed.
There are two canonical closed extensions of $\eth_{\dR}$ from smooth compactly supported forms.
First there is the {\em minimal domain},
\begin{multline*}
	\cD_{\min}(\eth_{\dR}) = \{ \omega \in L^2(\wt X; \Lambda^\bullet  ({}^w T^*\wt X)) : \\
	\exists (\omega_n) \subseteq \CIc(\wt X; \Lambda^\bullet  ({}^w T^*\wt X)) \Mst
	\omega_n \to \omega \Min L^2 \Mand \eth_{\dR}\omega_n \text{ is $L^2$-Cauchy} \},
\end{multline*}
on which we define $\eth_{\dR}\omega = \lim \eth_{\dR}\omega_n.$ This domain is characterized by the fact that the graph of $(\eth_{\dR}, \cD_{\min}(\eth_{\dR}))$ is the closure of the graph of $(\eth_{\dR}, \CIc(\wt X; \Lambda^\bullet  ({}^w T^*\wt X))).$
Secondly, there is the {\em maximal domain},
\begin{equation*}
	\cD_{\max}(\eth_{\dR}) = \{ \omega \in L^2(\wt X; \Lambda^\bullet  ({}^w T^*\wt X)) : \\
	\eth_{\dR}\omega \in L^2(\wt X; \Lambda^\bullet  ({}^w T^*\wt X)) \}
\end{equation*}
where $\eth_{\dR}\omega$ is computed distributionally. 
Clearly, if $\cD \subseteq L^2(\wt X; \Lambda^\bullet  ({}^w T^*\wt X))$ is a closed domain for $\eth_{\dR}$ containing $\CIc(\wt X; \Lambda^\bullet  ({}^w T^*\wt X))$ then it must satisfy
\begin{equation*}
	\cD_{\min}(\eth_{\dR}) \subseteq \cD \subseteq \cD_{\max}(\eth_{\dR})
\end{equation*}
justifying the labels `minimal' and `maximal'.

These domain are related to the edge Sobolev space of order one by the fact that $r\eth_{\dR}$ is an elliptic edge operator.
In fact this yields that
\begin{equation*}
	rH^1_e(\wt X; \Lambda^\bullet  ({}^w T^*\wt X)) \subseteq \cD_{\min}(\eth_{\dR})
	\subseteq \cD_{\max}(\eth_{\dR}) \subseteq H^1_e(\wt X; \Lambda^\bullet  ({}^w T^*\wt X)).
\end{equation*}

It turns out that the key to distinguishing between closed extensions of $\eth_{\dR}$ lies in the indicial operator.
To see intuitively why this happens, suppose we have a differential form supported near the boundary of the form $r^{\zeta}a(y,z).$
The volume form of a wedge metric in local coordinates is essentially of the form $r^v \; drdydz$ so that
\begin{equation*}
	r^{\zeta}a(y,z) \in L^2 \iff \Re\zeta +\tfrac12(v+1)>0.
\end{equation*}
Now if we also want to have 
\begin{equation*}
	\eth_{\dR}(r^{\zeta}a(y,z)) = r^{\zeta-1} I_y(\eth_{\dR};\zeta) a(y,z) + \cO(r^{\zeta})
\end{equation*}
in $L^2$ we will need to have
\begin{equation*}
	{ \Re \zeta + \tfrac12(v+1) > 1 } \quad
	\Mor \quad
	{
	\Re \zeta + \tfrac12(v+1) > 0 \Mand
	I_y(\eth_{\dR};\zeta)a(y,z) = 0 }.
\end{equation*}
This suggests that a key r\^ole will be played by
\begin{equation*}
	\spec_b(\eth_{\dR};q) \cap \{ \zeta: 0 < \Re \zeta + \tfrac12(v+1) \leq 1 \},
\end{equation*}
and we now make an extra assumption on the metric in order to make this set as simple as possible.
Directly from \eqref{eq:IndicialOp} it is easy to see that the set $\spec_b(\eth_{\dR};q)$ can be determined from the spectrum of $\eth_{\dR}^{Z_q},$ the de Rham operator on $Z_q$ with respect to the metric $g_{Z_q}.$ By rescaling this metric, we can assume that there is an interval around zero with no non-zero eigenvalues of $\eth_{\dR}^{Z_q}$ for any $q \in Y.$
We will say that a wedge metric is {\em suitably scaled} if
\begin{equation*}
	\spec_b(\eth_{\dR};q) \cap \{ \zeta: 0 \leq \Re \zeta + \tfrac12(v+1) \leq 1 \} \subseteq 
	\begin{cases}
	\{ -\tfrac12(v) \} & \Mif v \text{ is even} \\
	\{ -\tfrac12(v\pm 1) \} & \Mif v \text{ is odd}
	\end{cases}
\end{equation*}
These are the indicial roots that come from the null space of $\eth_{\dR}^Z$ and can not be removed by scaling the metric.
Note that the null space of the indicial operator at these roots is cohomological, e.g., 
\begin{equation*}
	\ker( I_q(\eth_{\dR};-\tfrac v2) ) = \cH^{v/2}(Z_q) \oplus \cH^{v/2}(Z_q) = \{ \text{harmonic forms on $Z_q$ of degree $v/2$} \}^2,
\end{equation*}
and so these null spaces fit together to form a bundle over $Y,$ isomorphic to two copies of the flat bundle described in \S\ref{sec:CohoFiberBdles}.

The intuitive analysis about the leading terms of elements in the maximal domain from the last paragraph can be made precise, but it requires distributional asymptotic expansions \cite[Lemma 2.1]{ALMP13}, cf. \cite[Theorem 7.3]{Mazzeo:Edge}, \cite[\S5]{Krainer-Mendoza:BVP1st}, \cite{Melrose-Mendoza}:
\begin{lemma}\label{lem:AsympExp}
If $g$ is a suitably scaled wedge metric on $\wt X,$ then any $\omega \in \cD_{\max}(\eth_{\dR})$ has a partial distributional asymptotic expansion
\begin{multline*}
	\omega \sim r^{-v/2}(\alpha(\omega) + dr \wedge \beta(\omega)) + \wt \omega \\
	\Mwhere
	\alpha, \beta \in H^{-1/2}(Y; \Lambda^\bullet T^*Y \otimes \cH^{v/2}(\pa\wt X/Y)), 
	\quad \wt \omega \in \bigcap_{\eps>0} r^{1-\eps}H^{-1}_e(\wt X; \Lambda^\bullet  ({}^w T^*\wt X))
\end{multline*}
in the sense that the $L^2$-pairing of $\omega$ with a sufficiently regular form is equal to the distributional pairing with this expansion.
If $v$ is odd or $\tH_{\dR}^{v/2}(Z) =0$ (i.e., if $\hat X$ is Witt) then this expansion is just $\omega \sim \wt\omega$ and our convention is that $\alpha(\omega) = \beta(\omega)=0.$
\end{lemma}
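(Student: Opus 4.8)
The plan is to deduce the expansion from the edge elliptic regularity theory for $\eth_{\dR}$, exploiting that $r\eth_{\dR}$ is an elliptic edge operator whose normal operator is $N_q(\eth_{\dR})$ of \eqref{eq:NormalOp} and whose indicial family is $I_q(\eth_{\dR};\zeta)$ of \eqref{eq:IndicialOp}. First I would record the a priori information already available: an element $\omega\in\cD_{\max}(\eth_{\dR})$ lies in $H^1_e(\wt X;\Lambda^\bullet({}^wT^*\wt X))$, as observed above, and $r\eth_{\dR}\omega=r\,(\eth_{\dR}\omega)$ lies in $rL^2$, hence in $\bigcap_{\eps>0}r^{1-\eps}L^2$. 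Thus $\omega$ solves an elliptic edge equation $(r\eth_{\dR})\omega=f$ with $\omega$ one edge derivative better than $L^2$ and $f$ essentially one weight order better than $L^2$, and the task is to improve the weight of $\omega$ across the critical strip $\{0\le\Re\zeta+\tfrac12(v+1)\le1\}$; the edge parametrix construction for $r\eth_{\dR}$ produces one term of asymptotic expansion for each point of $\spec_b(\eth_{\dR};q)$ lying in that strip, with the term's structure controlled by the pole of the inverse indicial family there.

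Next I would reduce to the behaviour on the link cohomology. Near $\partial\wt X$ split $L^2$ forms on $\partial\wt X$ fiberwise into the kernel of $\eth_{\dR}^Z$ and its orthogonal complement; by Hodge theory the kernel is the flat bundle $\cH^\bullet(\partial\wt X/Y)$ of \S\ref{sec:CohoFiberBdles}, and, $g$ being suitably scaled, the complement has fiberwise spectrum of $\eth_{\dR}^Z$ bounded away from $0$, so the indicial family \eqref{eq:IndicialOp} is invertible there throughout the closed strip; the edge regularity theorem then improves the weight of that component of $\omega$ with no obstruction, landing it in $\bigcap_{\eps>0}r^{1-\eps}H^{-1}_e$, where it is absorbed into $\wt\omega$. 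On the $\cH^j(Z_q)$-part the indicial family becomes $\begin{pmatrix}0 & j-v-\zeta\\ j+\zeta & 0\end{pmatrix}$, singular exactly at $\zeta\in\{-j,\ j-v\}$; intersecting with the critical strip forces $j$ to lie within $1/2$ of $v/2$. When $v$ is even the only root is $\zeta=-v/2$ on $\cH^{v/2}(Z_q)\oplus\cH^{v/2}(Z_q)$, where the indicial operator is $(\zeta+v/2)$ times an invertible constant, so $I_q^{-1}$ has a simple pole with no nilpotent part and the associated expansion term carries no logarithm. When $v$ is odd the only candidates are $\zeta=-\tfrac12(v\pm1)$, sitting on the two edges of the strip, and the corresponding homogeneous solutions $r^{-(v\pm1)/2}(\cdot)$ either fail to be in $L^2$ (the weight $-\tfrac12(v+1)$) or lie in $\bigcap_{\eps>0}r^{1-\eps}L^2\subseteq\bigcap_{\eps>0}r^{1-\eps}H^{-1}_e$ (the weight $-\tfrac12(v-1)$) and are again absorbed into $\wt\omega$.

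Feeding this back into the partial, distributional form of the edge regularity theorem (\cite[Lemma 2.1]{ALMP13}, \cite[Theorem 7.3]{Mazzeo:Edge}, \cite[\S5]{Krainer-Mendoza:BVP1st}, \cite{Melrose-Mendoza}) — partial and distributional because $\omega$ has only finite regularity — gives $\omega\sim r^{-v/2}(\alpha(\omega)+dr\wedge\beta(\omega))+\wt\omega$, with $\alpha(\omega),\beta(\omega)$ sections over $Y$ of $\Lambda^\bullet T^*Y\otimes\cH^{v/2}(\partial\wt X/Y)$ (the null bundle of the indicial operator at $-v/2$), with remainder in $\bigcap_{\eps>0}r^{1-\eps}H^{-1}_e$, and with the identity read as equality of $L^2$- and distributional pairings against sufficiently regular test forms; when $v$ is odd or $\tH_{\dR}^{v/2}(Z)=0$ there is no root and the expansion collapses to $\omega\sim\wt\omega$. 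The transverse regularity $\alpha(\omega),\beta(\omega)\in H^{-1/2}(Y;\Lambda^\bullet T^*Y\otimes\cH^{v/2}(\partial\wt X/Y))$ follows because these leading coefficients are the distributional trace of $\omega$ at the critical weight, and this trace is bounded from $\cD_{\max}(\eth_{\dR})$, equipped with its graph norm, into $H^{-1/2}$ — the edge analogue of the classical fact that $\omega\mapsto\omega\rest{\partial M}$ loses half a derivative on the maximal domain of $d$ on a smooth manifold with boundary.

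The main obstacle is the edge regularity input: one needs the full (``hybrid'' b-edge) parametrix for the elliptic edge operator $r\eth_{\dR}$ together with its precise mapping properties on weighted edge Sobolev spaces, and the verification that in the critical strip the inverse indicial family has a single pole, at $\zeta=-v/2$, with vanishing residue-nilpotent part, so that the expansion has exactly one term and a remainder one weight order better. The bookkeeping that decides precisely which borderline homogeneous solutions are swept into $\wt\omega$ is exactly where the ``suitably scaled'' hypothesis does its work.
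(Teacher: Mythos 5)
Your proposal is correct and follows essentially the same route as the paper (which defers the details to \cite[Lemma 2.1]{ALMP13} and the edge regularity theory of \cite{Mazzeo:Edge}): view $r\eth_{\dR}$ as an elliptic edge operator, split near the boundary into the fiberwise harmonic part and its complement, use the suitable-scaling hypothesis to push the complement and the borderline odd-$v$ roots into the remainder, and read off the single expansion term at $\zeta=-v/2$ with $H^{-1/2}$ coefficients via the boundedness of the Cauchy-data (trace) map on the maximal domain. The computation of the indicial roots on $\cH^j(Z_q)$, the $L^2$/weight bookkeeping, and the absence of a nilpotent part at $\zeta=-v/2$ all check out.
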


As anticipated, we can use this lemma to distinguish between the domains of closed extensions of $\eth_{\dR}$ \cite[Proposition 2.2]{ALMP13}.
\begin{proposition}
If $g$ is a suitably scaled wedge metric on $\wt X$ and $\omega \in \cD_{\max}(\eth_{\dR})$ then
\begin{equation*}
	\omega \in \cD_{\min}(\eth_{\dR}) \iff \alpha(\omega) = \beta(\omega) =0.
\end{equation*}
\end{proposition}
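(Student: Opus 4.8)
The plan is to derive \emph{both} implications from a single Green's formula for $\eth_{\dR}$. First I would record the purely functional-analytic fact that, by construction, $(\eth_{\dR}, \cD_{\max}(\eth_{\dR}))$ is the adjoint of $(\eth_{\dR}, \CIc(\wt X; \Lambda^\bullet({}^wT^*\wt X)))$, and since $\eth_{\dR}$ is formally self-adjoint its adjoint is in turn the closure of the latter, namely $(\eth_{\dR}, \cD_{\min}(\eth_{\dR}))$. Hence $\omega \in \cD_{\min}(\eth_{\dR})$ if and only if $\ang{\eth_{\dR}\omega, \eta}_{L^2} = \ang{\omega, \eth_{\dR}\eta}_{L^2}$ for every $\eta \in \cD_{\max}(\eth_{\dR})$, and the whole statement reduces to computing the boundary defect $\ang{\eth_{\dR}\omega, \eta}_{L^2} - \ang{\omega, \eth_{\dR}\eta}_{L^2}$ in terms of the asymptotic data of Lemma~\ref{lem:AsympExp}.

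To obtain that, I would truncate at $r=\delta$, integrate by parts on the smooth compact manifold with boundary $\wt X_\delta = \wt X \setminus c([0,\delta)_r \times \pa\wt X)$ — since $\eth_{\dR}$ is of Dirac type this produces a boundary term $-\int_{\{r=\delta\}} \ang{\sigma(\eth_{\dR})(dr)\,\omega, \eta}\;\dvol_{\{r=\delta\}}$, where $\sigma(\eth_{\dR})(dr)$ is, in the collar decomposition preceding \eqref{eq:DNearBdy}, the off-diagonal involution swapping the two summands — and then substitute the expansions $\omega \sim r^{-v/2}(\alpha(\omega) + dr\wedge\beta(\omega)) + \wt\omega$ and likewise for $\eta$. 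Using that the wedge volume on $\{r=\delta\}$ is $\delta^v\dvol_{\pa\wt X}(1+O(\delta))$, only the product of the two $r^{-v/2}$ leading terms survives as $\delta\to0$: the remainder-against-leading and remainder-against-remainder contributions are $o(1)$ because $\wt\omega,\wt\eta \in \bigcap_{\eps>0} r^{1-\eps}H^{-1}_e$ are negligible at the boundary in the $L^2$ sense. Integrating over the fibers $Z$ to form the inner product on $\cH^{v/2}(\pa\wt X/Y)$, I expect to land, up to a fixed normalization, on
\begin{equation*}
	\ang{\eth_{\dR}\omega, \eta}_{L^2} - \ang{\omega, \eth_{\dR}\eta}_{L^2} = \int_Y \big( \ang{\alpha(\omega), \beta(\eta)}_{\cH^{v/2}} - \ang{\beta(\omega), \alpha(\eta)}_{\cH^{v/2}} \big),
\end{equation*}
the right-hand pairing being a priori defined whenever one of $\omega,\eta$ has smooth leading coefficients, which is all that is needed below.

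Granting this formula, both directions are immediate. If $\omega \in \cD_{\min}(\eth_{\dR})$ the left side vanishes for all $\eta \in \cD_{\max}(\eth_{\dR})$; testing against the elements $\chi(r)\,r^{-v/2}(\alpha + dr\wedge\beta)$, which lie in $\cD_{\max}(\eth_{\dR})$ for arbitrary smooth $\cH^{v/2}(\pa\wt X/Y)$-valued $\alpha,\beta$ by the indicial computation \eqref{eq:IndicialOp}, the non-degeneracy of $(\alpha,\beta)\mapsto \ang{\alpha(\omega),\beta}_{\cH^{v/2}} - \ang{\beta(\omega),\alpha}_{\cH^{v/2}}$ forces $\alpha(\omega)=\beta(\omega)=0$. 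Conversely, if $\alpha(\omega)=\beta(\omega)=0$ the right side is identically zero, so $\ang{\eth_{\dR}\omega,\eta}_{L^2} = \ang{\omega,\eth_{\dR}\eta}_{L^2}$ for all $\eta \in \cD_{\max}(\eth_{\dR})$, which is exactly membership in the adjoint domain $\cD_{\min}(\eth_{\dR})$. As a cross-check on the reverse direction one may instead argue directly: $\alpha(\omega)=\beta(\omega)=0$ makes $\omega = \wt\omega \in \bigcap_\eps r^{1-\eps}H^{-1}_e$, and since $\eth_{\dR}\omega \in L^2$ while $r\eth_{\dR}$ is elliptic edge with no indicial root in the critical strip other than the cohomological ones just excluded (this is the \emph{suitably scaled} hypothesis), edge elliptic regularity improves this to $\omega \in rH^1_e(\wt X;\Lambda^\bullet({}^wT^*\wt X)) \subseteq \cD_{\min}(\eth_{\dR})$.

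I expect the main obstacle to be the Green's formula itself — precisely, proving that the boundary integral over $\{r=\delta\}$ genuinely converges to the displayed bilinear expression. This requires honest control of the remainders $\wt\omega,\wt\eta$ near $\pa\wt X$ and careful handling of the $H^{-1/2}$/$H^{1/2}$ regularity mismatch in the limiting pairing; equivalently, in the alternative route, it is the weight-improving edge elliptic regularity for $r\eth_{\dR}$, which is where the microlocal analysis underlying Lemma~\ref{lem:AsympExp} does the real work.
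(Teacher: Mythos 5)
Your argument is correct and is essentially the proof of the cited reference \cite{ALMP13} (the survey itself only states the result): the identification $\cD_{\min}(\eth_{\dR})=\cD_{\max}(\eth_{\dR})^*$ together with the boundary pairing formula extracted from the expansion of Lemma~\ref{lem:AsympExp}, the forward implication being also available more cheaply from graph-norm continuity of the Cauchy data map $\omega\mapsto(\alpha(\omega),\beta(\omega))$, which is the route the survey itself indicates when it says that closedness of $\cD_W(\eth_{\dR})$ follows from that continuity. The one step to phrase with care is that the remainder $\wt\omega\in\bigcap_{\eps>0}r^{1-\eps}H^{-1}_e$ cannot literally be restricted to the slice $\{r=\delta\}$; one should either establish the Green's formula on a graph-norm dense subspace of forms with classical expansions and pass to the limit, or invoke the distributional form of the pairing exactly as stated in Lemma~\ref{lem:AsympExp} --- which is precisely the ``main obstacle'' you correctly identify.
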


In particular, if $\hat X$ is a Witt space then the minimal and maximal domains of $\eth_{\dR}$ coincide so we say that $\eth_{\dR}$ is {\em essentially closed} (in fact {\em essentially self-adjoint} since this closed extension will necessarily be self-adjoint).
On a non-Witt space, this proposition shows that domains between the minimal and the maximal are determined by the coefficients $\alpha$ and $\beta.$ 
We can think of these as being Cauchy data on which we can impose ideal boundary conditions.

\subsection{Cheeger ideal boundary conditions} \label{sec:CIBC}

Given an element $\omega \in \cD_{\max}(\eth_{\dR})$ of the maximal domain,  its `Cauchy data' consists of two distributional sections $\alpha(\omega),$ $\beta(\omega)$ of the vector bundle
\begin{equation*}
	\cH^{v/2}(\pa \wt X/Y) \lra Y.
\end{equation*}
This vector bundle has a canonical flat structure reviewed in \S\ref{sec:CohoFiberBdles}. We say that its subbundle $W,$
\begin{equation*}
	\xymatrix{ W \ar[rr] \ar[rd] & & \cH^{v/2}(\pa \wt X/Y) \ar[ld] \\ & Y & }
\end{equation*}
is a {\em mezzoperversity} if it is parallel with respect to the flat connection of $\cH^{v/2}(\pa\wt X/Y).$ 
Note that the bundle $\cH^{v/2}(\pa \wt X/Y)$ inherits a bundle metric (from the $L^2$ metric on harmonic forms); we denote the orthogonal complement of $W$ by $W^{\perp}.$
To each mezzoperversity we associate {\em Cheeger ideal boundary conditions} to define the domain
\begin{multline*}
	\cD_{W}(\eth_{\dR}) = \{ \omega \in \cD_{\max}(\eth_{\dR}) : \\
	\alpha(\omega) \in H^{-1/2}(Y; \Lambda^\bullet T^*Y \otimes W), \; \beta(\omega) \in H^{-1/2}(Y;\Lambda^\bullet T^*Y \otimes W^{\perp}) \}.
\end{multline*}
Note that mezzoperversities always exist: two canonical choices are $W = \{0 \}$ and $W = \cH^{v/2}(\pa\wt X/Y).$

To avoid having to continually distinguish between the Witt and non-Witt cases, let us adopt the convention of writing the unique closed domain of $d+\delta$ on a Witt space by
\begin{equation*}
	\cD_{W}(\eth_{\dR})
\end{equation*}
with $W = \emptyset,$ and refer to this as the empty mezzoperversity.

\begin{theorem}\label{thm:CmptResolv}
Let $\hat X$ be a stratified space of depth one endowed with a suitably scaled wedge metric $g$
and let $\cD_W(\eth_{\dR}) \subseteq \cD_{\max}(\eth_{\dR})$ be the domain corresponding to a (possibly empty) mezzoperversity.
The operator $(\eth_{\dR}, \cD_W(\eth_{\dR}))$ is closed, self-adjoint, and Fredholm with compact resolvent, and
\begin{equation*}
	\cD_W(\eth_{\dR}) \subseteq \bigcap_{\eps>0}r^{1/2-\eps}H^1_e(\wt X; \Lambda^\bullet ({}^w T^*\wt X)).
\end{equation*}
\end{theorem}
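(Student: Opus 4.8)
The plan is to proceed in three stages: first establish that $\cD_W(\eth_{\dR})$ is a closed, self-adjoint domain; second establish the finer mapping property $\cD_W(\eth_{\dR}) \subseteq \bigcap_{\eps>0} r^{1/2-\eps}H^1_e$; and third deduce the Fredholm and compact-resolvent statements from that mapping property via Rellich-type compactness. For the first stage, I would show that $\cD_W(\eth_{\dR})$ is closed by noting that the maps $\omega \mapsto \alpha(\omega)$ and $\omega \mapsto \beta(\omega)$ of Lemma~\ref{lem:AsympExp} are continuous from $\cD_{\max}(\eth_{\dR})$ (with the graph norm) into $H^{-1/2}(Y; \Lambda^\bullet T^*Y \otimes \cH^{v/2}(\pa\wt X/Y))$, so that $\cD_W(\eth_{\dR})$, being the preimage of the closed subspaces $H^{-1/2}(Y;\Lambda^\bullet T^*Y\otimes W)$ and $H^{-1/2}(Y;\Lambda^\bullet T^*Y\otimes W^\perp)$ under these continuous maps, is itself closed in the graph norm. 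For self-adjointness, the key point is a Green-type formula: for $\omega, \eta \in \cD_{\max}(\eth_{\dR})$ one has $\ang{\eth_{\dR}\omega,\eta}_{L^2} - \ang{\omega,\eth_{\dR}\eta}_{L^2} = B(\omega,\eta)$, where $B$ is a boundary pairing that, using the asymptotic expansion, reduces to a pairing of the Cauchy data built out of the off-diagonal Clifford action $dr\wedge$. Because the expansion coefficient $\alpha$ lives in vertical form degrees and $\beta = dr\wedge(\cdots)$ picks up the normal direction, $B(\omega,\eta)$ is essentially $\int_Y \ang{\alpha(\omega),\beta(\eta)} - \ang{\beta(\omega),\alpha(\eta)}$ (suitably interpreted as an $H^{-1/2}$--$H^{1/2}$ duality, which is legitimate once one knows, again from the edge theory, that the Cauchy data of elements of $\cD_{\max}(\eth_{\dR})$ after imposing one half of the boundary condition actually gain half a derivative). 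The condition defining $\cD_W(\eth_{\dR})$ — $\alpha(\omega)$ valued in $W$ and $\beta(\omega)$ valued in $W^\perp$ — is then exactly a Lagrangian (maximal isotropic) condition for this symplectic-type boundary form, so $\cD_W(\eth_{\dR})^* = \cD_W(\eth_{\dR})$; here parallelism of $W$ with respect to $\nabla^{\cH}$ is what guarantees that $\wt\eth_{\dR}^Y$ preserves the boundary condition, so that $\eth_{\dR}$ genuinely maps $\cD_W(\eth_{\dR})$ into $L^2$ and the adjoint computation closes up.

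For the second stage — the regularity improvement $\cD_W(\eth_{\dR}) \subseteq \bigcap_{\eps>0} r^{1/2-\eps}H^1_e$ — I would run an edge-elliptic parametrix/bootstrap argument. We already know $\cD_{\max}(\eth_{\dR}) \subseteq H^1_e(\wt X;\Lambda^\bullet({}^wT^*\wt X))$, and that $r\eth_{\dR}$ is an elliptic edge operator. The content of the boundary condition is that we have killed precisely the indicial roots at $\Re\zeta = -v/2$ (the $v$ even case) or half of those at $\Re\zeta = -(v\pm1)/2$ (the $v$ odd case), i.e. exactly the `borderline' terms with $\Re\zeta + \tfrac12(v+1)$ at or just above $0$. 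Removing them pushes the leading asymptotics of $\omega \in \cD_W(\eth_{\dR})$ to the next available weight, which by the `suitably scaled' hypothesis sits strictly above $r^{1/2}$ in the relevant sense. Concretely, one writes $\omega = $ (leading boundary terms, now required to lie in $W$ resp.\ $W^\perp$) $+ \wt\omega$ with $\wt\omega$ in a weighted edge Sobolev space $r^{1-\eps}H^{-1}_e$, then feeds $\eth_{\dR}\omega \in L^2 \subseteq r^{-\delta}H^0_e$ back through the edge parametrix for $r\eth_{\dR}$, which is bounded on the appropriate weighted spaces provided no indicial root lies on the corresponding weight line — and the `suitably scaled' condition together with the Lagrangian boundary condition ensures this for every weight strictly below $1/2$. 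One subtlety is that the boundary term associated to $W$ (the $\alpha$ part) carries weight $r^{-v/2}$, which is worse than $L^2$ pointwise but is in $L^2$ against the volume form $r^v\,dr\,dy\,dz$; the bootstrap must be run on the `remainder after both halves of the Cauchy data are accounted for', and one checks that in the domain $\cD_W$ the surviving part improves. This localizes to the collar $c(\pa\wt X)$; away from the boundary $\eth_{\dR}$ is an ordinary elliptic operator and interior elliptic regularity gives smoothness, so the only issue is the collar, handled by the edge calculus.

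For the third stage, Fredholmness and compactness of the resolvent follow formally: the inclusion $\bigcap_{\eps>0} r^{1/2-\eps}H^1_e(\wt X;\Lambda^\bullet({}^wT^*\wt X)) \hookrightarrow L^2(\wt X;\Lambda^\bullet({}^wT^*\wt X))$ is compact — this is a weighted Rellich lemma on a manifold with boundary for the edge Sobolev scale, where the positive weight $r^{1/2-\eps}$ supplies the decay near $\pa\wt X$ needed to compactify the non-compact directions introduced by the degeneration, and the one extra edge derivative supplies compactness on compact pieces. Hence $(\eth_{\dR},\cD_W(\eth_{\dR}))$ has domain compactly included in $L^2$; being in addition closed and self-adjoint, it has discrete spectrum, compact resolvent, and in particular $0$ is either in the resolvent set or an isolated eigenvalue of finite multiplicity, so $\eth_{\dR}$ is Fredholm. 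The main obstacle, and the part requiring the real work, is the second stage: proving the sharp weighted regularity $r^{1/2-\eps}H^1_e$ requires controlling the parametrix of the elliptic edge operator $r\eth_{\dR}$ on a half-open interval of weights and verifying that the Lagrangian boundary condition $W \oplus W^\perp$ excises exactly the obstructing indicial roots — everything else (closedness, self-adjointness via the boundary Green's formula, the compactness deduction) is comparatively soft once that regularity statement is in hand. This is exactly the content cited from \cite[Proposition 2.2 ff.]{ALMP13} and its proof uses the distributional asymptotic expansions of Lemma~\ref{lem:AsympExp} together with the edge parametrix construction of \cite{Mazzeo:Edge}.
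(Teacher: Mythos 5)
Your architecture agrees with the paper's at both ends: closedness of $\cD_W(\eth_{\dR})$ via continuity of the Cauchy data maps of Lemma \ref{lem:AsympExp} is exactly the argument given, and deducing compact resolvent from compact containment of the domain in $L^2$ is also how the paper concludes. The genuine gap is in your stage two. You propose to get $\cD_W(\eth_{\dR})\subseteq\bigcap_{\eps>0}r^{1/2-\eps}H^1_e$ by feeding $\eth_{\dR}\omega\in L^2$ through an edge parametrix for $r\eth_{\dR}$, ``bounded on the appropriate weighted spaces provided no indicial root lies on the corresponding weight line.'' That criterion is correct for $b$-operators and isolated conic singularities (where $Y$ is a point), but it is not sufficient for a genuine edge with $\dim Y>0$: the location of indicial roots governs only the exponents in the asymptotic expansion, and an elliptic edge operator can fail to be Fredholm on a weight line containing no indicial roots --- indeed $(\eth_{\dR},\cD_{\max})$ has infinite-dimensional kernel on a non-Witt space even though it sees the same indicial family. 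The missing ingredient is the one the paper isolates in \S\ref{sec:ModelOps}: the normal operator $N_q(\eth_{\dR})$ of \eqref{eq:NormalOp} on the model wedge $\bbR^+_r\times Z_q\times T_qY$. The mezzoperversity $W$ induces a domain for $N_q(\eth_{\dR})$; one proves $N_q(\eth_{\dR})$ is invertible on that domain; and only then can these inverses be glued into a parametrix whose mapping properties yield both the weighted regularity and the compact containment. The boundary condition ``$\alpha\in W,$ $\beta\in W^{\perp}$'' enters the Fredholm theory through the invertibility of $N_q,$ not through the indicial family \eqref{eq:IndicialOp}, so your bootstrap as written would stall exactly at the step you yourself identify as ``the real work.''

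Two further structural remarks. First, the paper obtains Fredholmness directly from the normal-operator parametrix, without first knowing self-adjointness; your order of deductions (compact resolvent $\Rightarrow$ Fredholm) needs a nonempty resolvent set and hence leans on self-adjointness being established independently. Second, your route to self-adjointness --- a boundary Green's formula exhibiting $\cD_W(\eth_{\dR})$ as a Lagrangian condition on Cauchy data --- is genuinely different from the paper's, which defers self-adjointness to Theorem \ref{thm:HodgeThmSimpleEdge}(iii), i.e., to the identification $\cD_W(\eth_{\dR})=\cD_W(d)\cap\cD_W(\delta)$ with $(d,\cD_W(d))^*=(\delta,\cD_W(\delta))$ coming from the Kodaira decomposition. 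Your version is more classical and in principle viable, but as you note it requires making the $H^{-1/2}\times H^{-1/2}$ boundary pairing well defined, which itself uses the regularity gain of stage two; so the two halves of your argument cannot be run independently in the order you list them.
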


This theorem is proved in \cite[\S4]{ALMP13}. The fact that the domain is closed follows easily from Lemma \ref{lem:AsympExp} and continuity of the Cauchy data map.
The proof that the operator $(\eth_{\dR}, \cD_W)$ is Fredholm and has compact resolvent uses the normal operators $N_q(\eth_{\dR})$ defined in \S\ref{sec:ModelOps}.
The domain $\cD_W$ determines a domain for $N_q(\eth_{\dR})$ as an unbounded operator on the appropriate $L^2$ space.
With this domain $N_q(\eth_{\dR})$ is invertible and we can put these inverses together to construct a parametrix for $(\eth_{\dR}, \cD_W).$
This parametrix shows that $(\eth_{\dR}, \cD_W)$ is Fredholm and also that $\cD_W$ is compactly contained in $L^2(\wt X; \Lambda^\bullet ({}^w T^*\wt X))$ which implies that the resolvent will be compact.
Finally the proof of that this operator is self-adjoint in the non-Witt case follows from facts about domains of $d$ that we will discuss in the next subsection (Theorem \ref{thm:HodgeThmSimpleEdge}(iii)).

An important consequence of the theorem is that the null space of $(\eth_{\dR}, \cD_W(\eth_{\dR}))$ is finite dimensional. We refer to this null space as the {\em Hodge cohomology with Cheeger ideal boundary conditions} and we denote it by
\begin{equation*}
	\ker (\eth_{\dR}, \cD_W(\eth_{\dR})) = \cH_{W}^*(\hat X).
\end{equation*}

With this theorem we have established the most important analytic properties of $\eth_{\dR}$ and Hodge cohomology. 
We next look for a corresponding de Rham theory.

\subsection{De Rham cohomology} \label{sec:DepthOneDR}

The operators $d$ and $\delta$ individually define unbounded operators on $L^2(\wt X; \Lambda^\bullet  ({}^w T^*\wt X))$ and have minimal and maximal domains,
\begin{equation*}
	\cD_{\min}(d) \subseteq \cD_{\max}(d), \quad \cD_{\min}(\delta) \subseteq \cD_{\max}(\delta).
\end{equation*}
Since $d$ and $\delta$ are formally adjoint, these domains are related by
\begin{equation*}
	(d, \cD_{\min}(d))^* = (\delta, \cD_{\max}(\delta)), \quad
	(d, \cD_{\max}(d))^* = (\delta, \cD_{\min}(\delta))
\end{equation*}
as follows directly from the definition of adjoint and the distributional action of a differential operator.

Given a domain $\cD$ for $d,$ let us denote
\begin{equation*}
	\cD_p = \cD \cap L^2(\wt X; \Lambda^p ({}^w T^*\wt X)).
\end{equation*}
It is easy to see that
\begin{equation}\label{eq:DmaxCmpx}
	0 \lra
	\cD_{\max}(d)_0 
	\xlra{d}
	\cD_{\max}(d)_1 
	\xlra{d}
	\ldots
	\xlra{d}
	\cD_{\max}(d)_n
	\lra 0
\end{equation}
forms a complex, indeed a {\em Hilbert complex} in the sense of \cite{Bruning-Lesch:Hilbert}, and similarly for $\cD_{\min}(d).$

If $\hat X$ is a Witt space, then $\eth_{\dR}$ is essentially self-adjoint and \cite[Lemma 3.8]{Bruning-Lesch:Hilbert} shows that
\begin{equation*}
	\cD_{\max}(d) = \cD_{\min}(d).
\end{equation*}
If we denote the cohomology of the complex \eqref{eq:DmaxCmpx} by $\tH_{\dR}^*(\hat X)$ then Theorem \ref{thm:CmptResolv} and \cite[Corollary 2.5]{Bruning-Lesch:Hilbert} show that
\begin{equation*}
	\tH_{\dR}^p(\hat X) \cong \cH^p(\hat X).
\end{equation*}

Consider next the situation when $\hat X$ is not a Witt space.
Since $d$ is not elliptic, we can not expect arbitrary elements in the maximal domain of $d$ to have a distributional asymptotic expansion at $\wt X.$
However there is a way around this using the Kodaira decomposition (see, e.g., \cite[Lemma 2.1]{Bruning-Lesch:Hilbert}) corresponding to $(d, \cD_{\min}(d)),$
\begin{equation*}
	L^2(\wt X; \Lambda^\bullet  ({}^w T^*\wt X))
	= \bar{ d( \cD_{\min}(d)) } \oplus \bar{ \delta(\cD_{\max}(\delta)) } \oplus
	\ker (d, \cD_{\min}(d)) \cap \ker(\delta, \cD_{\max}(\delta)).
\end{equation*}
Given $\omega \in \cD_{\max}(d),$ let $\omega_{\delta}$ be the projection onto $\bar{ \delta(\cD_{\max}(\delta)) },$ and notice that
\begin{equation*}
	d\omega_{\delta} = d\omega, \quad \delta \omega_{\delta} = 0,
\end{equation*}
so $(d+\delta)\omega_{\delta} \in L^2(\wt X; \Lambda^\bullet  ({}^w T^*\wt X))$ and hence
\begin{equation*}
	\omega_\delta \in \cD_{\max}(\eth_{\dR}).
\end{equation*}
In particular Lemma \ref{lem:AsympExp} determines Cauchy data $\alpha(\omega_{\delta})$ and $\beta(\omega_{\delta}),$ and given a mezzoperversity $W$ we define
\begin{equation*}
	\cD_W(d) = \{ \omega \in \cD_{\max}(d) : \alpha(\omega_\delta) \in H^{-1/2}(Y; \Lambda^\bullet T^*Y \otimes W) \}.
\end{equation*}
It turns out that \cite[Lemma 5.3]{ALMP13}
\begin{equation*}
	\alpha( (d\omega)_{\delta} ) = \nabla^{\cH} \alpha( \omega_{\delta} )
\end{equation*}
where $\nabla^{\cH}$ is the flat connection on $\cH^{v/2}(\pa \wt X/Y),$ and hence the fact that $W$ is a flat bundle implies  
\begin{equation*}
	d(\cD_{W(d)}) \subseteq \cD_{W}(d).
\end{equation*}
Thus each mezzoperversity yields a complex
\begin{equation*}
	0 \lra
	\cD_{W}(d)_0 
	\xlra{d}
	\cD_{W}(d)_1 
	\xlra{d}
	\ldots
	\xlra{d}
	\cD_{W}(d)_n
	\lra 0
\end{equation*}
whose cohomology we denote
\begin{equation*}
	\tH_{W, \dR}^*(\hat X).
\end{equation*}
This is the de Rham cohomology associated to a mezzoperversity, and we next establish a Hodge theorem showing that it is equal to the Hodge cohomology.

\begin{theorem}\label{thm:HodgeThmSimpleEdge}
$ $
\begin{itemize}
\item [i)] $\cD_W(d)$ is the closure of $\cD_{W}(\eth_{\dR})$ with respect to the graph norm of $d.$
\item [ii)] If $\omega \in \cD_{\max}(\delta)$ and $\omega_d$ is its projection onto $\bar{d(\cD_{\max}(d))}$ then $\omega_d$ is an element of $\cD_{\max}(\eth_{\dR})$ and hence has Cauchy data. The adjoint domain of $\cD_W(d)$ is
\begin{equation*}
	\cD_W(\delta) = \{ \omega \in \cD_{\max}(\delta) : \beta(\omega_d) \in H^{-1/2}(Y; \Lambda^\bullet T^*Y \otimes W^{\perp}).
\end{equation*}
\item [iii)] $\cD_{W}(\eth_{\dR}) = \cD_W(d) \cap \cD_{W}(\delta)$ and hence
\begin{equation*}
	\tH^\bullet _{W, \dR}(\hat X) \cong \cH^\bullet _W(\hat X).
\end{equation*}
\end{itemize}
\end{theorem}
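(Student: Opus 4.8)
The plan is to establish the three parts in the order listed, since (i) and (ii) are the technical inputs that make (iii)---and thus the Hodge theorem---essentially a formal consequence of the general theory of Hilbert complexes in \cite{Bruning-Lesch:Hilbert}.

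\medskip

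\emph{Part (i).} I would first show that $\cD_W(\eth_{\dR}) \subseteq \cD_W(d)$, which is immediate: if $\omega \in \cD_W(\eth_{\dR})$ then $\omega \in \cD_{\max}(\eth_{\dR}) \subseteq \cD_{\max}(d)$, and since $\omega$ already lies in $\cD_{\max}(\delta)$ the Kodaira projection $\omega_\delta$ differs from $\omega$ by an exact term and a harmonic term, both of which have vanishing Cauchy data (the exact term because $\delta(\cD_{\min}(d))$-type pieces are controlled, and one checks $\alpha(\omega_\delta) = \alpha(\omega)$ directly from the defining projections, using that $d(\cD_{\min}(d))$ contributes no $r^{-v/2}$ asymptotics). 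Hence the graph-norm-of-$d$ closure of $\cD_W(\eth_{\dR})$ sits inside $\cD_W(d)$, since $\cD_W(d)$ is closed in that norm (this closedness follows from Lemma \ref{lem:AsympExp} together with continuity of $\omega \mapsto \alpha(\omega_\delta)$ as a map $\cD_{\max}(d) \to H^{-1/2}$, once one checks that the Kodaira projection is bounded in the relevant norms). For the reverse inclusion, given $\omega \in \cD_W(d)$ one approximates: decompose via the Kodaira decomposition, reduce to approximating $\omega_\delta \in \cD_{\max}(\eth_{\dR})$ with $\alpha(\omega_\delta) \in W$-valued forms, and then invoke a density statement---smooth forms with the correct leading asymptotics along $Y$ are dense in $\cD_W(\eth_{\dR})$ in the graph norm of $\eth_{\dR}$, hence \emph{a fortiori} in the weaker graph norm of $d$. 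This density is the analogue for the $W$-domain of the standard fact that $\cD_{\max}$ and $\cD_{\min}$ are closures of smooth forms with prescribed asymptotic behaviour; I would cite \cite[\S4]{ALMP13} for it.

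\medskip

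\emph{Part (ii).} This is the dual of (i). The relation $(d,\cD_W(d))^* = (\delta, \cD_W(\delta))$ is computed from the weak definition of the adjoint: $\eta \in \cD((d,\cD_W(d))^*)$ iff $\eta \in \cD_{\max}(\delta)$ and $\ang{d\omega,\eta} = \ang{\omega,\delta\eta}$ for all $\omega \in \cD_W(d)$. The boundary term in the integration-by-parts formula is a pairing of Cauchy data, which by Lemma \ref{lem:AsympExp} (applied to $\omega_\delta$ and to $\eta_d$, the projection of $\eta$ onto $\overline{d(\cD_{\max}(d))}$, which lands in $\cD_{\max}(\eth_{\dR})$ by the same argument that put $\omega_\delta$ there) reduces to a pairing between $\alpha(\omega_\delta) \in W$ and a component of $\beta(\eta_d)$. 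This pairing vanishes for all such $\omega$ precisely when $\beta(\eta_d)$ takes values in $W^\perp$---here I use that the intersection pairing on $\cH^{v/2}(\pa\wt X/Y)$ relating the $\alpha$ and $\beta$ slots is (up to a nonzero constant from the Hodge star on $Z$) the bundle metric, so that the annihilator of $W$ under it is exactly $W^\perp$. That the constant is nonzero, and that $\nabla^{\cH}$ is compatible with this pairing, are the points needing care.

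\medskip

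\emph{Part (iii).} Given (i) and (ii), the identity $\cD_W(\eth_{\dR}) = \cD_W(d) \cap \cD_W(\delta)$ follows: the inclusion $\subseteq$ is clear from (i) and its $\delta$-analogue; for $\supseteq$, if $\omega$ lies in both then $d\omega, \delta\omega \in L^2$ so $\omega \in \cD_{\max}(\eth_{\dR})$, and $\alpha(\omega) = \alpha(\omega_\delta) \in W$ while $\beta(\omega) = \beta(\omega_d) \in W^\perp$ (using the same Kodaira-projection identities as above, now noting $\omega$ is already in the maximal domain of $\eth_{\dR}$ so its own Cauchy data agree with those of its projections). This exhibits $(\eth_{\dR},\cD_W(\eth_{\dR}))$ as the Laplacian-type operator $D + D^*$ of the Hilbert complex $(d,\cD_W(d))$ in the sense of \cite{Bruning-Lesch:Hilbert}. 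By Theorem \ref{thm:CmptResolv} this operator is self-adjoint with compact resolvent, so the complex is a Fredholm Hilbert complex, and \cite[Corollary 2.5]{Bruning-Lesch:Hilbert} gives the Hodge isomorphism $\tH^\bullet_{W,\dR}(\hat X) \cong \ker(\eth_{\dR},\cD_W(\eth_{\dR})) = \cH^\bullet_W(\hat X)$. (One should note this also retroactively supplies the self-adjointness claim used in Theorem \ref{thm:CmptResolv}, so there is a mild bootstrapping: self-adjointness of $\cD_W(\eth_{\dR})$ is deduced here from $(d,\cD_W(d))^{**} = (d,\cD_W(d))$, i.e.\ closedness of $\cD_W(d)$, which is part (i).)

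\medskip

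\textbf{Main obstacle.} The crux is Part (i)---specifically, that $\cD_W(d)$ is \emph{exactly} the graph closure of $\cD_W(\eth_{\dR})$, not merely that one contains a dense subset of the other. The difficulty is that $d$ alone is not elliptic, so elements of $\cD_{\max}(d)$ need not have asymptotic expansions; the Kodaira decomposition is the device that restores enough regularity, but one must verify that projecting onto $\overline{\delta(\cD_{\max}(\delta))}$ is compatible with the Cauchy-data maps (the identity $\alpha((d\omega)_\delta) = \nabla^{\cH}\alpha(\omega_\delta)$ of \cite[Lemma 5.3]{ALMP13} is exactly what is needed, together with flatness of $W$), and that the resulting domain is genuinely closed and genuinely the closure of the smooth $W$-forms. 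Controlling the boundary terms in the integration by parts---showing the remainder $\wt\omega$ contributes nothing to the pairing---is where Lemma \ref{lem:AsympExp}'s precise statement (the remainder lies in $\bigcap_{\eps>0} r^{1-\eps}H^{-1}_e$, which beats the $r^{-v/2}$ weight of the leading term) does the real work.
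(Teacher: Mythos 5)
Your proposal is correct and follows essentially the same route the paper takes (deferring to \cite[\S 5]{ALMP13}): the Kodaira decomposition for $(d,\cD_{\min}(d))$ to restore asymptotic expansions via $\omega\mapsto\omega_\delta$, the identity $\alpha((d\omega)_\delta)=\nabla^{\cH}\alpha(\omega_\delta)$ together with flatness of $W$, the Green's-formula pairing of Cauchy data controlled by Lemma \ref{lem:AsympExp} to identify the adjoint domain, and finally \cite[Corollary 2.5]{Bruning-Lesch:Hilbert} for the Hodge isomorphism. You also correctly identify both the crux (that $\cD_W(d)$ is exactly the $d$-graph closure of $\cD_W(\eth_{\dR})$, which needs the density of the regular subdomain and the fact that the $\overline{d(\cD_{\min}(d))}$ and harmonic summands lie in $\cD_{\min}(d)$ and so carry no $\alpha$-datum) and the mild bootstrapping by which part (iii) supplies the self-adjointness used in Theorem \ref{thm:CmptResolv}.
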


\subsection{Generalized Poincar\'e duality} \label{sec:DepthOnePD}

Recall that on any smooth oriented closed manifold there is a natural intersection pairing on differential forms
\begin{equation*}
	(\eta, \omega) \mapsto \int \eta \wedge \omega
\end{equation*}
that descends to a non-degenerate pairing on cohomology. The signature of this pairing is the signature of the manifold.

Assume that $\hat X$ is an oriented pseudomanifold of depth one, and let
\begin{equation*}
	\xymatrix @R=1pt 
	{L^2(\wt X; \Lambda^\bullet ({}^w T^*\wt X)) \times L^2(\wt X; \Lambda^\bullet ({}^w T^*\wt X)) \ar[r]^-B & \bbR \\
	(\eta, \omega) \ar@{|->}[r] & \displaystyle \int_{\wt X} \eta\wedge\omega }.
\end{equation*}
Note that the Hodge star of a wedge metric is an isometric involution of $L^2(\wt X; \Lambda^\bullet ({}^w T^*\wt X))$ and hence defines an involution
\begin{equation*}
	*: \cD_{\max}(\eth_{\dR})\lra \cD_{\max}(\eth_{\dR}).
\end{equation*}
We have $B(\eta, *\eta) = \norm{\eta}_{L^2}^2,$ and hence $B$ is non-degenerate on $\cD_{\max}(\eth_{\dR}).$

For an oriented Witt space, $B$ descends to a non-degenerate pairing on cohomology
\begin{equation*}
	B: \tH^\bullet _{\dR}(\hat X) \times \tH^\bullet _{\dR}(\hat X) \lra \bbR,
\end{equation*}
and the signature of this form is the {\em signature of the Witt space $\hat X.$}

If $\hat X$ is oriented but not a Witt space, then $*$ will generally not preserve $\cD_W(\eth_{\dR})$ and we need to bring in a different mezzoperversity. 
Define
\begin{equation*}
	Q: \CI(Y; \cH^{v/2}(\pa\wt X/Y)) \times \CI(Y; \cH^{v/2}(\pa\wt X/Y)) \lra \CI(Y), \quad
	Q(\eta, \omega) = \int_{\pa\wt X/Y} \eta \wedge\omega.
\end{equation*}
Directly from the definition of the flat connection $\nabla^{\cH}$ we see that
\begin{equation*}
	dQ(\eta, \omega) = Q(\nabla^{\cH}\eta, \omega) \pm Q(\eta, \nabla^{\cH}\omega).
\end{equation*}
This shows that if $W \lra Y$ is a flat subbundle of $\cH^{v/2}(\pa\wt X/Y)$ and $\bbD W \lra Y$ is the $Q$-orthogonal complement of $W,$
then $\bbD W$ is also a flat subbundle. We call $\bbD W$ the {\em dual mezzoperversity} to $W.$
We can rewrite $Q$ using the Hodge star (on $Z$) and see that
\begin{equation*}
	\bbD W = *W^{\perp}.
\end{equation*}
In turn this makes it easy to see that the Hodge star defines a map 
\begin{equation*}
	*: \cD_{W}(\eth_{\dR}) \lra \cD_{\bbD W}(\eth_{\dR})
\end{equation*}
and that $B$ descends to non-degenerate pairing on cohomology,
\begin{equation*}
	B: \tH^\bullet _{W, \dR}(\hat X) \times \tH^\bullet _{\bbD W, \dR}(\hat X) \lra \bbR,
\end{equation*}
which is a generalization of Poincar\'e duality to this setting.

If $W = \bbD W$ we say that $W$ is a {\em self-dual mezzoperversity}. 
There are topological obstructions to the existence of a self-dual mezzoperversity, for example the existence of a self-dual mezzoperversity implies that the signature of the link $Z$ vanishes. 
If a space admits a self-dual mezzoperversity (or is a Witt space) then we say that it is a {\em Cheeger space}.
If $W$ is a self-dual mezzoperversity then $B$ descends to a non degenerate form on $\tH^\bullet _{W, \dR}(\hat X)$ and so these groups satisfy Poincar\'e duality and define a signature.

In \S\ref{sec:PropSign} we will explain why this signature is independent of the choice of self-dual mezzoperversity and hence only depends on the Cheeger space.
Also in this section we describe this signature as the Fredholm index of the signature operator with an appropriate domain.

\section{General stratified spaces} \label{sec:DepthEll}

Now we consider the general case where $\hat X$ is a union of several smooth manifolds,
\begin{equation*}
	\hat X = Y_0 \cup Y_1 \cup \ldots \cup Y_\ell.
\end{equation*}
For simplicity let us {\em assume that}, for each $k,$ the closure $\bar Y_k$ of $Y_k$ in $\hat X$ is $\hat Y_k = Y_k \cup Y_{k+1} \cup \ldots \cup Y_\ell.$
It is always true that the closure of a stratum is a union of strata, so our assumption is that there is a single chain of strata.
The points of depth zero, $Y_0,$ form $\hat X^{\reg},$ the regular part of $\hat X,$ while the other strata form the singular part of $\hat X.$
We denote the link of $\hat X$ at points in $Y_k$ by $\hat Z_k,$ these are themselves stratified spaces (of depth $k-1$).
We denote the dimensions of these spaces by
\begin{equation*}
	n = \dim \hat X^{\reg}, \quad h_k =\dim Y_k, \quad v_k = \dim \hat Z_k^{\reg}.
\end{equation*}
Note that $h_k$ is a decreasing function of $k.$
We assume throughout that all strata and links are connected.
(Neither of these assumptions is significant and they mainly serve to simplify the notation.)

In Section \ref{sec:StratSpaces} we will recall the precise definition of a stratified space and, more importantly for our purposes, explain how to resolve $\hat X$ to a smooth manifold with corners $\wt X.$ For now we content ourselves with describing the structure of the resolved space where our analysis is carried out.
The resolved structure is called a {\em boundary fibration structure}. The boundary hypersurfaces of $\wt X,$ which we denote $M_1, \ldots, M_{\ell},$ are embedded and in one to one correspondence with the singular strata. Indeed, each $M_k$ is the total space of a fiber bundle,
\begin{equation*}
	\wt Z_k \fib M_k \xlra{\phi_k} \wt Y_k,
\end{equation*}
with base $\wt Y_k$ the resolution of $\bar Y_k,$ and typical fiber $\wt Z_k,$ the resolution of $\hat Z_k.$
If $M_j \cap M_k \neq \emptyset,$ and $j<k,$ 
the fiber bundles are compatible in that there is a boundary hypersurface $K_{jk}$ of $Y_j$ and a fiber bundle map $\phi_{jk}: K_{jk}\lra Y_j$ such that the diagram
\begin{equation}\label{eq:CornerFibration}
	\xymatrix{
	M_j \cap M_k \ar[dr]_{\phi_k} \ar[rr]^{\phi_j} & & K_{jk} \ar[ld]^{\phi_{jk}} \subseteq Y_j \\
	& Y_k & }
\end{equation}
commutes.

A rigid wedge metric $g$ on $\hat X^{\reg} = \wt X^{\circ}$ is a metric that in some collar neighborhood of $M_k$ (for each $k$) has the form
\begin{equation*}
	dr_k^2 + r_k^2 g_{M_k/Y_k} + \phi_k^*g_{Y_k} 
\end{equation*}
where $r_k$ is a smooth function vanishing linearly on $M_k$ and positive elsewhere,
$g_{Y_k}$ is a rigid wedge metric on $\wt Y_k$ and $g_{M_k/Y_k}$ is a family of rigid wedge metrics on the fibers $\wt Z_k.$
The covectors of bounded pointwise length with respect to $g$ are the sections of a vector bundle, the {\em (iterated) wedge cotangent bundle,}
\begin{equation*}
	{}^{w} T^*\wt X \lra \wt X,
\end{equation*}
that is canonically isomorphic to $T^*\wt X$ over $\wt X^\circ.$
A wedge metric extends to a non-degenerate bundle metric on the iterated wedge cotangent bundle.
We denote the de Rham operator of a rigid wedge metric by $\eth_{\dR} = d + \delta.$ Its minimal and maximal domains $\cD_{\min}(\eth_{\dR}),$ $\cD_{\max}(\eth_{\dR})$ are defined just as in the case of a single singular stratum. 

Our plan will be to study $\eth_{\dR}$ one stratum at a time.
For this purpose,
a useful class of functions on $\wt X$ is given by
\begin{equation*}
	\CI_{\Phi}(\wt X) = \{ f \in \CI(\wt X): f\rest{M_k} \in \phi_k^* \CI(\wt Y_k) \}.
\end{equation*}
Note that any such function descends to a continuous function on $\hat X,$ so we can think of $\CI_{\Phi}(\wt X)$ as providing a notion of smooth function on $\hat X.$
If $f \in \CI_{\Phi}(\wt X)$ then $df$ extends from $\wt X^{\circ}$ to a smooth section of ${}^w T^*\wt X$ and this can be used to show
\begin{equation*}
	\omega \in \cD_{\max}(\eth_{\dR}), f\in \CI_{\Phi}(\wt X) \implies f\omega \in \cD_{\max}(\eth_{\dR}).
\end{equation*}
It is easy to see that there is a partition of unity of $\wt X$ consisting entirely of functions in $\CI_{\Phi}(\wt X)$ (\cite[Lemma 5.2]{ABLMP}).
This allows us to focus on elements of $\cD_{\max}(\eth_{\dR})$ supported on a {\em distinguished neighborhood} of a point in a singular stratum $q \in Y_k = \wt Y_k^{\circ},$
by which we mean a subset $\cU_q$ of a collar neighborhood $[0,1)_x \times M_k$ on which the fiber bundle is trivial and diffeomorphic to 
$[0,1)_x \times \wt Z_k \times \bbB^h$ where $\bbB^h$ is a ball in $Y_k$ centered around $q.$
Let
\begin{multline}
	A(Y_k) = \{ \chi \in \CI_{\Phi}(\wt X) : \\
	\chi \text{ is supported in a distinguished neighborhood of a point in } Y_k \}.
\end{multline}
%

\subsection{Cheeger ideal boundary conditions}

The stratum $Y_1$ has depth one, meaning that the link at each point is a smooth manifold.
If $q \in Y_1$ and $\cU_q \subseteq \wt X$ is a distinguished neighborhood, then any element $u \in \cD_{\max}(\eth_{\dR})$ supported in $\cU_q$ is subject to the analysis carried out in \S\ref{sec:DepthOne}. Thus if $g$ is suitably scaled then $u$ has a partial distributional asymptotic expansion as in Lemma \ref{lem:AsympExp}.
If $Y_1$ is not a Witt stratum then we choose $W_1,$ a flat subbundle of $\cH^{v/2}(M_1/Y_1),$ and we define
\begin{multline*}
	\cD_{\max, W_1}(\eth_{\dR}) = \{ u \in \cD_{\max}(\eth_{\dR}): \Mforall \chi \in A(Y_1), \\
	\alpha(\chi u) \in H^{-1/2}(Y_1; \Lambda^\bullet ({}^wT^*Y_1)\otimes W_1), \;
	\beta(\chi u) \in H^{-1/2}(Y_1; \Lambda^\bullet ({}^wT^*Y_1)\otimes W_1^{\perp}) \}.
\end{multline*}
If $Y_1$ is a Witt stratum, then there is no need to choose boundary conditions at $Y_1.$
For notational consistency, when $Y_1$ is a Witt stratum we set $W_1 = \emptyset$ and define $\cD_{\max, W_1}(\eth_{\dR}) =\cD_{\max}(\eth_{\dR}).$

Having chosen $W_1,$ we can apply Theorem \ref{thm:CmptResolv} (or rather its proof) and see that
\begin{equation*}
	u \in \cD_{\max, W_1}(\eth_{\dR}) \implies  
	\chi u \in r_1^{a}H^1_e(\wt X; \Lambda^\bullet ({}^wT^*\wt X)) \Mforall a<1/2, \chi \in A(Y_1).
\end{equation*}
Note that $\cD_{\max, W_1}(\eth_{\dR})$ is closed under multiplication by functions in $\CI_{\Phi}(\wt X),$ so we may continue to localize sections to consider one stratum at a time.\\

Next consider $Y_2.$ This is a stratum of depth two, meaning that its link $\hat Z_2$ is a stratified space of depth one. We can see, e.g., from the compatibility condition \eqref{eq:CornerFibration}, that the link of a singular point in $\hat Z_2$ must be $Z_1.$ Thus in our choice of boundary conditions at $Y_1,$ we have determined a domain for the de Rham operator on $\hat Z_2.$ Now recall the fundamental observation from \eqref{eq:NormalOp} that the link only enters into the normal operator through its de Rham operator. This allows us to carry out the analysis from \S\ref{sec:DepthOne} keeping in mind that $\eth_{\dR}^{Z_2}$ comes with its domain $\cD_{W_1}(\eth_{\dR}^{Z_2}).$
Thus (the analogue of) Lemma \ref{lem:AsympExp} applies to show that
\begin{multline*}
	u \in \cD_{\max, W_1}(\eth_{\dR}), \quad \chi \in A(Y_2) \\ \implies
	\chi u \sim r_2^{-v_2/2}(\alpha_2(\chi u) + dr_2 \wedge \beta_2(\chi u)) + \chi \wt u, \Mwhere \\
	\alpha_2(\chi u), \beta_2(\chi u) \in H^{-1/2}(Y_2; \Lambda^\bullet ({}^wT^*Y_2) \otimes \cH^{v_2/2}_{W_1}(M_2/Y_2)), \quad
	\wt u \in \bigcap_{\eps>0} r_2^{1-\eps}H^{-1}_e(\wt X; \Lambda^\bullet  ({}^w T^*\wt X))
\end{multline*}
and $\cH^{v_2/2}_{W_1}(M_2/Y_2)$ denotes the bundle over $Y_2$ whose typical fiber is $\ker(\eth_{\dR}^{Z_2}, \cD_{W_1}(\eth_{\dR}^{Z_2})),$ \cite[Lemma 2.1]{ALMP13}.
It is not obvious that these null spaces fit together to form a bundle but this follows from Corollary \ref{cor:DiffeoInv} below, and this bundle comes with a natural flat connection.
If $v_2$ is odd or this bundle has rank zero, let us set $W_2=\emptyset.$ Otherwise, let $W_2$ be a flat subbundle of $\cH^{v_2/2}_{W_1}(M_2/Y_2)$ and set
\begin{multline*}
	\cD_{\max, (W_1, W_2)}(\eth_{\dR}) = \{ u \in \cD_{\max,W_1}(\eth_{\dR}): \Mforall \chi \in A(Y_2), \\
	\alpha_2(\chi u) \in H^{-1/2}(Y_2; \Lambda^\bullet ({}^wT^*Y_2)\otimes W_2), \;
	\beta_2(\chi u) \in H^{-1/2}(Y_2; \Lambda^\bullet ({}^wT^*Y_2)\otimes W_2^{\perp}) \}.
\end{multline*}
Another application of Theorem \ref{thm:CmptResolv} shows that
\begin{equation*}
	u \in \cD_{\max, (W_1, W_2)}(\eth_{\dR})
	\implies
	\chi u \in r_2^{a}H^1_e(\wt X; \Lambda^\bullet ({}^wT^*\wt X)) \Mforall a<1/2, \chi \in A(Y_2).
\end{equation*}

We proceed in this way inductively stratum by stratum. At each step, imposing boundary conditions at the first $k-1$ strata allows us to establish a distributional asymptotic expansion at the $k^{\text{th}}$ stratum. Using this expansion we can impose boundary conditions at the $k^{\text{th}}$ stratum, invert the normal operators, construct a local parametrix, and show that elements satisfying the boundary conditions vanish to order almost $1/2$ at the first $k$-strata. Once we reach the $\ell^{\text{th}}$ stratum we can conclude with the analogue of Theorem \ref{thm:CmptResolv}, \cite[Theorem 4.3]{ALMP13}.

\begin{theorem}
Let $\hat X$ be a stratified space endowed with a suitably scaled rigid iterated wedge metric. Let $\bW = (W_1, W_2, \ldots, W_{\ell})$ be Cheeger ideal boundary conditions at the singular strata of $\hat X,$ and let
\begin{equation*}
	\cD_{\bW}(\eth_{\dR})
\end{equation*}
be the corresponding domain of the de Rham operator. Then $(\eth_{\dR}, \cD_{\bW}(\eth_{\dR}))$ is a closed, self-adjoint, Fredholm operator with compact resolvent and 
\begin{equation*}
	\cD_{\bW}(\eth_{\dR}) \subseteq \bigcap_{\eps>0} (r_1\cdots r_{\ell})^{1/2-\eps} H^1_e(\wt X; \Lambda^\bullet ({}^w T^*\wt X)).
\end{equation*}
\end{theorem}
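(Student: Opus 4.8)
The proof goes by induction on the depth $\ell$ of $\hat X$; the case $\ell=1$ is Theorem~\ref{thm:CmptResolv}, and the case $\ell=0$ (a closed manifold) is classical. Assume the statement holds, together with the corresponding Hodge-theoretic package, on all pseudomanifolds of depth $<\ell$. The link $\hat Z_k$ of a point in $Y_k$ is then a pseudomanifold of depth $k-1<\ell$, and by the corner compatibility \eqref{eq:CornerFibration} the truncation $(W_1,\dots,W_{k-1})$ of $\bW$ restricts to a choice of Cheeger ideal boundary conditions on $\hat Z_k$. By the inductive hypothesis $(\eth_{\dR}^{Z_k},\cD_{(W_1,\dots,W_{k-1})}(\eth_{\dR}^{Z_k}))$ is closed, self-adjoint and Fredholm with compact resolvent, so its null space is finite dimensional; together with the diffeomorphism invariance of these null spaces (Corollary~\ref{cor:DiffeoInv}) this shows that they assemble into a vector bundle $\cH^{v_k/2}_{(W_1,\dots,W_{k-1})}(M_k/Y_k)\lra Y_k$ carrying a canonical flat connection, which is exactly what is needed for the recipe defining $W_k$ and $\cD_{\bW}(\eth_{\dR})$ to make sense.

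The analytic heart is the iterated parametrix construction underlying Theorem~\ref{thm:CmptResolv}. The key structural point, recorded already in \eqref{eq:NormalOp}, is that the normal operator $N_q(\eth_{\dR})$ at $q\in Y_k$ sees the link $\hat Z_k$ only through its de Rham operator $\eth_{\dR}^{Z_k}$; since by induction the latter carries the self-adjoint Fredholm domain $\cD_{(W_1,\dots,W_{k-1})}(\eth_{\dR}^{Z_k})$, the Cheeger ideal boundary conditions at $Y_k$ cut out a domain for $N_q(\eth_{\dR})$ on the model wedge $L^2(\bbR^+_{r_k}\times Z_k\times T_qY_k)$. Just as in the depth-one argument one shows that, with this domain, $N_q(\eth_{\dR})$ is invertible: this is where the suitably-scaled hypothesis is used, to confine the relevant indicial roots to $-\tfrac{v_k}2$ (or $-\tfrac12(v_k\pm1)$) and to identify the null space of the indicial operator there with copies of the flat bundle above, after which the flatness of $W_k$ and the $Q$-pairing force triviality of both kernel and cokernel. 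Inverting the normal operators at every stratum, gluing them to an interior parametrix for the uniformly elliptic wedge operator $\eth_{\dR}$ by means of a $\CI_{\Phi}$ partition of unity, and checking that the error terms gain a positive power of each $r_k$, one obtains simultaneously that $(\eth_{\dR},\cD_{\bW}(\eth_{\dR}))$ is Fredholm and that $\cD_{\bW}(\eth_{\dR})$ embeds compactly into $L^2(\wt X;\Lambda^\bullet({}^wT^*\wt X))$, whence the resolvent is compact. Tracking the precise gain $r_k^{1/2-\eps}$ at each stratum, as in the depth-one bound $\cD_W\subseteq\bigcap_{\eps>0}r^{1/2-\eps}H^1_e$, yields the stated inclusion $\cD_{\bW}(\eth_{\dR})\subseteq\bigcap_{\eps>0}(r_1\cdots r_\ell)^{1/2-\eps}H^1_e(\wt X;\Lambda^\bullet({}^wT^*\wt X))$.

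Closedness is immediate from the iterated asymptotic expansions (Lemma~\ref{lem:AsympExp} applied one stratum at a time) and the continuity of the Cauchy-data maps $\alpha_k,\beta_k$. Self-adjointness is again extracted from the de Rham picture: at each stratum one uses the Kodaira decomposition to produce from $\omega\in\cD_{\max}(d)$ a component $\omega_\delta\in\cD_{\max}(\eth_{\dR})$ with well-defined Cauchy data, defines $\cD_{\bW}(d)$ by requiring $\alpha_k((\chi\omega)_\delta)\in W_k$ for all $\chi\in A(Y_k)$ and all $k$, and uses the identity $\alpha_k((d\omega)_\delta)=\nabla^{\cH}\alpha_k(\omega_\delta)$ together with flatness of $W_k$ to see that $d$ preserves $\cD_{\bW}(d)$; the adjoint domain is then computed to be $\cD_{\bW}(\delta)$, defined by the conditions $\beta_k((\chi\omega)_d)\in W_k^{\perp}$, and the identity $\cD_{\bW}(d)\cap\cD_{\bW}(\delta)=\cD_{\bW}(\eth_{\dR})$ exhibits $\eth_{\dR}$ as self-adjoint, which is the iterated version of Theorem~\ref{thm:HodgeThmSimpleEdge}.

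The main obstacle is keeping the two inductions — the Hodge/Fredholm package and its diffeomorphism invariance — synchronized without circularity: the very definition of $\cD_{\bW}(\eth_{\dR})$ at depth $k$ presupposes that the link kernels form a flat bundle, which comes from Corollary~\ref{cor:DiffeoInv}, so one must arrange that the proof of that corollary in depth $k$ invokes only the results of the present theorem in depths $<k$. The remaining genuinely new difficulty is purely analytic, namely the invertibility of the normal operators $N_q(\eth_{\dR})$ on the model wedge: one has to check that imposing $\alpha_k$-values in $W_k$ and $\beta_k$-values in $W_k^{\perp}$ annihilates both the $L^2$ kernel and the $L^2$ cokernel, and it is here that the flat-bundle structure, the nondegeneracy of the $Q$-pairing, and the suitably-scaled hypothesis all enter in an essential way.
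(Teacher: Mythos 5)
Your proposal follows essentially the same route as the paper, which (being a survey) only sketches the argument and defers to \cite[Theorem 4.3]{ALMP13}: induction over strata/depth, the key fact that the normal operator sees the link only through its de Rham operator equipped with the inductively defined domain, Corollary \ref{cor:DiffeoInv} to assemble the link-kernels into a flat bundle, a parametrix built from inverted normal operators for Fredholmness, compact inclusion and the weighted $H^1_e$ bound, and the Kodaira-decomposition/de Rham characterization $\cD_{\bW}(\eth_{\dR})=\cD_{\bW}(d)\cap\cD_{\bW}(\delta)$ for self-adjointness. The one small slip is attributing the kernel/cokernel analysis of the normal operators to the $Q$-pairing, which enters only for duality --- the orthogonality relevant to $\cD_{W_k}$ is the $L^2$ one --- but this does not affect the structure of the argument.
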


We denote the resulting Hodge cohomology groups by 
\begin{equation*}
	\cH^\bullet _{\bW}(\hat X).
\end{equation*}
%

\subsection{De Rham cohomology}

Given Cheeger ideal boundary conditions $\bW,$ we can define domains for $d$ and $\delta$ by
\begin{equation*}
	\cD_{\bW}(d) = d\text{-graph closure of }\cD_{\bW}(\eth_{\dR}), \quad
	\cD_{\bW}(\delta) = \delta\text{-graph closure of }\cD_{\bW}(\eth_{\dR}).
\end{equation*}
To see that these form complexes and that their intersection is $\cD_{\bW}(\eth_{\dR}),$ it is important to characterize these domains in terms of distributional asymptotic expansions. 

Let $Y$ be a stratum of $\hat X$ of depth $k,$ and assume inductively that we have carried out the analysis of Hodge and de Rham cohomology on all space of depth less than $k.$
The domain
\begin{equation*}
	\cD_{\max, (W_1, \ldots, W_{k-1})}(d)
\end{equation*}
forms a Hilbert complex and so we have a Kodaira decomposition
\begin{multline*}
	L^2(\wt X; \Lambda^\bullet ({}^w T^*\wt X)) = 
	\bar{d( \cD_{\max, (W_1, \ldots, W_{k-1})}(d) )}
	\oplus
	\bar{\delta( \cD_{\max, (W_1, \ldots, W_{k-1})}(d)^* )} \\
	\oplus
	\ker( d,  \cD_{\max, (W_1, \ldots, W_{k-1})}(d) ) \cap
	\ker(\delta,  \cD_{\max, (W_1, \ldots, W_{k-1})}(d)^* ).
\end{multline*}
Given $\omega \in \cD_{\max, (W_1, \ldots, W_{k-1})}(d),$ let $\omega_\delta$ denote the orthogonal projection onto the second summand, and note that
\begin{equation*}
	\omega_{\delta} \in \cD_{\max, (W_1, \ldots, W_{k-1})}(\eth_{\dR}).
\end{equation*}
This allows us to define
\begin{multline*}
	\cD_{\max, (W_1, \ldots, W_k)}(d) = \{ \omega \in \cD_{\max, (W_1, \ldots, W_{k-1})}(d) : \\
	\alpha_k((\chi \omega)_\delta) \in H^{-1/2}(Y_k; \Lambda^\bullet ({}^w T^*Y_k) \otimes W_k) \Mforall \chi \in A(Y_k) \}.
\end{multline*}
The fact that $W_k$ is a flat subbundle of $\cH^{v_k/2}_{(W_1, \ldots, W_{k-1})}(M_k/Y_k)$ implies  that $\cD_{\max, (W_1, \ldots, W_k)}(d)$ forms a complex.

Similar reasoning allows us to define
\begin{multline*}
	\cD_{\max, (W_1, \ldots, W_k)}(\delta) = \{ \omega \in \cD_{\max, (W_1, \ldots, W_{k-1})}(\delta) : \\
	\beta_k((\chi \omega)_d) \in H^{-1/2}(Y_k; \Lambda^\bullet ({}^w T^*Y_k) \otimes W_k^{\perp}) \Mforall \chi \in A(Y_k) \}
\end{multline*}
where $(\chi\omega)_d$ denotes the orthogonal projection of $\chi\omega$ onto $\bar{d(\cD_{\max, (W_1, \ldots, W_{k-1})}(\delta)^*)}.$
Together these expressions show that
\begin{equation*}
	\cD_{\max, (W_1, \ldots, W_k)}(\eth_{\dR}) = \cD_{\max, (W_1, \ldots, W_k)}(d) \cap \cD_{\max, (W_1, \ldots, W_k)}(\delta)
\end{equation*}
and establish the analogue of Theorem \ref{thm:HodgeThmSimpleEdge}, \cite[Theorem 5.1]{ALMP13}.

\begin{theorem}
Let $\hat X$ be a stratified space endowed with a suitably scaled rigid iterated wedge metric. For any choice of Cheeger ideal boundary conditions, $\bW = (W_1, W_2, \ldots, W_{\ell}),$ the complex
\begin{equation*}
	0 \lra 
	\cD_{\bW}(d)_0 \xlra{d} 
	\cD_{\bW}(d)_1 \xlra{d}
	\ldots \xlra{d}
	\cD_{\bW}(d)_n \lra 0
\end{equation*}
is a Fredholm Hilbert complex whose cohomology $\tH^\bullet _{\bW, \dR}(\hat X)$ is canonically isomorphic to the Hodge cohomology of $\bW,$
\begin{equation*}
	\tH^\bullet _{\bW, \dR}(\hat X) \cong \cH^\bullet _{\bW}(\hat X).
\end{equation*}
\end{theorem}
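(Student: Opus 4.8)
The plan is to reduce the statement to the preceding theorem (the general-stratum analogue of Theorem~\ref{thm:CmptResolv}, asserting that $(\eth_{\dR},\cD_{\bW}(\eth_{\dR}))$ is closed, self-adjoint, Fredholm with compact resolvent) combined with the Hilbert-complex machinery of \cite{Bruning-Lesch:Hilbert}, exactly as in the depth-one case of Theorem~\ref{thm:HodgeThmSimpleEdge}. The argument is inductive on the depth $\ell$, the case $\ell\le 1$ being Theorem~\ref{thm:HodgeThmSimpleEdge} (and $\ell=0$ the classical Hodge theorem). Assume the full package — Hodge and de Rham cohomology, the bundle $\cH^{v_k/2}_{(W_1,\ldots,W_{k-1})}(M_k/Y_k)$ with its flat connection $\nabla^{\cH}$ (this flat-bundle structure and its gluing across corners being supplied by Corollary~\ref{cor:DiffeoInv} and the compatibility \eqref{eq:CornerFibration}), and the analogue of Lemma~\ref{lem:AsympExp} — has been established on all spaces of depth $<\ell$, in particular on each link $\hat Z_k$ with its inherited domain $\cD_{(W_1,\ldots,W_{k-1})}(\eth_{\dR}^{Z_k})$; by the fundamental observation behind \eqref{eq:NormalOp}, this domain is precisely what the normal operator $N_q(\eth_{\dR})$ at $q\in Y_k$ sees of the link.

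First I would check that each $\cD_{\bW}(d)$ is a complex. Working one stratum at a time, I would establish the analogue at $Y_k$ of the identity $\alpha_k((d\omega)_\delta)=\nabla^{\cH}\alpha_k(\omega_\delta)$ from \cite[Lemma 5.3]{ALMP13}: this uses the splitting $d=d^{Z_k}+\wt d^{Y_k}+R$ of \S\ref{sec:CohoFiberBdles} adapted to the fibration $M_k\to Y_k$ — with $d^{Z_k}$ now the de Rham differential on $\hat Z_k$ carrying its domain $\cD_{(W_1,\ldots,W_{k-1})}$ — together with the fact that $\wt d^{Y_k}$ descends to $\nabla^{\cH}$ on the cohomology bundle. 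Flatness of $W_k$ inside $\cH^{v_k/2}_{(W_1,\ldots,W_{k-1})}(M_k/Y_k)$ then gives $d\big(\cD_{\max,(W_1,\ldots,W_k)}(d)\big)\subseteq \cD_{\max,(W_1,\ldots,W_k)}(d)$; iterating over $k=1,\ldots,\ell$ shows $\cD_{\bW}(d)$ is a Hilbert complex, and the dual computation handles $\cD_{\bW}(\delta)$.

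Next I would identify $\cD_{\bW}(d)$ as the $d$-graph closure of $\cD_{\bW}(\eth_{\dR})$ and $\cD_{\bW}(\delta)$ as its Hilbert-space adjoint, and prove the intersection identity $\cD_{\bW}(d)\cap\cD_{\bW}(\delta)=\cD_{\bW}(\eth_{\dR})$ — the general-stratum analogue of Theorem~\ref{thm:HodgeThmSimpleEdge}(i)--(iii). For the graph-closure statement I would use the inclusion $\cD_{\bW}(\eth_{\dR})\subseteq \bigcap_{\eps>0}(r_1\cdots r_\ell)^{1/2-\eps}H^1_e(\wt X;\Lambda^\bullet({}^wT^*\wt X))$ from the preceding theorem, a cutoff-and-mollification argument carried out within $\CI_{\Phi}(\wt X)$ (localizing with the $\CI_{\Phi}$-partition of unity and the families $A(Y_k)$), and the continuity of each Cauchy-data map $\alpha_k,\beta_k$ together with compatibility of the Kodaira projections $\omega\mapsto(\chi\omega)_\delta$ and $\omega\mapsto(\chi\omega)_d$ with localization. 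The adjoint identity follows from the formal adjointness of $d$ and $\delta$, the distributional characterization of their actions, and the fact that $W_k^{\perp}$ is the metric-orthogonal complement of $W_k$, so that the $\alpha$-condition defining $\cD_{\bW}(d)$ and the $\beta$-condition defining $\cD_{\bW}(\delta)$ are exactly dual. The intersection identity then drops out: an $\omega$ in both domains lies in $\cD_{\max}(\eth_{\dR})$ with $\alpha_k(\chi\omega)\in W_k$ and $\beta_k(\chi\omega)\in W_k^{\perp}$ for all $\chi\in A(Y_k)$ and all $k$, which is exactly the definition of $\cD_{\bW}(\eth_{\dR})$.

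Finally, with $(d,\cD_{\bW}(d))$ closed, $(\delta,\cD_{\bW}(\delta))=(d,\cD_{\bW}(d))^*$, and $\cD_{\bW}(\eth_{\dR})=\cD_{\bW}(d)\cap\cD_{\bW}(\delta)$, I would invoke the preceding theorem: $(\eth_{\dR},\cD_{\bW}(\eth_{\dR}))$ is self-adjoint and Fredholm, hence has closed range and finite-dimensional kernel. By \cite[Corollary 2.5]{Bruning-Lesch:Hilbert} the complex $\cD_{\bW}(d)$ is then a Fredholm Hilbert complex whose cohomology is represented by the harmonic space $\ker(\eth_{\dR},\cD_{\bW}(\eth_{\dR}))=\cH^\bullet_{\bW}(\hat X)$, giving the asserted isomorphism $\tH^\bullet_{\bW,\dR}(\hat X)\cong\cH^\bullet_{\bW}(\hat X)$. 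I expect the main obstacle to be the inductive bookkeeping around the Kodaira projection: verifying that projection onto $\bar{\delta(\cdots)}$ commutes with the localizing cutoffs well enough that $(\chi\omega)_\delta$ inherits, at $Y_k$, a distributional asymptotic expansion whose leading coefficient transforms by $\nabla^{\cH}$, and simultaneously confirming that the inductively defined spaces $\cH^{v_k/2}_{(W_1,\ldots,W_{k-1})}(M_k/Y_k)$ are genuine flat bundles (the input from Corollary~\ref{cor:DiffeoInv}) that glue correctly across the corners via \eqref{eq:CornerFibration}.
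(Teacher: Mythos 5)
Your proposal follows essentially the same route as the paper: an induction on depth in which the Kodaira projection $\omega\mapsto\omega_\delta$ supplies Cauchy data for elements of $\cD_{\max,(W_1,\ldots,W_{k-1})}(d)$, flatness of $W_k$ (via $\alpha_k((d\omega)_\delta)=\nabla^{\cH}\alpha_k(\omega_\delta)$) yields the complex property, the intersection identity $\cD_{\bW}(d)\cap\cD_{\bW}(\delta)=\cD_{\bW}(\eth_{\dR})$ is the analogue of Theorem~\ref{thm:HodgeThmSimpleEdge}, and the Fredholm/Hodge conclusions come from the compact-resolvent theorem together with \cite[Corollary 2.5]{Bruning-Lesch:Hilbert}. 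You also correctly identify the genuine technical burden (compatibility of the Kodaira projections with localization and the flat-bundle structure of $\cH^{v_k/2}_{(W_1,\ldots,W_{k-1})}(M_k/Y_k)$ via Corollary~\ref{cor:DiffeoInv}), so no changes are needed.
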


\subsection{Cheeger spaces}

If $\hat X$ is oriented then to each mezzoperversity we can associate a dual mezzoperversity.
Inductively, assume that we have chosen compatible flat bundles $W_1, \ldots, W_k$ over $Y_1, \ldots, Y_k$ respectively, and their dual bundles $\bbD W_1, \ldots, \bbD W_k.$
Then, over $Y_k,$ we let
\begin{equation}\label{eq:Qk}
\begin{gathered}
	Q_k: \CI(Y_k; \cH^{v_k/2}_{(W_1, \ldots, W_{k-1})}(M_k/Y_k)) \times  \CI(Y_k; \cH^{v_k/2}_{(\bbD W_1, \ldots, \bbD W_{k-1})}(M_k/Y_k))
	\lra \CI(Y_k), \\
	Q_k(\eta, \omega) = \int_{M_k/Y_k} \eta \wedge\omega
\end{gathered}
\end{equation}
and we define the dual bundle of a flat subbundle $W_k$ of $\cH^{v_k/2}_{(W_1, \ldots, W_k)}(M_k/Y_k)$ to be its $Q$-orthogonal complement.
As before, this is again a flat bundle and satisfies
\begin{equation*}
	*W_k^{\perp} = \bbD W_k.
\end{equation*}

In this way, given a mezzoperversity $\bW = (W_1, \ldots, W_{\ell})$ we have defined a dual mezzoperversity
\begin{equation*}
	\bbD\bW = (\bbD W_1, \ldots, \bbD W_\ell)
\end{equation*}
and shown that the Hodge start intertwines the respective domains,
\begin{equation*}
	*: \cD_{\bW}(\eth_{\dR}) \lra \cD_{\bbD \bW}(\eth_{\dR}).
\end{equation*}

We say that a stratified space is a {\em Cheeger space} if it admits a mezzoperversity that coincides with its dual mezzoperversity (by convention this includes Witt spaces).

\begin{theorem}[Generalized Poincar\'e duality]
Let $\hat X$ be an oriented pseudomanifold endowed with a rigid suitably scaled iterated wedge metric.
The pairing
\begin{equation*}
	B: L^2(\wt X; \Lambda^\bullet ({}^wT^*\wt X)) \times L^2(\wt X; \Lambda^\bullet ({}^wT^*\wt X)) \lra \bbR, \quad
	B(\eta, \omega) = \int \eta\wedge\omega
\end{equation*}
descends, for any choice of mezzoperversity $\bW,$ to a non-degenerate pairing
\begin{equation*}
	\tH^\bullet _{\bW, \dR}(\hat X) \times \tH^\bullet _{\bbD\bW, \dR}(\hat X) \lra \bbR.
\end{equation*}
\end{theorem}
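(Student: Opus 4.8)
The plan is to establish, in turn, that $B$ descends to a well-defined pairing on cohomology and that the descended pairing is perfect. Throughout, let $*$ denote the Hodge star of the wedge metric (available since $\hat X$ is oriented), normalised so that $B(\eta,\omega)=\langle\eta,*^{-1}\omega\rangle_{L^2}$ for $L^2$-forms; in particular $B$ is bounded on $L^2\times L^2$, vanishes unless $\deg\eta+\deg\omega=n:=\dim\hat X^{\reg}$, and $B(\eta,*\eta)=\norm{\eta}_{L^2}^2$.

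First I would record how $*$ interacts with the mezzoperversity domains. The Hodge star is an $L^2$-isometry with $**=\pm\Id$, and it carries the $d$-graph norm to the $\delta$-graph norm because $\delta=\pm*\,d\,*$. Combining this with the intertwining $*\colon\cD_{\bW}(\eth_{\dR})\to\cD_{\bbD\bW}(\eth_{\dR})$ proved above, with $\bbD\bbD\bW=\bW$ (which holds inductively because $\bbD W_k=*W_k^\perp$ and $W_k^{\perp\perp}=W_k$), and with the definitions of $\cD_{\bW}(d)$ and $\cD_{\bW}(\delta)$ as the $d$- and $\delta$-graph closures of $\cD_{\bW}(\eth_{\dR})$, one gets
\[
	*\,\cD_{\bW}(d)=\cD_{\bbD\bW}(\delta),\qquad *\,\cD_{\bW}(\delta)=\cD_{\bbD\bW}(d).
\]
Feeding this into the adjoint relation $(d,\cD_{\bW}(d))^{*}=(\delta,\cD_{\bW}(\delta))$ — the iterated analogue of Theorem~\ref{thm:HodgeThmSimpleEdge}(ii) — I obtain the integration-by-parts identity
\[
	B(d\alpha,\omega)=\pm\,B(\alpha,d\omega),\qquad \alpha\in\cD_{\bW}(d),\ \omega\in\cD_{\bbD\bW}(d),
\]
because $B(d\alpha,\omega)=\langle d\alpha,*^{-1}\omega\rangle=\langle\alpha,\delta(*^{-1}\omega)\rangle$ (legitimate since $*^{-1}\omega\in\cD_{\bW}(\delta)$) and $\delta(*^{-1}\omega)=\pm\,*(d\omega)$. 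Specialising $\omega$ to a closed form gives $B(d\alpha,\omega)=0$, specialising $\alpha$ to a closed form gives $B(\alpha,d\omega)=0$, and taking $\omega=d\mu$ gives $B(d\alpha,d\mu)=0$; since $\cD_{\bW}(d)$ and $\cD_{\bbD\bW}(d)$ are complexes, these three facts show that $B(\alpha,\omega)$ depends only on $[\alpha]\in\tH^\bullet_{\bW,\dR}(\hat X)$ and $[\omega]\in\tH^\bullet_{\bbD\bW,\dR}(\hat X)$. Thus $B$ descends to a pairing $\tH^{p}_{\bW,\dR}(\hat X)\times\tH^{n-p}_{\bbD\bW,\dR}(\hat X)\to\bbR$ for every $p$.

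For non-degeneracy I would invoke the Hodge theorem. Let $0\neq c\in\tH^{p}_{\bW,\dR}(\hat X)$; under the isomorphism with $\cH^{p}_{\bW}(\hat X)=\ker(\eth_{\dR},\cD_{\bW}(\eth_{\dR}))$ it has a nonzero harmonic representative $\eta$. Since $\eta$ is closed and coclosed, so is $*\eta$, which moreover lies in $\cD_{\bbD\bW}(\eth_{\dR})=\cD_{\bbD\bW}(d)\cap\cD_{\bbD\bW}(\delta)$ by the intertwining property; hence $*\eta$ is a harmonic form and defines a class $[*\eta]\in\tH^{n-p}_{\bbD\bW,\dR}(\hat X)$ with
\[
	B\big(c,[*\eta]\big)=B(\eta,*\eta)=\norm{\eta}_{L^2}^{2}\neq0 .
\]
Therefore the map $\tH^{p}_{\bW,\dR}(\hat X)\to\big(\tH^{n-p}_{\bbD\bW,\dR}(\hat X)\big)^{*}$ induced by $B$ is injective. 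Both sides are finite-dimensional by the Fredholm property of the de Rham complexes (Theorem~\ref{thm:CmptResolv} and its iterated analogue), and $*$ restricts to an isomorphism $\cH^{p}_{\bW}(\hat X)\cong\cH^{n-p}_{\bbD\bW}(\hat X)$, so the two dimensions agree and the map is an isomorphism. Running the same argument with $\bW$ and $\bbD\bW$ interchanged (permissible since $\bbD\bbD\bW=\bW$) gives injectivity, hence bijectivity, in the other variable as well, which is the asserted non-degeneracy.

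The genuinely load-bearing step is the integration-by-parts identity with no boundary contribution. This is where the decay of elements of $\cD_{\bW}(\eth_{\dR})$ at the singular strata — and hence the hypothesis that $g$ is rigid and suitably scaled — really enters, packaged into the adjointness statement $(d,\cD_{\bW}(d))^{*}=(\delta,\cD_{\bW}(\delta))$ and the compatibility $*\,\cD_{\bW}(d)=\cD_{\bbD\bW}(\delta)$ of the Hodge star with the mezzoperversity domains. Verifying these two facts carefully, together with the attendant signs, is the crux; once they are in hand, the rest is formal given the Hodge theorem and the Fredholm property.
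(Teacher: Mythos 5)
Your proof is correct and follows essentially the same route the paper takes: the key inputs are the intertwining $*\colon\cD_{\bW}(\eth_{\dR})\to\cD_{\bbD\bW}(\eth_{\dR})$, the adjointness $(d,\cD_{\bW}(d))^{*}=(\delta,\cD_{\bW}(\delta))$, and the identity $B(\eta,*\eta)=\norm{\eta}_{L^2}^2$ applied to a harmonic representative, exactly as in \S\ref{sec:DepthOnePD} and its iterated analogue. You have simply made explicit the descent-to-cohomology and dimension-count steps that the paper leaves as "easy to see."
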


Thus on a Cheeger space the groups $\tH^\bullet _{\bW, \dR}(\hat X)$ satisfy Poincar\'e duality and define a signature. We will discuss the properties of this signature in section \ref{sec:PropSign}.

\section{Properties of cohomology} \label{sec:PropsCoho}

\subsection{Metric independence of de Rham cohomology} \label{sec:MetInd}

Our definition of a mezzoperversity uses Hodge cohomology groups and hence is metric dependent.
In this section we will explain how to define mezzoperversities in a metric-independent way, and why this leads to metric independent de Rham cohomology groups.
For this section we will incorporate the metric into the notation of the domain, e.g., 
\begin{equation*}
	\cD_{\max}(d;g).
\end{equation*}

To start with, note that if $g$ and $g'$ are two iterated wedge metrics on the same pseudomanifold $\hat X$ then they both induce bundle metrics on ${}^w T^*\wt X$ over $\wt X$ and, since $\wt X$ is compact, these must be quasi-isometric. That is, there is a positive constant $C$ such that
\begin{equation*}
	C g(\cdot, \cdot) \leq g'(\cdot, \cdot) \leq \frac1C g(\cdot, \cdot).
\end{equation*}
It follows that $L^2$ norms defined by $g$ and $g'$ on $\CIc(\wt X; \Lambda^\bullet ({}^w T^*\wt X))$ are equivalent and hence
\begin{equation*}
	L^2(\wt X; \Lambda^\bullet ({}^w T^*\wt X))
\end{equation*}
as a set is independent of the choice of metric and inherits equivalent norms from $g$ and $g'.$
Then directly from the definitions of the minimal and maximal domains we see that
\begin{equation*}
	\cD_{\min}(d;g) = \cD_{\min}(d;g'), \quad \cD_{\max}(d;g)= \cD_{\max}(d;g')
\end{equation*}
so we can denote these unambiguously as $\cD_{\min}(d)$ and $\cD_{\max}(d).$
If $\hat X$ is a Witt space, then this shows that its de Rham cohomology is independent of the choice of metric.

For domains defined by Cheeger ideal boundary conditions, we proceed inductively.
So first {\em assume that $\hat X$ is a pseudomanifold of depth one} and adopt the notation of Section \ref{sec:DepthOne}.
A mezzoperversity is a flat subbundle $W$ of $\cH^{v/2}(M/Y) \lra Y$ and hence depends on the metric.
Let us emphasize this by referring to $W$ as a {\em Hodge mezzoperversity} or even a {\em $g$-Hodge mezzoperversity}.
Analogously a {\em de Rham mezzoperversity} will refer to a flat subbundle of $\tH^{v/2}_{\dR}(M/Y) \lra Y.$
Using the Hodge isomorphism on the smooth manifold $Z,$ we can associate a de Rham mezzoperversity to each Hodge mezzoperversity.
We denote the de Rham mezzoperversity corresponding to $W$ by $[W].$
The advantage is that the bundle $\tH^{v/2}_{\dR}(M/Y) \lra Y$ and, from \S\ref{sec:CohoFiberBdles} its flat connection, are defined independently of the metric.
Thus it makes sense to compare the two domains,
\begin{equation*}
	\cD_{[W]}(d;g) \Mand \cD_{[W]}(d;g').
\end{equation*}

Consider the subspace $\cD_{\max}^{\reg}(d) \subseteq \cD_{\max}(d)$ consisting of wedge differential forms with an asymptotic expansion near $Y,$
\begin{equation*}
	\chi \omega = r^{-v/2}(\alpha(\omega)+ dr\wedge \beta(\omega)) + \wt \omega, \Mforall \chi \in A(Y),
\end{equation*}
where $\wt\omega \in \cD_{\min}(d)$ and $\alpha(\omega),$ $\beta(\omega)$ are smooth differential forms defined near $\pa\wt X,$ of vertical degree $v/2,$ in the null space of $d^Z.$ 
This decomposition when it exists is not unique, since elements of $\cD_{\min}(d)$ may themselves have asymptotic expansions, but one can show that the de Rham cohomology class of $\alpha(\omega)$ is well-defined.
It turns out \cite[\S5]{ALMP13} that $\cD_{\max}^{\reg}(d)$ is dense in $\cD_{\max}(d)$ with respect to the graph norm of $d,$
and 
\begin{equation*}
	\cD_{[W]}^{\reg}(d;g) = \cD_{[W]}(d, g) \cap \cD_{\max}^{\reg}(d)
\end{equation*}
is dense in $\cD_{[W]}(d;g)$ with respect to the graph norm of $d.$
Moreover, if $\omega$ is in $\cD_{\max}^{\reg}(d)$ then $\omega \in \cD_{[W]}(d)$ precisely when the vertical de Rham cohomology class of $\alpha(\omega)$ as a section over $Y$ is in $[W].$
Thus $\cD_{[W]}^{\reg}(d;g) = \cD_{[W]}^{\reg}(d;g')$ and, since taking the closure in $L^2$ with respect to $g$ is the same as taking the closure with respect to $g',$ we have
\begin{equation*}
	\cD_{[W]}(d;g) = \cD_{[W]}(d;g').
\end{equation*}
It follows that on a pseudomanifold of depth one, the domain of the exterior derivative corresponding to a de Rham mezzoperversity is independent of the choice of rigid suitably scaled wedge metric and hence 
\begin{equation*}
	\tH^\bullet _{[W], \dR}(\hat X) \text{ is independent of the choice of }g.
\end{equation*}
The same thing can be shown on a general pseudomanifold proceeding inductively one stratum at a time.

\begin{theorem}[Metric independence]
Let $\hat X$ be a pseudomanifold and let $[\bW]$ be a de Rham mezzoperversity, i.e., a sequential choice of flat subbundles of the vertical de Rham cohomology with Cheeger ideal boundary conditions at each non-Witt stratum. If $g$ and $g'$ are two rigid, suitably scaled, iterated wedge metrics on $\hat X$ then
\begin{equation*}
	\cD_{[\bW]}(d;g)
	= \cD_{[\bW]}(d;g')
\end{equation*}
and, in particular, the de Rham cohomology groups $\tH^\bullet _{[\bW], \dR}(\hat X)$ are independent of the choice of metric.
\end{theorem}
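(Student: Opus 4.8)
The plan is to prove $\cD_{[\bW]}(d;g)=\cD_{[\bW]}(d;g')$ by a double induction: an outer induction on the depth of $\hat X$, and, within the inductive step, a stratum-by-stratum argument over $Y_1,\ldots,Y_\ell$. The base cases (smooth manifolds and depth-one pseudomanifolds) are the computations carried out above. For the inductive step, suppose the theorem holds for all pseudomanifolds of depth strictly less than $\ell=\depth\hat X$; since the link $\hat Z_k$ at $Y_k$ has depth $k-1<\ell$ for every $k$, this inductive hypothesis applies to each link. One strengthens the statement being carried along so that it includes, at each stage $k$: (a) that $\cD_{\max,(W_1,\ldots,W_k)}(d)$ is independent of the rigid, suitably scaled, iterated wedge metric; (b) that the vertical cohomology bundle $\tH^{v_{k+1}/2}_{\dR,(W_1,\ldots,W_k)}(M_{k+1}/Y_{k+1})\to Y_{k+1}$ together with its flat connection is metric-free, so that a de Rham mezzoperversity at $Y_{k+1}$ is defined without reference to a metric; and (c) that the regular subspace $\cD^{\reg}_{\max,(W_1,\ldots,W_k)}(d)$ of forms admitting a smooth partial asymptotic expansion at $Y_{k+1}$ with $\cD_{\min}(d)$-remainder is dense in $\cD_{\max,(W_1,\ldots,W_k)}(d)$ for the graph norm of $d$. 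Statement (b) at $Y_{k+1}$ is obtained by applying the outer inductive hypothesis fiberwise to $\hat Z_{k+1}$ (to get metric-independence of the fiber cohomology) together with a flat-bundle construction of the type in \S\ref{sec:CohoFiberBdles}; (c) comes from the local parametrix produced in the stage-$k$ analysis.

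The engine is the quasi-isometry step. Since $\wt X$ is compact, any two rigid iterated wedge metrics $g,g'$ induce quasi-isometric bundle metrics on ${}^wT^*\wt X$, so $L^2(\wt X;\Lambda^\bullet({}^wT^*\wt X))$ is the same Hilbert space up to equivalent norms; consequently $\cD_{\min}(d)$ and $\cD_{\max}(d)$ are metric-independent, and the two graph norms $\norm{\omega}_{L^2}^2+\norm{d\omega}_{L^2}^2$ are uniformly equivalent. Hence the $d$-graph closure of any fixed linear subspace is computed identically for $g$ and $g'$, which reduces the whole problem to matching the two domains on a single graph-dense subspace.

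For the stage-$k$ step, assume (a)--(c) through stage $k-1$ and let $Y_k$ be a stratum of depth $k$. By (a) at stage $k-1$, $\cD_{\max,(W_1,\ldots,W_{k-1})}(d)$ is metric-independent and forms the same Hilbert complex for $g$ and $g'$. Following \cite[\S5]{ALMP13}, an element $\omega$ of $\cD^{\reg}_{\max,(W_1,\ldots,W_{k-1})}(d)$ localized by $\chi\in A(Y_k)$ has $\chi\omega=r_k^{-v_k/2}(\alpha_k(\omega)+dr_k\wedge\beta_k(\omega))+\wt\omega$ with $\wt\omega\in\cD_{\min}(d)$ and $\alpha_k(\omega)$ a smooth $d^Z$-closed form of vertical degree $v_k/2$; the expansion is not unique, but the fiberwise de Rham class $[\alpha_k(\omega)]$, a section of the metric-free bundle of (b) at $Y_k$, is well-defined, and $\omega\in\cD_{[W_k]}(d)$ exactly when $[\alpha_k(\omega)]$ is valued in $[W_k]$ over $Y_k$. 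Since this criterion mentions only the topology of the link, the flat bundle $[W_k]$, and the splitting into leading term plus $\cD_{\min}(d)$-remainder --- none of which sees the metric --- we get $\cD^{\reg}_{[W_k]}(d;g)=\cD^{\reg}_{[W_k]}(d;g')$, and taking $d$-graph closures (equal for $g$ and $g'$ by the quasi-isometry step) together with the density (c) at $Y_k$ gives $\cD_{\max,(W_1,\ldots,W_k)}(d;g)=\cD_{\max,(W_1,\ldots,W_k)}(d;g')$. Running $k$ from $1$ to $\ell$ yields $\cD_{[\bW]}(d;g)=\cD_{[\bW]}(d;g')$, and $\tH^\bullet_{[\bW],\dR}(\hat X)$ is then metric-independent because it is the cohomology of a complex that is literally identical for the two metrics.

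The main obstacle is that the definition of $\cD_{[W_k]}(d)$ is phrased in a manifestly metric-dependent way: the Cauchy datum is extracted not from $\omega$ but from its $L^2$-Kodaira projection $(\chi\omega)_\delta$ onto $\bar{\delta(\cD_{\max,(W_1,\ldots,W_{k-1})}(d)^*)}$, an orthogonal projection that changes with $g$. Overcoming this requires exactly the two ingredients highlighted above --- that on the graph-dense regular subspace one may substitute for $(\chi\omega)_\delta$ the leading coefficient of an arbitrary smooth expansion without altering its fiberwise de Rham cohomology class, and that this class lives in a flat bundle that is metric-free by the outer inductive hypothesis. Establishing the density (c) at $Y_k$ is itself not a soft statement: it rests on the parametrix construction and the depth-$<k$ theory, and must be in hand before the metric-independence argument can be invoked. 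The corner-compatibility conditions \eqref{eq:CornerFibration} add nothing here, since they constrain only the fibrations and not the metric, and so are respected automatically once the bundles in (b) are produced metric-freely.
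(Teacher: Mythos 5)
Your proposal is correct and follows essentially the same route as the paper: the quasi-isometry step to fix $L^2$, $\cD_{\min}(d)$, $\cD_{\max}(d)$ and the graph norms, the passage from Hodge to de Rham mezzoperversities via the metric-free vertical cohomology bundle, and the graph-dense regular subspace $\cD^{\reg}$ on which the boundary condition is read off from the de Rham class of the leading coefficient rather than from the metric-dependent Kodaira projection. The stratum-by-stratum induction you spell out is exactly how the paper extends the depth-one argument to the general case.
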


An easy corollary is that de Rham cohomology is invariant under pull-back by a diffeomorphism of manifolds with corners $\wt X \lra \wt N$ that intertwines iterated boundary fibration structures on $\wt X$ and $\wt N$ (i.e., an isomorphism in the sense of \S\ref{sec:TotalRes}).

\begin{corollary}\label{cor:DiffeoInv}
Let $\hat X$ and $\hat N$ be pseudomanifolds and $F: \wt X \lra \wt N$ a diffeomorphism intertwining the boundary fibration structures. Given a de Rham mezzoperversity $\bW_{\hat N}$ on $\hat N,$ pull-back by $F$ determines a de Rham mezzoperversity $F^*\bW_{\hat N}$ on $\hat X,$ and an isomorphism
\begin{equation*}
	F^*: \tH^\bullet _{[\bW_{\hat N}], \dR}(\hat N) \lra \tH^\bullet _{[F^*\bW_{\hat N}], \dR}(\hat X).
\end{equation*}
\end{corollary}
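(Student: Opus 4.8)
\emph{Proof proposal.} The plan is to reduce the statement to the Metric Independence theorem by transporting all of the relevant structure along $F$. First I would fix a rigid, suitably scaled, iterated wedge metric $g_{\hat N}$ on $\hat N$, together with the auxiliary collar neighborhoods and boundary defining functions $r_k$ used in its definition, and then use $F$ to transport this structure to $\hat X$. Since $F\colon \wt X \to \wt N$ restricts on each boundary hypersurface $M_k(\wt X)$ to a diffeomorphism onto $M_k(\wt N)$ commuting with the fibrations $\phi_k$ and compatible over corners as in \eqref{eq:CornerFibration}, the metric $g_{\hat X} = F^*g_{\hat N}$ is again a rigid iterated wedge metric, with respect to the pulled-back collars; moreover it is suitably scaled, because the vertical metrics on the links of $\hat X$ are carried isometrically by $F$ to those on $\hat N$, so the fibrewise de Rham operators are conjugate and hence isospectral, and $\spec_b(\eth_{\dR};q)$ is unchanged. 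Consequently $F^*$ is a unitary isomorphism $L^2(\wt N;\Lambda^\bullet({}^w T^*\wt N)) \to L^2(\wt X;\Lambda^\bullet({}^w T^*\wt X))$ intertwining $d$, $\delta$ and hence $\eth_{\dR}$, so it carries $\cD_{\max}(d)$, $\cD_{\min}(d)$ and the regular subspace $\cD^{\reg}_{\max}(d)$ on $\hat N$ onto the corresponding subspaces on $\hat X$.

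Next I would construct $F^*\bW_{\hat N}$ by induction on depth. Assume the corollary has been established for pseudomanifolds of depth less than $k$; as observed just after Lemma \ref{lem:AsympExp}, this is precisely what is needed in order to make sense of the bundle $\cH^{v_k/2}_{(W_1,\ldots,W_{k-1})}(M_k/Y_k) \to Y_k$ and its flat connection, and similarly for the analogous de Rham bundle. Then $F$ induces over $Y_k$ an isomorphism between the vertical de Rham cohomology bundle of $M_k(\wt X)/Y_k$, formed with the already-constructed boundary conditions on its links, and that of $M_k(\wt N)/Y_k$; this isomorphism intertwines the flat connections of \S\ref{sec:CohoFiberBdles}, since over any trivialization of $\phi_k$ it is induced by a diffeomorphism of the fiber and homotopy invariance of de Rham cohomology (with Cheeger ideal boundary conditions) makes the transition functions locally constant. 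Pulling back the flat subbundle $[W_k^{\hat N}]$ through this isomorphism yields a flat subbundle $[W_k^{\hat X}]$, and the corner compatibility condition is transported because $F$ respects \eqref{eq:CornerFibration}; thus $F^*\bW_{\hat N} = (F^*W_1^{\hat N},\ldots,F^*W_\ell^{\hat N})$ is again a de Rham mezzoperversity. Now using the regular-subspace description of the Cheeger ideal boundary conditions from \S\ref{sec:MetInd} — that $\omega \in \cD^{\reg}_{\max}(d)$ lies in $\cD_{[\bW]}(d)$ exactly when the vertical de Rham class of $\alpha_k((\chi\omega)_\delta)$ lies in $[W_k]$ for every $k$ and every $\chi \in A(Y_k)$ — together with the fact that $F^*$ intertwines the Kodaira projections (being unitary and commuting with $d,\delta$) and the asymptotic-expansion coefficient maps $\alpha_k,\beta_k$ (since it respects the collar and fibration data), one gets $F^*\cD^{\reg}_{[\bW_{\hat N}]}(d;g_{\hat N}) = \cD^{\reg}_{[F^*\bW_{\hat N}]}(d;g_{\hat X})$; taking $d$-graph closures gives $F^*\cD_{[\bW_{\hat N}]}(d;g_{\hat N}) = \cD_{[F^*\bW_{\hat N}]}(d;g_{\hat X})$ as subcomplexes of $L^2$.

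Finally, by the Metric Independence theorem $\cD_{[F^*\bW_{\hat N}]}(d;g_{\hat X}) = \cD_{[F^*\bW_{\hat N}]}(d;g)$ for every rigid, suitably scaled, iterated wedge metric $g$ on $\hat X$, so the complex obtained above is the one defining $\tH^\bullet_{[F^*\bW_{\hat N}],\dR}(\hat X)$. Since $F^*$ commutes with $d$, it is an isomorphism of Fredholm Hilbert complexes $(\cD_{[\bW_{\hat N}]}(d)_\bullet,d) \to (\cD_{[F^*\bW_{\hat N}]}(d)_\bullet,d)$, and passing to cohomology yields the asserted isomorphism $F^*\colon \tH^\bullet_{[\bW_{\hat N}],\dR}(\hat N) \to \tH^\bullet_{[F^*\bW_{\hat N}],\dR}(\hat X)$. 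The main obstacle I anticipate is the inductive bookkeeping in the middle step: checking carefully that $F^*$ intertwines the maps $\alpha_k,\beta_k$ and the Kodaira projections at each stratum, that it is compatible with the auxiliary Cheeger boundary conditions entering the definition of the vertical cohomology bundles over each $Y_k$, and that the corner compatibility condition for mezzoperversities is preserved; the remaining steps are either formal consequences of unitarity of $F^*$ or direct appeals to the Metric Independence theorem.
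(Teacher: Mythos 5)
Your proposal is correct and follows exactly the route the paper intends: the corollary is presented as an immediate consequence of the Metric Independence theorem, obtained by pulling back the metric along $F$ so that $F^*$ becomes a unitary isomorphism of the $L^2$ complexes identifying $\cD_{[\bW_{\hat N}]}(d;g_{\hat N})$ with $\cD_{[F^*\bW_{\hat N}]}(d;F^*g_{\hat N})$, and then invoking metric independence on $\hat X$. Your inductive bookkeeping of the flat bundles, Cauchy data maps, and corner compatibility is the content the paper defers to \cite[Theorem 5.4]{ALMP13}, and it is carried out correctly here.
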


This is proved in \cite[Theorem 5.4]{ALMP13}, we do not go into the details as in the next section we discuss the more general case of stratified homotopy equivalences.

\subsection{Stratified homotopy invariance of de Rham cohomology}

If $X$ and $N$ are smooth closed manifolds and $F: X \lra N$ is a smooth homotopy equivalence between them, then
$F$ defines a pull-back map of differential forms that commutes with the exterior derivative. Thus $F^*$ descends to a map on de Rham cohomology which is necessarily an isomorphism,
\begin{equation*}
	F^*: \tH^\bullet _{\dR}(N) \xlra{\cong} \tH^\bullet _{\dR}(X).
\end{equation*}
We want to prove an analogous statement for the $L^2$-de Rham cohomology of a pseudomanifold corresponding to a mezzoperversity.

One problem is that, even on smooth Riemannian manifolds, the pull-back by a smooth homotopy equivalence need not induce a bounded map on $L^2$-differential forms.
For example, if $F$ is an embedding and $u_{\eps} \in \CI(N)$ is the indicator function of an $\eps$-ball around $F(X),$ then the $L^2$-norm of $u_{\eps}$ goes to zero with $\eps,$ while $F^*u_{\eps}=1$ has constant $L^2$ norm. Thus 
\begin{equation*}
	F^*: \{\text{ step functions }\} \subseteq L^2(N) \lra L^2(X)
\end{equation*}
is not even closable.
There is a clever way around this, developed by Hilsum-Skandalis \cite{Hilsum-Skandalis} for smooth manifolds, and extended in \cite{ALMP11, ALMP13.2} to pseudomanifolds. We replace $F^*$ by a map on $L^2$-differential forms called the {\em Hilsum-Skandalis replacement} and denoted $HS(F).$\\

First let us explain our assumptions on maps between stratified spaces.
By a {\em smooth map between stratified spaces} we mean a continuous map  $\hat F: \hat X \lra \hat N$ that lifts to a smooth map between the resolutions of the stratified spaces,
$F: \wt X \lra \wt N.$ Equivalently, these are the smooth maps $\wt X \lra \wt N$ such that 
\begin{equation*}
	F^*\CI_{\Phi}(\wt N) \subseteq \CI_{\Phi}(\wt X).
\end{equation*}
We denote these maps by
\begin{equation*}
	\CI(\hat X, \hat N) \equiv \CI_{\Phi}(\wt X, \wt N).
\end{equation*}
We say that $\hat F \in \CI(\hat X, \hat N)$ is {\em stratum-preserving} if the inverse image of a stratum is a union of strata,
and {\em strongly stratum preserving} if the codimension of a stratum of $\hat N$ coincides with the codimension of its inverse image in $\hat X.$
We say that $\hat F \in \CI(\hat X, \hat N)$ is a {\em stratified homotopy equivalence} if there is a smooth strongly stratum preserving map $\hat G \in \CI(\hat N, \hat X)$ such that
$\hat F \circ \hat G$ and $\hat G \circ \hat F$ are smoothly connected to the identity through smooth strongly stratum preserving maps.

We will assign a Hilsum-Skandalis replacement $HS(F)$ to a smooth strongly stratum preserving map. These replacements will descend to cohomology and only depend on the homotopy class of $F$ within smooth strongly stratum preserving maps.
The construction of $HS(F)$ is slightly complicated, but the main idea is that we want to `spread out' the image of $F$ to avoid precisely the sort of counterexample above. In \cite{Hilsum-Skandalis} this `spreading out' is done by embedding $N$ into a Euclidean space and averaging among nearby points. We will do something similar using the geodesic flow of a wedge metric. 

A smooth stratum-preserving map induces a map of the iterated wedge tangent bundles
\begin{equation*}
	DF: {}^w T\wt X \lra {}^w T\wt N
\end{equation*}
and hence a pull-back map
\begin{equation*}
	F^*:\CI(\wt L; \Lambda^\bullet ({}^w T^*\wt N)) \lra \CI(\wt X; \Lambda^\bullet ({}^w T^*\wt X)),
\end{equation*}
but as mentioned above, this map need not be bounded in $L^2.$
To construct a replacement, let us start with ${}^w \bbB \wt N \xlra{\pi_N} \wt N,$ the ball subbundle of  the dual bundle to ${}^w T^*\wt N,$ ${}^w T\wt N \lra \wt N,$ and the pull-back diagram
\begin{equation*}
	\xymatrix{
	F^*({}^w\bbB \wt N) \ar[r]^-{\bbB(F)} \ar[d]^{\pi_X} & {}^w \bbB \wt N \ar[d]^{\pi_N} \\
	\wt X \ar[r]^-{F} & \wt N }
\end{equation*}
Next, choose a Thom form $\cT_N$ for ${}^w \bbB \wt N \xlra{\pi_N} \wt N,$ and let $\bar\exp_N: {}^w \bbB \wt N \lra \wt N$ denote the continuous extension of the exponential map of a (complete metric conformal to an) iterated wedge metric from $\wt N^{\circ}$ to $\wt N.$
Then we set
\begin{equation*}
\begin{gathered}
	HS(F): \CI(\wt N; \Lambda^\bullet ({}^w T^*\wt N)) \lra \CI(\wt X; \Lambda^\bullet ({}^w T^*\wt X)), \\
	HS(F)(\omega) = (\pi_X)_*(\bbB(F)^*\cT_N\wedge(\bar\exp_N\circ\bbB(F))^*\omega).
\end{gathered}
\end{equation*}
This map commutes with the exterior derivative 
\begin{equation*}
	HS(F)d_N = d_X HS(F)
\end{equation*}
directly from its definition. If we endow ${}^w \bbB \wt N$ with a $\pi_N$-submersion metric compatible with $g_N$ and $F^*({}^w\bbB \wt N)$ with a $\pi_X$-submersion metric compatible with $g_X,$ which is always possible, then it is easy to see that $HS(F)$ is a bounded map with respect to the $L^2$-norms and so extends to
\begin{equation*}
	HS(F): L^2(\wt N; \Lambda^\bullet ({}^w T^*\wt N)) \lra L^2(\wt X; \Lambda^\bullet ({}^w T^*\wt X)).
\end{equation*}
Moreover, since $HS(F)$ sends forms with compact support in $\wt N^{\circ}$ to forms with compact support in $\wt X^{\circ},$ it induces maps
\begin{equation*}
	HS(F): \cD_{\max}(d_N) \lra \cD_{\max}(d_X), \quad
	HS(F): \cD_{\min}(d_N) \lra \cD_{\min}(d_X).
\end{equation*}

It turns out that $HS(F)$ defines a map on mezzoperversities. 
The key fact is that, when $\hat F$ sends the stratum $Y\subseteq \hat X$ to the stratum $Q \subseteq \hat N,$ the map $F$ restricts to the corresponding boundary hypersurfaces to a fiber bundle map
\begin{equation*}
	\xymatrix{
	\wt X  \ar@{}[r]|-*[@]{\supseteq} & M \ar[r]^F\ar[d] & P\ar[d]  \ar@{}[r]|-*[@]{\subseteq} & \wt N \\
	& Y \ar[r]^{\hat F} & Q &}
\end{equation*}
and the induced map between the link of a point in $Y$ and the link of the corresponding point in $Q$ will be a smooth strongly stratum preserving map.
The upshot is that, compatibly with $HS(F),$ there is a family of maps $HS(F\rest{Z_q}),$ $q \in Y,$ between vertical differential forms over $Y$ and vertical differential forms over $Q.$ 

If $\hat X$ and $\hat N$ are depth one non-Witt spaces, then $HS(F\rest{Z_q})$ induces a map between $\tH^{v/2}_{\dR}(M/Y)$ and $\tH^{v/2}_{\dR}(P/Q)$ (note that $v$ is the same in both spaces because $F$ is strongly stratum preserving). The origin of the flat connections in the exterior derivatives described in \S\ref{sec:CohoFiberBdles} can be used to show that they are intertwined by $HS(F\rest{Z_q}).$ 
Thus the boundary behavior of $HS(F)$ defines a `pull-back' map between mezzoperversities on $\hat N$ and mezzoperversities on $\hat X,$ which we denote
\begin{equation*}
	W_{\hat N} \mapsto F^{\sharp}W_{\hat N}.
\end{equation*}
By working with elements in the regular domain from \S\ref{sec:MetInd}, $\cD_{[W_{\hat N}]}^{\reg}(d_N),$ we can show that
\begin{equation*}
	HS(F) (\cD_{[W_{\hat N}]}(d_N)) \subseteq
	\cD_{[F^{\sharp}W_{\hat N}]}(d_X)
\end{equation*}
and hence there is a map on cohomology,
\begin{equation*}
	HS(F): \tH^\bullet _{[W_{\hat N}], \dR}(\hat N) \lra \tH^\bullet _{[F^{\sharp}W_{\hat N}], \dR}(\hat X).
\end{equation*}
An argument in \cite{Hilsum-Skandalis}, extended in \cite[Lemma 9.1]{ALMP11}, shows that this map is unchanged if $F$ is allowed to vary among smooth strongly stratum preserving maps, and that $HS(F\circ G) = HS(G) \circ HS(F)$ as maps on cohomology, for all smooth strongly stratum preserving maps $G:\hat N \lra \hat R.$
Iterating these constructions we obtain \cite[Theorem 3.7]{ALMP13.2}.

\begin{theorem}
Let $\hat X$ and $\hat N$ be pseudomanifolds, $F: \hat X \lra \hat N$ a smooth strongly stratum preserving map, and 
\begin{equation*}
	HS(F):L^2(\wt N; \Lambda^\bullet ({}^w T^*\wt N)) \lra L^2(\wt X; \Lambda^\bullet ({}^w T^*\wt X))
\end{equation*}
the Hilsum-Skandalis replacement of $F.$
Given a mezzoperversity $\bW_{\hat N}$ on $\hat N,$ $HS(F)$ induces a mezzoperversity $F^{\sharp}\bW_{\hat N}$ on $\hat X$ such that
\begin{equation*}
	HS(F)(\cD_{[\bW_{\hat N}]}(d_N)) \subseteq \cD_{[F^\sharp\bW_{\hat N}]}(d_X).
\end{equation*}
The induced map in de Rham cohomology
\begin{equation*}
	HS(F): \tH^\bullet _{[\bW_{\hat N}], \dR}(\hat N) \lra \tH^\bullet _{[F^\sharp\bW_{\hat N}], \dR}(\hat X)
\end{equation*}
is unchanged by varying $F$ smoothly through smooth strongly stratum preserving maps and is an isomorphism if $F$ is a stratified homotopy equivalence.
\end{theorem}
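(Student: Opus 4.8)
The plan is to prove the theorem by induction on the depth of $\hat X$ and $\hat N$, bootstrapping from the depth-one analysis of \S\ref{sec:DepthOne} and the metric-independence results of \S\ref{sec:MetInd}. The base case is that of closed manifolds, where $HS(F)$ agrees with the ordinary pullback $F^*$ up to chain homotopy and the statement is the classical homotopy invariance of de Rham cohomology; the only subtlety there, namely replacing the unbounded $F^*$ by a bounded operator on $L^2$, is precisely what the construction of Hilsum and Skandalis \cite{Hilsum-Skandalis} accomplishes. So assume the full conclusion (construction of $F^{\sharp}$, domain inclusion, cohomology map, homotopy invariance, and functoriality) has been established on all pseudomanifolds of depth strictly less than $k$, and let $\hat X,\hat N$ have depth at most $k$; the diffeomorphism case recorded in Corollary \ref{cor:DiffeoInv} is a simpler warmup in which $F^*$ is already bounded and $HS$ is not needed.

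First I would record the properties of $HS(F)$ visible from its definition: choosing a $\pi_N$-submersion metric on ${}^w\bbB\wt N$ compatible with $g_N$ and a $\pi_X$-submersion metric on $F^*({}^w\bbB\wt N)$ compatible with $g_X$, the operator $HS(F)$ is bounded on $L^2$, commutes with the exterior derivative, and carries forms supported in $\wt N^{\circ}$ to forms supported in $\wt X^{\circ}$; hence it maps $\cD_{\max}(d_N)$ to $\cD_{\max}(d_X)$ and $\cD_{\min}(d_N)$ to $\cD_{\min}(d_X)$. The essential new point is the behavior of $HS(F)$ near the boundary hypersurfaces of $\wt X$. Since $F$ is strongly stratum preserving, over a point $q$ of a singular stratum $Y\subseteq\hat X$ mapping into $Q\subseteq\hat N$ the map $F$ restricts on the corresponding boundary hypersurfaces to a fiber bundle map $M\to P$ covering $\hat F|_Y$, whose fiber component is a smooth strongly stratum preserving map $Z_q\to Z_{\hat F(q)}$ of links with the same link dimension $v$. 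The crux is to show that $\bar\exp_N$, being the continuous extension of the geodesic flow of a metric conformal to a rigid iterated wedge metric, respects the collar and fibration structure near $P$, so that near $M$ the operator $HS(F)$ is, to leading order in the boundary defining functions, the fiberwise Hilsum-Skandalis replacement $HS(F|_{Z_q})$ coupled to a bounded operator in the base directions. It follows that $HS(F)$ preserves the regular domains $\cD_{\max}^{\reg}(d)$ of \S\ref{sec:MetInd}, and that for a regular $\omega$ with expansion $\chi\omega = r^{-v/2}(\alpha(\omega)+dr\wedge\beta(\omega))+\wt\omega$ the leading coefficient of $HS(F)\omega$ is $HS(F|_{Z_q})$ applied to $\alpha(\omega)$.

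Next I would construct $F^{\sharp}$ and prove the domain inclusion. By the inductive hypothesis applied to the links, $HS(F|_{Z_q})$ induces a map of the vertical (Cheeger-ideal) de Rham cohomology bundles over $Y$, the one over $\hat N$ being pulled back along $\hat F|_Y$, and because the flat connections on these bundles originate from $\wt d^B$ as in \S\ref{sec:CohoFiberBdles} while $HS$ commutes with $d$, this map intertwines the connections $\nabla^{\cH}$. Hence the preimage of a flat subbundle is a flat subbundle, which defines $F^{\sharp}W$ over the non-Witt stratum $Y$; running this over the strata in order of increasing depth yields $F^{\sharp}\bW_{\hat N}$, with the boundary conditions on each link already transported at earlier stages. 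The inclusion $HS(F)(\cD_{[\bW_{\hat N}]}(d_N))\subseteq\cD_{[F^{\sharp}\bW_{\hat N}]}(d_X)$ is then verified on the graph-norm-dense subspace $\cD_{[\bW_{\hat N}]}^{\reg}(d_N)$: a regular form lies in $\cD_{[\bW_{\hat N}]}(d_N)$ precisely when the vertical cohomology class of its leading coefficient lies in $W$ over each non-Witt stratum, this class is sent by $HS(F|_{Z_q})$ into the corresponding class on $\hat X$, which by construction lies in $F^{\sharp}W$; passing to the closure in the graph norm of $d$, using boundedness of $HS(F)$ and that it commutes with $d$, gives the inclusion. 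Consequently $HS(F)$ descends to a map $HS(F): \tH^\bullet_{[\bW_{\hat N}],\dR}(\hat N)\to\tH^\bullet_{[F^{\sharp}\bW_{\hat N}],\dR}(\hat X)$.

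Finally, for homotopy invariance and the isomorphism claim I would, given a smooth homotopy $F_t$, $t\in[0,1]$, through strongly stratum preserving maps, integrate the Thom-form/exponential construction over $t$ to produce an explicit chain homotopy $h$ with $HS(F_1)-HS(F_0)=d_X h+h\, d_X$, following \cite{Hilsum-Skandalis} and its stratified extension \cite[Lemma 9.1]{ALMP11}; boundedness of $h$ on $L^2$ and its compatibility with the Cheeger ideal domains again reduce inductively to the fiberwise statement over each stratum. The same device gives $HS(G\circ F)=HS(F)\circ HS(G)$ on cohomology for composable strongly stratum preserving maps. If $\hat F$ is a stratified homotopy equivalence with strongly stratum preserving inverse $\hat G$, these two facts show that $HS(F)\circ HS(G)$ and $HS(G)\circ HS(F)$ are chain-homotopic to the respective identities, after identifying $(F\circ G)^{\sharp}\bW$ and $(G\circ F)^{\sharp}\bW$ with the starting mezzoperversities through the isomorphisms already built, so $HS(F)$ is an isomorphism in de Rham cohomology. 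The step I expect to be the main obstacle is the boundary analysis in the second paragraph: establishing that $HS(F)$ preserves the regular domain and that its leading-order behavior near each boundary hypersurface is exactly the fiberwise Hilsum-Skandalis replacement, uniformly in the base point. This is what makes the whole inductive scheme run, and it requires a delicate examination of $\bar\exp_N$ and of the Thom form near the corners of $\wt N$.
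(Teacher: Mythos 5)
Your proposal follows essentially the same route as the paper and the sources it summarizes (\cite{ALMP13.2}, \cite{ALMP11}): construct $HS(F)$ globally; record that it is bounded, commutes with $d$, and preserves the minimal and maximal domains; analyse its boundary behaviour through the fiberwise replacements $HS(F\rest{Z_q})$; use the resulting parallel map of vertical cohomology bundles to transport the mezzoperversity; verify the domain inclusion on the graph-norm-dense regular domain; and obtain homotopy invariance, functoriality, and the isomorphism statement from the Hilsum--Skandalis chain homotopy extended to the stratified setting. You also correctly single out the genuinely delicate point, namely the uniform leading-order identification of $HS(F)$ near each boundary hypersurface with the fiberwise replacement.

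One correction is needed in the construction of $F^{\sharp}W$: since $HS(F\rest{Z_q})$ pulls forms back from the link in $\hat N$ to the link in $\hat X,$ the induced parallel map goes $\hat F^*\tH^{v/2}_{\dR}(P/Q)\lra \tH^{v/2}_{\dR}(M/Y),$ so $F^{\sharp}W$ must be the \emph{image} of $\hat F^*W$ under this map, not the preimage --- the preimage of a flat subbundle of the target lives in the wrong bundle and cannot serve as a mezzoperversity on $\hat X.$ The image is a genuine flat subbundle because a parallel morphism of flat bundles has locally constant rank over a connected stratum; with this fix your verification of $HS(F)(\cD_{[\bW_{\hat N}]}(d_N))\subseteq\cD_{[F^{\sharp}\bW_{\hat N}]}(d_X)$ goes through exactly as written (indeed that paragraph already implicitly treats $F^{\sharp}W$ as containing the image).
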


\subsection{Properties of the signature} \label{sec:PropSign}

Let $\hat X$ be an oriented Cheeger space and $[\bW]$ a self-dual de Rham mezzoperversity.
As mentioned above, the intersection pairing
\begin{equation*}
	B: \tH^\bullet _{[\bW], \dR}(\hat X) \times \tH^\bullet _{[\bW], \dR}(\hat X) \lra \bbR
\end{equation*}
is a non-degenerate quadratic form, and we denote its signature by
\begin{equation*}
	\sign(\hat X, \bW).
\end{equation*}
A careful inspection of the Hilsum-Skandalis replacement shows that the pull-back of a self-dual mezzoperversity is a self-dual mezzoperversity (indeed the forms $Q_k$ from \eqref{eq:Qk} are compatible with $HS(F)$) and in fact:
\begin{proposition}
If $F: \hat X \lra \hat N$ is a stratified homotopy equivalence and $\bW_N$ is a self-dual mezzoperversity on $\hat N,$ then
\begin{equation*}
	\sign(\hat X, F^{\sharp}\bW_{\hat N}) = \sign(\hat N, \bW_{\hat N})
\end{equation*}
\end{proposition}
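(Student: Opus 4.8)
\emph{Sketch of the intended argument.}
The plan is to reduce everything to the homotopy-invariance Theorem proved above, the only genuinely new ingredient being that the Hilsum--Skandalis replacement is compatible with the intersection pairing. First I would check that $F^\sharp\bW_{\hat N}$ is again a self-dual mezzoperversity: this is the observation recorded just before the Proposition, that at every stratum $HS(F\rest{Z_q})$ intertwines the fibrewise forms $Q_k$ of \eqref{eq:Qk}, so the $Q_k$-orthogonality condition defining the dual is preserved under $F^\sharp$. Hence $\sign(\hat X,F^\sharp\bW_{\hat N})$ is defined, as the signature of the non-degenerate symmetric form $B_{\hat X}$ on $\tH^\bullet_{[F^\sharp\bW_{\hat N}],\dR}(\hat X)$. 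By the Theorem, $HS(F)$ induces an isomorphism
\[
 HS(F)_*\colon \tH^\bullet_{[\bW_{\hat N}],\dR}(\hat N)\;\xlra{\cong}\;\tH^\bullet_{[F^\sharp\bW_{\hat N}],\dR}(\hat X),
\]
so, the signature of a non-degenerate symmetric bilinear form being invariant under linear isometries, it suffices to show that $HS(F)_*$ carries $B_{\hat N}$ to $B_{\hat X}$, i.e. that $\int_{\hat X}HS(F)\eta\wedge HS(F)\omega=\int_{\hat N}\eta\wedge\omega$ whenever $\eta,\omega$ are closed and represent classes in $\tH^\bullet_{[\bW_{\hat N}],\dR}(\hat N)$.

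The natural route to this identity is to compare $HS(F)$ with the ordinary pull-back on a dense subcomplex. Working on the regular domain $\cD_{[\bW_{\hat N}]}^{\reg}(d_N)$ of \S\ref{sec:MetInd} --- which is dense in the graph norm of $d_N$ and computes $\tH^\bullet_{[\bW_{\hat N}],\dR}(\hat N)$ --- one has, on the one hand, that $F^*$ maps it into $\cD_{[F^\sharp\bW_{\hat N}]}^{\reg}(d_X)$: this is built into the definition of $F^\sharp$, since $F$ restricts on boundary hypersurfaces to a fibre-bundle map and $HS(F\rest{Z_q})$ intertwines the flat connections of \S\ref{sec:CohoFiberBdles}, so the vertical de Rham class of the leading term of $F^*\omega$ lands in the pulled-back flat subbundle, and this is part of the proof of the Theorem. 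On the other hand, $HS(F)$ and $F^*$ induce the same map on cohomology, by the Hilsum--Skandalis comparison (\cite{Hilsum-Skandalis}, in the extended form of \cite[Lemma 9.1]{ALMP11} and \cite{ALMP13.2}). Granting these, I may compute $\int_{\hat X}HS(F)\eta\wedge HS(F)\omega$ using the representatives $F^*\eta,F^*\omega$; then multiplicativity of $F^*$ gives $F^*\eta\wedge F^*\omega=F^*(\eta\wedge\omega)$, and $\int_{\hat X}F^*(\eta\wedge\omega)=\deg(F)\int_{\hat N}\eta\wedge\omega=\int_{\hat N}\eta\wedge\omega$, since a stratified homotopy equivalence restricts to a degree-one map of the regular parts (with the orientation on $\hat X$ induced by $F$).

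The step I expect to be the main obstacle is precisely the compatibility of $HS(F)$ with integration and wedge product near the singular strata, which underlies both ingredients above. Because $F$ is only a stratified homotopy equivalence, its restriction to $\hat X^{\reg}$ need not be proper, so the degree-one identity is not a change of variables and has to be read off from the construction: using Fubini for the fibre integration $(\pi_X)_*$ and the pull-back square one rewrites $\int_{\hat X}HS(F)\mu=\int_{\hat N}E_*\!\big(\bbB(F)^*\cT_N\big)\wedge\mu$, with $E=\bar\exp_N\circ\bbB(F)$, and must identify the degree-zero form $E_*(\bbB(F)^*\cT_N)$ with $\deg(F)=1$ --- using that $\cT_N$ has fibrewise integral one and that $\bar\exp_N$ extends to a proper map of manifolds with corners, so that no mass leaks into the singular strata of $\hat N$. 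The multiplicativity ``up to exact terms'' of $HS(F)$ on closed inputs is a comparable projection-formula computation for $(\pi_X)_*$. Carrying all of this out uniformly and inductively in the depth, staying inside the domains $\cD_{[\cdot]}(d)$ rather than only $\cD_{\max}(d)$ --- which is where the compatibility with the $Q_k$ recorded before the Proposition is really needed --- is the technical heart of the argument.
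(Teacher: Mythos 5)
Your proposal follows the same route the paper indicates: the survey offers no detailed proof of this Proposition, only the preceding remark that $HS(F)$ is compatible with the forms $Q_k$ of \eqref{eq:Qk} (so that $F^{\sharp}$ preserves self-duality) together with the homotopy-invariance theorem for $\tH^\bullet_{[\cdot],\dR},$ deferring the details to \cite{ALMP13.2}. Your reduction to showing that $HS(F)$ is an isometry of the intersection pairings --- verified by comparison with $F^*$ on the regular domain, the projection-formula computation for $(\pi_X)_*,$ and the degree-one identity --- is a correct filling-in of exactly those details.
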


If we endow $\hat X$ with a rigid suitably scaled iterated wedge metric $g,$ then we can identify the signature with the Fredholm index of an elliptic operator.
Assume $n = \dim \hat X$ is even.
Let 
\begin{equation*}
	L^2(\wt X; \Lambda^\bullet ({}^w T_{\bbC}^*\wt X))
\end{equation*}
denote the $L^2$-differential forms with complex coefficients, let
\begin{equation*}
\begin{gathered}
	\tau: L^2(\wt X; \Lambda^\bullet ({}^w T_{\bbC}^*\wt X)) \lra L^2(\wt X; \Lambda^\bullet ({}^w T_{\bbC}^*\wt X))\\
	\tau\rest{L^2(\wt X; \Lambda^p({}^w T_{\bbC}^*\wt X))} = i^{p(p-1)+n/2} *
\end{gathered}
\end{equation*}
and note that $\tau^2 = \Id.$
Denote the $\pm 1$ eigenspaces of $\tau$ by
\begin{equation*}
	L^2(\wt X; \Lambda^\bullet ({}^w T_{\bbC}^*\wt X))_{\pm}.
\end{equation*}
Since $\eth_{\dR}$ anticommutes with $\tau,$ it interchanges these two eigenspaces, and the restriction to either of these eigenspaces is known as the signature operator, $\eth_{\sign}.$
If $\bW$ is a self-dual mezzoperversity then 
\begin{equation*}
	\tau: \cD_{\bW}(\eth_{\dR}) \lra \cD_{\bW}(\eth_{\dR})
\end{equation*}
(with $\cD_{\bW}(\eth_{\dR}) \subseteq L^2(\wt X; \Lambda^\bullet ({}^w T_{\bbC}^*\wt X))$ after complexification) and we define 
\begin{equation*}
	\cD_{\bW}(\eth_{\sign}) = \cD_{\bW}(\eth_{\dR})\cap L^2(\wt X; \Lambda^\bullet ({}^w T_{\bbC}^*\wt X))_{+}.
\end{equation*}

\begin{proposition}
With notation as above, the operator
\begin{equation*}
	\eth_{\sign}: \cD_{\bW}(\eth_{\sign}) \subseteq L^2(\wt X; \Lambda^\bullet ({}^w T_{\bbC}^*\wt X))_{+} \lra
	L^2(\wt X; \Lambda^\bullet ({}^w T_{\bbC}^*\wt X))_{-}
\end{equation*}
is Fredholm and its index is equal to $\sign(\hat X, \bW).$
\end{proposition}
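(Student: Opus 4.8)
The plan is to run the classical signature-operator argument (the argument behind the Atiyah--Singer index theorem for the signature operator) in this setting, the analytic input having already been assembled.

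First I would record the algebraic reduction. Since $\bW$ is self-dual, $\bbD\bW = \bW$, and the Hodge star intertwines $\cD_{\bW}(\eth_{\dR})$ with $\cD_{\bbD\bW}(\eth_{\dR}) = \cD_{\bW}(\eth_{\dR})$ after complexification; as $\tau$ is the Hodge star composed with the degree-dependent unimodular scalar $i^{p(p-1)+n/2}$, it too preserves $\cD_{\bW}(\eth_{\dR})$. Because $\tau^{2} = \Id$ and $\eth_{\dR}$ anticommutes with $\tau$, the self-adjoint operator $(\eth_{\dR}, \cD_{\bW}(\eth_{\dR}))$ is odd for the grading $L^{2} = L^{2}_{+}\oplus L^{2}_{-}$, with off-diagonal blocks $\eth_{\sign}$ and its Hilbert-space adjoint $\eth_{\sign}^{*}$. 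The general version of Theorem \ref{thm:CmptResolv} says $(\eth_{\dR}, \cD_{\bW}(\eth_{\dR}))$ is Fredholm with compact resolvent, so $\eth_{\sign}$ is Fredholm with $\ker\eth_{\sign} = \cH^{\bullet}_{\bW}(\hat X)\cap L^{2}_{+}$ and $\coker\eth_{\sign}\cong\ker\eth_{\sign}^{*} = \cH^{\bullet}_{\bW}(\hat X)\cap L^{2}_{-}$. Since $\tau$ preserves $\cH^{\bullet}_{\bW}(\hat X) = \ker(\eth_{\dR},\cD_{\bW})$ and squares to $\Id$ there, that space splits into its $\tau$-eigenspaces and
\[
	\ind\eth_{\sign} = \dim\bigl(\cH^{\bullet}_{\bW}(\hat X)\cap L^{2}_{+}\bigr) - \dim\bigl(\cH^{\bullet}_{\bW}(\hat X)\cap L^{2}_{-}\bigr) = \STr\bigl(\tau\rest{\cH^{\bullet}_{\bW}(\hat X)}\bigr).
\]

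Next I would identify this supertrace with the signature. Transport the intersection form $B(\eta,\omega) = \int_{\wt X}\eta\wedge\omega$ to $\cH^{\bullet}_{\bW}(\hat X)$ through the Hodge isomorphism $\tH^{\bullet}_{\bW,\dR}(\hat X)\cong\cH^{\bullet}_{\bW}(\hat X)$ (the general version of Theorem \ref{thm:HodgeThmSimpleEdge}). The containment $\cD_{\bW}(\eth_{\dR})\subseteq\bigcap_{\eps>0}(r_{1}\cdots r_{\ell})^{1/2-\eps}H^{1}_{e}$ guarantees that Stokes' theorem holds on $\wt X$ with no boundary contribution for forms in this domain, so on $\cH^{\bullet}_{\bW}(\hat X)$ the form $B$ agrees with the cohomological intersection form and, by definition, its signature is $\sign(\hat X,\bW)$. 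Then the standard linear-algebra lemma underlying the signature theorem applies verbatim: the compatibility between $\tau = i^{p(p-1)+n/2}*$ and $B$ on harmonic forms is identical to that in the smooth closed case, so $B$ is positive definite on the $(+1)$-eigenspace of $\tau$, negative definite on the $(-1)$-eigenspace, these are $B$-orthogonal, and hence $\STr(\tau) = \sign(B)$. The hypothesis that $B$ is nondegenerate on $\cH^{\bullet}_{\bW}(\hat X)$ is exactly the content of $B(\eta,*\eta) = \norm{\eta}_{L^{2}}^{2}$ together with $*\,\cH^{\bullet}_{\bW}(\hat X) = \cH^{\bullet}_{\bbD\bW}(\hat X) = \cH^{\bullet}_{\bW}(\hat X)$. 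Combining with the previous step yields $\ind\eth_{\sign} = \sign(\hat X,\bW)$.

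The Fredholm package is immediate from the earlier theorems, so essentially all of the content is in the second step, and within it the delicate point is not the linear algebra but checking that the Hodge isomorphism $\tH^{\bullet}_{\bW,\dR}(\hat X)\cong\cH^{\bullet}_{\bW}(\hat X)$ intertwines the two descriptions of $B$ --- that is, that replacing a closed form in a $\cD_{\bW}(d)$-cohomology class by its harmonic representative changes $\int\eta\wedge\omega$ by zero. This is precisely where the fine domain containment and the vanishing of Stokes boundary terms are used, exactly as in the proof of the generalized Poincar\'e duality theorem above; once that compatibility is in hand the proposition follows.
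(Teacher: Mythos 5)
Your argument is correct and is essentially the standard one (the argument of the signature package carried out in \cite{ALMP13.2}, to which the paper defers): Fredholmness and the identification of kernel and cokernel with the $\tau$-eigenspaces of $\cH^{\bullet}_{\bW}(\hat X)$ follow from the self-adjoint Fredholm theorem for $(\eth_{\dR},\cD_{\bW}(\eth_{\dR}))$ together with $\tau$-invariance of the domain, and the supertrace of $\tau$ on harmonic forms is matched to $\sign(\hat X,\bW)$ exactly as in the closed smooth case, with the generalized Poincar\'e duality theorem supplying the needed vanishing of boundary terms. You also correctly isolate the one genuinely non-classical point, namely the compatibility of the Hodge isomorphism with the intersection form, so nothing is missing.
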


One of the most important properties of the signature of a smooth manifold is that it is unchanged under oriented bordism.
That is, if $X$ and $N$ are oriented smooth manifolds, and there is an oriented smooth manifold with boundary $\sM$ such that
\begin{equation*}
	\pa\sM = X \sqcup (-N),
\end{equation*}
then $\sign(X) = \sign(N).$

Now suppose that 
$\bW_{\!\hat X},$ $\bW_{\!\hat N}$ are self-dual mezzoperversities on the oriented Cheeger spaces $\hat X$ and $\hat N,$ respectively.
A bordism between $(\hat X, \bW_{\hat X})$ and $(\hat N, \bW_{\hat N})$ is a Cheeger space $\hat{\sM}$ with collared boundary,
\begin{equation*}
	\pa\hat \sM = \hat X \sqcup (-\hat N)
\end{equation*}
together with a mezzoperversity $\bW_{\hat{\sM}}$ that restricts to $\bW_{\hat X}$ in a collar of $\hat X$ and restricts to $\bW_{\hat N}$ in a collar of $\hat N.$
By showing that the signature operator of a Cheeger space defines a K-homology class and then applying K-homology theory, we proved in \cite[Theorem 4.7]{ALMP13.2} the bordism invariance of the signature.

\begin{proposition}
If there is a bordism between $(\hat X, \bW_{\hat X})$ and $(\hat N, \bW_{\hat N})$ then
\begin{equation*}
	\sign(\hat X, \bW_{\hat X}) = \sign(\hat N, \bW_{\hat N}).
\end{equation*}
\end{proposition}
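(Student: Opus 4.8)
The plan is to realize $\sign(\hat X, \bW_{\hat X})$ as the pushforward to a point of an analytic $K$-homology class carried by the signature operator, and then to deduce $\sign(\hat X, \bW_{\hat X}) = \sign(\hat N, \bW_{\hat N})$ from the long exact sequence of the pair $(\hat\sM, \partial\hat\sM)$. First I would check that $(\eth_{\dR}, \cD_{\bW}(\eth_{\dR}))$ on a Cheeger space defines a class in $K$-homology. By the general-stratum analogue of Theorem \ref{thm:CmptResolv} (\cite[Theorem 4.3]{ALMP13}) this operator, complexified and equipped with the grading $\tau$, is self-adjoint with compact resolvent, so $\eth_{\dR}(1+\eth_{\dR}^2)^{-1/2}$ is bounded and Fredholm; and for $f \in \CI_{\Phi}(\wt X)$ the differential $df$ extends to a smooth section of ${}^w T^*\wt X$, so $[\eth_{\dR}, f] = \sigma(\eth_{\dR})(df)$ is a bounded, grading-preserving bundle endomorphism preserving $\cD_{\bW}(\eth_{\dR})$. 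Since $\CI_{\Phi}(\wt X)$ is dense in $C(\hat X)$, this is an unbounded Kasparov module, giving a class
\begin{equation*}
	[\hat X, \bW_{\hat X}] \in KK_0(C(\hat X), \bbC) = K_0(\hat X),
\end{equation*}
and by the preceding Proposition its image under the collapse map $\epsilon_* : K_0(\hat X) \to K_0(\pt) = \bbZ$ is exactly $\sign(\hat X, \bW_{\hat X})$; likewise for $\hat N$.

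Next I would feed the bordism $\hat\sM$ into the same construction. Because $\partial\hat\sM = \hat X \sqcup (-\hat N)$ is collared and $\bW_{\hat\sM}$ is a product in the collars, one can attach an infinite half-cylinder $(-\infty, 0]\times\partial\hat\sM$ along the geometric boundary and extend the rigid iterated wedge metric and the mezzoperversity translation-invariantly. The signature operator on this complete-at-infinity space, with Cheeger ideal boundary conditions imposed at the singular strata (which pass through the cylinder unchanged), is self-adjoint, invertible near infinity, and Fredholm, so --- since $\dim\hat\sM$ is odd there is no $\tau$-grading --- it represents a class in the relative group $K_1(\hat\sM, \partial\hat\sM)$, built from $\eth_{\dR}^{\hat\sM}$ exactly as in the Baum--Douglas picture of the boundary map of a manifold with boundary. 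The analytic heart of the argument is to compute the image of this class under
\begin{equation*}
	\partial : K_1(\hat\sM, \partial\hat\sM) \lra K_0(\partial\hat\sM) = K_0(\hat X) \oplus K_0(\hat N).
\end{equation*}
Using the product structure of $\eth_{\dR}^{\hat\sM}$ near the collar (including in the singular directions, by the collar hypothesis) and that the boundary grading $\tau$ is induced from $\hat\sM$, one shows that $\partial$ sends it to $\big([\hat X, \bW_{\hat X}], -[\hat N, \bW_{\hat N}]\big)$, the minus sign arising from the orientation reversal on $\hat N$, which negates $*$, hence $\tau$, hence the class.

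Finally, exactness of
\begin{equation*}
	K_1(\hat\sM, \partial\hat\sM) \xlra{\partial} K_0(\partial\hat\sM) \xlra{\iota_*} K_0(\hat\sM)
\end{equation*}
forces $\big([\hat X, \bW_{\hat X}], -[\hat N, \bW_{\hat N}]\big) \in \ker\iota_*$, so $\iota_{X,*}[\hat X, \bW_{\hat X}] = \iota_{N,*}[\hat N, \bW_{\hat N}]$ in $K_0(\hat\sM)$. Pushing forward to $K_0(\pt) = \bbZ$ along $\hat\sM \to \pt$ and using that $\hat X \hookrightarrow \hat\sM \to \pt$ and $\hat N \hookrightarrow \hat\sM \to \pt$ compute the respective collapse maps, we obtain
\begin{equation*}
	\sign(\hat X, \bW_{\hat X}) = \sign(\hat N, \bW_{\hat N}),
\end{equation*}
which is the claim; details are in \cite[Theorem 4.7]{ALMP13.2}.

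I expect the main obstacle to be the computation of $\partial$, that is, showing that the abstract boundary map of the pair $(\hat\sM, \partial\hat\sM)$ genuinely implements ``restrict the signature operator and its mezzoperversity to $\partial\hat\sM$.'' This requires checking that the inductive, stratum-by-stratum definition of $\cD_{\bW}(\eth_{\dR})$ is compatible with a product cylindrical collar --- so that attaching the cylinder neither changes the boundary conditions at the singular strata nor spoils Fredholmness --- and it is precisely here that the full strength of the self-adjointness and compact-resolvent statements of the general-stratum analogue of Theorem \ref{thm:CmptResolv} is used, now for a space carrying both iterated wedge singularities and a cylindrical end. By contrast, the orientation-reversal sign and the density of $\CI_{\Phi}(\wt X)$ in $C(\hat X)$ are routine.
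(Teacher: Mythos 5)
Your proposal is correct and follows essentially the same route as the paper, which itself only sketches the argument --- ``the signature operator defines a K-homology class, then apply K-homology theory'' --- and defers the details to \cite[Theorem 4.7]{ALMP13.2}; your expansion via the relative class of $(\hat\sM,\pa\hat\sM)$, the boundary map, and exactness is exactly that argument. (One cosmetic slip: the Clifford action $[\eth_{\dR},f]=\sigma(\eth_{\dR})(df)$ is odd with respect to the grading $\tau$, not grading-preserving, but this does not affect the Kasparov-module verification, where only boundedness of the commutator is needed.)
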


A clever observation of Banal \cite{Banagl:LClass} (in the context of $L$-spaces and self-dual sheaves, cf. \cite[Theorem 4.8]{ALMP13.2}) is that any two mezzoperversities on a single Cheeger space are bordant. Thus this proposition shows that the signature of a Cheeger space is independent of the choice of self-dual mezzoperversity! We can restate the results so far in a more satisfying fashion:

\begin{theorem}
The signature of an oriented Cheeger space is unchanged under stratified homotopy equivalences or Cheeger space bordism.
\end{theorem}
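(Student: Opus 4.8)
The plan is to deduce the theorem from the three Propositions immediately preceding it, the crucial preliminary step being to upgrade the invariant $\sign(\hat X, \bW)$ to an invariant $\sign(\hat X)$ of the oriented Cheeger space alone. First I would invoke the observation of Banagl (the analogue of \cite[Theorem 4.8]{ALMP13.2}) that any two self-dual mezzoperversities $\bW$, $\bW'$ on a fixed oriented Cheeger space $\hat X$ are bordant: the product $\hat X \times [0,1]$ carries a mezzoperversity restricting to $\bW$ at one end and to $\bW'$ at the other. Applying the bordism-invariance Proposition to this bordism gives $\sign(\hat X, \bW) = \sign(\hat X, \bW')$, so that $\sign(\hat X) := \sign(\hat X, \bW)$ is well defined, independent of the self-dual mezzoperversity chosen.

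With this in hand, Cheeger-space bordism invariance is immediate. If $\hat{\sM}$ is a Cheeger-space bordism from $\hat X$ to $\hat N$, then by definition it carries a mezzoperversity $\bW_{\hat\sM}$ restricting to self-dual mezzoperversities $\bW_{\hat X}$, $\bW_{\hat N}$ in collars of the two boundary components; the bordism-invariance Proposition yields $\sign(\hat X, \bW_{\hat X}) = \sign(\hat N, \bW_{\hat N})$, and the previous paragraph converts this into $\sign(\hat X) = \sign(\hat N)$.

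For stratified homotopy invariance, let $F : \hat X \lra \hat N$ be a stratified homotopy equivalence and fix any self-dual mezzoperversity $\bW_{\hat N}$ on $\hat N$. The Hilsum--Skandalis replacement $HS(F)$ produces a mezzoperversity $F^{\sharp}\bW_{\hat N}$ on $\hat X$; inspecting the boundary behaviour of $HS(F)$ stratum by stratum and using that the fiberwise pairings $Q_k$ are compatible with $HS(F)$, one checks that $F^{\sharp}$ carries self-dual mezzoperversities to self-dual ones, so $F^{\sharp}\bW_{\hat N}$ is again self-dual. The homotopy-invariance Proposition then gives $\sign(\hat X, F^{\sharp}\bW_{\hat N}) = \sign(\hat N, \bW_{\hat N})$, which by independence of the mezzoperversity is exactly $\sign(\hat X) = \sign(\hat N)$. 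Combining the two cases finishes the proof.

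The genuinely hard inputs here are the ones I would cite rather than reprove: the bordism-invariance Proposition (which rests on showing that the signature operator with a self-dual domain defines a K-homology class and then running the bordism through K-homology theory, as in \cite{ALMP13.2}) and Banagl's observation on bordism of mezzoperversities. Within the assembly itself, the only point demanding care is the verification that $F^{\sharp}$ preserves self-duality — this is where one must track through the construction of $HS(F)$ how it intertwines the flat bundles $\cH^{v_k/2}$ on source and target together with their $Q_k$-pairings at each stratum — but this is essentially bookkeeping along the fibration structure rather than a new difficulty. Everything else is a matter of matching hypotheses between the three Propositions.
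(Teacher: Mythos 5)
Your proposal is correct and follows essentially the same route as the paper: the theorem is obtained by combining the bordism-invariance proposition with Banagl's observation that any two self-dual mezzoperversities on a fixed Cheeger space are bordant (making $\sign(\hat X)$ well defined), and then feeding the homotopy-invariance and bordism-invariance propositions through that well-definedness. The one point you rightly flag as needing care — that $F^{\sharp}$ preserves self-duality via the compatibility of the pairings $Q_k$ with $HS(F)$ — is exactly the point the paper singles out as well.
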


\subsection{$L$-class and higher signatures} \label{sec:LClass}

Let $\hat X$ be a Cheeger space, $\bW$ a self-dual mezzoperversity, and let $\bbS^\ell$ denote the $\ell$-dimensional sphere.
Goresky-MacPherson \cite{Goresky-MacPherson:IH} showed that $\hat X$ can be embedded into $\bbR^m$ for $m\gg1$ and any 
\begin{equation*}
	F: \hat X \lra \bbS^{2q}
\end{equation*}
can be arranged via homotopy to be the restriction of a map defined on all of $\bbR^m$ and to be transverse to a given point $\pt \in \bbS^{2q}.$

\begin{lemma}
The inverse image of $\pt$ under $F$ is a Cheeger space and hence has a signature.
Homotopic maps produce cobordant Cheeger spaces and hence the same signature.
\end{lemma}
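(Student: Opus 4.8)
The plan is to carry out Thom's construction, keeping careful track both of the stratified structure of the transverse preimage and of the mezzoperversity carried along with it. By hypothesis $F$ is the restriction to $\hat X$ of a smooth map $\tilde F$ on (a neighbourhood of $\hat X$ in) $\bbR^m$ that is transverse to $\pt$, so $P := \tilde F^{-1}(\pt)$ is a smooth submanifold of $\bbR^m$ of codimension $2q$ with $F^{-1}(\pt) = \hat X \cap P$, and transversality of $F$ says precisely that $P$ is transverse to every stratum of $\hat X$. The geometric heart of the argument --- and the step I expect to be the main obstacle --- is the standard but delicate fact that a submanifold in suitably general position with respect to a Thom--Mather stratified pseudomanifold (transverse to all strata, and compatible with the control data, both arrangeable by a further small homotopy of $F$ as in Goresky--MacPherson) meets it in a Thom--Mather stratified pseudomanifold \emph{with the same links}: the strata of $\hat X\cap P$ are the $Y_k\cap P$, each of codimension $2q$ in $Y_k$ (so nonempty for at most finitely many $k$), the control data of $\hat X$ restrict to control data of $\hat X\cap P$ once one splits off the directions normal to $P$, and the link of $\hat X\cap P$ at a point of $Y_k\cap P$ is the same stratified space $\hat Z_k$ as the link of $\hat X$ there. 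Since $\hat X$ is compact and its singular strata have codimension at least two, the same holds for $\hat X\cap P$; resolving it as in \S\ref{sec:StratSpaces} yields a manifold with corners whose iterated boundary fibration structure is obtained from that of $\wt X$ by base change along the inclusions $Y_k\cap P\hookrightarrow Y_k$.

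Granting this, I would transport the mezzoperversity functorially. Working with the metric-independent de Rham formulation, $[\bW]=([W_1],\dots,[W_\ell])$ is a sequential choice of flat subbundles of the vertical de Rham cohomology bundles with Cheeger ideal boundary conditions, compatible at corners; pulling these back along the base changes $Y_k\cap P\hookrightarrow Y_k$ (equivalently, restricting the boundary fibrations of $\wt X$ to the boundary hypersurfaces of the resolution of $\hat X\cap P$) produces flat subbundles over the strata of $\hat X\cap P$ again compatible at corners \eqref{eq:CornerFibration}, i.e.\ a de Rham mezzoperversity on $F^{-1}(\pt)$. Self-duality is preserved stratum by stratum: the pairing $Q_k$ of \eqref{eq:Qk} is defined fibrewise over $Y_k$ by integration over the fibre $M_k/Y_k$, and since the fibres --- the links --- do not change under the base change, the restriction of $Q_k$ to $Y_k\cap P$ is the corresponding pairing there, so the restriction of $\bbD[W_k]$ is the $Q_k$-orthogonal complement of the restriction of $[W_k]$; hence $[\bW]=\bbD[\bW]$ forces the restricted mezzoperversity to be self-dual. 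Equipping $F^{-1}(\pt)$ with the orientation induced by the orientation of $\hat X$ and a chosen orientation of $T_{\pt}\bbS^{2q}$ (its normal bundle in $\hat X^{\reg}$ being the trivial bundle $F^*T_{\pt}\bbS^{2q}$), we conclude that $F^{-1}(\pt)$ is an oriented Cheeger space, and by the generalized Poincar\'e duality theorem together with the theorem of \S\ref{sec:PropSign} it carries a signature $\sign(F^{-1}(\pt))$ that is independent of the chosen self-dual mezzoperversity (and that vanishes unless $\dim F^{-1}(\pt)=n-2q$ is divisible by four).

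For the second assertion, let $F_0,F_1:\hat X\to\bbS^{2q}$ be homotopic and both transverse to $\pt$. Choosing a homotopy, extending it to a homotopy of maps on $\bbR^m\times[0,1]$, and making it transverse to $\pt$ relative to $\hat X\times\{0,1\}$ --- possible since $F_0$ and $F_1$ are already transverse there --- yields $H:\hat X\times[0,1]\to\bbS^{2q}$ transverse to $\pt$ and constant near the ends. The product $\hat X\times[0,1]$ is itself a Cheeger space with collared boundary $\hat X\sqcup(-\hat X)$ carrying the self-dual product mezzoperversity, so the argument of the first two paragraphs, applied to $H$ and the Cheeger space $\hat X\times[0,1]$, shows that $H^{-1}(\pt)$ is a Cheeger space with collared boundary $F_0^{-1}(\pt)\sqcup(-F_1^{-1}(\pt))$ equipped with a mezzoperversity that restricts to the self-dual ones constructed above on the two ends; that is, $H^{-1}(\pt)$ is a Cheeger space bordism between $F_0^{-1}(\pt)$ and $F_1^{-1}(\pt)$ in the sense of \S\ref{sec:PropSign}. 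The bordism invariance of the signature established there gives $\sign(F_0^{-1}(\pt))=\sign(F_1^{-1}(\pt))$, and since that number does not depend on the choice of self-dual mezzoperversity it is an invariant of the Cheeger space $F^{-1}(\pt)$ alone, which is exactly the statement of the lemma.
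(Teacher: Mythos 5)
Your proposal is correct and follows exactly the route the paper intends but leaves implicit (the lemma is stated without proof, deferring to the Goresky--MacPherson transversality construction and the results of \S\ref{sec:PropSign}): stratified transversality makes $F^{-1}(\pt)$ a pseudomanifold with the same links, so the self-dual mezzoperversity of $\hat X$ restricts to one on the preimage --- self-duality being preserved because the fibrewise pairings $Q_k$ are unchanged --- and a transverse homotopy yields a Cheeger space bordism to which the bordism invariance of the signature applies. Your explicit handling of the restriction of the mezzoperversity and of the orientation is a welcome filling-in of details the paper takes for granted.
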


A trick due to Thom \cite{Thom:Espaces,Thom:Classes} uses this construction to define an $L$-class in homology.
Namely, if $4q>n+1,$ then we have
\begin{equation*}
	\tH^{2q}(\hat X;\bbQ) \cong [\hat X, \bbS^{2q}] \otimes \bbQ
\end{equation*}
and so we have a map
\begin{equation*}
	\xymatrix @R=1pt
	{\tH^{2q}(\hat X; \bbQ) \ar[r] & \bbQ \\
	[F] \ar@{|->}[r] & \sign(F^{-1}(\pt)) }
\end{equation*}
which defines an element in $H_{2q}(\hat X;\bbQ).$
If $4q \leq n+1,$ we choose any $\ell >n+1$ such that $4(q+\ell) > n + \ell +1,$
and then $\hat X \times \bbS^{2\ell}$ is a Cheeger space and
\begin{equation*}
	\tH^{2q+2\ell}(\hat X \times \bbS^{2\ell};\bbQ) \cong \tH^{2q}(\hat X;\bbQ)
\end{equation*}
so we can proceed as before.

The homology class defined in this way is called the {\em $L$-class} and denoted
\begin{equation*}
	\cL(\hat X) \in \tH_{\ev}(\hat X;\bbQ).
\end{equation*}
Now let $r: \hat X \lra B\pi_1\hat X$ be the classifying map for the universal cover of $\hat X.$
The higher signatures of $\hat X$ are defined to be the rational numbers
\begin{equation*}
	\{ \ang{\gamma, r_*\cL(\hat X)} : \gamma \in \tH^\bullet (B\pi_1\hat X; \bbQ) \}
\end{equation*}
where $\ang{\cdot, \cdot}$ denotes the pairing between cohomology and homology of $B\pi_1\hat X.$

The Novikov conjecture for $\hat X$ is the statement that the higher signatures are stratified homotopy invariant (see, e.g., \cite{Rosenberg:Survey}).
The strong Novikov conjecture for $\pi_1\hat X$ states that an `assembly map' between two K-homology groups is rationally injective.
The latter conjecture is known for many classes of groups. The analytic approach to the Novikov conjecture is to use index theory to `reduce' it to the strong Novikov conjecture. 
This reduces stratified homotopy invariance to the homotopy invariance of the index of a `higher' signature operator and, by using the Hilsum-Skandalis replacement construction, we were able to prove this analytically on both the Witt and Cheeger space settings. (For Witt spaces the index of this higher signature can also be computed topologically, at least after changing coefficients to $\bbQ,$ as a `symmetric signature.' There are now two constructions of the symmetric signature of a Witt space, \cite{Banagl:msri,Friedman-McClure}, and the latter is  stratified homotopy invariant.)  In \cite[Theorem 5.8]{ALMP13.2} we carry out the analytic approach to the Novikov conjecture for Cheeger spaces.

\begin{theorem}
Let $\hat X$ be a Cheeger space. If the strong Novikov conjecture is true for $\pi_1X,$ then the higher signatures of $\hat X$ are stratified homotopy invariants.
\end{theorem}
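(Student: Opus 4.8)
The plan is to implement the analytic approach to the Novikov conjecture of Lusztig, Mishchenko, and Kasparov in the Cheeger space setting, using the signature operator with Cheeger ideal boundary conditions from \S\ref{sec:PropSign} as the basic geometric object. Fix a self-dual mezzoperversity $\bW$ and let $r:\hat X\lra B\pi_1\hat X$ be the classifying map. The central object is the signature operator $\eth_{\sign}$ with domain $\cD_{\bW}(\eth_{\sign})$ \emph{twisted} by the flat bundle of finitely generated projective $C^*_r\pi_1\hat X$-modules $\cV = \wt X\times_{\pi_1\hat X} C^*_r\pi_1\hat X$ (the Mishchenko--Fomenko bundle), pulled back via $r$. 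First I would check that, with the twisted Cheeger ideal boundary conditions, $\eth_{\sign}\otimes\cV$ is Fredholm in the sense of Hilbert $C^*$-modules with compact resolvent; this is a routine upgrade of the Fredholm package behind Theorem \ref{thm:CmptResolv}, since the parametrix built by inverting the normal operators $N_q(\eth_{\dR})$ is local and compatible with $C^*$-algebra bundles (this compatibility is precisely why that argument was set up as it was in \cite{ALMP11}). This produces an index class $\Ind(\eth_{\sign}\otimes\cV)\in K_*(C^*_r\pi_1\hat X)$, which equals the image under the assembly map $\mu:K_*(B\pi_1\hat X)\lra K_*(C^*_r\pi_1\hat X)$ of the pushforward $r_*[\eth_{\sign}]$ of the K-homology class of the untwisted signature operator (whose existence is exactly what was used for bordism invariance in \S\ref{sec:PropSign}).

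Next I would prove that $\Ind(\eth_{\sign}\otimes\cV)$ is a stratified homotopy invariant. Given a stratified homotopy equivalence $F:\hat X\lra\hat N$, the Hilsum--Skandalis replacement $HS(F)$ of the previous subsection --- which carries $\cD_{[\bW_{\hat N}]}(d_N)$ into $\cD_{[F^{\sharp}\bW_{\hat N}]}(d_X)$, commutes with $d$, is bounded on $L^2$, and is an isomorphism on de Rham cohomology --- extends verbatim to a bounded map of the $\cV$-twisted complexes. The argument of Hilsum--Skandalis, as extended in \cite[\S9]{ALMP11} and \cite{ALMP13.2}, then shows that $HS(F)$ and the replacement $HS(G)$ of a stratified homotopy inverse $G$ are mutually chain homotopy inverse up to bounded homotopies supported away from the singular strata, so the twisted Hilbert-module signature complexes of $\hat X$ and $\hat N$ are homotopy equivalent; since $F$ induces $\pi_1\hat X\cong\pi_1\hat N$, their index classes agree in $K_*(C^*_r\pi_1\hat X)$. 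Hence $\mu(r_*[\eth_{\sign,\hat X}])=\mu(r_*[\eth_{\sign,\hat N}])$.

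Now I would invoke the hypothesis: the strong Novikov conjecture for $\pi=\pi_1\hat X$ says $\mu\otimes\bbQ$ is injective, so the previous step forces $r_*[\eth_{\sign,\hat X}]=r_*[\eth_{\sign,\hat N}]$ in $K_*(B\pi)\otimes\bbQ$. To convert this into a statement about the higher signatures, apply the homological Chern character: by Thom's transversality construction of $\cL(\hat X)$ together with the Lemma above (identifying $\sign(F^{-1}(\pt))$ with intersection-form signatures of Cheeger subspaces), one has $\Ch(r_*[\eth_{\sign,\hat X}])=r_*\cL(\hat X)$ up to a fixed nonzero constant in each degree --- handling odd $n$ by crossing with $\bbS^{2\ell}$ as in \S\ref{sec:LClass}. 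Therefore the higher signatures $\{\ang{\gamma, r_*\cL(\hat X)}:\gamma\in\tH^\bullet(B\pi;\bbQ)\}$ are determined by $r_*[\eth_{\sign,\hat X}]\otimes\bbQ$ and so coincide with those of $\hat N$, which is the claim.

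The main obstacle is the second step: showing that the twisted signature index in $K_*(C^*_r\pi)$ is a stratified homotopy invariant. The Hilsum--Skandalis replacement is not a morphism of complexes in the naive sense --- only a bounded cochain map that becomes a homotopy equivalence after a perturbation argument --- and one must run that argument while (i) keeping everything bounded over the Hilbert $C^*$-module, (ii) controlling the interaction of $HS(F)$ with the Cheeger ideal boundary conditions at all strata simultaneously, which forces an induction on the depth mirroring the inductive construction of $\cD_{\bW}$, and (iii) verifying that self-duality of the mezzoperversity is preserved under $F^{\sharp}$ so that it is genuinely the signature operator, not merely the de Rham complex, that is being compared. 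Everything else --- the $C^*$-coefficient Fredholm package, the identification with the assembly map, and the Chern character computation --- is either a routine enhancement of results already in the excerpt or standard $K$-theory.
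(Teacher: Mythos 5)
Your proposal is correct and follows essentially the same route as the paper (and as \cite[Theorem 5.8]{ALMP13.2}): twist the signature operator with Cheeger ideal boundary conditions by the Mishchenko--Fomenko bundle, identify the resulting higher index with the assembly of $r_*[\eth_{\sign}]$, prove stratified homotopy invariance of that index via the Hilsum--Skandalis replacement run inductively through the strata, and then use rational injectivity of the assembly map together with the Chern character/$L$-class identification to conclude. You have also correctly isolated the genuinely hard step, namely the $C^*$-module version of the Hilsum--Skandalis homotopy-equivalence argument compatible with the boundary conditions at every depth.
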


\subsection{Refined intersection (co)homology} \label{sec:RefIH}

In this subsection we explain a `de Rham theorem' showing that the de Rham cohomology groups defined analytically above have sheaf-theroretic descriptions. Thus we will shift gears and work more topologically. We will show that the cohomology corresponding to a mezzoperversity lies `in between' the two middle perversities. To make this precise we will first recall the sheaf-theoretic approach to intersection homology.

It is convenient to use a notation that emphasizes dimension as opposed to depth. So given $\hat X,$ an $n$-dimensional pseudomanifold, let
\begin{equation*}
	\emptyset = X_{-1} \subseteq X_0 \subseteq \ldots \subseteq X_{n-2} \subseteq X_n = \hat X
\end{equation*}
be an increasing sequence of closed subsets such that the stratum
\begin{equation*}
	S_k = X_k \setminus X_{k-1}
\end{equation*}
is either empty or a smooth manifold of dimension $k.$
The regular part of $\hat X$ is equal to $S_n = \hat X \setminus X_{n-2}.$
Let
\begin{equation*}
	\cV_k = \hat X \setminus X_{n-k}
\end{equation*}
be the increasing sequence of open subsets of $\hat X,$ starting with $\cV_2 = S_n = \hat X^{\reg}$ and satisfying
\begin{equation*}
	\cV_{k+1} = \cV_k \cup S_{n-k}.
\end{equation*}
Denote the inclusions by
\begin{equation*}
	\xymatrix{
	\cV_k \ar@{^(->}[r]^{i_k} & \cV_{k+1}  \ar@{<-^)}[r]^{j_k} & S_{n-k}.
	}
\end{equation*}

Let $\Sh(\hat X)$ denote the category of sheaves of $\bbR$-vector spaces on $\hat X.$
A constructible differential graded sheaf is a complex $\sc A$ such that $\sc A|_{S_k}$ has locally constant cohomology sheaves and has finitely generated stalk cohomology.
We denote the category of constructible differential graded sheaves by $\Sh^{\bullet}(\hat X).$
The derived category of bounded constructible complexes, $D(\hat X),$ has as objects the cochain complexes of bounded constructible sheaves but quasi-isomorphic chain complexes are now considered isomorphic.
Every object $\sc A$ in $D(\hat X)$ has associated to it a derived sheaf: the sheafification of the presheaf
\begin{equation*}
	\cU \mapsto \tH^\bullet(\sc A(\cU)),
\end{equation*}
which we denote $\bH^{\bullet}(\sc A) \in \Sh^{\bullet}(\hat X).$
The global hypercohomology of the derived sheaf is the cohomology of the global sections of an injective (more generally, any acyclic) resolution and is denoted $\bbH^{\bullet}(\hat X; \sc A).$
For a review of these concepts, see \cite{GM2, Borel, BanaglLong, Kirwan-Woolf}.

A perversity $\bar p$ is a function from $\{ 2, 3, \ldots \}$ to $\bbN_0$ such that $\bar p (c)$ and also $\bar q(c) = c-2-\bar p(c)$ are positive and nondecreasing functions of $c.$
We refer to $\bar q(c)$ as the dual perversity to $\bar p(c).$
The sheaf theoretic formulation of intersection cohomology in \cite{GM2} assigns to each perversity $\bar p$ an object in the derived category
\begin{equation*}
	\IC p \in D(\hat X)
\end{equation*}
with the property that its hypercohomology is the intersection cohomology of $\hat X$ with perversity $\bar p,$
\begin{equation*}
	\mathrm{IH}_{\bar p}^{i}(\hat X) = \bbH^{i}(\hat X; \IC p).
\end{equation*}

There are axioms characterizing $\IC p.$ For example: any constructible bounded complex of sheaves of real vector spaces, $\sc S,$ that satisfies
\begin{itemize}
\item [(a)] 
Normalization: There is an isomorphism of the restriction of $\sc S$ to the regular part $\cV_2$ of $\hat X$ and the constant rank $1$ sheaf over $\cV_2,$
\begin{equation*}
	\nu^{\bS}:\bbR_{\cV_2} \xlra{\cong} \sc S\rest{\cV_2},
\end{equation*}
\item [(b)]
Lower bound: $\tH^\ell(i_x^*\sc S) =0$ for any $x\in \hat X$ and $\ell < 0,$
\item [(c)]
$\bar p$-stalk vanishing: $\tH^\ell(i_x^*\sc S) =0$ for  any $x\in \cV_{k+1}\setminus \cV_2$ and $\ell > \bar p(k),$
\item [(d)]
$\bar p$-costalk vanishing: $\tH^\ell(i_x^!\sc S) =0$ for  any $x\in S_{n-k}$ and $\ell \leq \bar p(k)+n-k+1,$
\end{itemize}
defines $\IC p$ in $D(\hat X).$

There is also a beautiful construction of $\IC p$ in $D(\hat X)$ due to Deligne using only standard constructions of sheaf theory, namely
\begin{equation*}
	\IC p = \tau_{\leq \bar p(n)}Ri_{n*}\cdots \tau_{\leq \bar p(2)}Ri_{2*}\bbR_{\cV_2}.
\end{equation*}
We can think of this as efficiently patching together the local computation:
\begin{equation*}
	x \in S_{n-k} \implies
	H^i(\sc{IC}_{\bar p})_x \cong
	\begin{cases}
	\bbH^i(Z; \sc{IC}_{\bar p}) & i \leq \bar p(k) \\
	0 & i > \bar p(k) 
	\end{cases}
\end{equation*}
$ $

Now let us modify this theory to admit mezzoperverisites. Say that a constructible bounded complex of sheaves $\sc S$ is a {\em refined middle perversity complex of sheaves} if it satisfies the axioms:
\begin{quote}
	(a) Normalization, \quad
	(b) Lower bound, \\
	(c) $\bar n$-stalk vanishing, \quad
	(d) $\bar m$-costalk vanishing,
\end{quote}
and denote by $RP(\hat X)$ the full subcategory of $D(\hat X)$ with these objects.

The category $RP(\hat X)$ contains $\IC n$ and $\IC m.$ Any complex $\sc S \in RP(\hat X)$ has unique maps \cite[Proposition 1.6]{ABLMP}
\begin{equation*}
	\IC m \lra \sc S \lra \IC n
\end{equation*}
compatible with the normalizations on $\cV_2,$ and factoring the canonical map $\IC m \lra \IC n.$ In this sense $\sc S$ is `in between' $\IC m$ and $\IC n.$
If $\hat X$ is Witt then $\IC m \cong \IC n$ and this is the only object in $RP(\hat X).$

If $\hat X$ is not Witt then every object $\sc S \in RP(\hat X)$ is obtained from $\IC m$ by the `addition' of a suitably compatible collection, $\cL,$ of locally constant sheaves, one on each stratum where the Witt condition does not hold. We refer to this collection of locally constant sheaves as a {\em topological mezzoperveristy} \cite[Definition 2.1]{ABLMP}. Given a topological mezzoperversity $\cL = (W(S_{n-3}), W(S_{n-5}), \ldots),$ Banagl \cite{Banagl:ExtendingIH} has constructed a modified truncation function $\tau_{\leq \cL(\cdot)}$ and we can define
\begin{equation*}
	\sc{IC}_{\cL} = \tau_{\leq \cL(n)}Ri_{n*} \cdots \tau_{\leq \cL(2)} Ri_{2*} \bbR_{\cU_2}.
\end{equation*}
This satisfies the local computation
\begin{equation*}
\begin{gathered}
	x \in S_{n-k}, \; k \ev \implies
	H^i(\sc{IC}_{\cL})_x \cong
	\begin{cases}
	\bbH^i(Z; \sc{IC}_{\cL}) & i \leq \bar m(k) \\
	0 & i > \bar m(k) 
	\end{cases}\\
	x \in S_{n-k}, \; k \odd \implies
	H^i(\sc{IC}_{\cL})_x \cong
	\begin{cases}
	\bbH^i(Z; \sc{IC}_{\cL}) & i < \bar m(k) \\
	W(S_{n-k}) & i = \tfrac{k-1}2 \\
	0 & i > \bar n(k) 
	\end{cases}
\end{gathered}
\end{equation*}
and so is a refinement of Deligne's construction in which we truncate the cohomology not just at a given degree but at a given locally constant sheaf. 
We prove \cite[Theorem 2.2]{ABLMP} that every refined middle perversity complex of sheaves is given by $\sc{IC}_{\cL}$ for some topological mezzoperversity $\cL.$ \\

Now endow $\hat X$ with a rigid suitably scaled iterated wedge metric $g,$ and let $\cW$ be a de Rham mezzoperversity.
Define a presheaf on $\hat X$ by assigning to each open set $\cU \subseteq \hat X$ the vector space
\begin{equation*}
	\cD_{\cW}(\cU) = \{ \omega \in \cD_{\cW}(d) : \supp \omega \subseteq (\cU \cap \hat X^{\reg}) \}
\end{equation*}
and assigning to each inclusion $j:\cU \subseteq \cV$ the restriction map
\begin{equation*}
	j^*:\cD_{\cW}(\cV) \lra \cD_{\cW}(\cU).
\end{equation*}
The exterior derivative makes this into a complex of presheaves.
Denote the sheafification by 
\begin{equation*}
	\bL^2_{\cW}\sc\bOm \in \Sh^{\bullet}(\hat X).
\end{equation*}

(Note that, for $i\hookrightarrow \widehat X^{reg}\to \widehat X,$ we are using $i_!(D_W(d)).$)

\begin{theorem}
Let $\hat X$ be a pseudomanifold endowed with a rigid suitably scaled iterated wedge metric.
Given any de Rham mezzoperversity $\cW,$ the sheaf $\bL^2_{\cW}\sc\bOm$ defines an object in $RP(\hat X),$ and hence we have an equality in $D(\hat X),$
\begin{equation*}
	\bL^2_{\cW}\sc\bOm \cong \sc{IC}_{\cL(\cW)}
\end{equation*}
for some topological mezzoperversity $\cL(\cW).$
This establishes a one-to-one correspondence between de Rham mezzoperversities and topological mezzoperversities
\begin{equation*}
	\cW \leftrightarrow \cL(\cW).
\end{equation*}
Thus we have an equivalence between the hypercohomology of a refined middle perversity complex of sheaves and the $L^2$-de Rham cohomology with Cheeger ideal boundary conditions,
\begin{equation*}
	\tH_{\cW, \dR}^*(\hat X) = \bbH^\bullet (\hat X; \sc{IC}_{\cL(\cW)}).
\end{equation*}
\end{theorem}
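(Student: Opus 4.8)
The plan is to verify that $\bL^2_{\cW}\sc\bOm$ satisfies the four axioms (a)--(d) characterizing $RP(\hat X)$, to invoke \cite[Theorem 2.2]{ABLMP} in order to conclude $\bL^2_{\cW}\sc\bOm \cong \sc{IC}_{\cL(\cW)}$ in $D(\hat X)$ for a unique topological mezzoperversity $\cL(\cW)$, to check that $\cW \mapsto \cL(\cW)$ is a bijection, and finally to read off the hypercohomology identity from fineness of the sheaf complex. First I would establish constructibility and boundedness: localizing elements of $\cD_{\cW}(d)$ by the $\CI_{\Phi}(\wt X)$-partition of unity and using the distributional asymptotic expansions of \S\ref{sec:DepthEll}, a distinguished neighborhood of a point in $S_{n-k}$ is diffeomorphic to $\bbB^{n-k}\times C(\wt Z_k)$ and the complex $\bL^2_{\cW}\sc\bOm$ is quasi-isometrically modelled on that product; hence its cohomology sheaves are locally constant along $S_k$ with finitely generated stalks, and boundedness is immediate since forms live in degrees $0$ through $n$. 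Over the regular part $\cV_2 = \hat X^{\reg}$ there are no boundary conditions and $\bL^2_{\cW}\sc\bOm$ restricts to the ordinary $L^2$-de Rham sheaf, so the Poincar\'e lemma for $d$ on Euclidean balls gives the normalization isomorphism $\bbR_{\cV_2} \xlra{\cong} \bL^2_{\cW}\sc\bOm\rest{\cV_2}$, which is axiom (a); axiom (b) holds because the complex is concentrated in nonnegative degrees.

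The heart of the argument is the local cone computation of $H^i(\bL^2_{\cW}\sc\bOm)_x$ for $x \in S_{n-k}$. Because on ${}^w T^*\wt X$ the link enters the de Rham operator only through $\eth_{\dR}^{Z_k}$, a K\"unneth argument over the ball factor reduces this to the cohomology of the $L^2$-de Rham complex on the open cone $C(\wt Z_k)$ with the domain $\cD_{\cW'}(\eth_{\dR}^{Z_k})$ inherited from the truncated mezzoperversity $\cW'=(W_1,\dots,W_{k-1})$, which by the induction hypothesis (the inductive form of Theorem \ref{thm:CmptResolv}) is Fredholm with cohomology $\bbH^\bullet(\wt Z_k; \bL^2_{\cW'}\sc\bOm)$. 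Cheeger's cone formula, in the form adapted to Cheeger ideal boundary conditions --- the depth-one instance being Theorem \ref{thm:HodgeThmSimpleEdge}, the general case the inductive mechanism of \S\ref{sec:DepthEll} --- then produces exactly the local model recalled before the theorem for $\sc{IC}_{\cL}$: for $k$ even the stalk cohomology is $\bbH^i(\wt Z_k;\bL^2_{\cW'}\sc\bOm)$ in degrees $\leq \bar m(k)$ and vanishes above; for $k$ odd it is $\bbH^i$ below the middle degree, equals the flat bundle $W_k$ (transported by the vertical Hodge--de Rham isomorphism on $\wt Z_k$) in degree $(k-1)/2$, and vanishes above $\bar n(k)$. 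This is $\bar n$-stalk vanishing, axiom (c); axiom (d), $\bar m$-costalk vanishing, follows by applying the same computation to the dual mezzoperversity $\bbD\cW$ together with the fact that the Hodge star identifies $\cD_{\cW}(\eth_{\dR})$ with $\cD_{\bbD\cW}(\eth_{\dR})$, so that the costalk condition for $\cW$ becomes the stalk condition for $\bbD\cW$.

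With the four axioms in hand, \cite[Theorem 2.2]{ABLMP} yields a unique topological mezzoperversity $\cL(\cW)$ with $\bL^2_{\cW}\sc\bOm \cong \sc{IC}_{\cL(\cW)}$ in $D(\hat X)$. Bijectivity of $\cW\mapsto\cL(\cW)$ is then an induction on depth: from the stalk cohomology of $\sc{IC}_{\cL(\cW)}$ in degree $(k-1)/2$ on each odd-codimension non-Witt stratum one recovers the local system $W(S_{n-k})$, which, via the vertical Hodge--de Rham isomorphism, is a sub-local-system of $\tH^{v_k/2}_{\dR}(M_k/Y_k)$ determining the flat bundle $W_k$; so the map is injective, and it is surjective because both notions are parametrized inductively by the same data --- a choice of flat subbundle of the (Hodge, resp.\ de Rham) vertical cohomology at each non-Witt stratum --- and $\cW\mapsto\cL(\cW)$ is the identity on each such factor. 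Finally, since $\CI_{\Phi}(\wt X)$ admits partitions of unity and multiplication by such functions preserves $\cD_{\cW}(d)$, each term of the complex $\bL^2_{\cW}\sc\bOm$ is fine; hence $\bbH^\bullet(\hat X;\bL^2_{\cW}\sc\bOm)$ is computed by the complex of global sections, which is precisely the Fredholm Hilbert complex $(\cD_{\cW}(d),d)$ of \S\ref{sec:DepthEll} whose cohomology is $\tH^*_{\cW,\dR}(\hat X)$, so $\bbH^\bullet(\hat X;\sc{IC}_{\cL(\cW)}) = \tH^*_{\cW,\dR}(\hat X)$.

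The main obstacle is the local cone computation of the second paragraph. Making Cheeger's cone formula rigorous for the $L^2$-de Rham complex with Cheeger ideal boundary conditions requires both the inductively-obtained Fredholm property of the link complex (so that its local cohomology is represented by harmonic forms) and a careful analysis of how cone truncation interacts with the boundary conditions --- in particular that on an odd-codimension stratum the surviving middle-degree cohomology is cut down not to a fixed integer degree but to the flat subbundle $W_k$, and that the parity-dependent asymmetry between stalk and costalk vanishing comes out matching the pair $(\bar n,\bar m)$ exactly. The depth-one case is Theorem \ref{thm:HodgeThmSimpleEdge}; the bookkeeping that propagates it through the inductive structure of \S\ref{sec:DepthEll}, while tracking duality via the Hodge star, is where essentially all of the work lies.
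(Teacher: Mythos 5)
Your outline is essentially the correct one: the survey itself gives no proof of this theorem (it is deferred to \cite{ABLMP}), and the argument there proceeds exactly as you describe — softness/fineness via the $\CI_{\Phi}(\wt X)$ partition of unity, normalization and the lower bound from the interior Poincar\'e lemma, the local computation on a distinguished neighborhood $\bbB^{n-k}\times C(\wt Z_k)$ reducing by K\"unneth and the inductive Fredholm theory on the link to a Cheeger-type cone formula with ideal boundary conditions, then the classification \cite[Theorem 2.2]{ABLMP} and the inductive identification of the two parametrizations. The one place you genuinely diverge is axiom (d): deducing $\bar m$-costalk vanishing for $\cW$ from $\bar n$-stalk vanishing for $\bbD\cW$ via the Hodge star requires more than the identity $*:\cD_{\cW}(\eth_{\dR})\to\cD_{\bbD\cW}(\eth_{\dR})$ — you need a sheaf-level statement that $\bL^2_{\bbD\cW}\sc\bOm$ represents the (shifted) Verdier dual of $\bL^2_{\cW}\sc\bOm$, i.e., a local Poincar\'e--Lefschetz duality on the cone identifying $i_x^!$ of one complex with the dual of $i_x^*$ of the other; establishing that is roughly as much work as computing the costalk directly from the attaching triangle relating $i_x^!\sc S$, $\bbH^\bullet(\cU;\sc S)$ and $\bbH^\bullet(\cU\setminus S_{n-k};\sc S)$, which is the route the detailed paper takes since the cone formula already supplies both inputs. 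A second, minor point to make explicit: the presheaf is defined by a support condition in $\cU\cap\hat X^{\reg}$ (the paper's parenthetical about $i_!$), so identifying the global hypercohomology with the cohomology of the Hilbert complex $(\cD_{\cW}(d),d)$ needs the partition-of-unity argument not just for fineness but also to see that global sections of the sheafification recover all of $\cD_{\cW}(d)$. Neither point is a fatal gap; both are exactly where you correctly locate the work.
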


On any pseudomanifold there are two canonical choices of mezzoperversity. Indeed, given a flat bundle $\cH \lra Y,$ two canonical flat subbundles are the zero bundle and the entire bundle $\cH$ itself. Let $\cW_0$ be the mezzoperversity corresponding to always choosing the zero bundle, and $\cW_{\max}$ the mezzoperversity corresponding to always choosing the entire bundle of vertical cohomology. This theorem shows that
\begin{equation*}
	\tH_{\cW_0, \dR}^*(\hat X) = \mathrm{IH}_{\bar m}^*(\hat X), \quad
	\tH_{\cW_{\max}, \dR}^*(\hat X) = \mathrm{IH}_{\bar n}^*(\hat X).
\end{equation*}
(The latter can be identified with the cohomology of $\cD_{\max}(d)$ and the former with the cohomology of $\cD_{\min}(d),$ so in these cases the isomorphisms follow from the work of Cheeger described in \S\ref{sec:Background}.)

The definition of $RP(\hat X)$ only requires a {\em topological stratification.} In this generality, in \cite{Banagl:ExtendingIH}, Banagl studied a category $SD(\hat X)$ defined just like $RP(\hat X)$ but with the extra requirement of self-duality. Spaces $\hat X$ with $SD(\hat X)$ non-empty are called $L$-spaces. We show in \cite{ABLMP} that a Thom-Mather stratified space is an $L$-space if and only if it is a Cheeger space.

\section{Stratified spaces and their resolutions} \label{sec:StratSpaces}

In this section we will review the relation between stratified spaces and manifolds with corners.

\subsection{Thom-Mather stratified spaces} \label{sec:ThomMather}

We follow \cite{Mather:Notes, Verona:StratMap}, see also \cite{Pflaum:Study}.
Our aim in this section is to discuss the definition and some basic concepts culminating in the `local cone structure' of a Thom-Mather stratified space.

All topological spaces will be `nice' meaning Hausdorff, locally compact topological space with a countable basis for its topology.
A subset $A$ of a topological space $V$ is locally closed if every point $a\in A$ has a neighborhood $\cU$ in $V$ such that $A\cap \cU$ is closed in $\cU.$
A collection $\cS$ of subsets of $V$ is locally finite if every point $v \in V$ has a neighborhood that intersects only finitely many sets in $\cS.$

\begin{definition}
A  (Thom-Mather) {\em stratified space} is a triple $(\hat X, \sS, \sT)$ in which $\hat X$ is the space, $\sS$ is a decomposition of $\hat X$ into `strata', and $\sT$ is a collection of `tubes', one per stratum.
Specifically:
\begin{itemize}
\item $\hat X$ is a `nice' topological space.
\item $\sS$ is a locally finite collection of locally closed subsets of $\hat X.$ Each $Y \in \sS$ is a topological manifold in the induced topology and is provided with a smooth structure. The family $\sS$ satisfies the frontier condition
\begin{equation}\label{eq:Frontier}
	Y, Y' \in \sS \Mand Y \cap \bar{Y'} \neq\emptyset \implies Y \subseteq \bar{Y'}.
\end{equation}
\item $\sT$ contains, for each $Y \in \sS,$ a triple $(\cT_Y, \pi_Y, \rho_Y)$ in which $\cT_Y$ is an open neighborhood of $Y$ in $\hat X,$ and
\begin{equation*}
	\pi_Y: \cT_Y \lra Y, \quad \rho_Y: \cT_Y \lra \bbR^+
\end{equation*}
are continuous functions such that $\pi_Y\rest{Y} = \mathrm{id}_Y$ and $\rho_Y^{-1}(0) = Y.$ The neighborhood $\cT_Y$ is known as a tubular neighborhood of $Y$ in $\hat X$ and the triple $(\cT_Y, \pi_Y, \rho_Y)$ as the control data of $Y$ in $\hat X.$
\end{itemize}
The control data are compatible in that if $Y, Y' \in \sS$ are distinct strata such that $\cT_{Y} \cap Y' \neq \emptyset$ then 
\begin{equation}\label{eq:A8}
	(\pi_Y, \rho_Y)\rest{\cT_Y \cap Y'}: \cT_Y\cap Y' \lra Y\times (0,\infty)
\end{equation}
is a smooth submersion, and
\begin{equation}\label{eq:A9}
	\pi_Y\circ \pi_{Y'} = \pi_Y, \quad \rho_Y \circ \pi_{Y'} = \rho_Y
\end{equation}
when both sides are defined (i.e., on $\cT_Y \cap \pi_{Y'}^{-1}(\cT_Y \cap Y')$). 
\end{definition}

A stratified space is a {\em pseudomanifold} if the codimension of each singular stratum is at least two.

\begin{example}
The {\em trivial stratification of a smooth manifold} $L$ has a single stratum equal to $L$ with control data $\cT_L =L,$ $\pi_L = \id,$ $\rho_L\equiv 0.$

If $(\hat X, \sS, \sT)$ is a stratified space and $L$ is a smooth manifold then $\hat X \times L$ is naturally a stratified space with strata $Y \times L,$ $Y \in \sS.$

If $(\hat X, \sS, \sT)$ is a stratified space and $A \subseteq \hat X$ is either an open subset or a closed subset that is a union of strata, then $A$ is naturally a stratified space with strata $Y \cap A,$ $Y \in \sS.$
\end{example}

\begin{example}[Whitney stratified spaces]
Let $L$ be a smooth manifold and let $R, S$ be smooth submanifolds of $L.$ 
We say that $(R, S)$ satisfies Whitney's condition $(\bB)$ if for every 
\begin{itemize}
\item $x \in R$ 
\item sequence $(y_k) \subseteq S$ such that $y_k \to x$ and $T_{y_k}S \to \tau \subseteq T_xL$ (as sections of a Grassmanian bundle over $L$), 
\item sequence $(x_k) \subseteq R$ such that $x_k \to x,$ $x_k \neq y_k$ and the secant lines $\overline{x_ky_k}$ (defined in a coordinate chart) converge to $\ell \subseteq T_xL$
\end{itemize}
we have $\ell \subseteq \tau.$

A Whitney stratification of a subset $A \subseteq L$ is a locally finite collection $\sS$ of pairwise disjoint smooth submanifolds of $L$ covering $A,$ such that any $Y, Y' \in \sS$ satisfy the frontier condition \eqref{eq:Frontier} and Whitney's condition $(\bB).$ Spaces admitting a Whitney stratification include 
algebraic subvarieties of $\bbR^n$ or $\bbC^n$ \cite{Whitney}, semianalytic and sub analytic sets \cite{Lojasiewicz}, and `o-minimal structures' \cite{VanDenDries-Miller}.

Whitney stratified spaces are intimately connected to Thom-Mather stratified spaces. Mather proved \cite{Mather:Notes} that every Whitney stratified space admits a Thom-Mather stratification. Moreover, Teufel \cite{Teufel} showed that every Thom-Mather stratified space can be embedded into a Euclidean space so that its image is Whitney stratified.
\end{example}

Given $Y, Y' \in \sS$ we write 
\begin{equation*}
	Y<Y' \iff Y \cap \bar {Y'} \neq \emptyset \iff Y \subseteq \bar{Y'}.
\end{equation*}
This is an order on $\sS$ and \eqref{eq:A8} shows that $Y<Y' \implies \dim Y < \dim Y'.$
Another useful consequence of \eqref{eq:Frontier} is that the closure of a stratum is a union of strata.
Indeed, this is equivalent to the frontier condition \eqref{eq:Frontier}.

The {\em depth of a stratum} is
\begin{equation*}
	\depth(Y) = \sup\{ n \in \bbN_0: \exists Y_i \in \sS \Mst Y = Y_0 < Y_1 <\ldots<Y_n \}
\end{equation*}
and the {\em depth of a stratification} is
\begin{equation*}
	\depth(\sS) = \sup\{ \depth(Y) : Y \in \sS \}.
\end{equation*}
The regular part of $\hat X$ consists of those strata of depth zero,
\begin{equation*}
	\hat X^{\reg} = \bigcup \{ Y \in \sS : \depth(Y) =0\}.
\end{equation*}

Two stratified spaces $(\hat X, \sS, \sT)$ and $(\hat X', \sS', \sT')$ are {\em equivalent} if $\hat X = \hat X'$ with the same strata (as smooth manifolds) and the control data at a stratum $Y,$
\begin{equation*}
	(\cT_Y, \pi_Y, \rho_Y), \quad (\cT_Y', \pi_Y', \rho_Y'),
\end{equation*}
coincide in a neighborhood of $Y.$

A {\em morphism between stratified spaces} $(\hat X, \sS_X, \sT_X)$ and $(\hat N, \sS_N, \sT_N)$ is a continuous map
\begin{equation*}
	f: \hat X \lra \hat N
\end{equation*}
that maps strata into strata such that, whenever $Y \in \sS_X,$ and $Q \in \sS_N$ are such that $f(Y) \subseteq Q,$
we have
\begin{equation*}
	f\rest{Y}:Y \lra Q \text{ is smooth}
\end{equation*}
and, for some $(\hat X, \sS_X, \sT_X')$ equivalent to $(\hat X, \sS_X, \sT_X),$
\begin{equation*}
	f(\cT_Y') \subseteq \cT_Q, \quad f\circ \pi_Y' = \pi_Q \circ f, \quad \rho_Y' = \rho_Q \circ f.
\end{equation*}
An {\em isomorphism of stratified spaces} is defined to be a bijective morphism whose inverse is also a morphism and hence is a homeomorphism that restricts to a diffeomorphism on each stratum.

A {\em submersive morphism} $f:\hat X \lra \hat N$ is a morphism such that, whenever $Y \in \sS_X,$ and $Q \in \sS_N,$ are such that $f(Y) \subseteq Q,$
we have
\begin{equation*}
	f\rest{Y}:Y \lra Q \text{ is a submersion.}
\end{equation*}

If $\hat X_0$ is a stratified space and $L$ is a smooth manifold, then $\hat X_0 \times L$ has a natural stratification with strata $\{ Y \times L : Y \in \sS\},$ 
and the projection onto the second factor
\begin{equation*}
	\hat X_0 \times L \xlra{\pi_L} L
\end{equation*}
is a submersive morphism.
We say that a submersive morphism $f:\hat X \lra L$ is a {\em trivial fibration} if it participates in a commutative diagram
\begin{equation*}
	\xymatrix{
	\hat X \ar[rd]_{f} \ar[rr]^{F} & & \hat X_0 \times L \ar[dl]^{\pi_L} \\
	& L &}
\end{equation*}
with $F$ an isomorphism of stratified spaces $F.$ 

A submersive morphism $f:\hat X \lra L$ onto a smooth manifold (with its trivial stratification) is a {\em locally trivial fibration} if every point $x \in L$ has a neighborhood $\cU$ such that $f\rest{f^{-1}(\cU)}:f^{-1}(\cU) \lra \cU$ is a trivial fibration.
If $L$ is connected then all of the fibers are isomorphic stratified spaces and we write
\begin{equation*}
	\hat X_0 \fib \hat X \xlra{f} L
\end{equation*}
where $\hat X_0$ is a fiber $f^{-1}(\pt).$ (Note that the `structure group' consists of isomorphisms of stratified spaces $\hat X_0 \lra \hat X_0.$)

\begin{theorem}[Thom's first isotopy lemma for stratified spaces] \label{thm:Thom1Strat}
If $f: \hat X \lra L$ is a proper submersive morphism between a stratified space $\hat X$ and a smooth manifold $L$ (with its trivial stratification), then $f$ is a locally trivial fibration.
\end{theorem}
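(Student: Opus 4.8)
The plan is to reduce the statement to a purely local assertion and then prove that by integrating carefully chosen \emph{controlled} vector fields. Since being a locally trivial fibration is a condition local on $L$, it suffices (after passing to a connected chart) to treat the case $L = \bbB^n$, an open ball centered at $0$, and to produce an isomorphism of stratified spaces $f^{-1}(\bbB^n) \xlra{\cong} f^{-1}(0) \times \bbB^n$ over $\bbB^n$. Flowing successively along the coordinate directions $\pa/\pa t_1,\ldots,\pa/\pa t_n$ reduces this, one factor at a time, to the case $L = \bbR$: a proper submersive morphism $f\colon \hat X \to \bbR$ is isomorphic over $\bbR$ to the projection $f^{-1}(0)\times\bbR \to \bbR$. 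This is the heart of the matter.

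For the one-dimensional base, the key object is a lift of $\pa/\pa t$ to a vector field $V$ on $\hat X$ that is \emph{controlled}: (i) on each stratum $Y\in\sS$ the restriction $V|_Y$ is a smooth vector field that is $f|_Y$-related to $\pa/\pa t$; and (ii) near each stratum $Y$ the field $V$ is compatible with the control data $(\cT_Y,\pi_Y,\rho_Y)$, in the sense that $d\pi_Y(V)=V$ and $d\rho_Y(V)=0$ on $\cT_Y$ away from smaller strata. Such a $V$ is constructed by induction on $\depth(Y)$. On the deepest strata, where the tube is essentially $Y$ itself, one picks any $f$-horizontal lift using submersivity of $f|_Y$. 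Assuming $V$ has been built and is controlled on a neighborhood $\cU$ of the union of the strata of depth $>k$, one extends it over the depth-$k$ strata by first prescribing it on the tubes through the compatibility conditions \eqref{eq:A8} and \eqref{eq:A9} — which guarantee that prescriptions coming from different deeper strata agree and assemble into a genuine lift — and then patching with a partition of unity subordinate to the tubular neighborhoods.

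One then integrates $V$. Because each $V|_Y$ is a smooth vector field on the manifold $Y$ that is $f$-related to $\pa/\pa t$, along any of its flow lines $f$ increases at unit rate, so a trajectory over a bounded interval of time stays inside $f^{-1}$ of a compact interval, which is compact by properness; the escape lemma then gives that the flow is defined for all time. The control conditions $d\pi_Y(V)=V$ and $d\rho_Y(V)=0$ force the stratumwise flows on $Y$ and on the strata limiting onto $Y$ to be intertwined by $\pi_Y$ and to preserve $\rho_Y$, which is exactly what is needed for them to fit together into a continuous one-parameter group $\Phi_t\colon \hat X \to \hat X$ of stratified isomorphisms with $f\circ\Phi_t = f + t$. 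Then $F(x) = (\Phi_{-f(x)}(x),\, f(x))$ is the desired isomorphism $\hat X \xlra{\cong} f^{-1}(0)\times\bbR$ over $\bbR$, and carrying this through the coordinate-by-coordinate reduction gives the trivialization over $\bbB^n$.

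The main obstacle is the construction of the controlled vector field together with the verification that its flow is continuous across strata; neither follows formally from smoothness on individual strata, and it is precisely here that the Thom--Mather compatibility axioms \eqref{eq:A8}--\eqref{eq:A9} enter — first to make the inductive extension of $V$ over the tubes well defined, and second to guarantee that integral curves behave coherently as one approaches a boundary stratum. A secondary point is completeness of the flow, handled by properness via the escape lemma; and when upgrading from $L=\bbR$ to $L=\bbB^n$ one must check that flowing along successive coordinate directions yields a map that is still smooth on strata and still respects the control data, which again is a consequence of the same compatibility conditions.
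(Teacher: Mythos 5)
Your argument is correct and is precisely the classical Mather--Verona proof via controlled vector fields, which is the approach the paper itself points to: the paper does not reprove the lemma but cites \cite[Theorem 2.6]{Verona:StratMap} and \cite{Mather:Notes}, identifying controlled vector fields, their flows, and the liftability of vector fields on $L$ as the main tools. The only step worth tightening is completeness of the flow: the escape lemma must be applied within a single stratum $Y$, and compactness of $f^{-1}(K)$ in $\hat X$ alone does not rule out a trajectory reaching $\overline{Y}\setminus Y$ in finite time --- it is the invariance of the tubular functions $\rho_{Y'}$ for $Y'<Y$ (which you do record) that confines the trajectory to a compact subset of $Y$ itself.
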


This can be thought of as the version of Ehresmann's theorem for stratified spaces, and it indeed reduces to that theorem if $\hat X$ is smooth. A proof can be found in \cite[Theorem 2.6]{Verona:StratMap} (see also \cite[Corollary 10.2, Proposition 11.1]{Mather:Notes}, \cite[3.9.2]{Pflaum:Study}).

The main tool in the proof of Theorem \ref{thm:Thom1Strat} is the notion of a  controlled vector field on a stratified space (see \cite[\S 9]{Mather:Notes}, \cite[\S 2]{Verona:StratMap}).
A collection of smooth vector fields, one per stratum,
\begin{equation*}
	\xi = \{ \xi_Y \in \CI(Y, TY) : Y \in \sS \}
\end{equation*}
is a {\em controlled vector field} if whenever $Y, Y' \in \sS,$ $Y<Y',$ and $\zeta \in \cT_Y \cap Y'$ is sufficiently close to $Y,$
\begin{equation*}
	D\pi_Y(\xi_{Y'}(\zeta)) = \xi_Y(\pi_Y(\zeta)), \quad
	\xi_{Y'}\rho_Y = 0.
\end{equation*}
These are analogous to smooth vector fields on a smooth manifold in at least two ways:
First, the flow of a controlled vector field generates a (local) one parameter group of stratified space morphisms from an open subset $D_{\xi}\subseteq \hat X \times \bbR$ to $\hat X.$
Secondly, if $L$ is a smooth manifold and $f: \hat X \lra L$ is a submersive morphism, then every vector field on $L$ has a lift to a controlled vector field on $\hat X.$

Let us reexamine the control data of a stratified space in light of Theorem \ref{thm:Thom1Strat}.
If $Y$ is a stratum of $\hat X$ with control data $(\cT_Y, \pi_Y, \rho_Y)$ then $\cT_Y,$ as an open subset of $\hat X,$ inherits a stratification with strata 
\begin{equation*}
	\sS_{\cT_Y} = \{ \cT_Y \cap Y' : Y' \in \sS, Y \leq Y' \}.
\end{equation*}
The maps
\begin{equation*}
	(\pi_Y, \rho_Y)\rest{ \cT_Y\setminus Y}: \cT_Y \setminus Y \lra Y \times (0,\eps), \quad
	\pi_Y: \cT_Y \lra Y
\end{equation*}
are (for $\eps$ sufficiently small) proper submersive morphisms and hence locally trivial fibrations.

Let $a \in (0,\eps)$ and $K_Y = \rho_Y^{-1}(a).$
The canonical vector field on $(0,\eps)$ lifts to a controlled vector field on $\cT_Y \setminus Y$ and we can use its flow to identify, as stratified spaces, $\cT_Y\setminus Y$ with $K_Y  \times (0,\eps).$ The restriction $\psi_Y = \pi_Y\rest{\rho_Y^{-1}(a)}$ is a locally trivial fibration
\begin{equation*}
	\psi_Y: K_Y \lra Y
\end{equation*}
and the fiber over a point $q \in Y$ is a stratified space $Z_q$ called the {\em link of $\hat X$ at $Y.$} Local triviality implies that the links over points in the same connected component of $Y$ are isomorphic as stratified spaces (this explains the name `isotopy lemma').
It follows that the fiber over $q$ of the locally trivial fibration $\pi_Y$ is the cone over $Z_q,$
\begin{equation*}
	C_{\eps}(Z_q) = Z_q \times [0,\eps) / Z_q \times \{ 0\}
\end{equation*}
with its natural stratification.
Thus Thom's first isotopy lemma shows that a Thom-Mather stratified space is, locally near each stratum, a bundle of cones over simpler stratified spaces.

\subsection{Resolving a single stratum} \label{sec:ResolvingSingle}

Given $(\hat X, \sS, \sT)$ a stratified space let
\begin{equation*}
\begin{gathered}
	\sS^* = \{ Y \in \sS: \depth_{\sS}(Y) = \depth(\sS) \}, \\
	\hat X^* = \bigcup \{ Y \in \sS^* \}, \quad
	\cT_{\hat X^*} = \bigcup \{ \cT_Y : Y \in \sS^*\}.
\end{gathered}
\end{equation*}
We assume, replacing $(\hat X, \sS, \sT)$ with an equivalent stratified space if necessary, that 
\begin{equation*}
	Y, Y' \in \cS^* \implies \cT_Y \cap \cT_{Y'} = \emptyset
\end{equation*}
and we let $\pi_{\hat X^*}: \sT_{\hat X^*} \lra \hat X^*,$ $\rho_{\hat X^*}: \sT_{\hat X^*} \lra \bbR^+$ be the obvious maps.
Note that by the frontier condition $\hat X^*$ is a closed subset of $\hat X.$

As explained above, Thom's first isotopy lemma implies that $\sT_{\hat X^*}\setminus \hat X^*$ is isomorphic, as a stratified space, to $K_{\hat X^*}\times (0,\eps)$ for a stratified space participating in a locally trivial fibration
\begin{equation*}
	\psi_{\hat X^*}: K_{\hat X^*} \lra \hat X^*.
\end{equation*}
For $a \in (0,\eps),$ let 
\begin{equation*}
	\hat X^a = \hat X \setminus \{ \zeta \in \sT_{Y^*} : \rho_{\hat X^*}(\zeta) < a \}.
\end{equation*}
This is an example of a stratified space with boundary (formalized below in Definition \ref{def:StratCorners}).
Note that the particular value of $a$ is immaterial since different values yield isomorphic stratified spaces $K_{\hat X^*} \times (a,\eps).$
We will not distinguish between different values of $a,$ but denote this space by
\begin{equation*}
	[\hat X; \hat X^*]
\end{equation*}
and refer to it as the {\em radial blow-up of $\hat X$ along $\hat X^*.$}

Note that the boundary $K_{\hat X^*}$ inherits from $\hat X$ the locally trivial fibration $\psi_{\hat X^*}$ and also a {\em collar neighborhood}.
By which we will mean 
an open neighborhood of $K_{\hat X^*},$ $\cC_{K_{\hat X^*}} \subseteq [\hat X; \hat X^*],$ and a map
\begin{equation*}
	\Theta: \cC_{K_{\hat X^*}}  \lra K_{\hat X^*} \times \bbR^+
\end{equation*}
which is a homeomorphism onto its image and such that
\begin{equation*}
\begin{gathered}
	\Theta(\zeta) = (\zeta, 0) \Mforall \zeta \in K_{\hat X^*} \Mand \\
	\Theta(Y \cap \cC_{K_{\hat X^*}}) \subseteq (Y \cap K_{\hat X^*}) \times \bbR^+ \Mforall Y \in \sS\setminus \sS^*.
\end{gathered}
\end{equation*}

Finally note that 
\begin{equation*}
	\depth( [\hat X; \hat X^*] ) = \depth(\hat X) - 1,
\end{equation*}
so that we have simplified the stratification. On the other hand, we can recover $\hat X$ by collapsing the fibers of the boundary fibration 
\begin{equation*}
	\hat X = [\hat X; \hat X^*] / \{ K_{\hat X} \ni \zeta \sim \psi_{\hat X^*}(\zeta) \in \hat X^* \}.
\end{equation*}
We denote the projection map by 
\begin{equation*}
	\beta: [\hat X; \hat X^*] \lra \hat X
\end{equation*}
and refer to it as the `blow-down' map.

Hence for many purposes working on $\hat X$ is equivalent to working on $[\hat X; \hat X^*]$ together with its boundary fibration $\psi_{\hat X^*}.$
The resolution of $\hat X$ is the result of iterating this process to replace strata by boundaries. 

\subsection{Stratifications with collared corners} \label{sec:StratCorners}

In this section we summarize part of \S5 of \cite{Verona:StratMap} (where the terminology `abstract stratifications with faces' is used).
By a manifold with corners we mean a smooth manifold locally modeled on $(\bbR^+)^\ell$ with embedded boundary hypersurfaces, e.g., see \cite{Melrose:Conormal}.
(For notational convenience, we allow the empty set as a boundary hypersurface.)
A manifold with corners always has (compatible) collar neighborhoods of its boundary hypersurfaces (see e.g., \cite[Proposition 1.2]{Albin-Melrose:Resolution}). By a manifold with collared corners we mean that we have fixed a choice of collars.

\begin{definition}\label{def:StratCorners}
A  {\em stratified space with collared corners} is a quintuple $(\hat X, \sS, \sT, \sF, \sC)$ in which $\hat X$ is the space, $\sS$ is a decomposition of $\hat X$ into `strata', $\sT$ is a collection of `tubes', one per stratum, $\sF$ are the `boundary hypersurfaces' of $\hat X,$ and $\sC$ is a collection of `collars', one per boundary hypersurface.
Specifically:
\begin{itemize}
\item $\hat X$ is a `nice' topological space.
\item $\sF$ is a collection of closed subsets of $\hat X$ and $\sS$ is a locally finite collection of locally closed subsets of $\hat X.$
\item $\sC$ consists, for each $K \in \sF,$ of an open neighborhood $\cU_K\subseteq \hat X$ of $K$ and a homeomorphism
\begin{equation*}
	\Theta_K: \cU_K \lra K \times \bbR^+
\end{equation*}
such that $\Theta_K\rest{K} = \id_K,$ and 
\begin{equation*}
	\Theta_K(Y \cap \cU_K) \subseteq (Y\cap K) \times \bbR^+ \Mforevery Y \in \sS.
\end{equation*}
The pair $(\cU_K, \Theta_K)$ is known as a collar of $K.$ We denote the composition of $\Theta_K$ with the projection onto $K$ by $\theta_K$ and the composition with the projection onto $\bbR^+$ by $x_K,$ so that
\begin{equation*}
	\Theta_K(\zeta) = (\theta_K(\zeta), x_k(\zeta)).
\end{equation*}
\item Each $Y \in \sS$ is a topological manifold with boundary in the induced topology and is provided with the structure of a smooth manifold with corners whose boundary hypersurfaces are $Y \cap K$ for $K \in \sF.$ These are collared by $\Theta_{K\cap Y} = \Theta_K\rest{Y \cap \cU_K}.$ 
The family $\sS$ satisfies the frontier condition
\begin{equation*}
	Y, Y' \in \sS \Mand Y \cap \bar{Y'} \neq\emptyset \implies Y \subseteq \bar{Y'}.
\end{equation*}
\item $\sT$ contains, for each $Y \in \sS,$ a triple $(\cT_Y, \pi_Y, \rho_Y)$ in which $\cT_Y$ is an open neighborhood of $Y$ in $\hat X,$ and
\begin{equation*}
	\pi_Y: \cT_Y \lra Y, \quad \rho_Y: \cT_Y \lra \bbR^+
\end{equation*}
are continuous functions such that $\pi_Y\rest{Y} = \mathrm{id}_Y$ and $\rho_Y^{-1}(0) = Y.$ The neighborhood $\cT_Y$ is known as a tubular neighborhood of $Y$ in $\hat X$ and the triple $(\cT_Y, \pi_Y, \rho_Y)$ as the control data of $Y$ in $\hat X.$
\end{itemize}
The control data are compatible in that if $Y, Y' \in \sS$ are distinct strata such that $\cT_{Y} \cap Y' \neq \emptyset$ then 
\begin{equation*}
	(\pi_Y, \rho_Y)\rest{\cT_Y \cap Y'}: \cT_Y\cap Y' \lra Y\times (0,\infty)
\end{equation*}
is a smooth submersion, and
\begin{equation*}
	\pi_Y\circ \pi_{Y'} = \pi_Y, \quad \rho_Y \circ \pi_{Y'} = \rho_Y
\end{equation*}
when both sides are defined (i.e., on $\cT_Y \cap \pi_{Y'}^{-1}(\cT_Y \cap Y')$).
Finally the control data are compatible with the collar data in that 
for all $Y \in \sS,$ and $K \in \sF$
\begin{equation*}
	\pi_Y^{-1}(Y\cap K) =\cT_Y \cap K, \quad
	\Theta_{Y\cap K} \circ \pi_X = (\pi_X \times \id_{\bbR^+})\circ \Theta_K, \quad 
	\rho_Y = \rho_Y \circ \theta_K
\end{equation*}
whenever both sides make sense.
\end{definition}

The depth, dimension, and regular part of a stratified space with collared corners are defined as before. The interior of a stratified space with collared corners is the complement of the boundary hypersurfaces and is itself a stratified space.

\begin{example}
Any stratified space $(\hat X, \sS, \sT)$ is naturally a stratified space with collared corners.

If $(\hat X, \sS, \sT, \sF, \sC)$ is a stratified space with collared corners and $L$ is a smooth manifold with collared corners then 
$\hat X \times L$ is naturally a stratified space with collared corners. Its strata are $Y \times L,$ $Y \in \sS,$ and its boundary hypersurfaces are $K \times L^\circ,$ $K \in \sF,$ and $\hat X^\circ \times H,$ with $H$ a boundary hypersurface of $L.$

In particular $\hat X \times \bbR^+$ is a stratified space with collared corners where $\bbR^+$ has boundary $\{0\}$ with collar the obvious map $\bbR^+ \lra \{0\} \times \bbR^+.$

If $(\hat X, \sS, \sT, \sF, \sC)$ is a stratified space with collared corners and $A \subseteq \hat X$ is either an open set, a closed subset that is a union of strata, or a boundary hypersurface, then $A$ is naturally a stratified space with strata $Y \cap A,$ $Y \in \sS,$ and boundary hypersurfaces $K \cap A,$ $K \in \sF.$
\end{example}

Two stratified spaces with collared corners $(\hat X, \sS, \sT, \sF, \sC)$ and $(\hat X', \sS', \sT', \sF', \sC')$ are {\em equivalent} if 
\begin{equation*}
	\hat X = \hat X', \quad
	\sS = \sS', \quad
	\sF = \sF',
\end{equation*}
the control data coincide in a neighborhood of each stratum, and the collar data coincide in a neighborhood of each boundary hypersurface.
Up to equivalence we can assume that
\begin{equation*}
	\cT_Y \cap \cT_{Y'} \neq\emptyset \implies Y\leq Y' \Mor Y'\leq Y.
\end{equation*}

If $(\hat X, \sS_X, \sT_X, \sF_X, \sC_X)$ and $(\hat N, \sS_N, \sT_N, \sF_N, \sC_N)$  are stratified spaces with collared corners and $f:\hat X \lra \hat N$ is a continuous map, we say that it is {\em compatible with faces} if we can decompose the boundary hypersurfaces into `horizontal' and `vertical',
\begin{equation*}
	\sF_X = \sF_X^h(f) \sqcup \sF_X^v(f), \quad \sF_N = \sF_N^h(f) \sqcup \sF_N^v(f)
\end{equation*}
so that
\begin{itemize}
\item [a)] the inverse image of a boundary hypersurface in $\sF_N^h(f)$ is either empty or all of $\hat X,$
\item [b)] $f$ induces a bijection between $\sF_X^v(f)$ and $\sF_N^v(f)$ and if $K \in \sF_N^v(f)$ then, near $f^{-1}(K),$
\begin{equation*}
	\Theta_{K}\circ f = ( f\rest{f^{-1}(K)}\times \id_{\bbR^+}) \circ \Theta_{f^{-1}(K)},
\end{equation*}
\item [c)] near a boundary hypersurface $H \in \sF_X^h(f),$
\begin{equation*}
	f = (f\rest{H}) \circ \theta_H.
\end{equation*}
\end{itemize}
A continuous map $f: \hat X \lra \hat N$ is a {\em morphism between stratified spaces with collared corners,} $(\hat X, \sS_X, \sT_X, \sF_X, \sC_X)$ and $(\hat N, \sS_N, \sT_N, \sF_N, \sC_N),$ if it is compatible with faces, sends strata to strata smoothly and intertwines control data (as in \S\ref{sec:ThomMather}).
A morphism is {\em submersive} if its restriction to each stratum is a submersion. A morphism is an {\em isomorphism} if it is a bijection and a morphism whose inverse is a morphism.

For example, if $Y, Y'$ are strata of a stratified space with collared corners with $Y<Y'$ then, after potentially shrinking $\cT_Y,$ the maps
\begin{equation*}
	\pi_Y\rest{\cT_Y \cap Y'}: \cT_Y \cap Y' \lra Y, \quad
	\rho_Y\rest{\cT_Y \cap Y'}: \cT_Y \cap Y' \lra \bbR
\end{equation*}
are compatible with faces, with all of the faces being vertical for $\pi_Y$ and horizontal for $\rho_Y.$

If $K$ is a boundary hypersurface of a stratified space with collared corners and $(\cU_K, \Theta_K)$ is its collar then, after potentially shrinking $\cU_K,$ the collar map
\begin{equation*}
	\Theta_K:\cU_K \lra K \times \bbR^+
\end{equation*}
is an isomorphism of stratified spaces with collared corners.

The notions of trivial fibration and locally trivial fibration are entirely analogous to \S\ref{sec:ThomMather}.

\begin{theorem}[{Thom's first isotopy lemma for stratified spaces with collared corners \cite[Corollary 5.8]{Verona:StratMap}}]
If $f: \hat X \lra L$ is a proper submersive morphism between a stratified space with collared corners $\hat X$ and a smooth manifold with collared corners $L,$ then $f$ is a locally trivial fibration.
\end{theorem}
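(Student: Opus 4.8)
The plan is to follow the strategy of Theorem~\ref{thm:Thom1Strat}: lift a coordinate frame on the base to controlled vector fields on the total space and integrate them, the one extra ingredient in the present setting being that these lifts must also be adapted to the collar data so that their flows are morphisms of stratified spaces \emph{with collared corners}. Since being a locally trivial fibration is a local condition on $L$, I would fix $x_0\in L$ and, using that $L$ is a smooth manifold with collared corners, choose a chart near $x_0$ identifying a neighborhood with a relatively compact product $\cU=(\bbR^+)^a\times\bbR^b$, the first $a$ coordinates $t_1,\dots,t_a$ being defining functions for the boundary hypersurfaces of $L$ through $x_0$ and $s_1,\dots,s_b$ coordinates on an open cube. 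Replacing $(\hat X,\sS,\sT,\sF,\sC)$ by an equivalent stratified space with collared corners I may assume $\cT_Y\cap\cT_{Y'}\neq\emptyset\implies Y\leq Y'\Mor Y'\leq Y$. It then suffices to produce an isomorphism of stratified spaces with collared corners $F$ making
\begin{equation*}
	\xymatrix{ f^{-1}(\cU)\ar[rd]_{f}\ar[rr]^{F} & & f^{-1}(x_0)\times\cU\ar[dl]^{\pi_{\cU}} \\ & \cU & }
\end{equation*}
commute.

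The heart of the argument is to lift each base field $\pa_{t_i}$ and $\pa_{s_j}$ on $\cU$ to a controlled vector field on $f^{-1}(\cU)$ projecting to it. Controlled lifts of vector fields under a submersive morphism exist by the inductive construction recalled after Theorem~\ref{thm:Thom1Strat}: one works stratum by stratum in order of depth, gluing with a partition of unity subordinate to the tubular neighborhoods and using the compatibility relations~\eqref{eq:A8}--\eqref{eq:A9}. The refinement needed here is that each lift be \emph{collar-adapted}: over a vertical boundary hypersurface $K'=f^{-1}(K)$ of $\hat X$ lying above the face $\{t_i=0\}=K$ of $L$, the lift of $\pa_{t_i}$ should equal the collar field $\pa_{x_{K'}}$ in $(\cU_{K'},\Theta_{K'})$, while the lift of every other base field, and the lift of $\pa_{t_i}$ near the horizontal boundary hypersurfaces $H\in\sF_X^h(f)$, should be tangent to the corresponding boundary hypersurfaces of $\hat X$. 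Because Definition~\ref{def:StratCorners} forces the control data $(\cT_Y,\pi_Y,\rho_Y)$ to be compatible with the collars $\Theta_K$, I would run the inductive construction so that the collar neighborhoods are handled first, using the product structure $K'\times\bbR^+$ together with the vector fields already chosen on $K'$, and then extend into the interior by the usual controlled-lift argument without disturbing collar-adaptedness.

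A controlled vector field generates a local one-parameter group of morphisms of the stratified space, and collar-adaptedness makes these morphisms of stratified spaces with collared corners. Properness of $f$ makes the flow of each lifted field complete for as long as its projection stays in the relatively compact $\cU$, since an orbit leaving $f^{-1}(\cU)$ would have to escape to infinity inside the compact set $f^{-1}(\overline{\cU})$, which is impossible. Composing the flows of the $a+b$ lifts in a fixed order then yields a map $f^{-1}(x_0)\times\cU\to f^{-1}(\cU)$ over $\cU$; the reverse flows furnish its inverse, so it is a homeomorphism, it restricts to a diffeomorphism on each stratum since each flow does, it intertwines control data since each generating field is controlled, and it is compatible with the collars since each generating field is collar-adapted. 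Hence $F$ is the desired isomorphism and $f$ is a locally trivial fibration.

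I expect the main obstacle to be the lifting step: producing vector fields that are simultaneously genuine lifts of the base frame, controlled with respect to \emph{all} the tube data $(\cT_Y,\pi_Y,\rho_Y)$, and collar-adapted (collar-directional along vertical faces, tangential along horizontal faces). The first two requirements are Mather's original inductive construction; grafting on the third forces a careful ordering of the induction near the corners and a verification that the compatibility axioms between $\sT$ and $\sC$ in Definition~\ref{def:StratCorners} are preserved as one extends inward from the collar neighborhoods — this is essentially the content of \S5 of~\cite{Verona:StratMap}.
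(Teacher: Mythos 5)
Your proposal is correct and follows essentially the argument the paper defers to (Verona's Corollary 5.8): lift the base coordinate frame to controlled vector fields that are additionally collar-adapted along vertical and horizontal faces, use properness for completeness of the flows, and compose the resulting one-parameter groups of morphisms to build the trivialization. This is exactly the controlled-vector-field machinery the paper identifies as the main tool for Theorem \ref{thm:Thom1Strat}, with the collar compatibility handled as in \S5 of Verona.
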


Proceeding as in \S\ref{sec:ThomMather}, Thom's first isotopy lemma applied to the control data of a stratified space with collared boundary implies that such a space is, locally near each stratum, a bundle of cones over simpler stratified space with collared boundary.
As anticipated above, this structure allows us to iterate the radial blow-up construction from \S\ref{sec:ResolvingSingle}. Starting from a stratified space, blowing-up the strata of greatest depth produces a stratified space with collared boundary, and iterating produces a stratified space with collared corners. For spaces of finite depth this leads to what Verona called a `total decomposition' \cite[\S 6.3.1]{Verona:StratMap} and we call a resolution \cite{ALMP11, Albin-Melrose:Resolution}.

\subsection{The resolution of a stratified space} \label{sec:TotalRes}

In \cite{ALMP11}, we used Verona's constructions to prove an equivalence between the class of Thom-Mather stratified spaces and the class of manifolds with corners and iterated fibration structures, as introduced by Melrose. This resolution is already claimed in Thom \cite{Thom:Ensembles} but was not phrased in this way. As explained above, we have found the presentation of a stratified space as a manifold with corners and iterated fibration structures very useful for carrying out analysis.

By a {\em collective boundary hypersurface} we mean a finite union of non-intersecting boundary hypersurfaces.

\begin{definition}[{Melrose \cite{Albin-Melrose:Resolution, ALMP11}}] \label{def:ifs}
A {\em manifold with corners with an iterated fibration structure} if a triple $(L, \sF, \mathit{\Phi})$ where $L$ is a smooth manifold with corners, $\sF$ consists of disjoint collective boundary hypersurfaces with every boundary hypersurface of $L$ occurring in some element of $\sF,$ and $\mathit{\Phi}$ is a collection of locally trivial fibrations, one per element $H \in \sF,$
\begin{equation*}
	Z_H \fib H \xlra{\phi_H} Y_H
\end{equation*}
where $Z_H$ and $Y_H$ are smooth manifolds with corners. These satisfy:
\begin{itemize}
\item If $H, K \in \sF$ intersect then $\dim Y_H \neq \dim Y_K$ and we write 
\begin{equation*}
	H<K \Mif \dim Y_H < \dim Y_K.
\end{equation*}
\item if $H<K$ then $Y_K$ has a collective boundary hypersurface $Y_{HK}$ participating in a fiber bundle $\phi_{HK}:Y_{HK} \lra Y_H$ such that the diagram
\begin{equation*}
	\xymatrix{
	H\cap K \ar[rr]^-{\phi_K} \ar[rd]_-{\phi_H} & & Y_{HK} \ar[ld]^-{\phi_{HK}} \ar@{}[r]|-*[@]{\subseteq} & Y_K \\
	& Y_H & }
\end{equation*}
commutes.
\end{itemize}
\end{definition}

Two useful properties of these structures are:
(a) a manifold with corners and an iterated fibration structure admits collar neighborhoods compatible with the boundary fibrations, see \cite[Proposition 3.7]{Albin-Melrose:Resolution}, and
(b) the bases of the boundary fibrations inherit iterated fibration structures \cite[Lemma 3.4]{Albin-Melrose:Resolution}.

If $L$ is a smooth manifold with corners and $H$ is a boundary hypersurface then, since $H$ is embedded, there exists a smooth function 
\begin{equation*}
	x_H:L \lra [0,1]
\end{equation*}
with $H = x_H^{-1}(\{ 0\})$ such that $dx_H$ has no zeroes on $H.$
We call any such a function a {\em boundary defining function} for $H.$

Let $L$ and $M$ be smooth manifolds with corners
and let $\{ x_{H_i} \},$ $\{\wt x_{G_j} \}$ be complete sets of defining functions for the boundary hypersurfaces of $L$ and $M$ respectively. A smooth map $f: L \lra M$ is a {\em $\mathbf{b}$-map} if for each boundary hypersurface $G_j \subseteq M,$
\begin{equation*}
\begin{gathered}
	\text{ either } f^*\wt x_{G_j} = 0 \Mor \\
	f^*\wt x_{G_j} = a_{G_j} \prod x_{H_i}^{e(i,j)}, \quad \Mwith a_{G_j} \in \CI(L, (0,\infty)) \Mand e(i,j) \in \bbN_0. 
\end{gathered}
\end{equation*}
The first alternative occurs only if $f(L) \subseteq \pa M,$ in which case we say that $f$ is a boundary $b$-map; otherwise we say that $f$ is an interior $b$-map.

A {\em morphism between manifolds with corners and boundary fibration structures}, $(L, \sF_L, \mathit{\Phi}_L)$ and $(M, \sF_M, \mathit{\Phi}_M)$ is a $b$-map 
\begin{equation*}
	f: L \lra M
\end{equation*}
that sends the collective boundary hypersurface in $\sF_L$ to collective boundary hypersurfaces in $\sF_M$ such that, 
whenever $H \in \sF_L$ and $G \in \sF_M$ are such that $f(H) \subseteq G,$ we have
\begin{equation*}
	\xymatrix{
	H \ar[d]_-{\phi_H} \ar[r]^-{f} & G\ar[d]^-{\phi_G} \\
	Y_H \ar[r]^-{\bar f} & Y_G }
\end{equation*}
for some map $\bar f.$
(Note that the maps $\bar f$ inherit the structure of morphisms between manifolds with corners and boundary fibration structures.)

A smooth action of a Lie group on a compact manifold gives rise to a manifolds with corners and boundary fibration structures, see \cite{Albin-Melrose:Resolution}, but for us the main class of examples comes from resolving a stratified space.
Let us work out in detail how a stratified space gives rise to a manifold with corners and a boundary fibration structure in the two simplest non-trivial cases:\\

If $\hat X$ has a single singular stratum $Y,$ then blowing-up $Y$ as described in \S\ref{sec:ResolvingSingle} produces a smooth manifold with boundary $[\hat X; Y]$ with a boundary fibration over $Y.$ (Note that the boundary of $[\hat X;Y]$ need not be connected, which is why the definition involves collective boundary hypersurfaces.)
We denote the blown-up space by $\wt X$ and call it the resolution of $\hat X.$\\

If $\hat X$ has two singular strata $Y_1 < Y_2$ (assumed connected) then blowing-up $Y_1$ results in $[\hat X; Y_1],$ a depth one stratified space with boundary $K_1$ and a locally trivial fibration 
\begin{equation*}
	Z_1 \fib K_1 \xlra{\psi_{Y_1}} Y_1.
\end{equation*}
The stratum $Y_1$ is a closed smooth manifold while $K$ and $Z_1$ are stratified spaces (without boundary) of depth one.
The stratum $Y_2$ of $\hat X$ lifts (or restricts) to a stratum $\cY$ of $[\hat X;Y_1].$ 
The compatibility conditions between the strata imply that $K_1 \cap \cY$ is the singular stratum of $K$ and, for each $q\in Y_1,$ $\psi_{Y_1}^{-1}(q) \cap \cY$ is the singular stratum of $\psi_{Y_1}^{-1}(q).$
The theory developed in \S\ref{sec:StratCorners} allows us to blow-up $\cY$ in $[\hat X;Y_1]$ resulting in a manifold with corners $[[\hat X;Y_1];\cY].$

We denote this space by $\wt X$ and call it the resolution of $\hat X.$
It has two boundary hypersurfaces. One of them, $K_2,$ is created by the second blow-up and hence has a locally trivial fibration over $\cY,$
\begin{equation*}
	Z_2 \fib K_2 \xlra{\psi_{\cY}} \cY.
\end{equation*}
Note that $K_2$ and $\cY$ are smooth manifolds with boundary and $Z_2$ is a smooth manifold (without boundary).
In fact we can identify $\cY$ with $[\bar Y_2;Y_1]$ and, since $Y_1$ is the unique singular stratum in $\bar Y_2,$ this is precisely $\wt Y_2,$ the resolution of $\bar Y_2.$
Thus this boundary fibration
\begin{equation*}
	Z_2 \fib K_2 \xlra{\psi_{\cY}} \wt Y_2
\end{equation*}
has as base the resolution of $Y_2$ and as fiber the link of $Y_2$ in $\hat X.$

The other boundary hypersurface of $\wt X$ is $[K_1; K_1\cap \cY].$ The set $K_1 \cap \cY$ is transverse to the fibers of the fibration $\psi_{Y_1}$ and so this fibration survives the blow-up to yield a fibration
\begin{equation*}
	[\psi_{Y_1}^{-1}(q); \psi_{Y_1}^{-1}(q) \cap \cY] \fib [K_1; K_1\cap \cY] \xlra{\phi_{Y_1}} Y_1.
\end{equation*}
As explained above, the subsets being blown-up in the fiber and the total space of this fibration are precisely the singular parts of the respective sets. Hence we can identify these sets with the corresponding resolutions,
\begin{equation*}
	\wt Z_1 \fib \wt K_1 \xlra{\phi_{Y_1}} Y_1.
\end{equation*}

Finally note that the two boundary hypersurfaces $\wt K_1,$ $K_2$ intersect
\begin{equation*}
	\wt K_1\cap K_2 = \pa \wt K_1 = \pa K_2.
\end{equation*}
The boundary fibrations restricted to this intersection satisfy
\begin{equation*}
	\phi_{Y_1}(\wt K_1\cap K_2) = Y_1, \quad \psi_{\cY}(\wt K_1 \cap K_2) = K_1 \cap \cY = \pa \wt Y_2
\end{equation*}
and hence fit into the commutative diagram
\begin{equation*}
	\xymatrix
	{ \wt K_1 \cap K_2 \ar[rr]^{\psi_{\cY}} \ar[rd]_{\phi_{Y_1}} & & K_1 \cap \cY \ar[ld]^{\psi_{Y_1}} \\
	& Y_1 & }
\end{equation*}
Thus $\wt X$ has an iterated fibration structure and we can recover $\hat X$ by collapsing the fibers of $\psi_{\cY}$ and then collapsing the fibers of $\psi_{Y_1}.$
We denote the resulting projection by
\begin{equation*}
	\beta: \wt X \lra \hat X
\end{equation*}
and refer to it as the blow-down map.\\

This procedure can be carried out inductively on any compact space with a Thom-Mather stratification as in \cite[Propositions 2.3 and 2.5]{ALMP11}, yielding the following result.

\begin{theorem}\label{thm:Resolution}$ $
\begin{itemize}
\item [a)]
Let $\hat X$ be a compact Thom-Mather stratified space. 
Iteratively blowing-up the deepest stratum produces a manifold with corners with an iterated fibration structure $\wt X,$ known as the resolution of $\hat X,$ together with a continuous map
\begin{equation*}
	\beta: \wt X \lra \hat X
\end{equation*}
such that:
\begin{itemize}
\item[i)] $\beta$ restricts to a diffeomorphism between the interior of $\wt X$ and the regular part of $\hat X,$
\item[ii)] $\beta$ determines a bijection between the collective boundary hypersurfaces $\sF(\wt X)$ and the singular strata of $\hat X,$
\begin{equation*}
	\sS(\hat X) \ni Y \mapsto \bar{\beta^{-1}(Y)} \in \sF(\wt X),
\end{equation*}
\item[iii)] the boundary fibration on $\bar{\beta^{-1}(Y)} \in \sF(\wt X)$ has base space the resolution of $\bar Y,$ $\wt Y,$ and fiber over a point $q \in Y$ equal to the resolution of the link of $q$ in $\hat X.$
\end{itemize}

\item [b)]
Let $\wt X$ be a manifold with corners with an iterated fibration structure. Choosing compatible collar neighborhoods and then iteratively collapsing the fibers of the boundary fibration with largest dimensional base produces a Thom-Mather stratified space $\hat X,$ called the blow-down of $\wt X.$

\item [c)] These procedures are mutually inverse: the blow-down of the resolution of $\hat X$ is isomorphic to $\hat X,$ and the resolution of the blow-down of $L$ is isomorphic to $L.$ 
\end{itemize}
\end{theorem}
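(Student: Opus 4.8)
The plan is to prove (a), (b) and (c) simultaneously by induction on depth, carried out not only for closed Thom--Mather stratified spaces but in the larger class of compact stratified spaces with collared corners of Definition~\ref{def:StratCorners}; correspondingly the resolution will be a compact manifold with corners with an iterated fibration structure as in Definition~\ref{def:ifs}, some of whose boundary fibrations remember the original collars and the rest of which are created by blow-up. The base case, depth zero, is immediate: a depth-zero stratified space with collared corners is just a smooth manifold with corners, which we promote to an iterated fibration structure with $\sF=\emptyset$ and $\beta=\id$, while a manifold with corners with $\sF=\emptyset$ is a closed manifold. For the inductive step of (a), let $\hat X$ have depth $k\geq1$, let $\hat X^*$ be the union of its depth-$k$ strata (a disjoint union of closed smooth manifolds with corners, closed in $\hat X$ by the frontier condition \eqref{eq:Frontier}, with pairwise disjoint tubular neighbourhoods after replacing $\hat X$ by an equivalent stratification), and apply Thom's first isotopy lemma (Theorem~\ref{thm:Thom1Strat} and its collared-corners version) to the proper submersive morphisms $\pi_{\hat X^*}\colon\cT_{\hat X^*}\to\hat X^*$ and $(\pi_{\hat X^*},\rho_{\hat X^*})\colon\cT_{\hat X^*}\setminus\hat X^*\to\hat X^*\times(0,\eps)$. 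This produces the local cone structure: an identification $\cT_{\hat X^*}\setminus\hat X^*\cong K_{\hat X^*}\times(0,\eps)$ of stratified spaces with collared corners and a locally trivial fibration $\psi_{\hat X^*}\colon K_{\hat X^*}\to\hat X^*$ with fibre over $q$ the link $Z_q$.

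Next I form the radial blow-up $[\hat X;\hat X^*]$ of \S\ref{sec:ResolvingSingle}; by the machinery of \S\ref{sec:StratCorners} this is again a compact stratified space with collared corners, now of depth $k-1$, with one new collective boundary hypersurface $K_{\hat X^*}$ collared by the $(0,\eps)$-factor and carrying the fibration $\psi_{\hat X^*}$, while the hypersurfaces inherited from $\hat X$ survive, cut and re-collared using the compatibility \eqref{eq:A9} of control data with collar data. Applying the inductive hypothesis to $[\hat X;\hat X^*]$ yields its resolution $\wt X$ together with a blow-down $\wt X\to[\hat X;\hat X^*]$, and composing with the collapse $[\hat X;\hat X^*]\to\hat X$ of the fibres of $\psi_{\hat X^*}$ defines $\beta\colon\wt X\to\hat X$. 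Properties (i)--(iii) are read off the construction: (i) because each radial blow-up is an isomorphism over the regular part; (ii) because $\sF(\wt X)=\{\,\overline{\beta^{-1}(\hat X^*)}\,\}\cup\sF(\text{resolution of }[\hat X;\hat X^*])$, which by induction is in bijection with the singular strata of $\hat X$; and (iii) because by construction the base of the fibration on $\overline{\beta^{-1}(Y)}$ is the resolution of $\bar Y$ and the fibre over $q\in Y$ is the resolution of the link of $q$. What remains is the corner-compatibility bullet of Definition~\ref{def:ifs}: for intersecting $H<K$ in $\sF(\wt X)$ one must produce the fibre-bundle map $\phi_{HK}$ making the triangle \eqref{eq:CornerFibration} commute, which is obtained by transporting the identities $\pi_Y\circ\pi_{Y'}=\pi_Y$ and $\rho_Y\circ\pi_{Y'}=\rho_Y$ of \eqref{eq:A9} through the successive blow-ups, exactly as in the worked depth-two example of \S\ref{sec:TotalRes}; the inequality $\dim Y_H\neq\dim Y_K$ for intersecting $H,K$ is automatic, since intersecting hypersurfaces correspond to comparable strata, which have distinct depths and hence distinct-dimensional resolved bases.

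For (b) I run the induction in reverse, on the number of boundary fibrations, collapsing at each stage the fibres of a fibration $\phi_H$ with $Y_H$ of largest dimension. Collapsing $H$ to $Y_H$ turns $L$ into a compact stratified space with collared corners $\hat X_1$ with one newly created singular stratum $Y_H$, whose tube is the image of a collar neighbourhood of $H$ (such collars exist, compatibly with the remaining boundary fibrations, by \cite[Proposition~3.7]{Albin-Melrose:Resolution}), with $\rho_{Y_H}$ the collar coordinate and $\pi_{Y_H}$ equal to $\phi_H$ followed by the collar projection; the control-data compatibilities \eqref{eq:A8}--\eqref{eq:A9} of $Y_H$ against the deeper strata produced by the other hypersurfaces are precisely the commuting triangles of Definition~\ref{def:ifs}, and the frontier condition for $\hat X_1$ comes from the poset structure of $\sF$. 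The remaining hypersurfaces retain their fibrations over the now-stratified bases, so $\hat X_1$ again lies in the inductive class, and iterating collapses every hypersurface to produce a Thom--Mather stratified space. Part (c) then amounts to matching the two inductions level by level: the resolution performs at the deepest level the blow-up $[\hat X;\hat X^*]$ whose fibres $\beta$ collapses, so the defining identity $\hat X=[\hat X;\hat X^*]/\{K_{\hat X^*}\ni\zeta\sim\psi_{\hat X^*}(\zeta)\}$ of \S\ref{sec:ResolvingSingle} together with the inductive hypothesis for $[\hat X;\hat X^*]$ shows that composing all the collapses recovers $\hat X$ up to isomorphism of stratified spaces, and symmetrically resolving a blow-down reverses each collapse by the corresponding blow-up.

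I expect the main obstacle to be keeping the category closed under these operations with all structure intact: checking in detail that a radial blow-up of a stratified space with collared corners is again one, with the tube and collar compatibilities of Definition~\ref{def:StratCorners} re-established after cutting along the new hypersurface, and dually that collapsing a single fibration of an iterated fibration structure leaves the others forming an iterated fibration structure over the enlarged base, together with the corner-compatibility triangles. This bookkeeping is the core of Verona's ``total decomposition'', and it is precisely here that the full strength of the control-data axioms \eqref{eq:A8}--\eqref{eq:A9}, rather than merely the local cone structure they imply, is needed. A secondary point required for the word ``mutually inverse'' in (c) is that the resolution is well defined up to isomorphism independently of the auxiliary choices (the radius $a\in(0,\eps)$, the representative tubular neighbourhoods within an equivalence class, and the collars); this is handled by building, level by level, isomorphisms of stratified spaces with collared corners intertwining the blow-down maps, using that different values of $a$ yield isomorphic collars $K_{\hat X^*}\times(a,\eps)$.
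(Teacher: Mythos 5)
Your proposal follows essentially the same route the paper sets up in \S\ref{sec:ResolvingSingle}--\S\ref{sec:TotalRes}: induction on depth through the auxiliary category of stratified spaces with collared corners, blowing up the union of deepest strata after extracting the local cone structure from Thom's first isotopy lemma, and reversing by collapsing fibers in order of decreasing base dimension, with the bookkeeping (closure of the category under radial blow-up, the corner-compatibility triangles, independence of auxiliary choices) deferred to Verona's total decomposition and \cite[Propositions 2.3 and 2.5]{ALMP11}. The only variant worth noting is that the paper also records the d\'eplissage/doubling argument actually used in \cite{ALMP11}, which avoids some of the collared-corners bookkeeping by doubling across the new boundary and resolving $\bbZ_2$-equivariantly; your direct induction is the Verona-style alternative the survey's own exposition is built around.
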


Resolution of stratified spaces is a functor (cf. \cite[Propositions 2.6, 2.7]{ALMP11}):
If $\hat X$ and $\hat N$ are stratified spaces with resolutions $\wt X$ and $\wt N,$ then any morphism $f: \hat X\lra \hat N$ lifts to a morphism $\wt f:\wt X \lra \wt N$ and compositions lift to compositions.
Blow-down is also a functor but from manifolds with corners and iterated fibration structures to {\em isomorphism classes} of stratified spaces because we do not fix collar neighborhoods of the boundary hypersurfaces.

`Controlled' objects on a stratified space correspond to objects on the resolution that are compatible with the boundary fibration structure. For example given a controlled vector field on $\hat X,$
\begin{equation*}
	\xi = \{ \xi_Y \in \CI(Y;TY): Y \in \sS(\hat X) \},
\end{equation*}
we get a vector field on $\wt X$ by lifting $\xi_{\hat X^{\reg}}$ and taking the unique continuous extension from $\wt X^{\circ}$ (which exists by the control conditions on $\xi$). Conversely, given a vector field $\eta$ on $\wt X,$ the collection of vector fields
\begin{equation*}
	\xi = \{ \beta_*(\eta\rest{\wt X^\circ}) \} \cup \{ (\phi_H)_*(\eta\rest{H}) : H \in \sF(\wt X) \}
\end{equation*}
is a controlled vector field on $\hat X.$

Similarly the class of functions
\begin{equation*}
	\CI_{\Phi}(\wt X) = \{ f \in \CI(\wt X) : f\rest{H} \in \phi_H^\bullet \CI(Y_H) \Mforall H \in \sF(\wt X) \}
\end{equation*}
corresponds to the `controlled functions' on $\hat X$ from \cite[\S 1.2.8]{Verona:StratMap}. (See \cite[\S4]{Zucker:ReductiveLp} for a discussion of controlled vector bundles and connections and their corresponding Chern classes.)\\

Let us mention another approach to obtaining the resolution of a stratified space. After blowing-up the deepest stratum as in \S\ref{sec:ResolvingSingle}, one can make use of the collar of the boundary to define the {\em double of }$[\hat X; \hat X^*].$ This will be a Thom-Mather stratified space (without boundary) whose depth is now less than the depth of $\hat X.$ It comes equipped with an action of $\bbZ_2$ and a fundamental domain for this action recovers $[\hat X; \hat X^*].$ Inductively one can show that the double can be resolved consistently with the $\bbZ_2$ action and then restricting to a fundamental domain yields $\wt X.$ This doubling technique is developed in \cite{Brasselet-Hector-Saralegi:DeRham} where it is called {\em d\'eplissage}. This approach to resolution was carried out in \cite{ALMP11}.\\

Finally let us comment on the use of collective boundary hypersurfaces in Definition \ref{def:ifs}. A stratified space if called {\em normal} \cite[\S4]{Goresky-MacPherson:IH} if all of its links are connected. In the resolution of a normal stratified space it is not necessary to use collective boundary hypersurfaces and each boundary hypersurface is endowed with a boundary fibration. On the other hand, given a boundary fibration structure with collective boundary hypersurfaces, one can restrict the boundary fibrations to the individual boundary hypersurfaces. This corresponds to switching from the resolution of $\hat X$ to the resolution of its {\em normalization} (as in \cite[\S4.1]{Goresky-MacPherson:IH}). For the purposes of carrying out analysis on the regular part of $\hat X,$ normalization makes only a notational difference and so for simplicity we often assume that the collective boundary hypersurfaces consist of a single boundary hypersurface. This is the approach taken in \cite{ALMP11}.

\subsection{A smooth de Rham complex computing intersection cohomology}

In \cite{Brylinski}, Brylinski introduced a de Rham complex due to Goresky-MacPherson on the regular part of a stratified space that computed the intersection cohomology (cf. \cite{Pollini, Brasselet-Hector-Saralegi:DeRham, Brasselet}). This complex has a very natural description once we are working on the resolution of the stratified space. We give a brief description of this complex as an example of the usefulness of resolving a stratified space.

First, given a locally trivial fiber bundle of smooth manifolds with corners
\begin{equation*}
	F \fib H \xlra{\varphi} B,
\end{equation*}
let us say that a differential form $\omega$ on $H$ has {\em $\varphi$-vertical degree at most $p$} if
\begin{equation*}
	\Mforall V_1, \ldots, V_{p+1} \in \CI(H, TH/B), \quad
	\df i_{V_1}\ldots \df i_{V_{p+1}} \omega = 0.
\end{equation*}

Let $(\hat X, \sS, \sT)$ be a Thom-Mather stratified space, $(\wt X, \sF, \mathit{\Phi})$ its resolution and 
\begin{equation*}
	\beta:\wt X \lra \hat X
\end{equation*}
the blow-down function. 
Given $Y \in \sS$ let us denote the collective boundary hypersurface associated to $Y$ (i.e., the set $\bar{\beta^{-1}(Y)}$) by $M_Y,$ and by
\begin{equation*}
	\phi_Y: M_Y \lra \wt Y, \quad j_Y:M_Y \hookrightarrow \wt X
\end{equation*}
the boundary fibration of $M_Y$ and its inclusion into $\wt X,$ respectively.

Given $\bar p,$  a perversity function on $\hat X,$ let
\begin{multline*}
	\Omega_{\bar p}^*(\wt X)
	= \{ \omega \in \CI(\wt X; \Lambda^\bullet (T^*\wt X)) : 
	\text{ for each } Y \in \sS, \\
	j_Y^*\omega \Mand j_Y^*d\omega 
	\text{ have $\phi_Y$-vertical degree at most } \bar p(\mathrm{codim}_{\hat X}(Y)) \}.
\end{multline*}
It is immediate that $\Omega_{\bar p}^*(\wt X)$ defines a complex,
\begin{equation*}
	0 \lra
	\Omega_{\bar p}^0(\wt X) \xlra{d}
	\Omega_{\bar p}^1(\wt X) \xlra{d}
	\ldots \xlra{d}
	\Omega_{\bar p}^n(\wt X) \lra 0.
\end{equation*}

\begin{theorem}[Brylinski-Goresky-MacPherson]\label{thm:DeRhamPerversity}
Let $\hat X$ be a Thom-Mather pseudomanifold and $\bar p$ a perversity. 
The cohomology of the complex $\Omega_{\bar p}^*(\wt X)$ is isomorphic to the intersection cohomology of $\hat X$ with perversity $\bar p$ and real coefficients. 
\end{theorem}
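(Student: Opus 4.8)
The plan is to prove this via the sheaf-theoretic characterization of $\IC p$ recalled in \S\ref{sec:RefIH}. Let $\sc{\Omega}_{\bar p}$ denote the complex of sheaves on $\hat X$ obtained by pushing forward under the blow-down map $\beta:\wt X\to\hat X$ the complex of sheaves on $\wt X$ whose sections over an open set are the smooth forms satisfying the vertical-degree constraints defining $\Omega_{\bar p}^*(\wt X)$. Then $\Gamma(\hat X;\sc{\Omega}_{\bar p}) = \Omega_{\bar p}^*(\wt X)$, and the theorem follows once we show (i) $\sc{\Omega}_{\bar p}$ is a complex of fine sheaves, so that $H^\bullet(\Omega_{\bar p}^*(\wt X)) = \bbH^\bullet(\hat X;\sc{\Omega}_{\bar p})$, and (ii) $\sc{\Omega}_{\bar p}$ is isomorphic in $D(\hat X)$ to $\IC p$, so that $\bbH^\bullet(\hat X;\sc{\Omega}_{\bar p}) = \mathrm{IH}_{\bar p}^\bullet(\hat X)$. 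For (i): $\Omega_{\bar p}^*$ is a module over $\CI_{\Phi}(\wt X)$ — multiplication by $f\in\CI_{\Phi}(\wt X)$ preserves the constraints because $df|_{M_Y}$ is $\phi_Y$-horizontal, so contraction of $df\wedge\omega$ by vertical vector fields reduces to that of $\omega$ — and the partition of unity of $\wt X$ by functions in $\CI_{\Phi}(\wt X)$ (\cite[Lemma 5.2]{ABLMP}) descends to a partition of unity of $\hat X$ acting on $\sc{\Omega}_{\bar p}$.

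For (ii) I would first establish that $\sc{\Omega}_{\bar p}$ is bounded and constructible with respect to the stratification. By Thom's first isotopy lemma (Theorem \ref{thm:Thom1Strat}) and Theorem \ref{thm:Resolution}, a distinguished neighborhood of a point $x$ in a stratum $Y$ of codimension $c$ resolves to $\bbB^{\dim Y}\times\wt Z\times[0,\eps)_r$, where $\wt Z$ is the resolution of the link $Z$; here the constraint along $\{r=0\}$ becomes the condition that $j^*\omega$ and $j^*d\omega$ have degree at most $\bar p(c)$ in the $\wt Z$-directions, while the constraints along $\partial\wt Z\times[0,\eps)_r$ are exactly those of $\Omega_{\bar p}^*(\wt Z)$ extended trivially in $r$ — one uses here that the codimension in $\hat X$ of a stratum adjacent to $Y$ equals its codimension in $Z$. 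Collapsing the $\bbB^{\dim Y}$-factor by the ordinary Poincar\'e lemma and a K\"unneth argument identifies the stalk cohomology $H^i(\sc{\Omega}_{\bar p})_x$ with $H^i$ of the complex $\Omega_{\bar p}^*(\wt Z\times[0,\eps)_r)$; local constancy along strata then follows from local triviality of the boundary fibrations, and finite-dimensionality of the stalks by induction on depth.

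The crux is the cone computation: for $x$ in a stratum of codimension $c$,
\[
	H^i(\sc{\Omega}_{\bar p})_x \cong
	\begin{cases}
	\mathrm{IH}_{\bar p}^i(Z), & i\leq\bar p(c), \\
	0, & i>\bar p(c),
	\end{cases}
\]
which is precisely Deligne's local computation recalled in \S\ref{sec:RefIH}. This is the classical cone argument carried out on the resolution: writing a form on $\wt Z\times[0,\eps)_r$ as $\omega = b(r) + dr\wedge a(r)$ with $a,b$ $r$-families of forms on $\wt Z$, the homotopy $K\omega = \int_0^r a(s)\,ds$ satisfies $dK+Kd = \mathrm{id}-\mathrm{ev}_{r=0}$, where $\mathrm{ev}_{r=0}(\omega)=j^*\omega=b(0)$. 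One checks that $K$ preserves the subcomplex $\Omega_{\bar p}^*$ (the constraints at the strata of $\wt Z$ survive because $dr$ is independent of the $\wt Z$-directions, and the constraint at $\{r=0\}$ is vacuous on $K\omega$), and that the retraction onto the image of $\mathrm{ev}_{r=0}$ forces $H^i=0$ for $i>\bar p(c)$ — since $b(0)$ then has $\wt Z$-degree $>\bar p(c)$ and hence must vanish — and identifies $H^i$ with $H^i(\Omega_{\bar p}^*(\wt Z))$ for $i\leq\bar p(c)$, which is $\mathrm{IH}_{\bar p}^i(Z)$ by the inductive hypothesis on depth (the base case being the Poincar\'e lemma on a smooth manifold). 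I expect this to be the main obstacle: the careful bookkeeping showing that the cone homotopy simultaneously respects the vertical-degree constraint at $Y$ and all the constraints inherited from the strata of $Z$, and that the truncation degree comes out to exactly $\bar p(c)$, although it is conceptually routine.

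Finally I would verify the axioms characterizing $\IC p$. Axiom (a) (normalization) holds because $\sc{\Omega}_{\bar p}|_{\hat X^{\reg}}$ is the ordinary de Rham complex, quasi-isomorphic to $\bbR_{\hat X^{\reg}}$; axiom (b) (lower bound) is immediate since $\Omega_{\bar p}^*$ is concentrated in non-negative degrees; and axioms (c) and (d) ($\bar p$-stalk and $\bar p$-costalk vanishing) follow from the cone computation — (c) directly, and (d) from the companion computation of $i_x^!\sc{\Omega}_{\bar p}$ via the triangle relating costalks to stalks (equivalently, a bounded constructible complex satisfying (a), (b) and the displayed cone formula is characterized up to quasi-isomorphism as $\IC p$). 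Uniqueness of $\IC p$ in $D(\hat X)$ then yields $\sc{\Omega}_{\bar p}\cong\IC p$, and hence $H^\bullet(\Omega_{\bar p}^*(\wt X)) = \bbH^\bullet(\hat X;\IC p) = \mathrm{IH}_{\bar p}^\bullet(\hat X)$.
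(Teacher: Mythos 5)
Your proposal follows essentially the same route as the paper: sheafify the complex $\Omega_{\bar p}^*$ over $\hat X$ via the blow-down map, use the $\CI_{\Phi}(\wt X)$ partition of unity to get softness/fineness, reduce to the local model $\bbB^{n-k}\times[0,1)_r\times\wt Z_x$ on the resolution, and verify the Deligne-type local (cone) computation so that the axiomatic characterization of $\IC p$ applies. The only difference is one of detail: where the paper invokes ``standard de Rham theory'' for the truncated cone computation, you spell out the homotopy $K\omega=\int_0^r a(s)\,ds$ and the degree bookkeeping, which is exactly the content being appealed to.
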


\begin{proof}
Define a sheaf complex $\sc{\bOm}_{\bar p}$ over $\hat X$ by
\begin{equation*}
	\hat X \supseteq 
	\cU \mapsto \{ \omega \in \Omega_{\bar p}^*(\wt X) : \supp \omega \subseteq \beta^{-1}(\cU) \}.
\end{equation*}
That the sheaf $\sc{\bOm}_{\bar p}$ is quasi-isometric to $\IC p$ as complexes of sheaves of $\bbR$-vector spaces will follow after we check the axioms ($a$)-($d$) from \S\ref{sec:RefIH}. 

Adopting the notation from \S\ref{sec:RefIH}, 
first note that the existence of an appropriate partition of unity  in $\CI_{\Phi}(\wt X)$ implies that the sheaves are soft. The normalization and lower bound are automatic, so we need to check that, whenever $\cU\subseteq \hat X$ is a distinguished neighborhood of a point $x \in S_{n-k},$ we have
\begin{equation*}
	\begin{cases}
	\tH^j( \sc{\bOm}_{\bar p} (\cU) ) = 0 & \Mif j>\bar p(k) \\
	\tH^j( \sc{\bOm}_{\bar p} (\cU) ) \xlra{\cong}
	\tH^j( \sc{\bOm}_{\bar p} (\cU\setminus S_{n-k}) ) & \Mif j\leq \bar p(k)
	\end{cases}
\end{equation*}
where the map in the latter condition is induced by restriction.

If $\cU$ is as above and $\wt \cU = \beta^{-1}(\cU)$ then
\begin{equation*}
	\wt \cU \cong \bbB^{n-k} \times [0,1) \times \wt Z_x
\end{equation*}
where $\wt Z_q = \beta^{-1}(q)$ is the resolution of $Z_q,$ hence a smooth manifold with corners,
and $\sc{\bOm}_{\bar p} (\cU)$ is, directly from its definition, given by
\begin{equation*}
	\sc{\bOm}_{\bar p}(\cU) = \{ \omega \in \Omega^*(\bbB^{n-k} \times [0,1)_x \times \wt Z_x ) :
	\omega\rest{x=0}, d\omega\rest{x=0} \in 
	\Omega^*(\bbB^{n-k} \times [0,1)) \wedge \sc{\bOm}_{\bar p}(\wt Z_x)\rest{\mathrm{deg}\leq \bar p(k)} \}.
\end{equation*}
Since we are on a smooth manifold with corners, standard de Rham theory implies that the cohomology of this complex vanishes above degree $\bar p(k)$ and that, for other degrees, restriction to 
\begin{equation*}
	\wt \cU\setminus Y_{n-k} \cong \bbB^{n-k} \times (0,1) \times \wt Z_x
\end{equation*}
is an isomorphism in cohomology; indeed that the cohomology is equal to
\begin{equation*}
	\bH^j(\sc{\bOm}_{\bar p}(\cU) ) =
	\begin{cases}
	\bH^j( \sc{\bOm}_{\bar p}(\wt Z_x) ) & \Mif j \leq \bar p(k) \\
	0 & \Mif j > \bar p(k)
	\end{cases}
\end{equation*}
\end{proof}

\section{Borel-Serre compactifications and their resolutions} \label{sec:BorelSerre}

The Borel-Serre compactification and what is now known as the reductive Borel-Serre compactification of a locally symmetric space into a manifold with corners and a stratified space, respectively, were introduced in \cite{Borel-Serre:Corners} and \cite{Zucker:L2Coho}. In this section we will define another compactification of a locally symmetric space into a manifold with corners, the {\em resolved Borel-Serre compactification.} We will show that while the Borel-Serre compactification does not have an iterated fibration structure, the resolved Borel-Serre compactification does and that it can be obtained by resolving the reductive Borel-Serre compactification in the sense of Theorem \ref{thm:Resolution}.

It may be interesting to compare the resolved Borel-Serre compactification to other compactifications of locally symmetric spaces.

\subsection{Linear algebraic groups}

We review enough of the theory of linear algebraic groups to describe compactifications of locally symmetric spaces. We follow \cite{Borel-Serre:Corners, Borel-Ji:Book} to which we refer for proofs and longer explanations; see also, e.g., \cite{Goresky-Harder-MacPherson, Saper:CohoLocSymSpaces, Zucker:OnBdyCohoLocSymVar}.

A linear algebraic group is a Zariski-closed subgroup of some $GL_n(\bbC).$ If it can be defined by polynomials with $\bbQ$-coefficients we say that it is defined over $\bbQ.$ We will denote such groups by bold letters such as $\bG$. Their real points are then denoted by $\bG(\bbR)$ or the non-bold letter $G$ and form a real Lie group. The connected component of the identity will be denoted $\bG^\circ.$

A linear algebraic group $\bT$ is called an {\em algebraic torus} if it is isomorphic to a product of copies of $\bbC^*.$ If the isomorphism is defined over a field $\bbF,$ we say that it is {\em split} over $\bbF.$

An element $r \in \bG$ is {\em unipotent} if some positive power of $(r-\Id)$ is equal to the identity, and a linear algebraic group is called unipotent if all of its elements are unipotent. Every unipotent group is solvable.

The {\em radical} $\bR(\bG)$ of an algebraic group $\bG$ is the maximal connected normal solvable subgroup of $\bG,$ and the {\em unipotent radical,} $\bR_{\bU}(\bG),$ is the maximal connected unipotent normal subgroup of $\bG.$ If $\bG$ is defined over $\bbQ,$ then so are $\bR(\bG)$ and $\bR_{\bU}(\bG).$ We say that $\bG$ is {\em semisimple} if $\bR(\bG)$ is trivial and {\em reductive} if $\bR_{\bU}(\bG)$ is trivial.
We denote the real points in $\bR_{\bU}(\bG)$ by $R_UG.$

A {\em Levi subgroup} $\bL$ of $\bG$ is a maximal reductive subgroup; any two such are conjugate under $\bR_{\bU}(\bG)$ and $\bG = \bL \ltimes \bR_{\bU}(\bG).$ By a Levi subgroup $L$ of $G = \bG(\bbR)$ we will mean the real points of a Levi subgroup $\bf L$ of $\bG.$
Every compact subgroup of $G$ is the group of real points of a reductive $\bbR$-group and is hence contained in a Levi subgroup.

The maximal tori of $\bG,$ split over a field $\bbF,$ are all conjugate over $\bbF,$ and their common dimension is called the $\bbF$-rank of $\bG,$ $\mathrm{rk}_{\bbF}(G).$ A group $\bG$ is {\em split over $\bbQ$} if its $\bbQ$-rank coincides with its $\bbC$-rank. A group $\bG$ is {\em anisotropic over $\bbQ$} if it is of $\bbQ$-rank zero. 

A closed subgroup $\bP$ of a connected linear algebraic group $\bG$ is called a {\em parabolic subgroup} if $\bG/\bP$ is a projective variety or if $\bP$ contains a maximal connected solvable subgroup (known as a {\em Borel subgroup}).
By a parabolic subgroup $P$ of $G$ we will mean the real points of a parabolic subgroup $\bP$ of $\bG,$ defined over $\bbQ.$
A subgroup $H$ of $G$ is parabolic if and only if $G/H$ is compact.
We denote the set of {\em proper} parabolic subgroups of $G$ by $\cP(G).$

If $K$ is a maximal compact subgroup of $G$ and $P$ is a parabolic subgroup of $G$ then $K_P = K\cap P$ is a maximal compact subgroup of $P$ and $G = K \cdot P.$

If $\bG$ is reductive and $K$ is a maximal compact subgroup of $G,$ there exists one and only one involutive automorphism $\theta_K$ of $G$ whose fixed point set is $K$ and which is algebraic (in a suitable sense \cite[Proposition 1.6]{Borel-Serre:Corners}). This automorphism is called the {\em Cartan involution} of $G$ with respect to $K.$

If $\bN$ is a normal subgroup of $\bG$ defined over $\bbR,$ then $\theta_K(\bN(\bbR)) = \bN(\bbR).$
If $P$ is a parabolic subgroup of $G$ and $L$ a Levi subgroup of $G$ containing $K,$ then $L\cap P$ contains a unique Levi subgroup of $P$ stable under $\theta_K.$\\

We will assume from now on that $\bG$ is a connected reductive linear algebraic group defined over $\bbQ$ whose center is (defined and) anisotropic over $\bbQ.$\\

For any parabolic subgroup $\bP$ of $\bG$ defined over $\bbQ$ both $\bU_{\bP},$  the unipotent radical of $\bP,$ and $\bL_{\bP} = \bP/\bU_{\bP},$ the {\em Levi quotient} of $\bP,$ are also defined over $\bbQ.$ 
Let $\chi(\bL_{\bP})$ denote the group of algebraic rationally defined characters of $\bL_{\bP}$ and
\begin{equation*}
	\bM_{\bP} = \bigcap_{\alpha \in \chi(\bL_{\bP})}\ker \alpha^2.
\end{equation*}
Then $\bM_{\bP}$ is a reductive algebraic group defined over $\bbQ$ whose center is anisotropic over $\bbQ$ (it may fail to be connected but see \cite[Remark 2.10]{Borel-Ji:CompLocSym}).
Let $\bS_\bP$ be the maximal $\bbQ$-split torus in the center of $\bL_{\bP},$ and denote
\begin{equation*}
	U_P = \bU_{\bP}(\bbR), \quad L_P = \bL_{\bP}(\bbR), \quad M_{\bP} = \bM_{\bP}(\bbR), \quad S_{\bP} = \bS_{\bP}(\bbR), \quad 
	A_{\bP} = S_P^{\circ}.
\end{equation*}
There is a canonical isomorphism
\begin{equation*}
	A_{\bP} = (\bbR^*_+)^{\mathrm{park}_{\bG}(\bP)}
\end{equation*}
where $\mathrm{park}_{\bG}(\bP),$ the parabolic rank of $\bP$ in $\bG,$ is the length of a maximal chain
\begin{equation*}
	\bP = \bP_{\mathrm{park}_{\bG}(\bP)} \subset \bP_{\mathrm{park}_{\bG}(\bP)-1} \subset \cdots \subset \bP_1 \subset G
\end{equation*}
of proper parabolic subgroups defined over $\bbQ.$
The real points of the Levi quotient admit a decomposition
\begin{equation*}
	L_P = A_{\bP} M_{\bP} \cong A_{\bP} \times M_{\bP}.
\end{equation*}

For each choice of maximal compact subgroup $K$ of $G,$ there is a unique lift $L_{P,\theta_K}\subseteq P$ of $L_P$ to a subgroup of $P$ stable under $\theta_K.$ This lift splits the exact sequence
\begin{equation*}
	1 \lra U_P \lra P \lra L_P \lra 1.
\end{equation*}
It is possible to choose $K$ so that $L_{P, \theta_K}$ is defined over $\bbQ,$ and then so are the images $A_{\bP, \theta_K}$ and $M_{\bP, \theta_K}$ of its subgroups $A_{\bP}$ and $M_{\bP}.$
This yields the {\em rational Langlands decomposition} of $P:$
\begin{equation*}
	P = U_P A_{\bP,\theta_K} M_{\bP,\theta_K} \cong U_P \times A_{\bP,\theta_K} \times M_{\bP,\theta_K}.
\end{equation*}

The intersection of $K$ and $P,$ $K_P = K\cap P$ is a maximal compact subgroup of $P.$ Its image under the map $P \mapsto L_P$ is a maximal compact subset of $M_P$ and the lift of this image coincides with $K_P.$

Next, let us consider the parabolic subgroups of $\bG,$ defined over $\bbQ,$ contained in $\bP.$ These are the parabolic subgroups of $\bP$ and \cite[\S4.1]{Borel-Serre:Corners} are in one-to-one correspondence with the parabolic subgroups of $\bL_{\bP}$ (in fact $\bM_{\bP}$) by the projection $\bP \lra \bP/\bU_{\bP}.$ 
If $\bP, \bQ \in \cP(\bG)$ and $\bQ\subseteq \bP$ then 
\begin{equation*}
	Q_{\bP} = Q \cap M_{\bP,\theta_K} \in \cP(\bM_{\bP}).
\end{equation*}
The Langlands decompositions of $Q$ (with respect to $\theta_K$) and $Q_{\bP}$ (with respect to $\theta_{K\cap P}$),
\begin{equation*}
	Q = U_Q A_{\bQ,\theta_K} M_{\bQ,\theta_K}, \quad Q_{\bP} = U_{Q_\bP} A_{\bQ_{\bP}, \theta_{K \cap P}} M_{\bQ_\bP, \theta_{K\cap P}},
\end{equation*}
are related by
\begin{equation}\label{eq:LangRel}
	U_Q = U_P \rtimes U_{Q_P}, \quad 
	A_{\bQ,\theta_K} = A_{\bP, \theta_K} \times A_{\bQ_\bP, \theta_{K \cap P}}, \quad 
	M_{\bQ, \theta_K} = M_{\bQ_\bP, \theta_{K\cap P}}.
\end{equation}
Note that $K_Q = K \cap M_{\bQ,\theta_K} = K_{Q_P}.$ Below we will omit the Cartan involution $\theta$ in the notation for the Langlands decompositions.\\

The notion of an {\em arithmetic} subgroup of $\bG(\bbQ)$ is a sort of `substitute' for $\bG(\bbZ).$ Given a representation $\rho:\bG \lra \bG\bL_n$ defined over $\bbQ$ with discrete null space (i.e., an `almost faithful' representation), we can set
\begin{equation*}
	\bG(\bbZ)_{\rho} = \rho^{-1}(\bG\bL_n(\bbZ)).
\end{equation*}
If $\rho'$ is any other almost faithful representation then $\bG(\bbZ)_{\rho'}$ is commensurable with $\bG(\bbZ)_{\rho},$ meaning that the intersection of these two groups is of finite index in either one. We call any group $\Gamma<\bG(\bbQ)$ commensurable with $\bG(\bbZ)_{\rho}$ an arithmetic subgroup.

An arithmetic subgroup $\Gamma$ is {\em neat} if, whenever $\bH_2<\bH_1$ is a pair of algebraic $\bbQ$-subgroups of $\bG,$
\begin{equation*}
	\Gamma_{\bH_1/\bH_2} = (\Gamma\cap \bH_1(\bbQ)) / (\Gamma \cap \bH_2(\bbQ))
\end{equation*}
is torsion-free (and arithmetic). Every arithmetic group contains a neat normal subgroup of finite index.

\subsection{The partial compactification of a symmetric space to a manifold with corners}

As above, we assume that $\bG$ is a connected reductive linear algebraic group defined over $\bbQ$ whose center is (defined and) anisotropic over $\bbQ.$ Let $K$ be a maximal compact subgroup of $G$ and let
\begin{equation*}
	X = G/K
\end{equation*}
be the corresponding symmetric space. In this section we describe the Borel-Serre partial compactification \cite{Borel-Serre:Corners} of $X$ to a non-compact manifold with corners, $\bar X.$ We will also describe a `resolution' of $\bar X$ to a non-compact manifold with corners $\wt X$ with a boundary fibration structure.\\

Given $P \in \cP(G),$ the group $K_P = K \cap P$ is a maximal compact subgroup of $P.$ 
It coincides with $K \cap M_{\bP}$ and is also a maximal compact subgroup of $M_{\bP}.$
The quotient space
\begin{equation*}
	X_P = M_{\bP}/K_P = P/K_PA_{\bP}U_P
\end{equation*}
is a symmetric space of non-compact type for $M_{\bP}$ called the {\em boundary symmetric space} associated to $P.$

For each $P \in \cP(G)$ let
\begin{equation*}
	e_G(P) = U_P \times X_P.
\end{equation*}
The Borel-Serre partial compactification of $X,$ as a set, is 
\begin{equation}\label{eq:CloseX}
	\bar X = X \cup \bigsqcup_{P \in \cP(G)} e_G(P),
\end{equation}
where $e_G(P)$ is attached to $X$ using the horospherical decomposition $X = U_P \times A_{\bP} \times X_{\bP}$ (see \cite{Borel-Serre:Corners,Borel-Ji:Book}) and is a paracompact Hausdorff manifold with corners. 
Each $e_G(P)$ is the interior of a boundary face of $\bar X,$ with codimension equal to the parabolic rank of $P,$ and with closure in $\bar X$ given by
\begin{equation}\label{eq:CloseEP}
	\bar{e_G(P)} = e_G(P) \cup \bigsqcup_{\substack{Q \in \cP(G) \\ Q \subseteq P}} e_G(Q).
\end{equation}
Note that $K_Q = K \cap M_Q = K_{Q_P},$ and hence
the boundary symmetric space associated to $Q$ as a subgroup of $G$ is the same as the boundary symmetric space associated to $Q_P$ as a subgroup of $M_P,$
\begin{equation*}
	X_{Q} = M_Q/K_Q = M_{Q_P}/K_{Q_P} = X_{Q_P}.
\end{equation*}
$ $

\begin{proposition}[Boundary fibrations of the partial Borel-Serre compactification]\label{prop:PartialBS}
Let $X$ be the symmetric space of the real points of a connected reductive group $\bG$ as above 
and let $\bar X$ be its Borel-Serre partial compactification to a manifold with corners.
Each boundary face of $\bar X$ is of the form
\begin{equation*}
	\bar{e_G(P)} = U_P \times \bar{X_P}
\end{equation*}
where $P \in \cP(G),$ $U_P$ is the unipotent radical of $P,$ $X_P$ is the boundary symmetric space corresponding to $P$ and $\bar{X_P}$ is its Borel-Serre partial compactification. Let $\phi_P: \bar{e_G(P)} \lra \bar{X_P}$ denote the natural projection.

If $P, Q \in \cP(G)$ then the boundary faces $\bar{e_G(P)},$ $\bar{e_G(Q)}$ intersect if and only if 
\begin{equation*}
	R = P\cap Q \in \cP(G)
\end{equation*}
in which case $\bar{e_G(P)} \cap\bar{e_G(Q)} = \bar{e_G(R)}$ and we have a commutative diagram,
\begin{equation}\label{eq:PBSFibs}
	\xymatrix{ 
	\bar{e_G(Q)} \ar[d]_{\phi_Q} & & \bar{e_G(R)}
	\ar@{^(->}[rr] \ar@{_(->}[ll]  \ar[ld]_{\phi_Q\rest{\bar{e_G(R)}}} \ar[rd]^{\phi_P\rest{\bar{e_G(R)}}} \ar[dd]^{\phi_R}  
	& & \bar{e_G(P)} \ar[d]^{\phi_P} \\
	\bar{X_Q} & \ar@{_(->}[l] \bar{e_{M_Q}(R_Q)} \ar[rd]_{\phi_{R_Q}} & &
	\bar{e_{M_P}(R_P)} \ar@{^(->}[r] \ar[ld]^{\phi_{R_P}} & \bar{X_P} \\
	& & \bar{X_R} & & }
\end{equation}
\end{proposition}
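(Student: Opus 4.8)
The plan is to derive everything from the horospherical decomposition and the inductive character of the Borel--Serre construction, essentially repackaging the corner construction of \cite{Borel-Serre:Corners} in the present language. First I would recall that a neighborhood of $e_G(P)$ in $\bar X$ is obtained from the horospherical decomposition $X = U_P \times A_{\bP} \times X_P$ by compactifying the factor $A_{\bP} \cong (\bbR^*_+)^{\mathrm{park}_{\bG}(\bP)}$ to $[0,\infty)^{\mathrm{park}_{\bG}(\bP)}$ and gluing $e_G(P) = U_P \times X_P$ along the deepest corner. Because $X_P = M_{\bP}/K_P$ is the symmetric space of the connected reductive $\bbQ$-group $\bM_{\bP}$ (whose center is anisotropic over $\bbQ$), the same construction applies to it: $\bar{X_P}$ is a manifold with corners whose boundary faces are the $\bar{e_{M_P}(R_P)}$, indexed by $R_P \in \cP(\bM_{\bP})$, and these correspond bijectively to the rational parabolics $R$ of $\bG$ with $R \subseteq P$ via $R \mapsto R_P = R \cap M_{\bP}$ (\cite[\S4.1]{Borel-Serre:Corners}). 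Matching the gluing data for $\bar X$ near $e_G(P)$ with the gluing data for $\bar{X_P}$ then shows that $\bar{e_G(P)}$ is, as a manifold with corners, exactly $U_P \times \bar{X_P}$, that its boundary face over $e_G(R)$ is $U_P \times \bar{e_{M_P}(R_P)}$, and that the projection $\phi_P$ is the map forgetting the $U_P$-factor; in particular this reproves the closure formula \eqref{eq:CloseEP} from the corresponding formula on $\bar{X_P}$.

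Next I would treat the intersection statement. If $\bar{e_G(P)}$ and $\bar{e_G(Q)}$ meet, then by \eqref{eq:CloseEP} they share some face $e_G(S)$ with $S \subseteq P$ and $S \subseteq Q$; since $S$, and hence $P$ and $Q$, all contain a common minimal rational parabolic, $R := P \cap Q$ is a closed subgroup containing a minimal parabolic and is therefore itself parabolic. Conversely, if $R = P \cap Q$ is parabolic then $R \subseteq P$ and $R \subseteq Q$ force $e_G(R) \subseteq \bar{e_G(P)} \cap \bar{e_G(Q)}$. In both directions \eqref{eq:CloseEP} gives $\bar{e_G(P)} \cap \bar{e_G(Q)} = \bigsqcup_{S\subseteq R} e_G(S) = \bar{e_G(R)}$.

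Finally, for the commutative diagram \eqref{eq:PBSFibs} the key input is the iterated rational Langlands decomposition \eqref{eq:LangRel}. Applied to $R \subseteq P$ it gives $U_R = U_P \rtimes U_{R_P}$, $A_{\bR} = A_{\bP}\times A_{\bR_{\bP}}$, $M_{\bR} = M_{\bR_{\bP}}$, so that $X_R = X_{R_P}$ and $e_G(R) = U_R \times X_R$. Under the product descriptions above, $\phi_P$ restricted to $\bar{e_G(R)} = U_P \times \bar{e_{M_P}(R_P)}$ is the projection onto $\bar{e_{M_P}(R_P)}$, while $\phi_{R_P}\colon \bar{e_{M_P}(R_P)} \to \bar{X_{R_P}} = \bar{X_R}$ is the boundary fibration of $\bar{X_P}$ attached to $R_P$; composing them forgets all of $U_R = U_P \rtimes U_{R_P}$ and lands in $X_R$, which is exactly $\phi_R$. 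Running the identical argument with $Q$ in place of $P$ produces the left half of the diagram, and the upper triangle commutes with $\phi_R$ tautologically, since there $\bar{e_G(R)}$ is simply included into $\bar{e_G(P)}$ and $\bar{e_G(Q)}$. The step I expect to be the real obstacle is verifying that the three product structures --- on $\bar{e_G(P)}$, on $\bar{e_G(Q)}$, and on their common boundary face $\bar{e_G(R)}$ --- are mutually compatible, i.e.\ that the horospherical charts for $P$, for $Q$ and for $R$ can be chosen to agree on overlaps; this is precisely the point at which one must invoke the associativity of the Langlands decomposition under nested parabolics (together with a single compatible choice of $K$), and not merely its one-step form \eqref{eq:LangRel}.
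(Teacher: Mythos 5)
Your proposal is correct and follows essentially the same route as the paper: identify $\bar{e_G(P)}$ with $U_P\times\bar{X_P}$ via the closure formula \eqref{eq:CloseEP} and the bijection between $\cP(\bM_{\bP})$ and the rational parabolics contained in $P$, and then verify \eqref{eq:PBSFibs} by reading off from \eqref{eq:LangRel} that $\phi_{R_P}\circ\phi_P\rest{\bar{e_G(R)}}$ is the projection off $U_R=U_P\rtimes U_{R_P}$, hence equals $\phi_R.$ The only (harmless) divergence is that you re-derive the intersection statement from the disjointness of the open faces, whereas the paper simply cites Corollary 7.4 of \cite{Borel-Serre:Corners}.
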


\begin{proof}
We have described the closure $\bar{e_G(P)}$ of $e_G(P)$ in $\bar X$ in \eqref{eq:CloseEP}. The boundary symmetric space $X_P$ itself has a partial compactification as a symmetric space for $M_P,$
\begin{equation*}
	\bar{X_P} = X_P \cup \bigsqcup_{O \in \cP(M_P)} e_{M_P}(O)
	= X_P \cup \bigsqcup_{\substack{R \in \cP(G) \\ R \subseteq P}} e_{M_P}(R_P),
\end{equation*}
where $e_{M_P}(R_P) = U_{R_P} \times X_{R_P}.$
Thus to each $R \in \cP(G),$ $R \subseteq P,$ we assign a boundary face $e_G(R)$ of $\bar{e(P)}$ and a boundary face $e_{M_P}(R_P)$ of $\bar{X_P},$ given respectively by
\begin{equation*}
\begin{gathered}
	e_G(R) = U_R \times X_R = U_P \times U_{R_P} \times X_R, \\
	e_{M_P}(R_P) = U_{R_P} \times X_{R_P} = U_{R_P} \times X_R.
\end{gathered}
\end{equation*}
It follows that
\begin{multline*}
	\bar{e_G(P)} = 
	e_G(P) \cup \bigsqcup_{\substack{R \in \cP(G) \\ R \subseteq P}} e_G(R)
	= U_P \times X_P 
	\cup \bigsqcup_{\substack{R \in \cP(G) \\ R \subseteq P}} (U_P \times U_{R_P} \times X_R) \\
	= U_P \times \Big( X_P 
	\cup \bigsqcup_{\substack{R \in \cP(G) \\ R \subseteq P}} (U_{R_P} \times X_R) \Big)
	= U_P \times \Big( X_P 
	\cup \bigsqcup_{\substack{R \in \cP(G) \\ R \subseteq P}} e_{M_P}(R_P) \Big)
	= U_P \times \bar{X_P}.
\end{multline*}

Next let $P, Q \in \cP(G).$ Corollary 7.4 of \cite{Borel-Serre:Corners}, shows that
\begin{equation*}
	\bar{e_G(P)}\cap \bar{e_G(Q)} =
	\begin{cases}
	\bar{e_G(R)} & \Mif R= P\cap Q \in \cP(G) \\
	\emptyset & \text{otherwise}
	\end{cases}
\end{equation*}
From \eqref{eq:LangRel} we see that 
\begin{equation*}
	\bar{e_G(R)} = U_R \times \bar{X_R} = U_P \times U_{R_P} \times \bar{X_R}
\end{equation*}
and, since $\phi_P: U_P \times \bar{X_P} \lra X_P$ is just the projection off of $U_P,$ 
\begin{equation*}
	\phi_P(\bar{e_G(R)}) = U_{R_P} \times \bar{X_R} = \bar{e_{M_P}(R_P)}.
\end{equation*}
Moreover
\begin{equation*}
	\bar{e_G(R)} \xlra{\phi_P\rest{\bar{e_G(R)}}} \bar{e_{M_P}(R_P)} \xlra{\phi_{R_P}} \bar{X_R}
\end{equation*}
is the projection off of $U_R$ and so coincides with $\phi_R.$
This gives that the right `half' of \eqref{eq:PBSFibs} commutes, and by symmetry so does the left half.

\end{proof}

Proposition \ref{prop:PartialBS}, particularly diagram \eqref{eq:PBSFibs}, shows that the Borel-Serre partial compactification $\bar X$ does not have the structure of a manifold with corners with an iterated fibration structure (except when it is a manifold with boundary).
It is always possible to modify $\bar X$ to obtain an iterated fibration structure. 

Recall (e.g., \cite[Chapter 5]{Melrose:Corners}, \cite{Albin-Melrose:Resolution}) that if $M$ is a manifold with corners and $F\subseteq M$ is a boundary face, then 
\begin{equation*}
	[M;F],
\end{equation*}
the radial blow-up of $M$ along $F$ is a manifold with corners in which $F$ is replaced by its inward-pointing spherical normal bundle, forming a new boundary hypersurface (i.e., a boundary face of codimension one). There is a natural `blow-down map' $\beta: [M;F] \lra M$ which collapses the new boundary hypersurface back to $F.$

Consider the simple case of two maximal proper parabolic subgroups $P$ and $Q$ such that $R=P\cap Q$ is itself parabolic.
The space $[\bar X; \bar{e_G(R)}],$ replaces $\bar{e_G(R)}$ with a new boundary hypersurface
\begin{equation*}
	\bar{E_G(R)}.
\end{equation*}
The blow-down map $\beta: [\bar X; \bar{e_G(R)}] \lra \bar X$ restricts to $\bar{E_G(R)}$ to a (trivial) fiber bundle
\begin{equation*}
	\bbS_+^{\mathrm{park}(R)-1} \fib \bar{E_G(R)} \xlra{\beta\rest{\bar{E_G(R)}}} \bar{e_G(R)}
\end{equation*}
since $\bar{E_G(R)}$ is the total space of the inward pointing spherical normal bundle of $\bar{e_G(R)}$ in $\bar X.$
We can relabel this in terms of the rational Langlands decomposition of $P.$ Indeed, recall \cite{Borel-Serre:Corners}, that $\bar{e_G(R)}$ has a neighborhood of the form
\begin{equation*}
	{X(R)} = X \times_{A_R} \bar{A_R}
\end{equation*}
where $A_R$ acts on $X$ by a `geodesic action', and that we can identify $A_R \cong (\bbR^*_+)^{\mathrm{park}(R)}.$
There is an action of $\bbR^*_+$ on $A_R$ by scaling in all factors of $\bbR^*_+$ and 
\begin{equation*}
	\bbS_+^{\mathrm{park}(R)-1} \cong A_R/\bbR^*_+.
\end{equation*}
Thus the new boundary hypersurface $E_G(R)$ participates in a (trivial) fiber bundle
\begin{equation*}
	A_R/\bbR^*_+ \fib \bar{E_G(R)} \xlra{\beta\rest{\bar{E_G(R)}}}  \bar{e_G(R)}
\end{equation*}
as well as a (trivial) fiber bundle over $\bar{X_R},$
\begin{equation*}
	U_P \times (A_R/\bbR^*_+ ) \fib \bar{E_G(R)} \xlra{\Phi_P} \bar{X_R}.
\end{equation*}
By blowing-up $\bar{e_G(R)},$ we have separated the boundary hypersurfaces $\bar{e_G(P)}$ and $\bar{e_G(Q)}$ of $\bar X.$ Now at the corner of the intersection of $\beta^{-1}(\bar{e_G(P)})$ and $\bar{E_G(R)}$ we have a commutative diagram of fiber bundle maps
\begin{equation*}
	\xymatrix{
	\beta^{-1}(\bar{e_G(P)})\cap \bar{E_G(R)} \ar[rr]^-{\phi_P} \ar[rd]_-{\Phi_R} & &\bar{e_{M_P}(R_P)} \ar[ld]^-{\phi_{R_P}} \\
	& \bar{X_R} &
	}
\end{equation*}
as required in the definition of a boundary fibration structure.

In order to meet the requirements of a boundary fibration structure at every corner, we need to blow-up every boundary face of $\bar X$ of codimension greater than one. This procedure is referred to as the {\em total boundary blow-up} in \cite[\S 5.13]{Melrose:Corners}, \cite[\S2.6]{Hassell-Mazzeo-Melrose:Surgery} and denoted by a subscript `tb'.
We define the {\em resolved partial Borel-Serre compactification} of $X$ to be
\begin{equation*}
	\wt X = \bar X_{tb}.
\end{equation*}
It has a natural blow-down map $\beta_{tb}:\wt X \lra \bar X,$ and one boundary hypersurface $\bar{E_G(P)}$ for each proper parabolic subgroup of $G,$ where
\begin{equation*}
	E_G(P) = \beta_{tb}^{-1}\lrpar{e_G(P)}, \quad \bar{E_G(P)} =\bar{\beta_{tb}^{-1}\lrpar{e_G(P)}}.
\end{equation*}
(In general $\bar{E_G(P)} \neq \beta_{tb}^{-1}\lrpar{\bar{e_G(P)}}$ since the latter contains one boundary hypersurface for each parabolic subgroup of $G$ contained in $P.$)

\begin{proposition}[Boundary fibration structure of the resolved partial Borel-Serre compactification]\label{prop:ResPartialBS}
Let $X$ be the symmetric space of the real points of a connected reductive group $\bG$ as above 
and let $\wt X$ be its resolved Borel-Serre partial compactification to a manifold with corners.
Each boundary hypersurface of $\wt X$ is of the form
\begin{equation*}
	\bar{E_G(P)} = U_P \times (A_P/\bbR^*_+)_{tb} \times \wt{X_P}
\end{equation*}
where $P \in \cP(G),$ $U_P$ is the unipotent radical of $P,$ $A_P$ is the connected component of the identity in the real points of the split center of $U_P\diagdown P$ over $\bbQ,$ $X_P$ is the boundary symmetric space corresponding to $P$ and $\wt{X_P}$ is its resolved Borel-Serre partial compactification.

If $P, Q \in \cP(G)$ then $\bar{E_G(P)} \cap \bar{E_G(Q)} \neq\emptyset$ if and only if one of the parabolic groups is a subgroup of the other.

The natural projections $\Phi_P:\bar{E_G(P)} \lra \wt{X_P}$ endow $\wt X$ with an iterated fibration structure. That is, whenever $\bar{E_G(P)}\cap \bar{E_G(Q)} \neq \emptyset,$ we have (after relabeling if necessary) $\mathrm{park}(P)< \mathrm{park}(Q)$ and 
a commutative diagram of fiber bundle maps
\begin{equation*}
	\xymatrix{
	\bar{E_G(P)} \cap \bar{E_G(Q)} \ar[rr]^-{\Phi_P} \ar[rd]_-{\Phi_Q} & & \bar{E_{M_P}(Q_P)} \ar[ld]^-{\Phi_{Q_P}} \\
	& \wt{X_Q} &
	}
\end{equation*}
\end{proposition}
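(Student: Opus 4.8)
\emph{Proof strategy.} The plan is to induct on the $\bbQ$-rank of $\bG$. When $\mathrm{rk}_{\bbQ}(\bG)=0$ there are no proper parabolics, $\wt X=\bar X=X$ is compact without boundary, and there is nothing to prove. For the inductive step one combines three inputs: the general structure of the total boundary blow-up $M\mapsto M_{tb}$ of a manifold with corners; the product ``corner'' structure of $\bar X$ near each closed boundary face $\bar{e_G(P)}$ furnished by Borel--Serre; and the inductive hypothesis applied to the boundary symmetric spaces $X_P$, whose groups $M_P$ have strictly smaller $\bbQ$-rank, so that $\wt{X_P}=(\bar{X_P})_{tb}$ already carries an iterated fibration structure with boundary hypersurfaces $\bar{E_{M_P}(Q_P)}$, $Q_P\in\cP(M_P)$, and fibrations $\Phi_{Q_P}$.

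\emph{Boundary hypersurfaces of $\wt X$.} Recall (\cite[\S5.13]{Melrose:Corners}, \cite[\S2.6]{Hassell-Mazzeo-Melrose:Surgery}) that $M_{tb}$ is obtained by radially blowing up every boundary face of $M$ of codimension $\geq2$ in order of increasing dimension; that the order among equidimensional faces is immaterial since blow-ups along disjoint centres commute; that the boundary hypersurfaces of $M_{tb}$ are in bijection with the boundary faces $F$ of $M$, say $F\mapsto\wt F$; and that $\wt F\cap\wt F'\neq\emptyset$ exactly when $F\subseteq F'$ or $F'\subseteq F$. The boundary faces of $\bar X$ are the sets $\bar{e_G(P)}$, $P\in\cP(G)$, of codimension $\mathrm{park}(P)$, and by \eqref{eq:CloseEP} we have $\bar{e_G(P)}\subseteq\bar{e_G(Q)}\iff P\subseteq Q$; hence the boundary hypersurfaces of $\wt X=\bar X_{tb}$ are the $\bar{E_G(P)}$, and $\bar{E_G(P)}\cap\bar{E_G(Q)}\neq\emptyset$ precisely when one of $P,Q$ contains the other, which is the second assertion. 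To identify $\bar{E_G(P)}$, I would use the Borel--Serre fact that a neighbourhood of $\bar{e_G(P)}$ in $\bar X$ is the corner $X(P)=X\times_{A_P}\bar{A_P}$ with $\bar{A_P}\cong[0,\infty)^{\mathrm{park}(P)}$; together with Proposition \ref{prop:PartialBS} this presents the neighbourhood as $U_P\times\bar{X_P}\times[0,\infty)^{\mathrm{park}(P)}$, with the coordinate faces of the cube matching the $\bar{e_G(Q)}$, $P\subsetneq Q$, and the boundary faces of $\bar{X_P}$ matching the $\bar{e_{M_P}(R_P)}$, $R_P\in\cP(M_P)$ (i.e.\ the $\bar{e_G(R)}$, $R\subsetneq P$; see \cite[\S4.1]{Borel-Serre:Corners}). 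Organizing the blow-ups defining $\bar X_{tb}$ so that those ``in the $\bar{A_P}$ directions'' are separated from those ``in the $\bar{X_P}$ directions'', and invoking the inductive identification $(\bar{X_P})_{tb}=\wt{X_P}$, gives
\begin{equation*}
	\bar{E_G(P)}\;\cong\;U_P\times(A_P/\bbR^*_+)_{tb}\times\wt{X_P},
\end{equation*}
with $\Phi_P$ the projection onto the last factor.

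\emph{The iterated fibration structure.} Suppose $\bar{E_G(P)}\cap\bar{E_G(Q)}\neq\emptyset$; after relabelling, $Q\subsetneq P$, so $\mathrm{park}(P)<\mathrm{park}(Q)$, and since $X_Q$ is a proper boundary symmetric space of $X_P$ one has $\dim\wt{X_P}>\dim\wt{X_Q}$, which is the dimension requirement in Definition \ref{def:ifs}. As $\bar{e_G(Q)}\subseteq\bar{e_G(P)}=U_P\times\bar{X_P}$, in fact $\bar{e_G(Q)}=U_P\times\bar{e_{M_P}(Q_P)}$ with $Q_P=Q\cap M_P\in\cP(M_P)$, so in the decomposition above $\bar{E_G(P)}\cap\bar{E_G(Q)}=U_P\times(A_P/\bbR^*_+)_{tb}\times\bar{E_{M_P}(Q_P)}$, where $\bar{E_{M_P}(Q_P)}$ is the boundary hypersurface of $\wt{X_P}$ associated to $\bar{e_{M_P}(Q_P)}$. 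Thus $\Phi_P$ on this corner is the projection onto $\bar{E_{M_P}(Q_P)}\subseteq\wt{X_P}$, and composing with the boundary fibration $\Phi_{Q_P}\colon\bar{E_{M_P}(Q_P)}\to\wt{X_{Q_P}}$ of $\wt{X_P}$ (which exists by induction) lands in $\wt{X_{Q_P}}=\wt{X_Q}$ by the relation $M_Q=M_{Q_P}$ of \eqref{eq:LangRel}. Finally, tracing $\Phi_Q\colon\bar{E_G(Q)}\cong U_Q\times(A_Q/\bbR^*_+)_{tb}\times\wt{X_Q}\to\wt{X_Q}$ through the relations $U_Q=U_P\rtimes U_{Q_P}$ and $A_Q=A_P\times A_{Q_P}$ of \eqref{eq:LangRel} shows that $\Phi_Q=\Phi_{Q_P}\circ\Phi_P$ on the corner, which is the commuting triangle in the statement; this closes the induction.

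\emph{Main obstacle.} The substantive point is the identification of $\bar{E_G(P)}$: one must verify that the radial blow-ups constituting $\bar X_{tb}$, once localized near $\bar{e_G(P)}$, genuinely factor through the product $U_P\times\bar{X_P}\times[0,\infty)^{\mathrm{park}(P)}$ — that is, that the blow-ups along the cube directions and those along the $\bar{X_P}$ directions (and the ``mixed'' faces) can be reordered and grouped so as to produce $U_P\times(A_P/\bbR^*_+)_{tb}\times(\bar{X_P})_{tb}$. This is a combinatorial argument with iterated radial blow-ups, resting on the commutation of blow-ups along nested or disjoint centres together with the fact (from Borel--Serre and Proposition \ref{prop:PartialBS}) that every boundary face of $\bar X$ meeting $\bar{e_G(P)}$ is the product of a coordinate face of $\bar{A_P}$ with a boundary face of $U_P\times\bar{X_P}$. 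Once this is in place, the rest — the intersection pattern, the dimension condition, and the commuting triangles — is formal.
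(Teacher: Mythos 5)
Your proposal is correct and follows essentially the same route as the paper: identify $\bar{E_G(P)}$ by splitting the total boundary blow-up into the resolution of the faces contained in $\bar{e_G(P)}$ (yielding $U_P\times\wt{X_P}$) and of those containing it (yielding $(A_P/\bbR^*_+)_{tb}$), then read off the intersection pattern and the commuting triangles from \eqref{eq:CloseEP}, \eqref{eq:PBSFibs} and \eqref{eq:LangRel}. The ``main obstacle'' you flag is handled in the paper exactly as you suggest, by performing the blow-ups in order of decreasing codimension via the intermediate spaces $\bar X_{tb>k}$, so that one first obtains $\bar{F_G(P)}=\bar{e_G(P)}_{tb}=U_P\times\wt{X_P}$, then the normal factor $(\bbR^*_+)^{\ell}/\bbR^*_+$, and finally its total boundary blow-up.
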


\begin{proof}
The fact that $\bar X$ is a Hausdorff manifold with corners implies the same for $\wt X.$

For each $k\in \bbN,$ let $\bar X_{tb>k}$ denote the space obtained from $\bar X$ by blowing-up all of the boundary faces of codimension greater than $k$ and let
\begin{equation*}
	\beta_{tb>k}:\bar X_{tb>k} \lra \bar X
\end{equation*}
denote the blow-down map.
Thus
\begin{equation*}
	\wt X = \bar X_{tb} = \bar X_{tb>1}.
\end{equation*}

Consider $P \in \cP(G)$ of parabolic rank $\ell.$ The face $\bar{e_G(P)}$ has codimension $\ell$ in $\bar X,$ so if
\begin{equation*}
	F_{G}(P) = \beta_{tb>\ell}^{-1}(e_G(P))
\end{equation*}
then $\bar{F_G(P)}$ is obtained from $\bar{e_G(P)}$ by blowing-up all of its boundary faces in order of increasing codimension. Thus we have
\begin{equation*}
	\bar{F_G(P)} = \bar{e_G(P)}_{tb} = U_P \times (\bar{X_P})_{tb} = U_P \times \wt{X_P}.
\end{equation*}
When we pass to $\bar X_{tb>\ell-1},$ the boundary face $\bar{F_{G}(P)}$ itself gets blown-up so it lifts to a boundary hypersurface
\begin{equation*}
	H_G(P) = \beta_{tb>\ell-1}^{-1}(e_G(P))
\end{equation*}
whose closure is
\begin{equation*}
	\bar{H_G(P)} = \bar{F_G(P)} \times (\bbR^*_+)^{\ell}/\bbR^*_+.
\end{equation*}
Finally when we pass to $\bar X_{tb}$ we blow-up the intersection of $\bar{H_G(P)}$ with (lifts of) boundary faces of $\bar X$ corresponding to parabolic subgroups of $X$ that contain $P,$ and these intersect $\bar{H_G(P)}$ in $\bar{F_G(P)}$ times a boundary face of $(\bbR^*_+)^{\ell}/\bbR^*_+$ so we end up with
\begin{equation*}
	\bar{E_G(P)} = \bar{F_G(P)} \times \lrpar{(\bbR^*_+)^{\ell}/\bbR^*_+}_{tb}
	= U_P \times (A_P/\bbR^*_+)_{tb} \times \wt{X_P}.
\end{equation*}

From this description we see that the boundary hypersurfaces that intersect $\bar{E_G(P)}$ come either from the blow-ups of faces $e_G(Q)$ of $Q$ that are contained in $P$ (accounting for the boundary faces of $\bar{F_G(P)}$) or from the blow-ups of faces $e_G(Q)$ of $Q$ that contain $P$ (accounting for the boundary faces of $\lrpar{(\bbR^*_+)^{\ell}/\bbR^*_+}_{tb}$).

The compatibility of the boundary fibrations over a corner now follows directly from \eqref{eq:PBSFibs}, as explained above.
\end{proof}

\subsection{The compactification of a locally symmetric space to a manifold with corners}

By Theorem 9.3 of \cite{Borel-Serre:Corners}, any arithmetic subgroup $\Gamma$ of $G$ acts properly on $\bar X$ with compact orbit space $\Gamma\diagdown \bar X,$ known as the {\em Borel-Serre compactification of $\Gamma\diagdown X.$} Similarly any arithmetic subgroup of $M_P$ acts properly on $\bar{X_P}$ with compact orbit space.

If $\Gamma$ is torsion-free it will act freely on $X,$ but not necessarily on $\bar X.$ However if $\Gamma$ is also neat, then it will act freely on $\bar X.$

For $P \in \cP$ let
\begin{equation*}
	\Gamma_P = P \cap \Gamma, \quad \Gamma_{U_P} = U_P \cap \Gamma.
\end{equation*}
These are arithmetic subgroups of $P$ and $U_P,$ respectively.
From \cite[Proposition 1.2]{Borel-Serre:Corners}, we know that the image of $\Gamma_P$ under the projection $\pi_{U_P}:P \lra U_P \diagdown P$ is contained in $M_P$ and is an arithmetic subgroup of $M_P$ which we will denote $\Gamma_{M_P}.$ These groups fit into a short exact sequence
\begin{equation*}
	0\lra \Gamma_{U_P}
	\lra \Gamma_P
	\xlra{\pi_{U_P}} \Gamma_{M_P} \lra 0.
\end{equation*}
Since $U_P$ is normal in $P,$ the product decomposition $e_G(P) = U_P \times X_P$ descends to a fiber bundle
\begin{equation*}
	\Gamma_{U_P} \diagdown U_P \fib 
	\Gamma_P \diagdown e_G(P)
	\xlra{\psi_P}
	\Gamma_{M_P} \diagdown X_P.
\end{equation*}
with fiber a compact nilmanifold. We write the total space as
\begin{equation*}
	e'_G(P) = (\Gamma_{U_P} \diagdown U_P) \times_{\Gamma_{M_P}} X_P
\end{equation*}
where $\Gamma_{M_P}$ acts on the first factor by conjugation.

The stabilizer of $e_G(P)$ under the action of $\Gamma$ on $\bar X$ is $\Gamma_P,$ and hence $e'_G(P)$ is the image of $e_G(P)$ under the projection $\bar X \lra \Gamma \diagdown \bar X.$
Thus we have
\begin{equation*}
	\Gamma\diagdown\bar{X} = \Gamma\diagdown X \cup \bigsqcup_{P \in \cP(G)/\Gamma} e'_G(P)
\end{equation*}
where $\cP(G)/\Gamma$ denotes a set of representatives of the conjugacy classes of elements of $\cP(G)$ under the action of $\Gamma.$
The image of the closure of $e_G(P)$ is equal to the closure of the image of $e_G(P)$ and is given by
\begin{equation*}
	\bar{e'_G(P)} = \Gamma_P \diagdown \bar{e_G(P)} = e'_G(P) \cup \bigsqcup_{Q \in \cP(M_P)/\Gamma_{M_P}}e'_G(Q).
\end{equation*}

\begin{proposition}[Boundary fibrations of the Borel-Serre compactification]
Let $X$ be the symmetric space of the real point of a connected reductive group $\bG$ as above, let $\Gamma$ be a neat arithmetic subgroup of $\bG(\bbQ),$ and let $\Gamma\diagdown\bar{X}$ be the Borel-Serre compactification of $\Gamma\diagdown X$ to a manifold with corners.
Each boundary face of $\Gamma\diagdown\bar{X}$ is the total space of a fiber bundle
\begin{equation*}
	\bar{e_G'(P)} = (\Gamma_{U_P}\diagdown U_P) \times_{\Gamma_{M_P}} \bar{X_P} 
	\xlra{\psi_P} \Gamma_{M_P}\diagdown \bar {X_P}
\end{equation*}
with fiber the nil manifold $\Gamma_{U_P}\diagdown U_P$ and base the Borel-Serre compactification of $\Gamma_{M_P}\diagdown  {X_P}.$

If $P, Q \in \cP(G)/\Gamma$ then the boundary faces $\bar{e'_G(P)},$ $\bar{e'_G(Q)}$ intersect if and only if, after replacing  $P$ with a $\Gamma$-conjugate subgroup,
\begin{equation*}
	R = P\cap Q \in \cP(G)
\end{equation*}
in which case $\bar{e'_G(P)} \cap\bar{e'_G(Q)} = \bar{e'_G(R)}$ and we have a commutative diagram,
\begin{equation}\label{eq:BSFibs}
	\xymatrix{ 
	\bar{e'_G(Q)} \ar[d]_{\psi_Q} & & \bar{e'_G(R)}
	\ar@{^(->}[rr] \ar@{_(->}[ll]  \ar[ld]_{\psi_Q\rest{\bar{e'_G(R)}}} \ar[rd]^{\psi_P\rest{\bar{e'_G(R)}}} \ar[dd]^{\psi_R}  
	& & \bar{e'_G(P)} \ar[d]^{\psi_P} \\
	\Gamma_{M_Q}\diagdown \bar{X_Q} & \ar@{_(->}[l] \bar{e'_{M_Q}(R_Q)} \ar[rd]_{\psi_{R_Q}} & &
	\bar{e'_{M_P}(R_P)} \ar@{^(->}[r] \ar[ld]^{\psi_{R_P}} & \Gamma_{M_P}\diagdown \bar{X_P} \\
	& & \Gamma_{M_R}\diagdown \bar{X_R} & & }
\end{equation}

\end{proposition}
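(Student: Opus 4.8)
The plan is to obtain the statement by descending the $G$-equivariant Proposition~\ref{prop:PartialBS} to the quotient by $\Gamma$. First I would record the basic structure: by Theorem~9.3 of \cite{Borel-Serre:Corners}, $\Gamma$ acts properly on $\bar X$ with compact quotient, and since $\Gamma$ is neat the action is free, so $\Gamma\diagdown\bar X$ is a compact manifold with corners and the quotient map is a covering respecting the corner structure. As $\gamma\cdot\bar{e_G(P)}=\bar{e_G(\gamma P\gamma^{-1})}$ and the normalizer of a parabolic subgroup equals that subgroup, the $\Gamma$-stabilizer of the face $\bar{e_G(P)}$ is exactly $\Gamma_P=\Gamma\cap P$; hence the boundary faces of $\Gamma\diagdown\bar X$ are the images $\Gamma_P\diagdown\bar{e_G(P)}$, indexed by $\cP(G)/\Gamma$, and (again using neatness) each is embedded.

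Next I would identify each face $\Gamma_P\diagdown\bar{e_G(P)}$ with $\bar{e'_G(P)}$ together with its fibration. By Proposition~\ref{prop:PartialBS}, $\bar{e_G(P)}=U_P\times\bar{X_P}$ with $\phi_P$ the projection onto $\bar{X_P}$, and the $\Gamma_P$-action is compatible with the exact sequence $0\to\Gamma_{U_P}\to\Gamma_P\xlra{\pi_{U_P}}\Gamma_{M_P}\to0$ of \cite[Prop.~1.2]{Borel-Serre:Corners}: $\Gamma_{U_P}$ acts by left translation on the $U_P$-factor, with compact nilmanifold quotient $\Gamma_{U_P}\diagdown U_P$, while $\Gamma_{M_P}$ acts on this nilmanifold by conjugation and on $\bar{X_P}$ through its natural action as an arithmetic subgroup of $M_P$. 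Taking the quotient then gives
\[
\Gamma_P\diagdown\bar{e_G(P)}=(\Gamma_{U_P}\diagdown U_P)\times_{\Gamma_{M_P}}\bar{X_P}=\bar{e'_G(P)},
\]
and $\phi_P$, being equivariant over $\pi_{U_P}$, descends to the claimed bundle $\psi_P:\bar{e'_G(P)}\to\Gamma_{M_P}\diagdown\bar{X_P}$ with fiber $\Gamma_{U_P}\diagdown U_P$. To recognize the base as the Borel-Serre compactification of $\Gamma_{M_P}\diagdown X_P$ I would first check that $\Gamma_{M_P}$ is neat in $M_P$ (this follows from neatness of $\Gamma$, since the eigenvalues of the image in $L_P$ of an element of $\Gamma_P$, in any representation, occur among the eigenvalues of that element in $\bG$) and then apply Theorem~9.3 of \cite{Borel-Serre:Corners} to $M_P$, checking compatibility with the closure formula \eqref{eq:CloseEP}.

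For the intersection pattern and the cube \eqref{eq:BSFibs}, I would argue that $\bar{e'_G(P)}$ and $\bar{e'_G(Q)}$ meet precisely when some translate $\bar{e_G(\gamma P\gamma^{-1})}$ meets $\bar{e_G(Q)}$ in $\bar X$, which by Proposition~\ref{prop:PartialBS} (that is, Corollary~7.4 of \cite{Borel-Serre:Corners}) happens iff $\gamma P\gamma^{-1}\cap Q\in\cP(G)$; replacing $P$ by $\gamma P\gamma^{-1}$ reduces to the case $R=P\cap Q\in\cP(G)$, where $\bar{e_G(P)}\cap\bar{e_G(Q)}=\bar{e_G(R)}$ and $\Gamma\cap R=\Gamma_P\cap\Gamma_Q$, so that pushing down produces $\bar{e'_G(R)}$. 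Finally I would descend the commutative diagram \eqref{eq:PBSFibs} by the compatible actions of $\Gamma_R$, $\Gamma_P$, $\Gamma_Q$ and the corresponding boundary arithmetic groups, using the Langlands relations \eqref{eq:LangRel} to match the arithmetic subgroups appearing on the two sides (e.g.\ $\Gamma_{R_P}$ with the relevant subgroup of $M_{R_P}$), which yields \eqref{eq:BSFibs}.

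The hardest part will not be a single estimate but the bookkeeping. I expect the real work to be: verifying that every boundary arithmetic group $\Gamma_{M_P}$ (and $\Gamma_{M_Q}$, $\Gamma_{M_R}$, and so on down the recursion) is again neat, so that the appearance of honest Borel-Serre compactifications $\overline{\Gamma_{M_P}\diagdown X_P}$ as bases is legitimate; and confirming that the $\Gamma_P$-quotient of $U_P\times\bar{X_P}$ is genuinely the twisted product $\bar{e'_G(P)}$, not merely a space fibering over the base. One must also be careful in the intersection analysis that, in general, $\bar{e'_G(P)}\cap\bar{e'_G(Q)}$ is a finite disjoint union of faces of the stated form, one per relevant $\Gamma$-conjugacy class of parabolic intersections, so the asserted equality should be read componentwise.
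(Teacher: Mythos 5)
Your proposal is correct and takes essentially the same route as the paper's proof: both descend Proposition \ref{prop:PartialBS} to the $\Gamma$-quotient using the exact sequence $1\to\Gamma_{U_P}\to\Gamma_P\to\Gamma_{M_P}\to1$ and the equivariance of the projections $\phi_P$, the only organizational difference being that the paper assembles $\bar{e'_G(P)}$ stratum by stratum from the pieces $e'_G(R)$ with $R\subseteq P$ (rewriting each via \eqref{eq:LangRel}), while you quotient the closed face $U_P\times\bar{X_P}$ by $\Gamma_P$ in one step. Your extra cautions --- neatness of the boundary groups $\Gamma_{M_P}$, and that $\bar{e'_G(P)}\cap\bar{e'_G(Q)}$ may a priori have several components indexed by conjugacy classes --- are sensible refinements that the paper leaves implicit in its definition of neatness and in its indexing by $\cP(G)/\Gamma$.
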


\begin{proof}
If $\bP, \bQ \in \cP(G)$ and $\bQ \subseteq \bP,$ then from \eqref{eq:LangRel} we have a short exact sequence
\begin{equation*}
	1\lra U_P \lra U_Q \lra U_{Q_P} \lra 1,
\end{equation*}
and hence a fiber bundle 
\begin{equation*}
	\Gamma_{U_P} \diagdown U_P \fib
	\Gamma_{U_Q} \diagdown U_Q 
	\lra
	\Gamma_{U_{Q_P}} \diagdown U_{Q_P}.
\end{equation*}
In particular, we can identify 
\begin{equation*}
	\Gamma_{U_Q} \diagdown U_Q = (\Gamma_{U_P} \diagdown U_P) \times_{\Gamma_{U_{Q_P}}} U_{Q_P}
\end{equation*}
and hence
\begin{equation*}
	e'_G(Q) = (\Gamma_{U_Q} \diagdown U_Q) \times_{\Gamma_{M_Q}} X_Q
	= \lrpar{ (\Gamma_{U_P} \diagdown U_P) \times_{\Gamma_{U_{Q_P}}} U_{Q_P} } \times_{\Gamma_{M_Q}} X_Q.
\end{equation*}
Then using the short exact sequence
\begin{equation*}
	1 \lra \Gamma_{U_{Q_P}} \lra \Gamma_{Q_P} \lra \Gamma_{M_Q} \lra 1
\end{equation*}
we can further identify this with
\begin{equation*}
	e'_G(Q) = (\Gamma_{U_P} \diagdown U_P) \times_{\Gamma_{Q_P}} \lrpar{U_{Q_P}\times X_Q}
	= (\Gamma_{U_P} \diagdown U_P) \times_{\Gamma_{Q_P}} e_{M_Q}(Q_P).
\end{equation*}

The space $\bar{X_P}$ is a symmetric space for $M_P$ and under the action of $\Gamma_{M_P}$ on $\bar{X_{P}}$ the stabilizer of $e_{M_P}(R_P)$ is $\Gamma_{M_{R_P}},$ so we have
\begin{equation*}
	\Gamma_{M_P} \diagdown \bar{X_P} = 
	\Gamma_{M_P} \diagdown X_P \cup 
	\bigsqcup_{\substack{R \in \cP(G)/\Gamma \\ R \subseteq P}} \Gamma_{M_{R_P}}\diagdown e_{M_P}(R_P).
\end{equation*}
Thus 
\begin{multline*}
	\bar{e'_G(P)} = 
	e'_G(P) \cup \bigsqcup_{\substack{R \in \cP(G) \\ R \subseteq P}} e'_G(R)
	= (\Gamma_{U_P} \diagdown U_P) \times_{\Gamma_{M_P}} X_P 
	\cup \bigsqcup_{\substack{R \in \cP(G) \\ R \subseteq P}} 
	(\Gamma_{U_P} \diagdown U_P) \times_{\Gamma_{R_P}} e_{M_R}(R_P) \\
	= (\Gamma_{U_P} \diagdown U_P) \times_{\Gamma_{M_P}}
	\bar{X_P}.
\end{multline*}
This proves the first part of the proposition, the rest follows from Proposition \ref{prop:PartialBS} and the equivariance of the projections $\phi_P.$
\end{proof}

Just as the action of $\Gamma$ on $X$ extends to an action of $\Gamma$ on $\bar X,$ there is a unique action of $\Gamma$ on $\wt X$ that extends the action of $\Gamma$ on $X.$
Indeed, for compact spaces and groups this was proved in \cite[Proposition 6.2]{Albin-Melrose:Resolution}, and the extension to proper actions is straightforward (see, e.g., \cite{Kankaanrinta} for collar and tubular neighborhood theorems).

We define the {\em resolved Borel-Serre compactification of $\Gamma\diagdown X$} to be
\begin{equation*}
	\Gamma\diagdown \wt{X}.
\end{equation*}
Let $\bar{E_G'(P)}$ denote the image of $\bar{E_G(P)}$ under the natural projection $\wt X \lra \Gamma\diagdown \wt X$

\begin{theorem}[Boundary fibrations of the resolved Borel-Serre compactification]
Let $X$ be the symmetric space of the real point of a connected reductive group $\bG$ as above, let $\Gamma$ be a neat arithmetic subgroup of $\bG(\bbQ),$ and let $\Gamma\diagdown\wt{X}$ be the resolved Borel-Serre compactification of $\Gamma\diagdown X.$ 

There is one boundary hypersurface $\bar{E_G'(P)}$ for each $\Gamma$-conjugacy class of $\bbQ$-parabolic subgroups of $\bG.$ Each of these boundary hypersurfaces is the total space of a fiber bundle
\begin{equation*}
	\bar{E_G'(P)} = (A_P/\bbR^*_+)_{tb} \times \lrpar{(\Gamma_{U_P}\diagdown U_P) \times_{\Gamma_{M_P}} \wt{X_P} }
	\xlra{\Psi_P} \Gamma_{M_P}\diagdown \wt {X_P}
\end{equation*}
with fiber the compact manifold with corners $(A_P/\bbR^*_+)_{tb} \times \Gamma_{U_P}\diagdown U_P$ and base the resolved Borel-Serre compactification of $\Gamma_{M_P}\diagdown  {X_P}.$ Here $U_P$ denotes the unipotent radical of $P,$ $A_P$ is the connected component of the identity in the real points of the split center of $U_P\diagdown P$ over $\bbQ,$ and $X_P$ is the boundary symmetric space corresponding to $P.$

If $P, Q \in \cP(G)/\Gamma$ then $\bar{E'_G(P)} \cap \bar{E'_G(Q)} \neq\emptyset$ if and only if one of the parabolic groups is $\Gamma$-conjugate to a subgroup of the other.

The fiber bundle maps $\Psi_P: \bar{E_G'(P)} \lra \Gamma_{M_P}\diagdown \wt {X_P}$ endow $\Gamma\diagdown\wt{X}$ with an iterated fibration structure. That is, whenever $\bar{E'_G(P)}\cap \bar{E'_G(Q)} \neq \emptyset,$ we have (after relabeling if necessary) $\mathrm{park}(P)< \mathrm{park}(Q)$ and 
a commutative diagram of fiber bundle maps
\begin{equation*}
	\xymatrix{
	\bar{E'_G(P)} \cap \bar{E'_G(Q)} \ar[rr]^-{\Psi_P} \ar[rd]_-{\Psi_Q} & & \bar{E'_{M_P}(Q_P)} \ar[ld]^-{\Psi_{Q_P}} \\
	& \Gamma_Q \diagdown \wt{X_Q} &
	}
\end{equation*}

Moreover, the resolved Borel-Serre compactification is the total boundary blow-up of the Borel-Serre compactification,
\begin{equation*}
	\Gamma\diagdown \wt X = \lrpar{\Gamma\diagdown \bar{X}}_{tb}.
\end{equation*}

\end{theorem}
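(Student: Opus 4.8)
The plan is to obtain parts (1)--(3) by descending Proposition~\ref{prop:ResPartialBS}, which describes $\wt X=\bar X_{tb}$, along the quotient map $\wt X\lra\Gamma\diagdown\wt X$, reusing the bookkeeping of nested arithmetic subgroups from the preceding Proposition on the boundary fibrations of $\Gamma\diagdown\bar X$; the final equality becomes the statement that the total boundary blow-up commutes with this quotient. The preliminary point is that, since $\Gamma$ is neat, it acts freely and properly on $\bar X$ (Theorem~9.3 of \cite{Borel-Serre:Corners} together with neatness), so its unique extension to $\wt X=\bar X_{tb}$ --- which exists by \cite[Proposition~6.2]{Albin-Melrose:Resolution} extended to proper actions, \cite{Kankaanrinta} --- is again free and proper; thus $\pi:\wt X\lra\Gamma\diagdown\wt X$ is a covering map of manifolds with corners, and since the blow-down $\beta_{tb}:\wt X\lra\bar X$ is proper and $\Gamma$-equivariant while $\Gamma\diagdown\bar X$ is compact, $\Gamma\diagdown\wt X$ is a compact manifold with corners.

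For the final equality $\Gamma\diagdown\wt X=(\Gamma\diagdown\bar X)_{tb}$, the idea is that the total boundary blow-up commutes with pullback along any covering map (indeed any local diffeomorphism) of manifolds with corners. One performs $\bar X_{tb}=\bar X_{tb>1}$ stagewise, blowing up at each stage the disjoint union of all boundary faces of a fixed codimension (in decreasing order of codimension), and each radial blow-up along a boundary face is local and compatible with restriction to open sets. A diffeomorphism preserves codimension, so the deck transformations of $\Gamma$ permute the faces within each stage and $\bar X\lra\Gamma\diagdown\bar X$ identifies each stage of the blow-up upstairs with the pullback of the corresponding stage downstairs; iterating, $\bar X_{tb}$ is the $\Gamma$-equivariant pullback of $(\Gamma\diagdown\bar X)_{tb}$, whence $\Gamma\diagdown\wt X=\Gamma\diagdown(\bar X_{tb})=(\Gamma\diagdown\bar X)_{tb}$.

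Next I would descend the boundary structure. The group $\Gamma$ permutes the boundary hypersurfaces $\{\bar{E_G(P)}:P\in\cP(G)\}$ of $\wt X$ by $\gamma\cdot\bar{E_G(P)}=\bar{E_G(\gamma P\gamma^{-1})}$, so those of $\Gamma\diagdown\wt X$ are in bijection with $\cP(G)/\Gamma$, and the stabilizer of $\bar{E_G(P)}$ is $\Gamma_P=\Gamma\cap P$ (inherited, by equivariance of the blow-up, from the fact that $\Gamma_P$ stabilizes $e_G(P)$ in $\bar X$); hence $\bar{E'_G(P)}=\Gamma_P\diagdown\bar{E_G(P)}$. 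Substituting $\bar{E_G(P)}=U_P\times(A_P/\bbR^*_+)_{tb}\times\wt{X_P}$ from Proposition~\ref{prop:ResPartialBS} and analyzing the $\Gamma_P$-action through the extension $0\to\Gamma_{U_P}\to\Gamma_P\to\Gamma_{M_P}\to0$ --- $\Gamma_{U_P}$ translating $U_P$, and $\Gamma_{M_P}$ acting by conjugation on $U_P$, trivially on $(A_P/\bbR^*_+)_{tb}$ (since $A_P$ and $M_P$ commute in the Langlands decomposition), and by its symmetric-space action on $\wt{X_P}$ --- gives
\begin{equation*}
	\bar{E'_G(P)}=(A_P/\bbR^*_+)_{tb}\times\big((\Gamma_{U_P}\diagdown U_P)\times_{\Gamma_{M_P}}\wt{X_P}\big),
\end{equation*}
fibering over $\Gamma_{M_P}\diagdown\wt{X_P}$ via the map $\Psi_P$ descended from $\Phi_P$, with fiber $(A_P/\bbR^*_+)_{tb}\times(\Gamma_{U_P}\diagdown U_P)$; the base is the resolved Borel-Serre compactification of $\Gamma_{M_P}\diagdown X_P$ because $\Gamma_{M_P}$ is a neat arithmetic subgroup of $M_P$ (neatness descends to the subquotient $\Gamma_P/\Gamma_{U_P}$) and $\wt{X_P}$ is the resolved partial Borel-Serre compactification of the symmetric space $X_P$ of $M_P$. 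For the intersection statement, $\bar{E'_G(P)}\cap\bar{E'_G(Q)}\neq\emptyset$ iff $\bar{E_G(P')}\cap\bar{E_G(Q')}\neq\emptyset$ for some $\Gamma$-conjugates $P'$ of $P$ and $Q'$ of $Q$, which by Proposition~\ref{prop:ResPartialBS} holds iff $P'$ and $Q'$ are nested, that is, iff one of $P,Q$ is $\Gamma$-conjugate to a subgroup of the other.

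Finally, for the iterated fibration structure: when $\bar{E'_G(P)}\cap\bar{E'_G(Q)}\neq\emptyset$, after a $\Gamma$-conjugation we take $R=P\cap Q\in\cP(G)$ with $\mathrm{park}(P)<\mathrm{park}(Q)$ and restrict the corner diagram of Proposition~\ref{prop:ResPartialBS} to $\bar{E_G(P)}\cap\bar{E_G(Q)}$. Since every arrow there is equivariant for the relevant stabilizers --- $\Gamma_R=\Gamma\cap R$ on the corner, $\Gamma_P$ and $\Gamma_Q$ on the two hypersurfaces, $\Gamma_{M_P}$ and $\Gamma_{M_Q}$ on the bases --- compatibly with the group extensions \eqref{eq:LangRel}, the diagram descends to the commutative triangle
\begin{equation*}
	\xymatrix{
	\bar{E'_G(P)}\cap\bar{E'_G(Q)} \ar[rr]^-{\Psi_P} \ar[rd]_-{\Psi_Q} & & \bar{E'_{M_P}(Q_P)} \ar[ld]^-{\Psi_{Q_P}} \\
	& \Gamma_Q\diagdown\wt{X_Q} & }
\end{equation*}
required by Definition~\ref{def:ifs}, the compatible collars it also demands descending because collars on $\wt X$ can be chosen $\Gamma$-equivariantly. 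The main obstacle I foresee is making the descent of the total boundary blow-up precise --- verifying that the stagewise blow-up is genuinely $\Gamma$-equivariant and compatible with the free proper quotient, which is where the resolution-of-group-actions machinery of \cite{Albin-Melrose:Resolution} must be invoked and extended from compact group actions to proper discrete ones --- together with keeping the nested arithmetic subgroups $\Gamma_P\supset\Gamma_{U_P}$, $\Gamma_{M_P}$, $\Gamma_R$, $\Gamma_{Q_P}$ and their exact sequences coherent throughout the descent.
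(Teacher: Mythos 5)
Your proposal is correct and follows essentially the same route as the paper: compute $\bar{E'_G(P)}=\Gamma_P\diagdown\bar{E_G(P)}$ by quotienting first by $\Gamma_{U_P}$ and then by $\Gamma_{M_P}$, pulling the $(A_P/\bbR^*_+)_{tb}$ factor out of the $\Gamma_{M_P}$-quotient because $A_P$ is central in $L_P$, identifying the result with the total boundary blow-up of $\Gamma\diagdown\bar X$ by the same stagewise argument as in the partial (non-quotiented) case, and inheriting the iterated fibration structure from $\wt X$ via equivariance of the maps $\Phi_P$. Your write-up is in fact somewhat more explicit than the paper's on the commutation of the total boundary blow-up with the free proper quotient, which the paper simply defers to the proof of the preceding proposition.
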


\begin{proof}
In terms of the rational Langlands decomposition of a parabolic subgroup
\begin{equation*}
	P = U_P A_{\bP} M_{\bP}
\end{equation*}
the face $E_G(P)$ of $\wt X$ is obtained by taking the quotient of $A_P$ by a diagonal $\bbR^*_+$-action and the quotient of $M_{\bP}$ by the action of a maximal compact subgroup.
It is this decomposition of $P$ that induces the decomposition 
\begin{equation*}
	\bar{E_G(P)} = U_P \times (A_P/\bbR^*_+)_{tb} \times \wt{X_P}.
\end{equation*}
Thus if we take a quotient of this face by $\Gamma_{U_P} = \Gamma\cap U_P$ it will act entirely on the fist factor,
\begin{equation*}
	\Gamma_{U_P}\diagdown \bar{E_G(P)}  = (\Gamma_{U_P}\diagdown U_P)  \times (A_P/\bbR^*_+)_{tb} \times \wt{X_P}.
\end{equation*}
Subsequently taking the quotient by the action of $\Gamma_{M_P}$ yields
\begin{equation*}
	\Gamma_P \diagdown \bar{E_G(P)}
	= \lrpar{ (\Gamma_{U_P}\diagdown U_P)  \times (A_P/\bbR^*_+)_{tb} } \times_{\Gamma_{M_P}} \wt{X_P}
\end{equation*}
with the group $\Gamma_{M_P}$ acting on the first factor by conjugation. However, since $A_P$ is in the center of $L_P,$ this action does not affect it and we can pull-out the factor of $A_P$ to obtain
\begin{equation*}
	\Gamma_P \diagdown \bar{E_G(P)}
	= (A_P/\bbR^*_+)_{tb} \times \lrpar{ (\Gamma_{U_P}\diagdown U_P)    \times_{\Gamma_{M_P}} \wt{X_P} }
\end{equation*}
as required.

Identifying this face with the lift of the face $\bar{e'_G(P)}$ after performing the total boundary blow-up of $\Gamma\diagdown \bar X$ can be done as in the proof of Proposition \ref{prop:ResPartialBS}.

The boundary fibration structure is inherited from that of $\wt X$ thanks to the equivariance of the maps $\Phi_P.$
\end{proof}

\subsection{The compactification of a locally symmetric space to a stratified space}
In \cite{Zucker:L2Coho}, Zucker introduced the {\em reductive Borel-Serre compactification} of a locally symmetric space.
This is the stratified space $\hat M$ obtained from $\Gamma\diagdown \bar X$ by collapsing the fibers of the fiber bundle maps
\begin{equation*}
	\Gamma_{U_P}\diagdown U_P \fib e'_G(P) \lra \Gamma_{M_P}\diagdown X_P.
\end{equation*}
Note that the natural map $\Gamma\diagdown \bar X \lra \hat M$ restricts to a diffeomorphism of the interior, $\Gamma\diagdown X,$ and the regular part of $\hat M.$

Alternatively, one obtains the same space by collapsing the fibers of the fiber bundle maps
\begin{equation*}
	\Gamma_{U_P}\diagdown U_P \fib \bar{e'_G(P)} \lra \Gamma_{M_P}\diagdown \bar{X_P}
\end{equation*}
in an order of non-decreasing codimension.
Similarly, one could start with the space $\Gamma\diagdown \wt X$ and collapse the fibers of the fiber bundle maps
\begin{equation*}
	(A_P/\bbR^*_+)_{tb} \times \Gamma_{U_P}\diagdown U_P \fib \bar{E'_G(P)} \lra \Gamma_{M_P}\diagdown \bar{X_P}
\end{equation*}
in an order of non-decreasing codimension and again end up with $\hat M.$
This is precisely the procedure outlined above for going from a manifold with corners and an iterated fibration structure to a stratified space.

\begin{corollary}
Let $X$ be the symmetric space of the real points of a connected reductive group $\bG$ as above, let $\Gamma$ be a neat arithmetic subgroup of $\bG(\bbQ),$ and let $\hat M$ be the reductive Borel-Serre compactification of $\Gamma\diagdown X.$ 

The resolution of $\hat M$ to a manifold with corners and an iterated fibration structure is $\Gamma\diagdown \wt X,$ the resolved Borel-Serre compactification of $\Gamma\diagdown X.$
\end{corollary}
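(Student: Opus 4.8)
The plan is to deduce the corollary from Theorem \ref{thm:Resolution}, which asserts that resolution and blow-down are mutually inverse operations between Thom--Mather stratified spaces and manifolds with corners carrying an iterated fibration structure. Since the resolution of a stratified space is characterized (up to isomorphism) as the inverse of the blow-down, it suffices to exhibit $\Gamma\diagdown\wt X$ as a manifold with corners with an iterated fibration structure whose blow-down, in the sense of Theorem \ref{thm:Resolution}(b), is $\hat M.$

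The first ingredient is already in hand. The theorem on boundary fibrations of the resolved Borel--Serre compactification (proved via Proposition \ref{prop:ResPartialBS}) shows that $\Gamma\diagdown\wt X$ is a compact manifold with corners, that its boundary hypersurfaces $\bar{E'_G(P)}$ are indexed by $\Gamma$-conjugacy classes of $\bbQ$-parabolic subgroups, each carrying a fiber bundle $\Psi_P\colon\bar{E'_G(P)}\lra\Gamma_{M_P}\diagdown\wt{X_P},$ and that these fibrations satisfy the corner compatibility of Definition \ref{def:ifs}. One should also record that, because $\Gamma$ is neat, the action on $\wt X$ is free and proper, so the compatible collar neighborhoods of $\wt X$ descend to $\Gamma\diagdown\wt X$; this is what lets one speak unambiguously of collapsing the fibers of the $\Psi_P.$

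Next I would carry out the collapsing. Choosing such collars and collapsing the fibers of the $\Psi_P$ in order of non-increasing $\dim\bigl(\Gamma_{M_P}\diagdown\wt{X_P}\bigr)$ --- equivalently, non-decreasing codimension of $\bar{E'_G(P)}$ --- is precisely the blow-down procedure of Theorem \ref{thm:Resolution}(b). I would identify the outcome with $\hat M$ stratum by stratum: collapsing $\bar{E'_G(P)}$ absorbs first the factor $(A_P/\bbR^*_+)_{tb}$ and then the nilmanifold fiber $\Gamma_{U_P}\diagdown U_P,$ so that the composite map $\Gamma\diagdown\wt X\lra\hat M$ restricts to a diffeomorphism of interiors $\Gamma\diagdown X$ and sends $\bar{E'_G(P)}$ onto the stratum of $\hat M$ associated to $P.$ That this agrees with Zucker's compactification is the content of the remark preceding the corollary: $\hat M$ is equally obtained from $\Gamma\diagdown\bar X$ by collapsing the bundles $\Gamma_{U_P}\diagdown U_P\fib e'_G(P)\lra\Gamma_{M_P}\diagdown X_P,$ and the identity $\Gamma\diagdown\wt X=(\Gamma\diagdown\bar X)_{tb}$ makes the two collapses compatible. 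With $\hat M$ identified as the blow-down of $\Gamma\diagdown\wt X,$ Theorem \ref{thm:Resolution}(c) applies verbatim and yields that the resolution of $\hat M$ is isomorphic to $\Gamma\diagdown\wt X.$

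The main obstacle I anticipate is bookkeeping rather than anything conceptual: one must check that the order in which the fibers are collapsed is immaterial (the compatibility diagrams of Definition \ref{def:ifs} guarantee this, but must be invoked carefully at each corner), and that the collared-corner data chosen on $\Gamma\diagdown\wt X$ induce on $\hat M$ exactly the control data of a Thom--Mather stratification --- i.e. tubular neighborhoods with the submersion and commutation relations of \S\ref{sec:ThomMather}. This is where the equivariance of the maps $\Phi_P$ under $\Gamma$ and the naturality of collar neighborhoods under the free proper $\Gamma$-action do the real work. Once that compatibility is verified, the corollary follows formally.
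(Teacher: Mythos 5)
Your argument is correct and is essentially the paper's own: the paper likewise observes that collapsing the fibers of the boundary fibrations $\Psi_P$ of $\Gamma\diagdown\wt X$ in order of non-decreasing codimension (equivalently, starting with the largest-dimensional base, as in Theorem \ref{thm:Resolution}(b)) recovers Zucker's $\hat M,$ and then concludes via Theorem \ref{thm:Resolution}(c). The additional care you take with collars, freeness of the neat $\Gamma$-action, and the induced control data is exactly the bookkeeping the paper leaves implicit.
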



\end{document}